\theoremstyle{plain}
\newcounter{thmcount}[subsection]
\newtheorem{theorem}[thmcount]{Theorem}
\newtheorem{corollary}[thmcount]{Corollary}
\newtheorem{lemma}[thmcount]{Lemma}
\newtheorem{proposition}[thmcount]{Proposition}
\newtheorem{conjecture}[thmcount]{Conjecture}
\newtheorem*{conjecture*}{Conjecture}
\theoremstyle{definition}
\newtheorem{remark}[thmcount]{Remark}
\newtheorem{example}[thmcount]{Example}
\newtheorem{definition}[thmcount]{Definition}
\newtheorem{notation}[thmcount]{Notation}
\newtheorem*{acknowledgements}{Acknowledgements}
\def\newmathop#1{\expandafter\gdef\csname #1\endcsname{\mathop{\rm #1}\nolimits}}
\def\Q{{\mathbb Q}}
\def\F{{\mathbb F}}
\def\Z{{\mathbb Z}}
\def\R{{\mathbb R}}
\def\C{{\mathbb C}}
\def\N{{\mathbb N}}
\def\triv{{\mathbf 1}}
\def\SG{{\mathcal T}}
\def\B{{\mathfrak B}}
\def\A{{\mathcal A}}
\def\L{{\mathcal L}}
\def\a{{\mathbf a}}
\def\cO{{\mathcal O}}
\def\iso{\simeq}
\def\myperp{{\!\vee}}
\def\different{{\mathfrak D}}
\def\Cl{\mathbf{Cl}}
\def\T{{\mathbf T}}
\def\TFKp{{\mathbf T_p^{\scriptscriptstyle F/K}}}
\def\RC{{\mathcal C}}
\def\X{{\mathcal X}}
\def\kI{\rm I}
\def\vI{\hbox{$1\!\!\cdot\!\! 1 \!\!:\>$}}
\def\vII{\hbox{$2\!\!\cdot\!\! 2\!\!:\>$}}
\def\vIIa{\hbox{$1\!\!\cdot\!\! 2A \!\!:\>$}}
\def\vIIb{\hbox{$1\!\!\cdot\!\! 2B \!\!:\>$}}
\def\vIII{\hbox{$3\!\!\cdot\!\! 3\!\!:\>$}}
\def\vIV{\hbox{$4\!\!\cdot\!\! 4\!\!:\>$}}
\def\vVI{\hbox{$6\!\!\cdot\!\! 6\!\!:\>$}}
\def\voI{1 \!:\!}
\def\voII{2 \!:\!}
\def\vIs{\hbox{$1\!\!\cdot\!\! 1$}}
\def\vIIs{\hbox{$2\!\!\cdot\!\! 2$}}
\def\vIIbs{\hbox{$1\!\!\cdot\!\! 2B$}}
\def\vIIIs{\hbox{$3\!\!\cdot\!\! 3$}}
\def\vIVs{\hbox{$4\!\!\cdot\!\! 4$}}
\def\vVIs{\hbox{$6\!\!\cdot\!\! 6$}}
\def\neron{\omega^{\circ}}
\def\Lak{{\Lambda_{\scriptstyle A/K}}}
\def\Lakv{{\Lambda^\vee_{\scriptstyle A/K}}}
\def\Lakf{{\Lambda_{\scriptstyle A/K}^F}}
\def\Lakvf{{\Lambda^{\vee F}_{\scriptstyle A/K}}}
\def\Lal{{\Lambda_{\scriptstyle A/L}}}
\def\Lalv{{\Lambda^\vee_{\scriptstyle A/L}}}
\def\Lalf{{\Lambda_{\scriptstyle A/L}^F}}
\def\Lalvf{{\Lambda^{\vee F}_{\scriptstyle A/L}}}
\def\Lek{{\Lambda_{\scriptstyle E/K}}}
\def\Lekv{{\Lambda^\vee_{\scriptstyle E/K}}}
\renewcommand{\div}{\ensuremath{\:|\:}}
\def\leftchoice#1#2#3#4{{\def\arraystretch{0.7}
\Bigl\{\!\!\begin{array}{ll}
   \scriptstyle #1,\!\!\!&\scriptstyle #2\cr
   \scriptstyle #3,\!\!\!&\scriptstyle #4\end{array}}}
\def\leftchoicethree#1#2#3#4#5#6{{\def\arraystretch{0.7}
\Biggl\{\!\!\begin{array}{ll}
   \scriptstyle #1,\!\!\!&\scriptstyle #2\cr
   \scriptstyle #3,\!\!\!&\scriptstyle #4\cr
   \scriptstyle #5,\!\!\!&\scriptstyle #6\end{array}}}
\def\smallmatrix#1#2#3#4{
  \genfrac{(}{.}{0pt}{1}{#1}{#3}
  \genfrac{.}{)}{0pt}{1}{#2}{#4}
}
\font\tencyr=wncyr10
\def\sha{\text{\tencyr\cyracc{Sh}}}
\font\eightcyr=wncyr8
\def\smallsha{\text{\eightcyr\cyracc{Sh}}}
\def\UDspace{\vphantom {\int^{\int^\int}}}
\def\rh#1{\smash{\raise-5pt\hbox{$#1$}}}
\def\rhv#1#2{\smash{\raise#1pt\hbox{#2}}}
\begin{document}

\title[{$\Z[C_n]$-lattices and Tamagawa numbers}]{Finite quotients of $\Z[C_n]$-lattices and Tamagawa numbers of semistable abelian varieties}
\author{L. Alexander Betts, Vladimir Dokchitser}
\address{Merton College, Oxford OX1 4JD, United Kingdom}
\email{alexander.betts@maths.ox.ac.uk}
\address{Mathematics Institute, University of Warwick, Coventry CV4 7AL, United Kingdom}
\email{v.dokchitser@warwick.ac.uk}
\address{Mathematics Institute, University of Warwick, Coventry CV4 7AL, United Kingdom}
\email{a.morgan.1@warwick.ac.uk}
\subjclass[2010]{Primary 11G10; Secondary 11G25, 11G40, 14K15, 20C10}


\maketitle

\begin{center}
(with an appendix by V. Dokchitser and A. Morgan)
\end{center}

\begin{abstract}
We investigate the behaviour of Tamagawa numbers of semistable principally
polarised abelian varieties in extensions of local fields. In view of
the Raynaud parametrisation, this translates into a purely algebraic
problem concerning the number of $H$-invariant points on a quotient
of $C_n$-lattices $\Lambda/e\Lambda'$ for varying $H\le C_n$ and $e\in\N$.
In particular, we give a simple formula for the change of Tamagawa numbers
in totally ramified extensions (corresponding to varying $e$) and one
that computes Tamagawa numbers up to rational squares in general extensions.

As an application, we
extend some of the existing results on the $p$-parity conjecture for Selmer
groups of abelian varieties by allowing more general local behaviour.
We also give a complete classification of the behaviour of Tamagawa numbers
for semistable 2-dimensional principally polarised abelian varieties, that
is similar to the well-known one for elliptic curves.
The appendix explains how to use this classification for Jacobians of
genus 2 hyperelliptic curves given by equations of the form $y^2=f(x)$,
under some simplifying hypotheses.
\end{abstract}

\tableofcontents



\section{Introduction}

The present article has a number theory side and an algebraic side. 
From the number theory point of view, we study local Tamagawa numbers of abelian varieties over $p$-adic fields, and how these change as one makes the field larger.
We will focus on the relatively simple case of semistable abelian varieties. 
However, it is crucial to us that the residue field of the base field is not algebraically closed,
in which respect our focus differs from many works, such as \cite{Lor,NU} or more recently \cite{HN}.
Indeed, the origin of our investigation came from the study of Selmer groups and questions related to the Birch--Swinnerton-Dyer conjecture, the parity conjecture and Iwasawa theory, where the abelian varieties live over number fields and the local terms, such as Tamagawa numbers, are computed over their completions.
One of our main applications is an extension of the results of \cite{tamroot} on the $p$-parity conjecture.

The theory of semistable abelian varietes allows us to translate this study of their Tamagawa numbers into one of pure algebra.
Thus, from the algebraic point of view, we study finite quotients $\Lambda/\Lambda'$ of an integral representation of a cyclic group $C_n$ by a finite index submodule. Specifically, we develop tools for studying the $C_n$-invariant elements of this quotient, and how their number changes as one refines this quotient module to $\Lambda/e\Lambda'$ for varying $e\in\Z$.

\subsection{Algebraic setting}
\label{ss:introalg}

The reader who is only interested in applications to abelian varieties may wish to skip to \S\ref{ss:introtam} at the expense of not seeing the formal definition of the group $\B$, and instead treating it as a black box.

Our algebraic setting is the following: $\Lambda\iso \Z^{\oplus d}$ will be a finitely generated free abelian group and $\Lambda'\subseteq\Lambda$ a subgroup of finite index; we will refer to them as {\em lattices} even when we don't insist that they come with a bilinear form.
We will primarily be interested in the case when $F$ is an automorphism of $\Lambda$ of finite order that preserves $\Lambda'$. However, for the purposes of the main theory this is unnecessarily restrictive: we only need $F$ to be a semisimple endomorphism (that maps $\Lambda'$ to itself), or even just an endomorphism whose eigenvalue-1 generalised eigenspace is not larger than its 1-eigenspace, i.e. its Jordan blocks for eigenvalue 1 are all of size 1, or, equivalently, $\ker(F\!-\!1)=\ker(F\!-\!1)^2$.
For the main applications, $\Lambda$ will be the dual lattice to $\Lambda'$ with respect to an $F$-invariant symmetric non-degenerate $\Z$-valued pairing on $\Lambda'$.

We would like to understand the $F$-invariant points in the quotient group $\Lambda/\Lambda'$, and how these change as we refine the quotient by scaling $\Lambda'$ to $e\Lambda'$ for $e\in\Z$.
To see the kinds of behaviour that can occur consider the following examples for~$\Lambda=\Z^{\oplus 2}$. (i) First take $\Lambda=\Lambda'$ and $F$ the identity map. Then $\Lambda/e\Lambda'$ is, of course, all $F$-invariant and its order grows as $e^2=e^{\rk\Lambda^F}$; in general the sublattice of $F$-invariants $\Lambda^F$ will always contribute to $(\Lambda/e\Lambda')^F$ with precisely this kind of growth as $e$ varies. (ii) Next consider the ``diagonal'' index 2 sublattice of $\Lambda$, that is take $\Lambda'$ to be spanned by the vectors $(1,1)$ and $(1,-1)$. In this case both points of $\Lambda/\Lambda'$ must be $F$-invariant, even if $F$ has no invariants on $\Lambda$ itself. Note that these survive as $F$-invariants in $\Lambda/e\Lambda'\cong\frac{1}{e}\Lambda/\Lambda'$, and hence give a constant contribution as $e$ varies. (iii) Finally, consider $\Lambda'=\Lambda$ and $F$ the multiplication by -1 map. When $e$ is odd, $(\Lambda/\Lambda')^F$ is trivial, but when $e$ is even $(0,0), (\frac e2,0), (0,\frac e2)$ and $(\frac e2,\frac e2)$ are elements of order 2 in $\Lambda'/e\Lambda'$ and are, perforce, $F$-invariant. In other words, $\Lambda/e\Lambda'$ can pick up a few sporadic $F$-invariants when $e$ becomes divisible by a certain integer.

It turns out that these are the only types of contributions to $(\Lambda/e\Lambda')^F$.
For this purpose we will introduce two groups: 
$$
\SG =\SG(\Lambda)=\frac\Lambda{\Lambda^F+(\Lambda\cap V^{F,\perp})}
 \qquad {\text{and}} \qquad 
\B=\B_{\Lambda,\Lambda'}=\frac{(V/\Lambda')^F \mod \Lambda}{V^F \mod \Lambda},
$$
where
$V=\Lambda\otimes_\Z\Q$ regarded as a vector space with an $F$-action and containing $\Lambda$, and  $V^{F,\perp}$ is the orthogonal complement to $V^F$ under a non-degenerate $F$-invariant pairing (or the sum of the generalised eigenspaces of $F$ on $V$ for all eigenvalues except 1, in the absence of such a pairing).
The ``separation group'' $\SG$ simply measures how far $\Lambda$ is away from being the direct sum of its $F$-invariant sublattice and its orthogonal complement (or suitable equivalent in the absence of an $F$-invariant pairing).
The group $\B$ is more subtle and is responsible for the sporadic growth of $F$-invariants in example (iii) above. Indeed, if $\Lambda=\Lambda'$ and $F$ has no invariants on $\Lambda$ (so $V^F=0$), then $\B=(V/\Lambda)^F\! \mod \Lambda$: this precisely recovers the group $C_2\times C_2$ of 2-torsion points in example (iii).

The general result that we will prove is the following; here $\SG'=\SG(\Lambda')$ refers to the separation group of $\Lambda'$:

\begin{theorem}\label{thm:exactformulafixpoint}
Suppose that $\Lambda$ is a lattice with an endomorphism $F$ satisfying $\ker(F\!-\!1)=\ker(F\!-\!1)^2$.
Suppose $\Lambda'\subseteq\Lambda$ is a full-rank $F$-stable sublattice.
Write the characteristic polynomial of $F$ over $\Lambda$ as $\pm(t-1)^rp(t)$ where $p(1)>0$. 
Then
$$
\left|\left(\!\frac\Lambda{\Lambda'}\!\right)^{\!\!\!F}\right|=
\left|\frac{\Lambda^F}{\Lambda'^F}\right|   \cdot   \frac{p(1)}{|\B|\cdot|\SG'|},
$$
and for all $e\ge 1$,
$$
\left|\left(\!\frac\Lambda{e\Lambda'}\!\right)^{\!\!F}\right|=
\left|\left(\!\frac\Lambda{\Lambda'}\!\right)^{\!\!F}\right|\cdot
|\B[ e]|\cdot  e^r.
$$
$\SG'$ and $\B$ are finite abelian groups, the product of whose orders divides $p(1)$. 
If $\Lambda'$ is the dual lattice to $\Lambda$ with respect to a non-degenerate $F$-invariant symmetric pairing, then 
$\SG\iso\SG'$ and $\B$ admits a perfect
antisymmetric pairing; in particular the order of $\B$ and of $\B[e]$ is either a square or twice a square.
\end{theorem}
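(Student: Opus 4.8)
I would attack the identity $|(\Lambda/\Lambda')^F| = |\Lambda^F/\Lambda'^F| \cdot p(1)/(|\B|\cdot|\SG'|)$ by first reducing to the case $\Lambda^F = 0$, and then establishing the scaling formula and the duality statements separately. First I would use the hypothesis $\ker(F-1) = \ker(F-1)^2$ to split $V = V^F \oplus W$ as $\Q[F]$-modules, where $W = V^{F,\perp}$ is the sum of the remaining generalised eigenspaces; note $W$ has no $F$-invariants and $\det(F-1\mid W) = \pm p(1) \neq 0$. The separation group $\SG(\Lambda)$ measures the failure of $\Lambda$ to decompose compatibly with this splitting, so I expect an exact sequence relating $(\Lambda/\Lambda')^F$ to the corresponding invariants of the "pure" pieces $\Lambda^F/\Lambda'^F$ and $(\Lambda\cap W)/(\Lambda'\cap W)$, with correction terms accounted for precisely by $\SG$, $\SG'$ and $\B$. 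The key computation is then on the pure piece with no invariants: for a lattice $M\subseteq W$ with $F M\subseteq M$ and $V^F(W)=0$, one shows $|(M/FM\text{-}\mathrm{type\ quotient})^F|$ is governed by $|\det(F-1\mid W)| = p(1)$ twisted by $|\B|$, using the snake lemma applied to $F-1$ acting on $0\to M'\to M\to M/M'\to 0$ together with the identification of $\B$ as the cokernel-type discrepancy $\ker(F-1\mid V/M')/\ker(F-1\mid V/M)$ modulo the image of $V^F$.

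For the scaling formula $|(\Lambda/e\Lambda')^F| = |(\Lambda/\Lambda')^F|\cdot|\B[e]|\cdot e^r$, I would apply the already-proven first formula with $\Lambda'$ replaced by $e\Lambda'$: here $(\Lambda^F : (e\Lambda')^F) = e^r \cdot (\Lambda^F:\Lambda'^F)$ since $\Lambda'^F$ has rank $r$, while $p(1)$ and $\SG' = \SG(e\Lambda') \iso \SG(\Lambda')$ are unchanged (scaling a lattice doesn't change its separation group, as that is a "shape" invariant). The only moving part is $\B_{\Lambda, e\Lambda'}$ versus $\B_{\Lambda,\Lambda'}$, and I would show directly from the definition $\B = \big((V/\Lambda')^F \bmod \Lambda\big)/\big(V^F\bmod\Lambda\big)$ that $\B_{\Lambda,e\Lambda'}/\B_{\Lambda,\Lambda'} \iso \B_{\Lambda,\Lambda'}[e]$ — essentially because refining $\Lambda'$ to $e\Lambda'$ enlarges $(V/\Lambda')^F$ by exactly the $e$-torsion visible inside $\Lambda$, which is the content of example (iii) in the introduction.

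The duality statement is where I expect the real work. Assume $\Lambda'$ is the $\Z$-dual of $\Lambda$ under a non-degenerate $F$-invariant symmetric pairing $\langle\,,\rangle$ on $\Lambda'\otimes\Q = V$; then $F$ acts by an orthogonal-type involution-free isometry on $W = V^{F,\perp}$, and $\langle\,,\rangle$ identifies $\Lambda/\Lambda'$ with $\Hom(\Lambda/\Lambda', \Q/\Z)$ $F$-equivariantly. The isomorphism $\SG\iso\SG'$ should follow formally: dualizing the defining presentation $\Lambda/(\Lambda^F + (\Lambda\cap W))$ swaps the roles of $\Lambda$ and $\Lambda'$ because $(\Lambda^F)^\perp = \Lambda'\cap W$ and $(\Lambda\cap W)^\perp = \Lambda'^F$ under the pairing. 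For $\B$, the pairing on $W$ induces a pairing on the discrepancy group; the crucial point is that on $W$ the form $\langle\,,\rangle$ composed with $(F-1)^{-1}$ (which exists on $W$) is \emph{antisymmetric}: since $F$ is an isometry, $\langle (F-1)x, y\rangle = \langle Fx,y\rangle - \langle x,y\rangle = \langle x, F^{-1}y\rangle - \langle x,y\rangle = -\langle x, (F-1)F^{-1}y\rangle$, and substituting $x\mapsto F x$ (or tracking the $F^{-1}$ appropriately) yields $\langle x, (F-1)^{-1}y\rangle = -\langle (F-1)^{-1}x, y\rangle$ after symmetrizing, exhibiting the antisymmetry. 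This antisymmetric pairing descends to a \emph{perfect} antisymmetric pairing on $\B$ (perfectness because $\Lambda'$ is the full dual lattice, so no degeneracy is introduced when passing to the quotient), and a finite abelian group carrying a perfect antisymmetric pairing has order a square or twice a square — the twice-a-square case arising exactly when the induced quadratic refinement is nontrivial on the 2-torsion. The same applies to $\B[e]$ since an antisymmetric perfect pairing restricts to one on each $\B[e]$. The main obstacle I anticipate is bookkeeping the sign in the antisymmetry computation and verifying that the pairing on $\B$ is genuinely perfect rather than merely nondegenerate after tensoring with $\Q$ — this requires carefully checking that passing to the dual lattice $\Lambda'$ (as opposed to an arbitrary finite-index sublattice) is exactly what makes the induced form on the finite group $\B$ unimodular.
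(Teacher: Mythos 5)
Your overall architecture (prove the $e=1$ formula, deduce the scaling formula by comparing $\B_{\Lambda,\Lambda'}$ with $\B_{\Lambda,e\Lambda'}$, then handle $\SG\iso\SG'$ and the pairing on $\B$ by duality) matches the paper's, and your antisymmetry identity $\langle (F-1)x,y\rangle=-\langle x,(F-1)F^{-1}y\rangle$ is essentially the computation in Theorem \ref{thm:bettspairing}. But the core of the first formula is missing, and the snake-lemma route you indicate does not produce it. On the pure piece $W$ (no $F$-invariants), applying the snake lemma to $D=F-1$ on $0\to M'\to M\to M/M'\to 0$ gives $|(M/M')[D]|\cdot|M/DM|=|M'/DM'|\cdot|(M/M')/D(M/M')|$, and since $|M/DM|=|M'/DM'|=|\det(D|_W)|$ this collapses to the tautology that kernel and cokernel of an endomorphism of a finite group have equal order: $p(1)$, $\B$ and $\SG'$ never appear. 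Applied instead to the full lattices, the cokernels $\Lambda/D\Lambda$, $\Lambda'/D\Lambda'$ are infinite (free rank $r$), and the decisive input is exactly the statement you leave as ``governed by $p(1)$ twisted by $|\B|$'': that $D^{-1}\Lambda'/(\Lambda'+V[D])$ (equivalently the torsion of $\Lambda'/D\Lambda'$) has order precisely $p(1)/|\SG_D(\Lambda')|$ --- a Smith-normal-form/determinant computation (Proposition \ref{prop:orderofDinverse}) --- together with the four-term exact sequence $0\to\Lambda[D]/\Lambda'[D]\to(\Lambda/\Lambda')[D]\to D^{-1}\Lambda'/(\Lambda'+V[D])\to\B\to 0$ of Theorem \ref{thm:exactformulatorsionnoe}. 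Note also that your expected correction terms ``$\SG$, $\SG'$ and $\B$'' do not match the formula: only $\SG(\Lambda')$ occurs, and explaining why $\SG(\Lambda)$ drops out is part of what the exact sequence does for you.

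Two further concrete problems. First, your mechanism for the scaling formula is backwards: since $e\Lambda'\subseteq\Lambda'$, refining $\Lambda'$ to $e\Lambda'$ \emph{shrinks} $D^{-1}\Lambda'$, and $\B_{\Lambda,e\Lambda'}=e\,\B_{\Lambda,\Lambda'}\iso\B/\B[e]$ is a subgroup of $\B$ of index $|\B[e]|$ (Proposition \ref{prop:bettsgpvaryLambda'}); your claimed isomorphism $\B_{\Lambda,e\Lambda'}/\B_{\Lambda,\Lambda'}\iso\B[e]$ and the heuristic that the refinement ``enlarges $(V/\Lambda')^F$'' would, when fed into the first formula, put $|\B[e]|$ in the denominator and yield the wrong statement; with the direction corrected, your comparison argument is exactly Corollary \ref{cor:exactformulatorsion}. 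Second, the ``formal'' dualization you use for $\SG\iso\SG'$ rests on false identities: the $\vee$-dual of the non-full-rank sublattice $\Lambda^F$ is the lattice-plus-subspace $\Lambda'+W$ (and $(\Lambda\cap W)^\vee=\Lambda'+V^F$), not $\Lambda'\cap W$ and $\Lambda'^F$. The correct argument (Theorem \ref{lem:perppreservessepgps}) needs the alternative presentation $\SG(\Lambda)\iso\bigl((\Lambda+U)\cap(\Lambda+W)\bigr)/\Lambda$ of Lemma \ref{defnlem:sepgp} plus the duality $M/N\iso N^\vee/M^\vee$ for finite quotients; similarly, to make your pairing perfect on $\B$ you need the dual presentation $\B\iso(\Lambda^\vee\cap DV)/(\Lambda^\vee\cap D\Lambda)$ (Lemma \ref{lem:alternativebettsgp}), on which the kernels can actually be computed, rather than the definition of $\B$ you work with.
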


The first formula in the theorem above is proved in Theorem \ref{thm:exactformulatorsionnoe}, 
the second one in Corollary \ref{cor:exactformulatorsion}, and the result on the finiteness and 
orders of $\SG'$ and $\B$ in Corollary \ref{cor:BTdivP} (with $D=F\!-\!1$ and $V=\Lambda\otimes_\Z\Q$).
The results on $\SG$ and $\B$ when $\Lambda'$ is the dual lattice to $\Lambda$ are proved in
Theorems \ref{lem:perppreservessepgps} and \ref{thm:bettspairing} (note that $F$ is necessarily an automorphism of $V=\Lambda\otimes\Q$ if it preserves a non-degenerate pairing).
Further properties of the groups $\SG$ and $\B$, including results on their exponents and number of generators, are proved in \S\ref{ss:sepgp}--\ref{ss:pairing}.

For applications to Tamagawa numbers, $F$ will be an automorphism of finite order and $\Lambda'$ the dual to $\Lambda$ with respect to a symmetric pairing, and we will both be interested in varying $e$ and in replacing $F$ by $F^f$, i.e. decreasing the order of the group of automorphisms that it generates.
Theorem \ref{thm:uptosquares2} shows that, up to a square error, $|(\Lambda/e\Lambda')^{F^f}|$ changes in a very elementary fashion: it essentially only depends on the parity of $e$ and $f$.
In \S \ref{ss:c_n} we also give a classification of all possible such 2-dimensional $F$-lattices together with their invariants $\SG$ and $\B$, and all lattices where $\Lambda\otimes\Q$ is the (unique) faithful irreducible $\Q[F]$-representation (Theorems \ref{thm:2d} and \ref{thm:c_n}).

\subsection{Tamagawa numbers}
\label{ss:introtam}

The reason for developing the above algebraic machinery is that we wish to apply it to study the behaviour of Tamagawa numbers of abelian varieties in field extensions. Suppose $A/K$ is a principally polarised abelian variety over a $p$-adic field with semistable reduction. If the reduction is split semistable, then its Tamagawa number 
can be expressed as $c_{A/K}=|\Lak/\Lakv|$, where $\Lakv$ and $\Lak$ are the character group of the toric part in the Raynaud parametrisation of $A$ and its dual lattice with respect to the monodromy pairing (see \S\ref{s:tam} or \cite{tamroot} \S3.5.1 for further details). If the reduction is semistable, but not necessarily split, then the Frobenius element $F\in\Gal(K^{nr}/K)$
acts as an automorphism on $\Lak$ and $\Lakv$, and the Tamagawa number is
$$
  c_{A/K} = \left|\biggl(\frac{\Lak}{\Lakv}\biggr)^{\!\!F}\right|.
$$ 
Taking the base change of $A$ to an extension with residue degree $f$ and ramification degree $e$ effectively replaces $F$ by $F^f$ and $\Lakv$ by $e\Lakv$ in the formula for the Tamagawa number. This is precisely the setting in which our algebraic results can be applied.

Thus the main invariant of $A/K$ in our description of the behaviour of Tamagawa numbers will be the group 
$$
  \B_{A/K}=\B_{\Lak,\Lakv}.
$$
Recall that this is a finite abelian group which, in our setting, admits a perfect antisymmetric pairing (so, in particular, has square or 2$\times$square order).

The theorem below illustrates the type of results that we obtain; the full list is given in \S \ref{ss:tam}. Note that point (i) interprets $\B$ number-theoretically, as the group that controls the Tamagawa number $c_{A/L}$ in totally ramified extensions $L/K$.
Points (ii) and (iii) were motivated by, respectively, Iwasawa theoretic considerations (see \cite{megasha} Thm.\ 5.5) and applications to the $p$-parity conjecture that require knowing the behaviour of Birch--Swinnerton-Dyer quotients up to rational squares in field extensions (see \S\ref{ss:intro-p} below, and \S\ref{sspparity}).

Let us say that $A/K$ has {\em toric dimension} $d$ and {\em split toric dimension} $r$ if $\rk \Lak\!=\!d$ and $\rk\Lakf\!=\!r$. Equivalently, $r$ is the multiplicity of $1\!-\!T$ in the local polynomial of $A/K$ (essentially the Euler factor in the $L$-series) or, alternatively, the multiplicity of $1$ in the characteristic polynomial of the Frobenius element in its action on the Tate module of $A/K$ (or on $H^1_{{\text{\'et}}}(A,\Q_l)$). Similarly $d$ is the total number, counting multiplicities, of such roots of absolute value 1.

\begin{theorem}\label{thm:cvintro}
Let $K$ be a finite extension of $\Q_p$, and $A/K$ a semistable principally polarised abelian variety of toric dimension $d$ and split toric dimension $r$.

(i) If $L/K$ is a totally ramified extension of degree $e$, then
$$
  c_{A/L} = |\B_{A/K}[e]| \cdot c_{A/K} \cdot e^{r}.
$$

(ii) If $K\subset L_1 \subset L_2 \subset \ldots$ is a tower of finite extensions with $L_k/K$ of ramification degree $e_k$, then for all sufficiently large $k$
$$
  c_{A/L_k} = C \cdot e_k^{r_\infty}, 
$$
for some suitable constant $C\in\Q$, and where $r_{\infty}$ is the split toric dimension of $A/L_k$ for all sufficiently large $k$.

(iii) If $L/K$ is a finite extension of residue degree $f$ and ramification degree $e$, then
$$
 c_{A/L}  \sim
    \begin{cases}
      c_{A/K} \cdot e^r
        & \text{if }\>2\nmid e,2\nmid f, \\
      c_{A/K}\cdot|\B_{A/K}|\cdot e^r
        & \text{if }\>2\div e,2\nmid f, \\
      c_{A/K^{nr}}\cdot e^d
        & \text{if }\>2\div f,
    \end{cases}
$$
where $\sim$ denotes equality up to rational squares.
\end{theorem}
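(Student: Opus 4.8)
The plan is to reduce all three statements to the algebra of \S\ref{ss:introalg} and then invoke Theorems \ref{thm:exactformulafixpoint} and \ref{thm:uptosquares2}. I would start by recalling from \S\ref{s:tam} that for semistable $A/K$ one has $c_{A/K}=\bigl|(\Lak/\Lakv)^F\bigr|$, where $F$ is the Frobenius automorphism of $\Lak$ and $\Lakv\subseteq\Lak$ is the dual lattice with respect to the monodromy pairing (a non-degenerate symmetric $F$-invariant $\Z$-valued pairing), and that $F$ acts on $\Lak$ with finite order, since the torus in the special fibre of $A$ becomes split over a finite unramified extension. By definition $d=\rk\Lak$ is the degree, and $r=\dim_\Q(\Lak\otimes\Q)^F$ the multiplicity of $t-1$, of the characteristic polynomial of $F$ on $\Lak$. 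Passing to a finite extension $L/K$ of residue degree $f$ and ramification degree $e$ replaces $(F,\Lakv)$ by $(F^f,e\Lakv)$, so $c_{A/L}=\bigl|(\Lak/e\Lakv)^{F^f}\bigr|$, and the split toric dimension of $A/L$ equals $\dim_\Q(\Lak\otimes\Q)^{F^f}$, the multiplicity of $t-1$ in the characteristic polynomial of $F^f$. Since $F$ has finite order, so does $F^f$; in particular $F^f$ is semisimple (so $\ker(F^f\!-\!1)=\ker(F^f\!-\!1)^2$) and stabilises $\Lakv$, so Theorem \ref{thm:exactformulafixpoint} applies with $\Lambda=\Lak$, $\Lambda'=\Lakv$ and $F^f$ in the role of $F$. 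I will write $\B^{(f)}$ for the resulting group $\B_{\Lak,\Lakv}$ built from $F^f$, so that $\B^{(1)}=\B_{A/K}$.

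Part (i) is then just the case $f=1$ of the second displayed formula of Theorem \ref{thm:exactformulafixpoint}:
\[
  c_{A/L}=\bigl|(\Lak/e\Lakv)^F\bigr|
   =\bigl|(\Lak/\Lakv)^F\bigr|\cdot\bigl|\B_{A/K}[e]\bigr|\cdot e^r
   =c_{A/K}\cdot\bigl|\B_{A/K}[e]\bigr|\cdot e^r .
\]

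For part (ii), I would first note that $L_k\subseteq L_{k+1}$ forces $f_k\mid f_{k+1}$ and $e_k\mid e_{k+1}$, so $F^{f_{k+1}}$ is a power of $F^{f_k}$ and the cyclic subgroups $\langle F^{f_k}\rangle\le\langle F\rangle$ form a decreasing chain; being a decreasing chain of finite groups, it stabilises, say $\langle F^{f_k}\rangle=G$ for all $k\ge k_0$. Each quantity attached to $F^{f_k}$ that enters the second formula of Theorem \ref{thm:exactformulafixpoint} --- the group $(\Lak/\Lakv)^{F^{f_k}}$, the group $\B^{(f_k)}$, and the integer $r_k=\dim_\Q(\Lak\otimes\Q)^{F^{f_k}}$, which is the split toric dimension of $A/L_k$ --- is built purely from fixed subgroups or fixed subspaces of $F^{f_k}$, hence depends only on $\langle F^{f_k}\rangle$, and so is constant for $k\ge k_0$; in particular $r_k$ equals the common value $r_\infty$. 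Since $e_k\mid e_{k+1}$, the $e_k$-torsion subgroups of $\B^{(f_k)}$ then form a non-decreasing chain inside a fixed finite group, hence are eventually constant. Feeding all this into the second formula of Theorem \ref{thm:exactformulafixpoint} applied to $F^{f_k}$ gives $c_{A/L_k}=C\cdot e_k^{r_\infty}$ for all sufficiently large $k$, with $C$ the (eventually constant) value of $\bigl|(\Lak/\Lakv)^{F^{f_k}}\bigr|\cdot\bigl|\B^{(f_k)}[e_k]\bigr|$.

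Part (iii) is the Tamagawa-number translation of Theorem \ref{thm:uptosquares2}: up to rational squares, $\bigl|(\Lak/e\Lakv)^{F^f}\bigr|$ depends only on the parities of $e$ and $f$. The elementary ingredients I would isolate first are: if $e$ is odd then $\B^{(f)}[e]$ has odd order and, by Theorem \ref{thm:exactformulafixpoint}, a perfect antisymmetric pairing, so its order is a square and $\bigl|\B^{(f)}[e]\bigr|\sim 1$; if $f$ is odd then the eigenvalues of $F$ that become $1$ under $F^f$ without already being $1$ are non-trivial $f$-th roots of unity, and these occur in complex-conjugate pairs (as $1$ is the only real root of unity of odd order), so $r_f\equiv r\pmod 2$ and $e^{r_f}\sim e^r$. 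The substantive remaining comparisons --- $\bigl|(\Lak/\Lakv)^{F^f}\bigr|\sim c_{A/K}$ and $\bigl|\B^{(f)}\bigr|\sim|\B_{A/K}|$ for $f$ odd, and $\bigl|(\Lak/e\Lakv)^{F^f}\bigr|\sim c_{A/K^{nr}}\cdot e^d$ for $f$ even --- are exactly what Theorem \ref{thm:uptosquares2} provides; combined with the second formula of Theorem \ref{thm:exactformulafixpoint}, they yield the three cases. The main obstacle I anticipate lies precisely in that last point, inside Theorem \ref{thm:uptosquares2}: comparing the $F^f$-invariants of $\Lak/e\Lakv$, and the groups $\B^{(f)}$, with their $F$-counterparts up to rational squares when $f$ is odd. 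This is where one must exploit the perfect antisymmetric pairing on $\B$ from Theorem \ref{thm:bettspairing} together with a parity count of the contributions of the non-trivial root-of-unity eigenspaces of $F$; by contrast, parts (i) and (ii) are essentially bookkeeping once the algebraic theorems are granted.
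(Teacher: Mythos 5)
Your proposal is correct and follows essentially the same route as the paper: translate via the Raynaud parametrisation (Remark \ref{rmk:lakcv}) into the lattice problem and apply the results of \S\ref{s:betts} --- the second formula of Theorem \ref{thm:exactformulafixpoint} (i.e.\ Corollary \ref{cor:exactformulatorsionviaF}) for (i) and (ii), and Theorem \ref{thm:uptosquares2} together with the formula for $c_{A/K}$ for (iii) --- which is precisely how the paper's Corollary \ref{cor:Bram}, Corollary \ref{cor:iwastab} and Theorem \ref{thm:cvuptosquares} are obtained. The only cosmetic differences are that in (ii) you stabilise the chain of subgroups $\langle F^{f_k}\rangle$ directly (the constancy of the groups $\B$ built from $F^{f_k}$ being justified as in Remark \ref{rmk:Bcoprimef}) instead of first making the unramified base change used in Corollary \ref{cor:iwastab}, and in (iii) you partially re-derive Theorem \ref{thm:cvuptosquares} rather than quoting it.
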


The above result follows from Corollary \ref{cor:Bram}, Corollary \ref{cor:iwastab} and Theorem~\ref{thm:cvuptosquares}, which
in turn rely on the properties of $\B$ and related invariants of $\Z[C_n]$-lattices established in \S\ref{s:betts}.
It is worth remarking that for the purposes of point (ii) the assumption that $A/K$ admits a principal polarisation may be removed
(see Corollary~\ref{cor:iwastab}).
However, the semistability assumption is necessary to obtain such a stable growth: for example, the Tamagawa number of the elliptic curve 243a1 fluctuates between 1 and 3 in the layers of the $\Z_3$-cyclotomic tower of $\Q_3$, see \cite{megasha} Remark~5.4.

We also apply our algebraic machinery to study in detail semistable abelian varieties that have toric dimension 2 (see \S\ref{ss:dim2}), and to those for which $\Lak\otimes\Q$ is an irreducible $\Z[F]$-module (equivalently those for which the eigenvalues of the Frobenius element $F$ on $\Lak$ are the set of primitive $k$-th roots of unity for some~$k$); see Remark \ref{rmk:cncv}. In particular we will give a complete classification of reduction types and their behaviour in field extension for the case of toric dimension 2: this is summarised in the theorem below, which is a direct consequence of Theorems~\ref{thm:2d} and \ref{thm:cvdim2}.
It is worth pointing out that this is an altogether different kind of classification from that of Namikawa--Ueno \cite{NU} for curves of genus 2, since we treat only the semistable case, but, on the other hand, are very much interested in the action of the Frobenius element.
To motivate the result, let us first recall the corresponding classification for elliptic curves:

\begin{example}
Let $K/\Q_p$ be a finite extension and $E/K$ an elliptic curve with multiplicative reduction. The elliptic curve has Kodaira type $\kI_n$ for some $n\ge 1$, and either split or non-split multiplicative reduction. If $L/K$ is a finite extension of residue degree $f$ and ramification degree $e$, then $E/L$ has Kodaira type $\kI_{en}$ and the split/non-split characteristic is the same as for $E/K$ unless $f$ is even, in which case the reduction is always split. The reduction type is related to the $\Z[F]$-structure of $\Lek$ and $\Lekv$ as follows:

\begingroup\smaller[1]
$$
\begin{array}{|l|c|c|c|c|c|c|  }
 \hline
 \text{Type}^{\phantom {X^X}} & \Lek         & \Lekv            & F & c_{E/K} &  f=2  & \text{condition}\cr
 \hline
 \kI_n \text{ split}   & \Z      & n\Z      & 1 & n &\text{unchanged}&   \cr
 \hline
 \rh{\kI_n \text{ non-split}} & \rh\Z      & \rh{n\Z}     & \rh{-1} &  1 & \rh{\kI_n \text{ split}} & n \text{ odd}     \cr
           &                 &                      &    &   2 && n \text{ even}     \cr
\hline
\end{array}\\[7pt]
$$
\endgroup
The table also lists the corresponding Tamagawa number and the effect of a quadratic unramified extension (under ``$f=2$'').
Note that together with the fact that an unramified extension of odd degree does not change the reduction type and the knowledge of the effect of a totally ramified extension, the table is sufficient to determine the reduction type of $E/L$ and its Tamagawa number for every finite extension $L/K$.

Note also that the above classification applies more generally to semistable abelian varieties of toric dimension 1: $\Lak$ is then isomorphic to $\Z$, $\Lakv$ is some sublattice (hence $=n\Z$) and $F$ is an automorphism of $\Lak$ (hence multiplication by $\pm 1$). As explained in the beginning of the section, the Tamagawa number is given by $|(\Z/n\Z)^F|$, and the behaviour in ramified and unramified extensions is obtained by scaling $e$ or replacing $F$ by $F^f$. Thus the overall result is exactly as for elliptic curves with multiplicative reduction. 

The theorem below gives an analogous table for toric dimension 2. In particular, the two results together cover Jacobians of all semistable curves of genus 2. The appendix (\S\ref{s:appendix}) explains how to explicitly determine the type that occurs for Jacobians of genus 2 hyperelliptic curves given by equations of the form $y^2=f(x)$ over finite extensions of $\Q_p$, assuming that $p$ is odd and that $f(x)$ is integral with a unit leading coefficient
and no triple roots over the residue field.

\end{example}

\begin{theorem}\label{thm:introclassification}
Let $K/\Q_p$ be a finite extension and let $A/K$ be a semistable principally polarised abelian variety of toric dimension 2. 

Then, up to $\Z[F]$-isomorphism, $\Lak,\Lakv$ and the action of $F$ on $\Lak$ are given by one of the cases in the following table.
The parameters $n$ and $m$ are strictly positive integers.
All the types are distinct, except for $\vI n,m$ and $\vI n',m'$ which are isomorphic if and
only if $C_n\times C_m \iso C_{n'}\times C_{m'}$, and similarly for $\vII n,m$ and $\vII n',m'$.
The Tamagawa number $c_{A/K}$ is determined by the type as shown in the table.
\begingroup\smaller[1]
$$
\hskip-11mm
\begin{array}{|l|c|c|c|c|c|c|c|  }
 \hline
\UDspace
 \text{Type} & \Lak         & \Lakv            & F & c_{A/K} &  f=2 & f=3  & \text{condition}  
\\[2pt] \hline \UDspace 
  \vI n,m  & \Z^2      & n\Z\oplus m\Z        &\smallmatrix1001& nm &\text{unchanged}&\text{unchanged}&   
 \\[2pt] \hline \UDspace 
  \rh{\vIIa n,m} & \rh{\Z^2}      & \rh{n\Z \oplus m\Z}       & \rh{\smallmatrix100{-1}}&  n & \rh{\vI n,m} & \rh{\text{unchanged}}& m \text{ odd}     \cr
           &                 &                      &    &   2n&&& m \text{ even}     
\\[2pt] \hline \UDspace 
   &  & &&&
      \vI 2n,\frac{m}{2} & & \ord_2\frac nm>0      \cr
  \vIIb n,m & \Z^2+\langle \frac12,\frac12\rangle & n\Z\oplus m\Z+\langle \frac n2,\frac m2\rangle  &\smallmatrix100{-1}& n &
      \vI n,m &\text{unchanged}& \ord_2\frac nm= 0      \cr
   \rhv{3}{${\scriptstyle n\equiv m \!\!\!\mod 2}$}&  & &&&
      \vI \frac n2,2m & & \ord_2\frac nm< 0     
\\[2pt] \hline \UDspace 
           &      &                 &                              &   1&&& n, m \text{ odd}    \cr
 \vII n,m  & \Z^2  & n\Z \oplus m\Z      &\smallmatrix{-1}00{-1}& 2&\vI n,m&\text{unchanged}& n,m \text{ odd/even}     \cr
           &      &                 &                              &   4&&& n,m \text{ even}    
\\[2pt] \hline \UDspace 
 \vIII n   & \Z[\zeta_3] & (\zeta_3-1)n\Z[\zeta_3] &\cdot\zeta_3 &3&  \text{unchanged}& \vI n,3n & 
\\[2pt] \hline \UDspace 
 \rh{\vIV n}    & \rh{\Z[i]} & \rh{n\Z[i]} &\rh{\cdot i}&1  & \rh{\vII n,n}&\rh{\text{unchanged}}& n \text{ odd}   \cr
           &                    &       &        &  2& & & n \text{ even}  
\\[2pt] \hline \UDspace 
 \vVI n    & \Z[\zeta_3] & (\zeta_3-1)n\Z[\zeta_3] &\cdot\zeta_6& 1& \vIII n & \vII n,3n  & 
\\[2pt] \hline
\end{array}\\[5pt]
$$
\endgroup

If $L/K$ is an unramified extension of degree coprime to 2 and 3 then $A/L$ has the same type with the same parameters $n, m$ as $A/K$. If $L/K$ is an unramified extension of degree 2 or 3 then the type of $A/L$ changes as shown in the table under the headings ``$f=2$'' and ``$f=3$'', respectively. Finally, if $L/K$ is a totally ramified extension of degree $e$, then $A/L$ has the same type as $A/K$ with parameters $n$ and $m$ replaced by $ne$ and $me$. 

\end{theorem}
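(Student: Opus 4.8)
The plan is to turn the statement into pure algebra via the Raynaud parametrisation, carry out by hand the (short) classification of rank-$2$ $\Z[F]$-lattices that can occur, and then read off the Tamagawa numbers and the behaviour in extensions from Theorem~\ref{thm:exactformulafixpoint} and Theorem~\ref{thm:cvintro}; packaged, the two halves are the algebraic classification (Theorem~\ref{thm:2d}) and its Tamagawa-number counterpart (Theorem~\ref{thm:cvdim2}). For the reduction: for semistable $A/K$ the Raynaud parametrisation supplies the character lattice $\Lakv$ of the toric part over $K^{nr}$, its dual $\Lak=\Lakv^\vee$ under the monodromy pairing (a symmetric, non-degenerate, $\Z$-valued, $F$-invariant form on $\Lakv$), and the Frobenius $F$, which is an automorphism of finite order since the torus splits over a finite unramified extension. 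Toric dimension $2$ means $\rk\Lak=2$, and $c_{A/K}=|(\Lak/\Lakv)^F|$. Base-changing to $L/K$ of residue degree $f$ and ramification degree $e$ replaces $F$ by $F^f$ and scales the monodromy pairing by $e$; unwinding duality gives $(\Lal,\Lalv,F)\cong(\Lak,e\Lakv,F^f)$ as $\Z[F]$-modules with a dual pairing, so the base-changed Tamagawa number is $|(\Lak/e\Lakv)^{F^f}|$. Everything is therefore determined by the $\Z[F]$-isomorphism type of $(\Lak,\Lakv,F)$ together with the operations $\Lakv\mapsto e\Lakv$ and $F\mapsto F^f$.

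For the classification, the rational representation $V=\Lak\otimes\Q$ of the finite cyclic group $\langle F\rangle$ is $2$-dimensional, hence one of $\triv^{\oplus2}$, $\triv\oplus\mathrm{sgn}$, $\mathrm{sgn}^{\oplus2}$, $\Q(\zeta_3)$, $\Q(\zeta_4)$, $\Q(\zeta_6)$, with $F$ acting as $1$, $\mathrm{diag}(1,-1)$, $-1$, $\zeta_3$, $i$, $\zeta_6$ respectively. When $F=\pm1$ every sublattice is $F$-stable and only $\Lak/\Lakv$ is an invariant, so Smith normal form gives $(\Lak,\Lakv)\cong(\Z^2,n\Z\oplus m\Z)$ with $(n,m)$ determined up to $C_n\times C_m$-isomorphism; these are the families $\vIs n,m$ and $\vIIs n,m$. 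When $V=\Q(\zeta_k)$ is irreducible, $k\in\{3,4,6\}$, the ring $\Z[F]=\Z[\zeta_k]$ is a PID, so $\Lak$ is free of rank one and $\Lakv$ an ideal multiple of it; writing the essentially unique $F$-invariant symmetric form as $c\cdot\mathrm{tr}_{\Q(\zeta_k)/\Q}(x\bar y)$ with $c\in\Q^\times$ and imposing $\Lak=\Lakv^\vee$ forces $\Lakv$ to be $n$ times $\different_{\Q(\zeta_k)/\Q}$ (times $\Lak$). Since $\different_{\Q(i)/\Q}=(2)$ is generated by a rational integer this reads $\Lakv=n\Z[i]$ (family $\vIVs n$), while $\different_{\Q(\zeta_3)/\Q}=(\zeta_3-1)$ is not, giving $\Lakv=n(\zeta_3-1)\Z[\zeta_3]$ for families $\vIIIs n$ ($F=\zeta_3$) and $\vVIs n$ ($F=\zeta_6$), sharing the lattice $\Z[\zeta_3]=\Z[\zeta_6]$ but with $F$ of order $3$ and $6$. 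Finally, for $V=\triv\oplus\mathrm{sgn}$ the lattice itself matters: by the classification of $\Z[C_2]$-lattices (Diederichsen, Reiner) the indecomposables are $\Z$, $\Z^-$ and the regular representation $\Z[C_2]$, so $\Lak$ is $\Z\oplus\Z^-$ or $\Z[C_2]$, and a direct enumeration of the $F$-stable finite-index sublattices admitting a compatible dual pairing produces $\vIIas n,m$ and $\vIIbs n,m$, the latter with the parity condition $n\equiv m\bmod2$ that is exactly what is needed for the dual to again be of regular-representation type. Distinctness is then routine: distinct $V$'s are non-isomorphic $\Q[F]$-modules, and within a family the group $\Lak/\Lakv$ — refined, for $\vIIas$ and $\vIIbs$, by how it meets the $(\pm1)$-eigenlines — pins down the parameters.

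For the Tamagawa numbers, in each of the seven families $c_{A/K}=|(\Lak/\Lakv)^F|$ is computed either directly or, uniformly, from Theorem~\ref{thm:exactformulafixpoint} as $|\Lak^F/\Lakv^F|\cdot p(1)/(|\B_{A/K}|\,|\SG'|)$ (here $\SG'=0$ whenever $V^F=0$), which simultaneously records $|\B_{A/K}|$ and can be cross-checked against the antisymmetry of the pairing on $\B$. For a totally ramified $L/K$ of degree $e$ one passes to $(\Lak,e\Lakv,F)$, and in every family multiplying $\Lakv$ by $e$ just rescales $(n,m)\mapsto(ne,me)$ inside the same family, with the Tamagawa number given by Theorem~\ref{thm:cvintro}(i). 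For an unramified $L/K$ of degree $f$ one passes to $(\Lak,\Lakv,F^f)$: if $f$ is coprime to $2$ and $3$ it is coprime to $\mathrm{ord}(F)\in\{1,2,3,4,6\}$, so $F^f$ generates $\langle F\rangle$ and, via the relevant field automorphism (complex conjugation on $\Z[\zeta_k]$), $(\Lak,\Lakv,F^f)\cong(\Lak,\Lakv,F)$, i.e.\ the type and parameters are unchanged; for $f=2$ or $f=3$ the element $F^f$ has strictly smaller order and the triple degenerates to a scalar or lower-order family whose parameters are read off from the Smith normal form of $\Lak/\Lakv$ (for instance $\vIIIs n$ with $f=3$: $F^3=1$ and $\Z[\zeta_3]/(n(\zeta_3-1))$ has invariant factors $(n,3n)$, giving $\vIs n,3n$), and likewise for every other entry of the $f=2,f=3$ columns. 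An arbitrary $L/K$ is a composite of an unramified and a totally ramified extension, so iterating these two operations covers all cases.

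The main obstacle is the case $V=\triv\oplus\mathrm{sgn}$, and within it the family $\vIIbs$ coming from the regular representation $\Z[C_2]$: there the sublattice structure and the perfect monodromy pairing constrain one another, and one must enumerate the $F$-stable finite-index sublattices completely and without redundancy while keeping all normalisations compatible — this is what produces the constraint $n\equiv m\bmod2$ and, in the $f=2$ column, the trichotomy according to the sign of $\ord_2(n/m)$. By contrast the scalar cases ($F=\pm1$, Smith normal form) and the irreducible cases ($\Z[\zeta_k]$ a PID for $k=3,4,6$, where the only subtlety is whether $\different_{\Q(\zeta_k)/\Q}$ is generated by a rational integer) are entirely routine, and once the seven families and their invariants $\B_{A/K}$, $\SG'$ are in hand, the distinctness statements and the verification that the two extension operations reproduce the $f=2$, $f=3$ and ``parameters $\mapsto ne,me$'' columns are straightforward bookkeeping.
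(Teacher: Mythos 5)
Your proposal is correct and takes essentially the same route as the paper: reduce via the Raynaud parametrisation to the classification of the triples $(\Lak,\Lakv,F)$ (Theorem~\ref{thm:2d}), split according to the rational representation of $\langle F\rangle$, handle the irreducible cases through ideals of $\Z[\zeta_k]$ and the trace pairing and different, and then read off the Tamagawa numbers and the behaviour under $e$ and $f$ case by case (Theorem~\ref{thm:cvdim2}), including the same $\ord_2(n/m)$ trichotomy for \vIIbs{} under $f=2$ and the invariant-factor computations for \vIIIs{} and \vVIs{} under $f=3$. The only (equivalent) variation is that for $V=\triv\oplus\mathrm{sgn}$ you invoke the Diederichsen--Reiner classification of $\Z[C_2]$-lattices, whereas the paper uses the separation group $\SG$ together with the duality $\SG(\Lambda)\iso\SG(\Lambda^\vee)$ to separate types \vIIas{} and \vIIbs{} and to pin down $\Lakv$ -- the two dichotomies coincide.
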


\subsection{On the $p$-parity conjecture}
\label{ss:intro-p}

Finally, we turn to an application to the $p$-parity conjecture for abelian varieties, which is a parity version of the Birch--Swinnerton-Dyer conjecture.

Let $A/K$ be an abelian variety over a number field. Its main arithmetic invariant is its Mordell--Weil rank, that is the rank of its group of $K$-rational points $A(K)$. The Birch--Swinnerton-Dyer conjecture predicts that this coincides with the analytic rank of $A/K$ --- the order of vanishing at $s=1$ of the $L$-function $L(A/K,s)$. Since the $L$-function is conjectured to satisfy a functional equation of the type $s \leftrightarrow 2-s$, the parity of the analytic rank can be read off from the sign in the functional equation: it is even if the sign is $+$ and odd if the sign is $-$. This sign is (conjecturally) explicitly given as the root number $w(A/K)$, which is constucted via the theory of local root numbers or local $\epsilon$-factors, and hence one can talk about the ``parity of the analytic rank'' without knowing that the $L$-function has an analytic continuation to $s=1$. The Parity Conjecture is precisely this parity version of the Birch--Swinnerton-Dyer Conjecture, i.e. that the parity of the rank of $A/K$ can be read off from the root number as $(-1)^{\rk A/K}=w(A/K)$.

Moreover, if $F/K$ is a Galois extension, then one can try to  determine the multiplicities of the irreducible constituents in the representation $A(F)\otimes_\Z\C$ of $\Gal(F/K)$. A generalisation of the Birch--Swinnerton-Dyer conjecture predicts that the multiplicity of a (complex) irreducible representation $\tau$ is given by the order of vanishing of the twisted $L$-function $L(A/K,\tau,s)$ at $s=1$, see e.g. \cite{RohV} \S2. Once again, there is a parity conjecture to the effect that, for self-dual $\tau$, the parity of this multiplicity can be recovered from the associated root number as $(-1)^{\langle\tau,A(K)\otimes\C\rangle}=w(A/K,\tau)$, where $\langle,\rangle$ is the usual inner product of characters.

In view of the conjectured finiteness of the Tate-Shafarevich group $\sha(A/K)$, there is also the following $p$-parity conjecture, that morally ought to be more accessible, although still remains unresolved. For a prime number $p$ let
$$
  \X_p(A/K) = (\text{Pontryagin dual of the } p^{\infty}\text{-Selmer group of } A/K)\otimes_{\Z_p}\Q_p.
$$
This is a $\Q_p$-vector space whose dimension is the rank of $A/K$, provided $\sha(A/K)$ is finite; in general $\dim_{\Q_p}\X_p(A/K)=\rk A/K + t$, where $t$ measures the $p$-divisible part of $\sha$ as $\sha(A/K)[p^{\infty}]=(\Q_p/\Z_p)^{\oplus t}\oplus \text{(finite group)}$.
Then we expect:

\begin{conjecture}[$p$-Parity Conjecture]
\label{conj:pparity}
For an abelian variety over a number field $A/K$,
$$
  (-1)^{\dim \X_p(A/K)}=w(A/K).
$$
If $F/K$ is a Galois extension and $\tau$ a self-dual representation of $\Gal(F/K)$, then
$$
 (-1)^{\langle \tau, \X_p(A/F) \rangle} = w(A/K,\tau).
$$
\end{conjecture}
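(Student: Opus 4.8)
The full conjecture is of course open, so the realistic target --- and the one the paper aims at, extending \cite{tamroot} --- is the twisted statement for self-dual $\tau$ when $A/K$ has semistable reduction at its places of bad reduction, subject to whatever extra hypotheses the global machinery imposes at the places above $p$; the untwisted statement is the special case $F=K$, $\tau=\triv$. The plan is to run Dokchitser--Dokchitser's formalism of regulator constants and arithmetic local constants. Choosing a ``relation'' that expresses some positive multiple of $\tau$ as a $\Q$-linear combination of permutation characters $\Ind_{H_i}^{\Gal(F/K)}\triv$, this machinery yields, up to a rational square, an identity
$$
(-1)^{\langle\tau,\,\X_p(A/F)\rangle}\ =\ \prod_{v}\ \mathcal{C}_v(A,\tau),
$$
the product being over the places $v$ of $K$, where each $\mathcal{C}_v$ is an explicit local term built from Tamagawa numbers, Néron periods, local root numbers and similar invariants of the base changes of $A$ to the completions at $v$ of the intermediate fields $K\subseteq L\subseteq F$; dually $w(A/K,\tau)=\prod_v w_v(A/K,\tau)$ with $w_v$ the local root numbers. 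The conjecture for $(A/K,\tau)$ is thus equivalent to the purely local assertion $\mathcal{C}_v(A,\tau)=w_v(A/K,\tau)$ for every $v$, to be verified place by place.

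The archimedean places and the finite places of good reduction are handled by results already in the literature --- this is precisely why earlier work had to impose restrictive local hypotheses. The new content is the finite places $v\nmid p$ at which $A$ has semistable bad reduction. There $\mathcal{C}_v$ is, modulo squares, a product over the $H_i$ of ratios $c_{A/L_w}/c_{A/K_v}$ with each $L_w/K_v$ of ramification degree $e$ and residue degree $f$, and Theorem~\ref{thm:cvintro}(iii) computes every such $c_{A/L_w}$ modulo rational squares purely in terms of the parities of $e$ and $f$ and of the fixed data $c_{A/K_v}$, $c_{A/K_v^{nr}}$ and $|\B_{A/K_v}|$. Feeding this in, and using that $|\B_{A/K_v}|$ is a square or twice a square (Theorem~\ref{thm:exactformulafixpoint}), collapses $\mathcal{C}_v$ to a short expression in the $2$-adic valuations of the ramification and residue degrees attached to the $H_i$, which one then matches against the known closed form for $w_v(A/K,\tau)$ at a place of semistable reduction --- this form has the same shape, being insensitive to tame twisting beyond a sign recording a parity.

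The difficult places are those above $p$. There $\mathcal{C}_v$ is genuinely more than a product of Tamagawa numbers: it also records the change of Néron periods (equivalently of $p$-adic valuations of discriminants of Néron models) and the local behaviour of the formal group and of the Bloch--Kato local conditions, all of which can jump unpredictably when $p$ divides a ramification degree, while at the same time $w_v(A/K,\tau)$ can pick up wild contributions from $\tau$. Controlling this is what forces the extra hypotheses --- e.g.\ $\tau$ tamely ramified above $p$, or $A$ having ordinary or good reduction there --- or else the import of sharper results on $p$-adic local $\epsilon$-factors. A more technical residual point is fixing the ``twice a square'' ambiguity of $|\B_{A/K_v}|$ with the correct sign: one must use not the order of $\B_{A/K_v}$ alone but the perfect antisymmetric pairing on it from Theorem~\ref{thm:bettspairing} to decide, for each relevant $e$, whether $\B_{A/K_v}[e]$ contributes trivially or nontrivially modulo squares, and to check that these choices are consistent across the whole relation so as to reproduce the root-number sign. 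I expect this $p$-adic analysis, rather than the semistable bad-reduction bookkeeping (which is exactly what the paper's $\B$-machinery trivialises), to be the main obstacle.
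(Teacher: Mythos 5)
The statement you were asked about is a conjecture; the paper neither proves it nor claims to, and your opening correctly identifies that the realistic target is the partial result (Theorem \ref{thm-introparity}/\ref{thm-parity}) for twists by $\tau\in\TFKp$. Your outline does match the paper's strategy in broad strokes: express the twisted Selmer parity via Brauer relations and regulator constants as a product of local terms $C_v(\Theta)$, reduce the conjecture for these twists to a place-by-place ``local compatibility'' statement $(-1)^{\ord_p C_v(\Theta)}=w(A/K_v,\tau)$, and verify it at semistable places using the up-to-squares Tamagawa formula of Theorem \ref{thm:cvuptosquares} together with the square-or-twice-square structure of $\B_{A/K_v}$.

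However, your allocation of the difficulties diverges from what the paper actually does, in ways that would derail the argument. First, the paper does not treat the places above $p$ as a separate hard case and imposes no tameness, ordinarity, or $p$-adic $\epsilon$-factor input there: Theorem \ref{comp-ss} applies uniformly to all nonarchimedean places of characteristic zero, because semistability makes a minimal exterior form stay minimal in every extension, so $C(A/L,\omega)=c_{A/L}$ with no period or formal-group terms, and the local root number is simply $(-1)^{\langle\tau,\Lak\otimes\Q_p\rangle}$ for $\tau\in\T_{\Theta,p}$ (trivial determinant); the genuinely $p$-adic Selmer-theoretic content is quoted wholesale from \cite{tamroot} (Thm.\ 1.14), whose only extra hypothesis is the principal polarisation, respectively the $K$-rational divisor when $p=2$. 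Second, you omit the mechanism the paper uses at the primes of unstable reduction, which is where its hypothesis actually sits: requiring cyclic decomposition groups makes local compatibility vacuous there, since cyclic groups have no Brauer relations (Lemma \ref{comp-cyc}); archimedean places are disposed of the same way. Third, your proposed ``sign-fixing'' via the antisymmetric pairing on $\B_{A/K_v}$ for each $e$ is not needed and would not be how the computation closes: only whether $|\B_{A/K_v}|$ is a square or twice a square enters (via Theorem \ref{thm:uptosquares}), and the paper finishes the bookkeeping not by a direct match with a closed form for $w_v$, but by writing $C(A/L,\omega)$ up to squares as a monomial in the three elliptic-curve Tamagawa functions (types $\kI_1$ split, $\kI_1$ non-split, $\kI_2$ non-split), for which compatibility is already known from \cite{tamroot}, together with the fact that functions depending only on the residue degree evaluate trivially on Brauer relations. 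Without these three ingredients your plan leaves the cases $v\mid p$ and $v$ of unstable reduction unproved and the final sign comparison unjustified.
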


\noindent The second formula is a strictly stronger statement, as taking $\tau=\triv$ it recovers the first one.
We will refer to it as the $p$-parity conjecture for the twist of $A/K$ by $\tau$.

Most results on the $p$-parity conjecture concern elliptic curves; in particular the first formula is now known to hold for all elliptic curves over $\Q$ and in many cases for elliptic curves over totally real fields \cite{kurast, NekIV}. 
The situation with general abelian varieties is more difficult, and the main results are those of \cite{CFKS} that establish the first formula assuming that the abelian variety admits a suitable isogeny, and of \cite{tamroot} that proves the second formula for a class of representations $\tau$. Our results on the behaviour of Tamagawa numbers let us strengthen the results of the latter paper as follows.

For a Galois extension of number fields $F/K$ and a prime number $p$, the ``regulator constant'' machinery of \cite{tamroot} and its preceding papers constructs a special set $\TFKp$ of self-dual representations of $\Gal(F/K)$. Unfortunately there is still no simple description of the set $\TFKp$; we refer the reader to \S\ref{sspparity} for its definition.
We will prove the following result on on $p$-parity conjecture for twists by $\tau\in\TFKp$. It effectively removes 
the ugly assumption ``4ex'' from \cite{tamroot} Thm.\ 3.2 and its applications.
Strictly speaking this is only a strengthening of existing results for the one prime $p\!=\!2$. 
However, $2$-Selmer groups have been particularly important for the parity conjectures, explicit descent, quadratic twists, as well as recent results on hyperelliptic curves \cite{Manjul-Gross-Wang}.

\begin{theorem}
\label{thm-introparity}
Let $F/K$ be a Galois extension of number fields and let $p$ be a prime
number. Let $A/K$ be a principally polarised abelian variety
all of whose primes of unstable reduction have cyclic
decomposition groups\footnote{e.g. if they are unramified}
in $F/K$; if $p=2$ assume also that the
polarisation is induced by a $K$-rational divisor.

\begin{enumerate}
\item The $p$-parity conjecture holds for all twists of $A/K$
by $\tau\in \T^{\scriptscriptstyle{F/K}}_p$. 

\item If $\Gal(F/K) \iso D_{2p^n}$, then the $p$-parity
conjecture holds for $A/K$ twisted by $\tau$ of the form
$\tau=\sigma\oplus\triv\oplus\det\sigma$ for every 2-dimensional
representation $\sigma$ of $\Gal(F/K)$.
(Here $D_{2k}$ denotes the dihedral group of order $2k$.)

\item
Suppose $p=2$ and that the $2$-Sylow subgroup of $\Gal(F/K)$ is normal.
If the $2$-parity conjecture holds for $A/K$ and over all quadratic
extensions of $K$ in $F$, then it holds for $A$ over all subfields of $F$
and for all twists of $A$ by orthogonal representations of $\Gal(F/K)$.
\end{enumerate}
\end{theorem}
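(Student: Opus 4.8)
The plan is to reduce all three parts to (1), and to reduce (1) to a purely local statement at the primes of bad reduction, which is where the new results on Tamagawa numbers of semistable abelian varieties are used. Recall the mechanism of \cite{tamroot}: for a self-dual $\tau\in\TFKp$ the regulator-constant formalism expresses the $p$-parity conjecture for the twist of $A/K$ by $\tau$ as the vanishing of a finite product $\prod_v\lambda_v(A,\tau)$ of $\pm1$-valued local terms, one for each place $v$ of $K$, where $\lambda_v$ compares a local arithmetic quantity --- built, over the subfields of $F_w/K_v$ cut out by $\tau$, from local Tamagawa numbers and related local invariants and taken up to rational squares --- with the local root number $w_v(A/K,\tau)$. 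In \cite{tamroot} the terms at the archimedean places, at the places above $p$, and at the places of good reduction away from $p$ are shown to be trivial with no hypothesis on the reduction; the auxiliary condition ``4ex'' was invoked only to control $\lambda_v$ at the places of bad reduction. So it suffices to prove $\lambda_v(A,\tau)=1$ at every bad place $v$ of $K$ under our hypotheses.

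There are two cases. If $v$ is a place of unstable reduction then $\Gal(F_w/K_v)$ is cyclic by hypothesis, and for cyclic decomposition groups the triviality of $\lambda_v$ is already established in \cite{tamroot}: over a cyclic tower there are no nontrivial Brauer relations, so the arithmetic and root-number sides of $\lambda_v$ each telescope along $K_v\subseteq\cdots\subseteq F_w$ and their agreement is the classical local compatibility. The substantive case is a place $v$ of semistable reduction, now allowed to be wildly ramified and to have non-cyclic decomposition group. Here $A/K_v$ is described by its Raynaud parametrisation, and writing $\tau|_{\Gal(F_w/K_v)}=\sum_i n_i\,\C[\Gal(F_w/K_v)/H_i]$ the arithmetic side of $\lambda_v$ is, up to rational squares, determined by $\prod_i c_{A/F_w^{H_i}}^{\,n_i}$. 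By Theorem~\ref{thm:cvuptosquares} (equivalently Theorem~\ref{thm:cvintro}(iii)) each factor $c_{A/F_w^{H_i}}$ equals, modulo rational squares, a universal function of $c_{A/K_v}$, $c_{A/K_v^{nr}}$, $|\B_{A/K_v}|$ and the parities of the ramification and residue degrees of $H_i$. Feeding this into $\lambda_v$ and using that $\tau\in\TFKp$ --- so that the ``$e$-parity'', ``$f$-parity'' and $c_{A/K_v^{nr}}$ contributions cancel in the product, exactly as in \cite{tamroot} --- leaves a power of $|\B_{A/K_v}|$ modulo squares, whose square class is the only remaining unknown. This is where the antisymmetric pairing of \S\ref{ss:pairing} does the work: by Theorem~\ref{thm:exactformulafixpoint}, $|\B_{A/K_v}|$ is a square or twice a square, and when $p=2$ the hypothesis that the polarisation is induced by a $K$-rational divisor forces that pairing to be alternating, which fixes the square class in precisely the way matched by the local root number side (itself read off from the monodromy pairing in the semistable case, as in \cite{tamroot}). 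This yields $\lambda_v(A,\tau)=1$, completing (1). I expect this last step --- matching the square class of $|\B_{A/K_v}|$ against the local root number, and verifying that the $K$-rational divisor hypothesis for $p=2$ is exactly what is needed --- to be the main obstacle; everything before it is bookkeeping inherited from \cite{tamroot}.

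Finally, (2) and (3) follow from (1) by the same representation-theoretic arguments as the corresponding corollaries of \cite{tamroot} Thm.~3.2, now carried by the weaker local hypotheses. For (2), when $\Gal(F/K)\iso D_{2p^n}$ the self-dual virtual representation $\tau=\sigma\oplus\triv\oplus\det\sigma$ is handled by the dihedral instance of the regulator-constant method, which takes only (1) as input. For (3), when the $2$-Sylow subgroup of $\Gal(F/K)$ is normal, every orthogonal $\Q[\Gal(F/K)]$-representation can be reduced --- using inductivity of Selmer ranks and of root numbers, a Brauer-style induction on $[F:K]$, and the dihedral case (2) --- to twists by representations in $\TFKp$ together with the $2$-parity conjecture for $A$ over $K$ and over the quadratic subextensions of $F/K$; the former is (1) and the latter is assumed, so (3) follows.
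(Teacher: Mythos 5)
Your overall architecture matches the paper's: reduce to a place-by-place "local compatibility" statement (the paper's Theorem \ref{comp-parity}), dispose of the unstable places via the cyclic-decomposition-group hypothesis and the absence of Brauer relations for cyclic groups (Lemma \ref{comp-cyc}), and at semistable places feed the up-to-squares Tamagawa formula of Theorem \ref{thm:cvuptosquares} into the Brauer-relation bookkeeping (Theorem \ref{comp-ss}); parts (2) and (3) are indeed quoted from the dihedral regulator-constant computations of the earlier paper once (1) is in place. The gap is in the step you yourself flag as the main obstacle: how the leftover $|\B_{A/K_v}|$ term is eliminated. You propose that the $p=2$ hypothesis that the polarisation comes from a $K$-rational divisor forces the antisymmetric pairing on $\B_{A/K_v}$ to be alternating, hence $|\B_{A/K_v}|$ to be a square, and that this square class is then "matched" by the local root number. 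This mechanism is not available: that hypothesis is purely global and is used only in Theorem \ref{comp-parity} (via the results of the regulator-constant paper) to control the parity of $|\sha[p^\infty]|$; it says nothing about $\B$ at a given place. Indeed an elliptic curve over $\Q$ with non-split multiplicative reduction of type $\kI_1$ at some prime is principally polarised by a rational divisor, yet there $\B_{A/K_v}\iso C_2$, which is twice a square, so no hypothesis of this kind can force square order, and the local root-number side in the semistable case is simply $(-1)^{\langle\tau,\Lambda\rangle}$ with no term to absorb such a square class.

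What the paper actually does in Theorem \ref{comp-ss} is show that the $\B$-contribution \emph{cancels} over any Brauer relation, so its square class never needs to be determined: up to squares the factor $|\B_{A/K_v}|$ occurs exactly over the subfields with $2\div e$, $2\nmid f$, and this indicator function is, up to squares, the ratio $2c_2/c_3$ of Tamagawa numbers of auxiliary elliptic curves with non-split reduction of types $\kI_1$ and $\kI_2$, for which local compatibility is already known; hence its exponent in $\ord_p C_v(\Theta)$ is even, while the remaining pieces ($c_{A/K_v}$ or $c_{A/K_v^{nr}}$, depending only on the residue degree, and the $e^r$, $e^{d-r}$ terms) are handled by the unramified-function and split/non-split elliptic-curve comparisons, giving $\ord_p C_v(\Theta)\equiv\langle\tau,\Lambda\rangle\bmod 2$. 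So for odd $p$ your leftover term is harmless because $|\B|$ is a square or twice a square, but for $p=2$ your proposal is missing the genuine content --- the evenness, against $\tau\in\T_{\Theta,2}$, of the weighted count of subfields with even ramification and odd residue degree --- and replaces it with an argument that would fail.
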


Let us stress that the theorem provides a large number of twists of $A/K$ for which the $p$-parity conjecture holds: for example we could prove the $p$-parity conjecture for all principally polarised abelian varieties $A$ over $\Q$ using (2) if we knew how to find a $D_{2p}$-extension of $\Q$ in which (i) the $p^\infty$-Selmer rank of $A$ does not grow, (ii) the analytic rank of $A$ does not grow, or at least $w(A,\sigma)=w(A,\det\sigma)=1$ for a faithful 2-dimensional representation $\sigma$, and (iii) the primes of unstable reduction of $A$ are unramified. (Here $D_4$ should be interpreted as $C_2\times C_2$ for $p=2$.)

Part (1) will be proved in Theorem \ref{thm-parity}. Part (2) then follows from \cite{tamroot} Ex.\ 2.21, 2.22 that compute regulator constants for dihedral groups (same as in \cite{tamroot} Thm.~4.2). Similarly (3) follows from \cite{tamroot} Thm.\ 4.4, Thm.\ 4.5., where ``Hypothesis~4.1'' holds by~(2).

\begin{remark}
Adam Morgan has recently proved that the $2$-parity conjecture holds after a proper quadratic extension of the base field for Jacobians of hyperelliptic curves, under some local conditions on the reduction types \cite{Mor}. Thus in Theorem \ref{thm-introparity} (3) for this class of abelian varieties one only needs to assume the 2-parity conjecture for $A/K$ itself.
\end{remark}


\subsection{Notation}

Throughout this paper, a {\em lattice} $\Lambda$ will simply mean a finitely generated free abelian group, $\Lambda\iso\Z^n$. It will often come with a non-degenerate symmetric pairing $(\cdot,\cdot):\Lambda\times\Lambda\to\Q$, but in such cases we will always explicitly state it.
A $\Z[F]$- or $\Z[G]$-lattice will mean a lattice with an endomorphism $F$ or with a linear action of a finite group $G$, respectively.

\begin{definition}\label{def:dual}
If $V$ is a finite-dimensional $\Q$-vector space endowed with a non-degenerate symmetric bilinear form
$(\cdot,\cdot):V\times V\rightarrow\Q$, then for any $S\subseteq V$ we
write
$$
 S^{\myperp}=\{v\in V|\forall s\in S:(v,s)\in \Z \}.
$$
Note that if $U\subseteq V$ is a subspace, then $U^{\myperp}$ is just the subspace orthogonal to~$U$.
If $\Lambda\subset V$ is a lattice of maximal rank, then $\Lambda^{\myperp}$ is precisely its dual lattice.

\end{definition}


\noindent Throughout the paper, we use the notation:

\begin{tabular}{llll}
\vphantom{$\int^X$}%
$M^F$ & $F$-invariants of $M$, i.e. $\{m\in M| Fm=m\}$ for an endomorphism $F$ \cr
$M^G$ & $G$-invariants of $M$, i.e. $\{m\in M| gm=m\> \forall g\in G\}$ for a group $G$ \cr
$M[D]$ & $D$-torsion of $M$, i.e. $\{m\in M| Dm=0\}$  for an endomorphism $D$ \cr
$M[e]$ & $e$-torsion of $M$, i.e. $\{m\in M| em=0\}$  for $e\in\Z$ \cr
$\zeta_k$ & primitive $k$-th root of unity \cr
$\Phi_k$ & $k$-th cyclotomic polynomial \cr
$\SG$ & see Definition \ref{def:SG}\cr
$\B$ & see Definition \ref{defn:bettsgp}
\end{tabular}

\noindent For a finite extension $K/\Q_p$ and an abelian variety $A/K$ we use the notation:

\begin{tabular}{llll}
\vphantom{$\int^X$}%
$K^{nr}$ & maximal unramified extension of $K$ \cr
$F$ & Frobenius element (in \S\ref{s:tam}) \cr
$c_{A/K}$ & local Tamagawa number of $A/K$ \cr 
$w(A/K)$ & local root number of $A/K$ \cr
$w(A/K,\tau)$ & local root number of the twist of $A$ by $\tau$, see \cite{RohG} \cr
$\Lak$ & see Notation \ref{not:lak}\cr
$\B_{A/K},  \SG_{A/K}, P_{A/K}$ & see Notation \ref{not:bak}\cr
\end{tabular}

\smallskip
\noindent Note that we use the same notation for global root numbers when $K$ is a number field.
In this setting we also write $\X_p(A/K)$ for the dual $p^\infty$-Selmer group of $A/K$ (see \S\ref{ss:intro-p}),
that is
$$
 \X_p(A/K)=\Hom_{\Z_p}(\varinjlim \Sel_{p^n}(A/K), \Q_p/\Z_p)\otimes\Q_p.
$$ 
Finally, when working with number fields, we will typically write $F$ for a Galois extension of $K$. Hopefully this clash of notation with the Frobenius element in the case of local fields (or its analogue $F\in\Aut(\Lambda)$ in the algebraic setting) will not lead to any confusion.

\bigskip

\begin{acknowledgements}
We would like to thank Tim Dokchitser and Adam Morgan for their useful comments.
We would also like to thank Emmanuel College and Trinity College, Cambridge, where most of this research was carried out.
The second author is supported by a Royal Society University Research Fellowship.
\end{acknowledgements}


\section{Finite quotients of $\Z[F]$-lattices}
\label{s:betts}

In this section we develop the algebraic tools for studying the $F$-invariants of quotients $\Lambda/e\Lambda'$ of $\Z[F]$-lattices. The setting will be slightly more general than in \S\ref{ss:introalg} in the introduction.
We will typically have $V$ a finite-dimensional $\Q$-vector space, $\Lambda\subset V$ a lattice of maximal rank, and $D\in\End(V)$ an endomorphism that preserves $\Lambda$ and satisfies $V[D]=V[D^2]$. The endomorphism $D$ plays the role of $F\!-\!1$ for $F$ as in the introduction and in the applications. Note that, in particular, $V[D]=V^F$ and $\Lambda[D]=\Lambda^F$, and that if $F$ has finite order then both $F$ and $D$ act semisimply on $V$, so that the assumption that $V[D]=V[D^2]$ is automatically satisfied.


\subsection{The separation group $\SG(\Lambda)$}\label{ss:sepgp}

In this section we introduce the group $\SG(\Lambda)$ and establish its basic properties.

\begin{definition}[Separation group $\SG(\Lambda)$]\label{def:SG}
Suppose $V=U\oplus W$ is a finite-dimensional $\Q$-vector space and
$\pi_U,\pi_W$ the projections inducing this direct sum.
For a lattice $\Lambda\subset V$ of full rank we define the
{\em separation group}
$$
\SG_{U,W}(\Lambda)
=\frac\Lambda{(\Lambda\cap U)+(\Lambda\cap W)}.
$$
If $D\in\End(V)$ satisfies $V[D]=V[D^2]$,
then $V=DV\oplus V[D]$ and we write
$$
\SG_D(\Lambda)=\SG_{DV, V[D]}(\Lambda).
$$
\end{definition}

\begin{example}
Consider $\Lambda=\Z^2\subset \Q^2=V$ and $F=\smallmatrix{0}{1}{1}{0}$, the reflection in the $x=y$ line, and set $D=F\!-\!1$.
The picture is the following:

{\centering{\includegraphics[width=\textwidth]{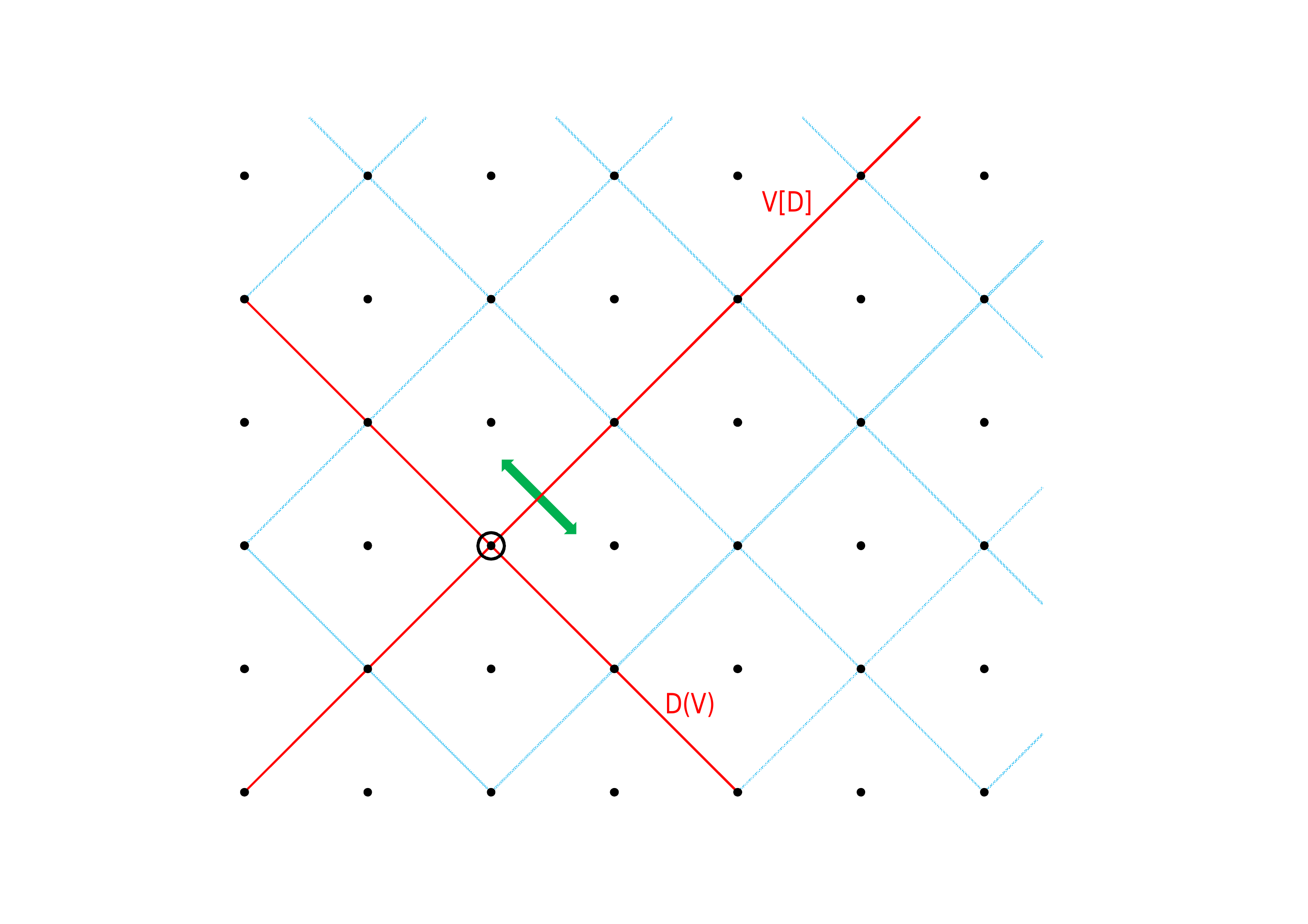}}}

Here the action of $F$ is indicated by the diagonal green arrow. The kernel of~$D$ is $W\!=\!V[D]\!=\!V^F$ is the $x\!=\!y$ line, and its image $U=DV$ is the $x\!=\!-y$ line; these are indicated in red. We clearly have $V=U\oplus W$. However, on the level of lattices this is no longer a direct sum: the blue lines serve to identify the lattice $(\Lambda\cap U)+(\Lambda\cap W)$, and we visibly have the quotient group $\SG_D(\Lambda)\iso C_2$.
\end{example}

\begin{lemma}\label{defnlem:sepgp}
With notation as in the Definition, $\SG_{U,W}(\Lambda)$ is a finite group and
$$
\SG_{U,W}(\Lambda)
\cong\frac{\pi_U\Lambda}{\Lambda\cap U}
\cong\frac{\pi_W\Lambda}{\Lambda\cap W}
\cong\frac{\pi_U\Lambda+\pi_W\Lambda}\Lambda
=\frac{(\Lambda+U)\cap(\Lambda+W)}\Lambda.
$$
\end{lemma}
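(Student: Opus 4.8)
The plan is to exhibit each of the claimed groups as a quotient and produce explicit isomorphisms, deriving finiteness as a by-product at the end. First I would set up notation: since $V = U \oplus W$ with projections $\pi_U, \pi_W$, every $\lambda \in \Lambda$ decomposes uniquely as $\lambda = \pi_U\lambda + \pi_W\lambda$. I would begin with the map $\Lambda \to \pi_U\Lambda / (\Lambda \cap U)$ given by $\lambda \mapsto \pi_U\lambda \bmod (\Lambda \cap U)$. This is clearly surjective; the key point is to compute its kernel. If $\pi_U\lambda \in \Lambda \cap U$, then $\pi_W\lambda = \lambda - \pi_U\lambda \in \Lambda$ as well, and since $\pi_W\lambda \in W$ we get $\pi_W\lambda \in \Lambda \cap W$, whence $\lambda \in (\Lambda\cap U) + (\Lambda\cap W)$. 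Conversely that sum is obviously in the kernel. This gives $\SG_{U,W}(\Lambda) \cong \pi_U\Lambda / (\Lambda\cap U)$, and by the symmetric argument also $\cong \pi_W\Lambda / (\Lambda\cap W)$.

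Next I would handle the third isomorphism. Consider the map $\pi_U\Lambda + \pi_W\Lambda \to \pi_U\Lambda / (\Lambda\cap U)$ sending $u + w \mapsto u \bmod (\Lambda\cap U)$ for $u \in \pi_U\Lambda$, $w \in \pi_W\Lambda$; this is well-defined because the decomposition $V = U\oplus W$ makes the $U$-component of an element of $\pi_U\Lambda + \pi_W\Lambda$ unambiguous (it lands in $\pi_U\Lambda$). It is surjective, and I claim its kernel is exactly $\Lambda$. Indeed $\Lambda \subseteq \pi_U\Lambda + \pi_W\Lambda$ via $\lambda = \pi_U\lambda + \pi_W\lambda$, and such an element maps to $\pi_U\lambda \bmod(\Lambda\cap U)$; but we just showed $\pi_U\lambda \in \Lambda\cap U$ iff $\lambda \in (\Lambda\cap U)+(\Lambda\cap W)$, so actually I should be slightly more careful — the kernel consists of $u+w$ with $u \in \Lambda\cap U$. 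Writing $u + w$ with $u \in \Lambda \cap U \subseteq \Lambda$ forces $w \in \pi_W\Lambda$ to satisfy: is $u + w \in \Lambda$? Here I need the reverse containment $\pi_U\Lambda + \pi_W\Lambda \supseteq \Lambda$ is clear, but for the kernel I want $\{u+w : u \in \Lambda\cap U, w\in\pi_W\Lambda\} = \Lambda$. Given $w \in \pi_W\Lambda$, pick $\lambda\in\Lambda$ with $\pi_W\lambda = w$; then $\pi_U\lambda + w \in \Lambda$, and modifying by any element of $\Lambda\cap U$ stays in $\Lambda$, and conversely every $\lambda\in\Lambda$ has this form since $\pi_U\lambda \in \pi_U\Lambda$ — hmm, I need $\pi_U\lambda \in \Lambda\cap U$, which is false in general. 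So the correct statement is that the kernel is $\Lambda + (\Lambda\cap U) = \Lambda$; more precisely I should route this through the already-established isomorphism: the composite $\pi_U\Lambda + \pi_W\Lambda \twoheadrightarrow \pi_U\Lambda/(\Lambda\cap U) \cong \SG_{U,W}(\Lambda)$ has $\Lambda$ in its kernel and is surjective, and a short diagram chase (or a rank/index count using that all these groups sit inside $V$ as commensurable lattices) shows the kernel is exactly $\Lambda$. This is the step I expect to need the most care — keeping straight which ambient group each quotient lives in.

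Finally, for the last equality $\pi_U\Lambda + \pi_W\Lambda = (\Lambda + U)\cap(\Lambda+W)$: the inclusion $\subseteq$ is immediate since $\pi_U\Lambda \subseteq \Lambda + W$ (as $\pi_U\lambda = \lambda - \pi_W\lambda$) and $\pi_U\Lambda \subseteq \Lambda + U$ trivially, and symmetrically for $\pi_W\Lambda$. For $\supseteq$, take $x \in (\Lambda+U)\cap(\Lambda+W)$, so $x = \lambda_1 + u = \lambda_2 + w$ with $\lambda_i \in \Lambda$, $u\in U$, $w\in W$; projecting, $\pi_U x = \pi_U\lambda_1 + u$ and also $\pi_U x = \pi_U\lambda_2$, so $\pi_U x \in \pi_U\Lambda$, and likewise $\pi_W x \in \pi_W\Lambda$, giving $x = \pi_U x + \pi_W x \in \pi_U\Lambda + \pi_W\Lambda$. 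For finiteness: $\Lambda$ and $(\Lambda\cap U)\oplus(\Lambda\cap W)$ are both full-rank lattices in $V$ — the latter because $\Lambda\cap U$ spans $U$ over $\Q$ (it contains $N\pi_U\lambda$ for suitable $N$ when... actually $\Lambda\cap U$ spans $U$ since $U = \pi_U V = \pi_U(\Lambda\otimes\Q)$ and clearing denominators lands in $\Lambda\cap U$ only if the result lies in $U$, which it does) — hence their quotient is finite. I would state this finiteness observation first or fold it in at the end. The specialization to $\SG_D(\Lambda)$ with $U = DV$, $W = V[D]$ under the hypothesis $V[D]=V[D^2]$ needs only the remark already in Definition \ref{def:SG} that then $V = DV \oplus V[D]$, so no extra work is required.
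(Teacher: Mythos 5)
Your first two isomorphisms, the set-theoretic equality $(\Lambda+U)\cap(\Lambda+W)=\pi_U\Lambda+\pi_W\Lambda$, and the finiteness argument are all fine and essentially match the paper. The gap is in the third isomorphism. The map $\pi_U\Lambda+\pi_W\Lambda\to\pi_U\Lambda/(\Lambda\cap U)$, $u+w\mapsto u$, does \emph{not} contain $\Lambda$ in its kernel: its kernel is $(\Lambda\cap U)+\pi_W\Lambda$, and an element $\lambda\in\Lambda$ lies in it precisely when $\pi_U\lambda\in\Lambda$, which fails whenever $\SG_{U,W}(\Lambda)\neq 0$. Concretely, in the paper's example ($\Lambda=\Z^2$, $U$ the line $x=-y$, $W$ the line $x=y$) the element $(1,0)\in\Lambda$ maps to $(\tfrac12,-\tfrac12)\not\equiv 0 \bmod \Lambda\cap U=\Z(1,-1)$. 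So your patch --- ``the composite has $\Lambda$ in its kernel and \dots the kernel is exactly $\Lambda$'' --- asserts something false, and the map simply does not descend to $\frac{\pi_U\Lambda+\pi_W\Lambda}{\Lambda}$. Nor does the suggested rank/index count rescue the argument on its own: comparing covolumes only shows the two quotients have equal \emph{order}, whereas the lemma claims a group isomorphism (indeed one that is later used as a $\Z[D]$-module isomorphism, cf.\ Remark \ref{rmk:defnlem:sepgp} and Theorem \ref{lem:perppreservessepgps}).

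The fix is to set the map up in the opposite direction, as the paper does: define $\Lambda\to\frac{\pi_U\Lambda+\pi_W\Lambda}{\Lambda}$ by $x\mapsto\pi_U(x)\bmod\Lambda$. Its kernel is exactly $(\Lambda\cap U)+(\Lambda\cap W)$ (if $\pi_U(x)\in\Lambda$ then $\pi_U(x)\in\Lambda\cap U$ and $x-\pi_U(x)\in\Lambda\cap W$), and it is surjective because $\pi_U(x)\equiv-\pi_W(x)\bmod\Lambda$, so the image contains both $\pi_U\Lambda\bmod\Lambda$ and $\pi_W\Lambda\bmod\Lambda$. (Equivalently, you could map $\pi_W\Lambda\to\frac{\pi_U\Lambda+\pi_W\Lambda}{\Lambda}$ by reduction mod $\Lambda$; this is surjective for the same reason and has kernel $\Lambda\cap W$, which routes the claim through your second isomorphism.) With this replacement your write-up becomes a complete proof along the paper's lines.
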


\begin{proof}
To prove the first isomorphism, simply note
that $\pi_U:\Lambda\rightarrow\frac{\pi_U\Lambda}{\Lambda\cap U}$ is
surjective with kernel
$(\Lambda\cap U)+(\Lambda\cap\ker\pi_U)=(\Lambda\cap U)+(\Lambda\cap W)$.
The second isomorphism follows similarly.

To prove the final isomorphism, consider the map $\pi_U:\Lambda\rightarrow\frac{\pi_U\Lambda+\pi_W\Lambda}\Lambda$. Its kernel clearly contains $(\Lambda\cap U)+(\Lambda\cap W)$, and conversely if $x\in\Lambda$ lies in the kernel then $\pi_U(x)\in\Lambda\cap U$ so $x=\pi_U(x)+(x-\pi_U(x))\in(\Lambda\cap U)+(\Lambda\cap W)$.
Moreover, this map is surjective: its image clearly contains all of $\pi_U\Lambda\) mod \(\Lambda$ and, since $\pi_U(x)\equiv-\pi_W(x)$ mod $\Lambda$, contains all of $\pi_W\Lambda$ mod $\Lambda$ also.
Thus by the first isomorphism theorem, this factors as an isomorphism $\SG_{U,W}(\Lambda)\cong\frac{\pi_U\Lambda+\pi_W\Lambda}\Lambda$.

For the equality, note that a generic element $u+w\in V$ with
$u\in U$ and $w\in W$ lies in $\Lambda+U$ if and only if $w\in \pi_W\Lambda$,
and similarly it lies in $\Lambda+W$ if and only if $u\in\pi_U\Lambda$.
Hence $(\Lambda+U)\cap(\Lambda+W)=\pi_U\Lambda+\pi_W\Lambda$, as required.

Finally $\frac{\Lambda}{((\Lambda\cap U)+(\Lambda\cap W))}$ is clearly
finitely-generated and $\frac{((\Lambda+U)\cap(\Lambda+W))}{\Lambda}$ is
clearly a torsion group, so $\SG_{U,W}(\Lambda)$ is finite.
\end{proof}

\begin{remark}\label{rmk:defnlem:sepgp}
If $\Lambda$ is a lattice with $D\in\End(\Lambda)$, then the separation
group $\SG_D(\Lambda)$ (defined inside $V=\Lambda\otimes\mathbb Q$) is
an intrinsic property of $\Lambda$, not of the vector space it is
embedded in. Also, in this circumstance, the projections
$\pi_{V[D]},\pi_{DV}$ are $D$-equivariant, so the isomorphisms in
Lemma \ref{defnlem:sepgp} are $\mathbb Z[D]$-module isomorphisms. In
fact, we will see in Corollary \ref{cor:sepgpbehaviour} that the $D$
action on $\SG_D(\Lambda)$ is zero.
\end{remark}

\begin{lemma}\label{prop:snf}
Let $V$ be an $m$-dimensional $\Q$-vector space, $D\in\End(V)$
with $V[D] = V[D^2]$, and $\Lambda\subset V$ a full-rank $D$-stable
lattice. Then we have the (abelian group) isomorphism
$$
  \frac{D^{-1}\Lambda}{\Lambda}
  \iso
  \bigoplus_{i=1}^k\left(C_{d_i}\right)\oplus
                   \left(\mathbb Q/\mathbb Z\right)^{m-k}
$$
where $d_1,\dots,d_k$ are the non-zero Smith normal form entries of $D|_\Lambda$.
\end{lemma}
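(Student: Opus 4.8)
The plan is to reduce to a statement purely about the endomorphism $D$ acting on the lattice $\Lambda$, using the Smith normal form (equivalently, the structure theorem for finitely generated modules over a PID), and then to read off the quotient $D^{-1}\Lambda/\Lambda$ directly. First I would choose a $\Z$-basis of $\Lambda$ in which the matrix of $D$ is in Smith normal form: there exist $\Z$-bases $e_1,\dots,e_m$ and $f_1,\dots,f_m$ of $\Lambda$ such that $D e_i = d_i f_i$ for $1\le i\le k$ and $De_i=0$ for $k<i\le m$, where $d_1,\dots,d_k$ are the nonzero invariant factors of $D|_\Lambda$. Here $k=\rk DV = \rk D\Lambda$, and the $m-k$ zero entries correspond to $V[D]$, which has dimension $m-k$ precisely because the hypothesis $V[D]=V[D^2]$ forces the generalised $0$-eigenspace of $D$ to coincide with $\ker D$ (so the Jordan blocks for eigenvalue $0$ all have size $1$, hence the number of zero Smith entries equals $\dim V[D]$ rather than being smaller).

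Next I would compute $D^{-1}\Lambda = \{v\in V : Dv\in\Lambda\}$ in these coordinates. Writing $v=\sum_i x_i e_i$ with $x_i\in\Q$, we have $Dv=\sum_{i=1}^k x_i d_i f_i$, so the condition $Dv\in\Lambda$ is equivalent to $x_i d_i\in\Z$ for $1\le i\le k$, i.e.\ $x_i\in\frac1{d_i}\Z$, with no constraint on $x_i$ for $i>k$. Wait --- that last clause is wrong: if $i>k$ then $De_i=0$, so $v$ need only lie in $V$, but $D^{-1}\Lambda$ is a subset of $V$, not a lattice, and indeed the coordinates $x_{k+1},\dots,x_m$ range over all of $\Q$. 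Hence
$$
D^{-1}\Lambda = \bigoplus_{i=1}^k \tfrac1{d_i}\Z\, e_i \;\oplus\; \bigoplus_{i=k+1}^m \Q\, e_i .
$$
Quotienting by $\Lambda = \bigoplus_{i=1}^m \Z e_i$ gives
$$
\frac{D^{-1}\Lambda}{\Lambda} \;\iso\; \bigoplus_{i=1}^k \frac{\frac1{d_i}\Z}{\Z} \;\oplus\; \bigoplus_{i=k+1}^m \frac{\Q}{\Z} \;\iso\; \bigoplus_{i=1}^k C_{d_i} \;\oplus\; (\Q/\Z)^{m-k},
$$
which is exactly the claimed isomorphism. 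One small bookkeeping point: I should note that $D^{-1}\Lambda$ contains $\Lambda$ (since $D\Lambda\subseteq\Lambda$), so the quotient makes sense as an abelian group.

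The main --- really the only --- subtlety is the count of zero Smith entries: a priori the Smith normal form of $D|_\Lambda$ could have more zero columns than $\dim_\Q V[D]$ only if... no, in fact the number of zero invariant factors of an integer matrix equals $m - \rk_\Q D$, which is $\dim V[D]$ automatically, so even this is not an issue and the hypothesis $V[D]=V[D^2]$ is not needed for \emph{this} lemma (it will be used elsewhere). I would remark on this briefly but not dwell on it. Everything else is the standard Smith normal form computation, so the proof is short; the only thing to be careful about is keeping the PID structure theorem applied to $D$ as a map $\Lambda\to\Lambda$ (two possibly different bases on source and target) rather than trying to diagonalise $D$ as an endomorphism, which is generally impossible over $\Z$.
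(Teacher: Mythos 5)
Your proof is correct and follows essentially the same route as the paper: choose $\Z$-bases via Smith normal form so that $D$ sends basis vectors to multiples of basis vectors, compute $D^{-1}\Lambda$ coordinatewise, and quotient by $\Lambda$. Your side remark that the hypothesis $V[D]=V[D^2]$ is not actually needed here (the number of zero invariant factors automatically equals $\dim_\Q V[D]$) is accurate and harmless.
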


\begin{proof}
By properties of Smith normal form 
we may pick $\Z$-bases $u_i,v_i$ of $\Lambda$ such that $D u_i=d_i v_i$.
Now $D(\sum a_i u_i)\in\Lambda$ if and only if $d_ia_i\in\mathbb Z$ for every $i$.
Thus
$$
  D^{-1}\Lambda=
  \bigoplus_{i=1}^k\left(\frac1{d_i}\mathbb Zu_i\right)
   \oplus\bigoplus_{i=k+1}^m\left(\mathbb Qu_i\right)
$$
and thus the quotient of $D^{-1}\Lambda$ by $\Lambda=\bigoplus\mathbb Zu_i$
is as claimed.
\end{proof}

\begin{proposition}\label{prop:orderofDinverse}
Let $V$ be an $m$-dimensional $\Q$-vector space, $D\in\End(V)$
with $V[D] = V[D^2]$, and $\Lambda\subset V$ a full-rank $D$-stable
lattice. Write the characteristic polynomial of $D$ as $\pm t^rp(t)$
for some $r\ge 0$, and with $p(0)>0$. Then
$$
\left|\frac{D^{-1}\Lambda}{\Lambda+V[D]}\right|=\frac{p(0)}{|\SG_D(\Lambda)|}.
$$
If $D$ is non-degenerate, then $\left|\frac{D^{-1}\Lambda}{\Lambda}\right|=|\det D|$.
\end{proposition}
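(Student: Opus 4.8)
The plan is to relate the quotient $D^{-1}\Lambda/(\Lambda+V[D])$ to the torsion group computed in Lemma \ref{prop:snf}, and separately to identify $|\SG_D(\Lambda)|$ in terms of the same Smith normal form data, so that the ratio collapses to $p(0)$. First I would invoke Lemma \ref{prop:snf}: picking $\Z$-bases $u_i, v_i$ of $\Lambda$ with $Du_i = d_i v_i$ (with $d_1,\dots,d_k$ the nonzero SNF entries and $d_{k+1}=\dots=d_m=0$), we have $D^{-1}\Lambda/\Lambda \iso \bigoplus_{i=1}^k C_{d_i}\oplus(\Q/\Z)^{m-k}$. The characteristic polynomial of $D$ is $\pm t^{m-\rk D}$ times a polynomial with nonzero constant term, and since the $u_i$ with $i>k$ span $V[D]$ (here is where $V[D]=V[D^2]$ matters: $\corank D = \dim V[D]$, i.e. $r = m-k$), we get $p(0) = \prod_{i=1}^k d_i$ up to sign, hence $p(0)=|\prod d_i|$ by the $p(0)>0$ normalisation. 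In particular, when $D$ is non-degenerate we have $k=m$, $r=0$, and $D^{-1}\Lambda/\Lambda \iso \bigoplus C_{d_i}$ has order $\prod d_i = |\det D|$, which is the final assertion.

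Next I would handle the $V[D]$ correction. Quotienting $D^{-1}\Lambda/\Lambda$ further by the image of $V[D]$ kills exactly the $(\Q/\Z)^{m-k}$ summand coming from the $\Q u_i$ with $i>k$ — but one must be careful that $V[D]$ does not also have a component along the $C_{d_i}$ factors. This is the point where I expect to need the decomposition $V = DV\oplus V[D]$ (valid precisely because $V[D]=V[D^2]$): modulo $\Lambda$, the subgroup $(\Lambda+V[D])/\Lambda$ is $\pi_{V[D]}(D^{-1}\Lambda)$ combined with $\Lambda\cap V[D]$-translates, and its intersection with the finite part is governed by $\SG_D(\Lambda) = \Lambda/((\Lambda\cap DV)+(\Lambda\cap V[D]))$. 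Concretely, I would compute $|D^{-1}\Lambda/(\Lambda+V[D])|$ as $|D^{-1}\Lambda/\Lambda|$ divided by $|(\Lambda+V[D])/\Lambda|$, and then show $|(\Lambda+V[D])/\Lambda| = |\SG_D(\Lambda)|\cdot(\text{infinite part})$ — more precisely, that the finite index contribution of $V[D]$ in $D^{-1}\Lambda/\Lambda$ beyond the obvious $(\Q/\Z)^{m-k}$ is exactly $|\SG_D(\Lambda)|$, using Lemma \ref{defnlem:sepgp} which identifies $\SG_D(\Lambda)$ with $(\pi_{DV}\Lambda + \pi_{V[D]}\Lambda)/\Lambda$. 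Since $D^{-1}\Lambda \subseteq DV \oplus (D^{-1}\Lambda\cap V[D])$-type decompositions interact cleanly with these projections, the bookkeeping should give $|D^{-1}\Lambda/(\Lambda+V[D])| = \big(\prod_{i=1}^k d_i\big)/|\SG_D(\Lambda)| = p(0)/|\SG_D(\Lambda)|$.

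The main obstacle, I expect, is precisely this last bookkeeping step: making rigorous the claim that adjoining $V[D]$ to $\Lambda$ inside $D^{-1}\Lambda$ simultaneously annihilates the full $(\Q/\Z)^{m-k}$-part and trims the finite part by exactly the separation group, with no double-counting. The cleanest route is probably to work entirely in coordinates adapted to $V=DV\oplus V[D]$ rather than the SNF basis of $\Lambda$ alone — i.e. choose a basis realising both the SNF of $D$ and the splitting of $V$ — or alternatively to set up a short exact sequence $0\to (\Lambda+V[D])/\Lambda \to D^{-1}\Lambda/\Lambda \to D^{-1}\Lambda/(\Lambda+V[D])\to 0$ and compute the left term directly via $(\Lambda + V[D])/\Lambda \iso \pi_{DV}\Lambda/(\Lambda\cap DV)$-type identifications from Lemma \ref{defnlem:sepgp}, whose finite part is $|\SG_D(\Lambda)|$. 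Everything else (the SNF computation, the identification $p(0)=\prod d_i$, the non-degenerate case) is routine once $V[D]=V[D^2]$ is used to pin down $r=m-k$.
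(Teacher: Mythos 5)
There is a genuine gap, and it sits exactly at the step you flag as "bookkeeping". The identity $p(0)=\prod_{i=1}^k d_i$ that you extract from the Smith normal form is false whenever $\SG_D(\Lambda)$ is non-trivial: Smith normal form allows different bases on source and target, so it only computes the torsion of $\Lambda/D\Lambda$, not the characteristic polynomial. One has $\prod_{i\le k} d_i=[\Lambda\cap DV:D\Lambda]$, whereas $p(0)=|\det(D|_{DV})|=[\Lambda\cap DV:D(\Lambda\cap DV)]$, and the discrepancy is $[D\Lambda:D(\Lambda\cap DV)]=|\SG_D(\Lambda)|$, since $D$ identifies $\Lambda/\bigl(\Lambda[D]+(\Lambda\cap DV)\bigr)$ with $D\Lambda/D(\Lambda\cap DV)$. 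Concretely, for $\Lambda=\Z^2$ and $D=\smallmatrix{0}{0}{1}{2}$ the characteristic polynomial is $t(t-2)$, so $p(0)=2$, while the Smith normal form of $D$ is $\smallmatrix{1}{0}{0}{0}$, so $\prod d_i=1$; here $\SG_D(\Lambda)\iso C_2$. The correct general relation is $p(0)=|\SG_D(\Lambda)|\cdot\prod_{i\le k}d_i$.

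Your second step contains a compensating error: adjoining $V[D]$ does \emph{not} trim the finite part by $|\SG_D(\Lambda)|$. In the SNF coordinates one has $V[D]=\bigoplus_{i>k}\Q u_i$, hence $\Lambda+V[D]=\bigoplus_{i\le k}\Z u_i\oplus\bigoplus_{i>k}\Q u_i$ and $D^{-1}\Lambda/(\Lambda+V[D])\iso\bigoplus_{i\le k}C_{d_i}$ on the nose, of order $\prod_{i\le k}d_i$ (in the example above your claimed value $\prod d_i/|\SG_D(\Lambda)|$ would be $1/2$, not even an integer). So your route, carried out correctly, reduces the proposition to the identity $\prod_{i\le k}d_i\cdot|\SG_D(\Lambda)|=p(0)$, which is precisely the non-trivial content and is not established by your argument: your two false intermediate claims cancel to give the right final formula, but neither is proved. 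The paper avoids the issue by applying the non-degenerate case not to $D$ on $\Lambda$ but to $D|_W$ on the full-rank lattice $\Lambda\cap W\subset W=DV$, whose index $[\Lambda\cap W:D(\Lambda\cap W)]$ genuinely equals $p(0)$, and then comparing $\pi_W\Lambda$ with $\Lambda\cap W$ via Lemma \ref{defnlem:sepgp} to produce the factor $|\SG_D(\Lambda)|$; to repair your proof you would need to insert an analogous comparison, e.g.\ the index computation $[D\Lambda:D(\Lambda\cap DV)]=|\SG_D(\Lambda)|$ sketched above.
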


\begin{proof}
In the non-degenerate case, Lemma \ref{prop:snf} gives $\left|\frac{D^{-1}\Lambda}{\Lambda}\right|=\prod d_i=|\det D|$.

For the general case, write $U=V[D]$ and $W=DV$. By assumption on $D$,
$V=U\oplus W$, and we can let $\pi$ be the corresponding projection onto $W$.
Then
\begin{align*}
\frac{D^{-1}\Lambda}{\Lambda+V[D]}
= \frac{D^{-1}(\Lambda\cap W)}{\Lambda+U} 
&= \frac{(D|_W)^{-1}(\Lambda\cap W)\oplus U}{\pi\Lambda\oplus U} \\
&\cong \frac{(D|_W)^{-1}(\Lambda\cap W)}{\pi\Lambda} \\
&\cong \frac{\left((D|_W)^{-1}(\Lambda\cap W)\right)/(\Lambda\cap W)}
     {\pi\Lambda/(\Lambda\cap W)}
\end{align*}
But $\Lambda\cap W$ is a full-rank lattice in $W$, and $D|_W$ is
non-degenerate with characteristic polynomial $\pm p(t)$, so
(by the non-degenerate case) the numerator of the quotient has size $p(0)$.
The denominator has size $|\SG_D(\Lambda)|$ by Lemma \ref{defnlem:sepgp},
which completes the proof.
\end{proof}


\begin{corollary}\label{cor:sepgpbehaviour}
Let $V$ be a $d$-dimensional $\Q$-vector space and suppose \hbox{$D\in\End(V)$} satisfies $V[D] = V[D^2]$.
Write the characteristic polynomial of $D$ as $\pm t^rp(t)$ and
the minimal polynomial as either $\pm tp_0(t)$ or, if $r=0$, as $\pm p_0(t)$,
with the signs chosen so that $p(0),p_0(0)>0$.

Then for every full-rank $D$-stable lattice $\Lambda\subset V$,
$\SG_D(\Lambda)$ has order dividing $p(0)$, exponent dividing $p_0(0)$,
and is generated by at most $\min(r,d-r)$ elements. The induced action
of $D$ on $\SG_D(\Lambda)$ is the zero map.
\end{corollary}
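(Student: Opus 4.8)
The plan is to reduce each assertion to a fact about the non-degenerate block of $D$ together with Proposition~\ref{prop:orderofDinverse} (for the order) and the Smith normal form description (for the exponent and number of generators). Write $V=V[D]\oplus DV$ using the hypothesis $V[D]=V[D^2]$, let $W=DV$, and recall from Lemma~\ref{defnlem:sepgp} that $\SG_D(\Lambda)\cong \pi_W\Lambda/(\Lambda\cap W)$, where $\pi_W$ is the projection onto $W$ along $V[D]$. Since $\Lambda\cap W$ is a full-rank lattice in $W$ and $\pi_W\Lambda\subseteq (D|_W)^{-1}(\Lambda\cap W)$ (because $D(\pi_W\Lambda)=D\Lambda\cap W\subseteq\Lambda\cap W$; note $D\Lambda\subseteq\Lambda\cap W$), we get that $\SG_D(\Lambda)$ is a subquotient of $(D|_W)^{-1}(\Lambda\cap W)/(\Lambda\cap W)$.

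First I would handle the order and exponent simultaneously. By Lemma~\ref{prop:snf} applied to $D|_W$ on the lattice $\Lambda\cap W$, the group $(D|_W)^{-1}(\Lambda\cap W)/(\Lambda\cap W)$ is isomorphic to $\bigoplus_{i=1}^{k}C_{d_i}$ where $d_1\mid d_2\mid\cdots\mid d_k$ are the (necessarily all non-zero, as $D|_W$ is injective) Smith normal form entries of $D|_W$; here $k=\dim W=d-r$. Its order is $\prod d_i=|\det(D|_W)|=p(0)$, and its exponent is the largest invariant factor $d_k$. Since $\SG_D(\Lambda)$ is a subquotient of this group, its order divides $p(0)$ and its exponent divides $d_k$. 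To finish the exponent claim I need $d_k\mid p_0(0)$: the minimal polynomial of $D|_W$ is $p_0(t)$ (when $r=0$, $D|_W=D$ and the minimal polynomial is $p_0$; when $r>0$ the minimal polynomial of $D$ is $tp_0(t)$, and since $V[D]=V[D^2]$ the restriction $D|_W$ is non-degenerate with minimal polynomial exactly $p_0(t)$). Then $p_0(D|_W)=0$ on $\Lambda\cap W$, so for $x\in (D|_W)^{-1}(\Lambda\cap W)$ we have $p_0(0)\,x = p_0(D|_W)x - (p_0(D|_W)-p_0(0))x$; the first term is in $\Lambda\cap W$ and the second is $D|_W$ applied to something in $(D|_W)^{-1}(\Lambda\cap W)$, hence in $\Lambda\cap W$. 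So $p_0(0)$ kills the quotient, giving $d_k\mid p_0(0)$ and hence $\exp\SG_D(\Lambda)\mid p_0(0)$.

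For the number of generators: $\SG_D(\Lambda)\cong \pi_W\Lambda/(\Lambda\cap W)$ is a quotient of $\pi_W\Lambda$, which has rank $\dim W=d-r$, so it needs at most $d-r$ generators. For the bound $r$, I would use the dual description $\SG_{U,W}(\Lambda)\cong \pi_U\Lambda/(\Lambda\cap U)$ from Lemma~\ref{defnlem:sepgp} with $U=V[D]$: this is a quotient of $\pi_U\Lambda\subseteq U$, which has rank $\dim U=r$. Combining, $\SG_D(\Lambda)$ is generated by at most $\min(r,d-r)$ elements. Finally, that $D$ acts as zero on $\SG_D(\Lambda)$ is immediate from the isomorphism $\SG_D(\Lambda)\cong \pi_U\Lambda/(\Lambda\cap U)$ with $U=V[D]=\ker D$: both the module and the identification are $D$-equivariant (Remark~\ref{rmk:defnlem:sepgp}), and $D$ annihilates $U$.

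The main obstacle is the bookkeeping around the minimal and characteristic polynomials of $D|_W$ versus those of $D$, i.e.\ verifying carefully that under $V[D]=V[D^2]$ the restriction $D|_{DV}$ is non-degenerate with characteristic polynomial $\pm p(t)$ and minimal polynomial $\pm p_0(t)$ (with the stated sign conventions), so that Proposition~\ref{prop:orderofDinverse}/Lemma~\ref{prop:snf} apply with the right numbers; everything else is routine. One should also note that in the edge cases $r=0$ (so $U=0$, $\SG_D(\Lambda)=0$, consistent with $\min(r,d-r)=0$) and $r=d$ (so $W=0$, $p(t)=1$, $p(0)=1$, again $\SG_D(\Lambda)=0$) the statements hold trivially.
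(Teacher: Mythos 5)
Your proposal is correct and follows essentially the same route as the paper: both rest on the isomorphisms of Lemma \ref{defnlem:sepgp}, the Smith-normal-form computation on the non-degenerate block $DV$ (the content of Lemma \ref{prop:snf} and Proposition \ref{prop:orderofDinverse}) for the order bound $p(0)$, and the integrality identity $p_0(0)=p_0(D)-Dh(D)$ for the exponent bound, with the generator count and the vanishing of the $D$-action read off from the two projection descriptions exactly as in the paper. The only difference is presentational — you unpack Proposition \ref{prop:orderofDinverse} by embedding $\SG_D(\Lambda)$ into $(D|_{DV})^{-1}(\Lambda\cap DV)/(\Lambda\cap DV)$ and verify $p_0(0)$ kills it concretely, where the paper phrases the same facts via annihilators of $\SG_D(\Lambda)$ as a subquotient of $V[D]$ and of $DV$.
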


\begin{proof}
We have from Proposition \ref{prop:orderofDinverse}
that $p(0)/|\SG_D(\Lambda)|$ is the order of a particular group, so it
is an integer. Thus $|\SG_D(\Lambda)|$ divides $p(0)$.

Moreover the $\mathbb Z[D]$-module isomorphisms in Lemma \ref{defnlem:sepgp} realise $\SG_D(\Lambda)$ both as a subquotient of $V[D]$ and of $DV$, so it has both $D$ and $p_0(D)$ in its annihilator.  Since $D$ stabilises $\Lambda$, it has a monic integral minimal polynomial, so that $p_0(0)$ is a $\Z[D]$-linear combination of $D$ and $p_0(D)$. Thus both $D$ and $p_0(0)$ are in the annihilator of $\SG_D(\Lambda)$.

Finally, by Lemma \ref{defnlem:sepgp} 
$\SG_D(\Lambda)$ is a quotient both of $\pi_{V[D]}\Lambda$
and of $\pi_{DV}\Lambda$, which are
finitely generated subgroups of an $r$- and a $(d\!-\!r)$-dimensional
$\Q$-vector space, respectively. Hence $\SG_D(\Lambda)$ is generated
by at most $\min(r,d-r)$ elements, as claimed.
\end{proof}


\subsection{The group $\B_{\Lambda,\Lambda'}$ and $D$-torsion of lattice-quotients}\label{ss:bettsgp}

In this section we introduce the group $\B=\B_{\Lambda,\Lambda'}$, establish its basic properties and prove the main formulae of Theorem \ref{thm:exactformulafixpoint}.

\begin{definition}[$\B_{\Lambda,\Lambda'}$]\label{defn:bettsgp}
Suppose that $\Lambda$ is a lattice, $D\in\End(\Lambda)$ an endomorphism 
satisfying $\Lambda[D]=\Lambda[D^2]$, and that $\Lambda'\subseteq\Lambda$ is a
$D$-stable sublattice of maximal rank. We extend $D$ to a $\Q$-endomorphism
of $V=\Lambda\otimes\Q$ and define 
$$
\B_{\Lambda,\Lambda'}=\frac{\Lambda+D^{-1}\Lambda'}{\Lambda+V[D]}.
$$
\end{definition}


\begin{example}
Consider $\Lambda=\Lambda'=\Z^2\subset \Q^2=V$ and $F=\smallmatrix{-1}{0}{0}{1}$,
the reflection in the $y$-axis, and set $D=F\!-\!1$.
The picture is the following:


\smallskip

\hskip 2cm {\centering{\includegraphics[width=8.5cm]{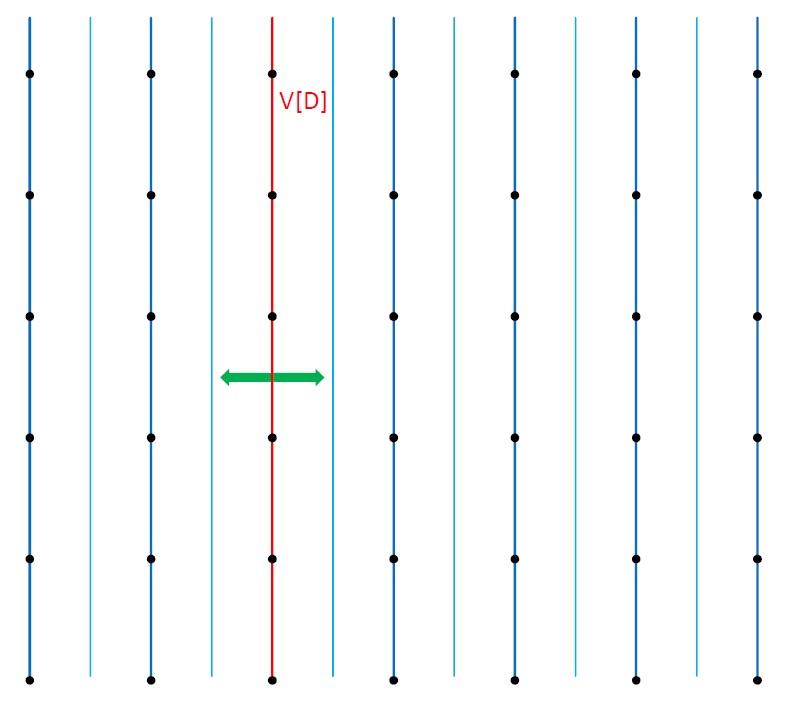}}}


\noindent Here the action of $F$ is indicated by the horizontal green arrow. The kernel of $D$ is $V[D]=V^F=D^{-1}(0)$ is the $y$-axis, indicated in red.
Thus the group $\Lambda+V[D]$ is the set of vertical lines passing through the points of $\Lambda$ (the dark vertical lines in the diagram). The preimage $D^{-1}\Lambda$ is the set of vertical lines with integer or half-integer $x$-coordinates (all vertical lines in the diagram), so the quotient group $\B_{\Lambda,\Lambda}$ is visibly $C_2$.
\end{example}

\begin{proposition}\label{prop:bettsgpbehaviour}
Under the conditions in Definition \ref{defn:bettsgp},
$\B_{\Lambda,\Lambda'}$ is a finite abelian group on at most $(d\!-\!r)$
generators, where $d$ is the rank of $\Lambda$ and $r$ is the nullity of $D$.
Writing the minimal polynomial of $D$ over $V$
as either $\pm tp_0(t)$ or $\pm p_0(t)$, with $p_0(0)>0$,
the exponent of $\B_{\Lambda,\Lambda'}$ divides $p_0(0)$.
The induced action of $D$ on $\B_{\Lambda,\Lambda'}$ is
the zero map.
\end{proposition}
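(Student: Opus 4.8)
The plan is to mirror the structure of Corollary~\ref{cor:sepgpbehaviour}: first realise $\B_{\Lambda,\Lambda'}$ as a subquotient of a controllable module, then read off the three claimed properties (finiteness with a bound on the number of generators, exponent dividing $p_0(0)$, and vanishing of the induced $D$-action) from that description. The key observation is that, since $V=DV\oplus V[D]$ by the hypothesis $V[D]=V[D^2]$, the numerator $\Lambda+D^{-1}\Lambda'$ and the denominator $\Lambda+V[D]$ both contain $V[D]$ after we quotient appropriately, so that modulo $V[D]$ everything lives in $DV$; concretely, letting $\pi=\pi_{DV}$ be the projection onto $DV$ along $V[D]$, I would show
$$
\B_{\Lambda,\Lambda'}=\frac{\Lambda+D^{-1}\Lambda'}{\Lambda+V[D]}\cong\frac{\pi\Lambda+\pi(D^{-1}\Lambda')}{\pi\Lambda}=\frac{\pi\Lambda+(D|_{DV})^{-1}(\Lambda'\cap DV)}{\pi\Lambda},
$$
using that $D^{-1}\Lambda'\cap DV=(D|_{DV})^{-1}(\Lambda'\cap DV)$ and that $\pi$ is $D$-equivariant (Remark~\ref{rmk:defnlem:sepgp}). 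This exhibits $\B_{\Lambda,\Lambda'}$ as a quotient of a finitely generated subgroup of the $(d-r)$-dimensional $\Q$-vector space $DV$, hence finite and generated by at most $d-r$ elements.

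For the statement on the exponent and the $D$-action, I would argue as follows. Every element of $\B_{\Lambda,\Lambda'}$ is represented by some $x\in D^{-1}\Lambda'$, i.e.\ $Dx\in\Lambda'\subseteq\Lambda$; hence $Dx\in\Lambda\subseteq\Lambda+V[D]$, which says exactly that $D$ annihilates the class of $x$, so the induced action of $D$ on $\B_{\Lambda,\Lambda'}$ is zero. For the exponent, write the minimal polynomial of $D$ on $V$ as $\pm tp_0(t)$ (or $\pm p_0(t)$ if $D$ is injective), with $p_0(0)>0$. Then $p_0(D)$ kills $DV$ (as $tp_0(t)$ is a multiple of the minimal polynomial, so $p_0(D)D=0$ on $V$, and on the complementary summand $V[D]$ it acts as $p_0(0)\cdot\mathrm{id}$, but via the isomorphism above $\B_{\Lambda,\Lambda'}$ is a subquotient of $DV$ so $p_0(D)$ kills it). Since $D$ stabilises $\Lambda$ it has a monic integral minimal polynomial, so $p_0(0)$ is a $\Z[D]$-linear combination of $D$ and $p_0(D)$; as both of these annihilate $\B_{\Lambda,\Lambda'}$, so does $p_0(0)$. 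Thus the exponent divides $p_0(0)$.

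The main obstacle I anticipate is purely bookkeeping: making the isomorphism $\B_{\Lambda,\Lambda'}\cong(\pi\Lambda+\pi(D^{-1}\Lambda'))/\pi\Lambda$ rigorous, in particular checking that the natural map $\Lambda+D^{-1}\Lambda'\to DV/\pi\Lambda$ given by $x\mapsto\pi(x)\bmod\pi\Lambda$ has kernel exactly $\Lambda+V[D]$ and is surjective onto the stated image. Surjectivity is immediate since $\pi(\Lambda+D^{-1}\Lambda')=\pi\Lambda+\pi(D^{-1}\Lambda')$; for the kernel, if $\pi(x)\in\pi\Lambda$ then $\pi(x)=\pi(\lambda)$ for some $\lambda\in\Lambda$, so $x-\lambda\in\ker\pi=V[D]$, giving $x\in\Lambda+V[D]$, and the reverse inclusion is clear. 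Everything else is a direct transcription of the argument in Corollary~\ref{cor:sepgpbehaviour}, with $\Lambda'$ playing a passive role, so no genuinely new difficulty arises — the one point to be slightly careful about is the degenerate-$D$ case, where one must use the minimal polynomial $\pm tp_0(t)$ rather than $\pm p_0(t)$, exactly as flagged in the statement.
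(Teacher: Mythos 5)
Your proof is correct and follows essentially the same route as the paper: both decompose $V=DV\oplus V[D]$, identify $D^{-1}\Lambda'$ modulo $V[D]$ with the rank-$(d-r)$ lattice $(D|_{DV})^{-1}(\Lambda'\cap DV)$ to bound the number of generators, and get the exponent from the fact that $D$ and $p_0(D)$ both annihilate $\B_{\Lambda,\Lambda'}$ together with integrality of the minimal polynomial. The only cosmetic difference is that you package the reduction to $DV$ as an explicit isomorphism via the projection $\pi$; note that finiteness strictly requires either observing that $\pi\Lambda$ is a full-rank lattice in $DV$ or (as in the paper) combining finite generation with the torsion statement you prove anyway, rather than following from ``quotient of a finitely generated subgroup of $DV$'' alone.
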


\begin{proof}
Since $D(\Lambda+D^{-1}\Lambda')=D\Lambda+DD^{-1}\Lambda'\subseteq\Lambda$,
we have $D\in\Ann(\B_{\Lambda,\Lambda'})$. 
Moreover $\B_{\Lambda,\Lambda'}$ is evidently a subquotient
of $V/V[D]$, so has $p_0(D)$ in its annihilator. $D$ is
an integral endomorphism (it preserves a lattice), so $p_0(0)$ is
a $\Z[D]$-linear combination
of $D$ and $p_0(D)$, and hence lies in $\Ann(\B_{\Lambda,\Lambda'})$.

Finally, $D^{-1}\Lambda' = (DV\cap D^{-1}\Lambda')\oplus V[D]= (D|_{DV})^{-1}(\Lambda'\cap DV)\oplus V[D]$.
As $D$ is invertible on $DV$, $(D|_{DV})^{-1}(\Lambda'\cap DV)$ is a lattice of rank $d-r$, and hence so is $D^{-1}\Lambda'/V[D]$. 
Thus $\B_{\Lambda,\Lambda'}$ can be generated by $(d\!-\!r)$ elements, and, being a torsion group, must be finite.
\end{proof}

\begin{proposition}\label{prop:bettsgpvaryLambda'}
Under the conditions in Definition \ref{defn:bettsgp}, we have for
all $e\in\N$
$$
  \B_{\Lambda,e\Lambda'} = e\>\B_{\Lambda,\Lambda'}
  \cong\frac{\B_{\Lambda,\Lambda'}}{\B_{\Lambda,\Lambda'}[e]}.
$$
\end{proposition}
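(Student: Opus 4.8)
The claim is an equality $\B_{\Lambda,e\Lambda'}=e\,\B_{\Lambda,\Lambda'}$ of subquotients of $V/V[D]$, together with the standard abelian-group identification $e\,\B_{\Lambda,\Lambda'}\cong\B_{\Lambda,\Lambda'}/\B_{\Lambda,\Lambda'}[e]$. The second isomorphism is automatic: multiplication by $e$ on a finite abelian group $M$ has image $eM$ and kernel $M[e]$, so $eM\cong M/M[e]$; I would simply cite this. The work is in the first equality. Unwinding the definitions, $\B_{\Lambda,e\Lambda'}=(\Lambda+D^{-1}(e\Lambda'))/(\Lambda+V[D])$ and $e\,\B_{\Lambda,\Lambda'}$ is the image of multiplication-by-$e$ on $(\Lambda+D^{-1}\Lambda')/(\Lambda+V[D])$, i.e. $(e\Lambda+eD^{-1}\Lambda'+V[D])/(\Lambda+V[D])$. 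So the statement to prove is the identity of subgroups of $V$
$$
\Lambda + D^{-1}(e\Lambda') + V[D] \;=\; \Lambda + eD^{-1}\Lambda' + V[D].
$$

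\textbf{Key steps.} First I would observe that $D$ acts invertibly on $DV$ and $V=DV\oplus V[D]$, which lets me work purely inside $DV$: projecting along $V[D]$, both sides become $\pi_{DV}\Lambda + (D|_{DV})^{-1}(\cdot)$ applied to $e\Lambda'\cap DV$ versus $e(\Lambda'\cap DV)$ — but since $\Lambda'$ is $D$-stable and of full rank, $\Lambda'\cap DV$ is a full-rank lattice in $DV$ and $e\Lambda'\cap DV = e(\Lambda'\cap DV)$, so $(D|_{DV})^{-1}(e\Lambda'\cap DV) = e\,(D|_{DV})^{-1}(\Lambda'\cap DV)$ by $\Q$-linearity of $(D|_{DV})^{-1}$. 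Hence $D^{-1}(e\Lambda')=eD^{-1}\Lambda'$ modulo $V[D]$ — in fact $D^{-1}(e\Lambda') = (D|_{DV})^{-1}(e\Lambda'\cap DV)\oplus V[D] = e(D|_{DV})^{-1}(\Lambda'\cap DV)\oplus V[D]$, whereas $eD^{-1}\Lambda' = e(D|_{DV})^{-1}(\Lambda'\cap DV)\oplus eV[D]$. These differ only in the $V[D]$-component, where one has $V[D]$ and the other $eV[D]=V[D]$ (as $V[D]$ is a $\Q$-vector space, so $e$-divisible). Thus $D^{-1}(e\Lambda') = eD^{-1}\Lambda' + V[D]$ exactly, and adding $\Lambda+V[D]$ to both gives the displayed identity. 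Dividing by $\Lambda+V[D]$ yields $\B_{\Lambda,e\Lambda'}=e\,\B_{\Lambda,\Lambda'}$.

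\textbf{Main obstacle.} There is no real obstacle — the argument is a short manipulation — but the one point to handle carefully is the decomposition $D^{-1}(e\Lambda') = (D|_{DV})^{-1}(e\Lambda'\cap DV)\oplus V[D]$, which relies on $V[D]=V[D^2]$ (so that $V=DV\oplus V[D]$ with $D$ restricting to an automorphism of $DV$); this is exactly the computation carried out in the last paragraph of the proof of Proposition \ref{prop:bettsgpbehaviour}, so I would invoke it rather than redo it. The only subtlety worth flagging explicitly is that $e\Lambda'\cap DV = e(\Lambda'\cap DV)$, which uses that $DV$ is a subspace (hence $e x\in DV \iff x\in DV$), and that $V[D]$ being a rational vector space is unchanged by multiplication by $e$.

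\begin{proof}
By the structure of $D$ (using $\Lambda[D]=\Lambda[D^2]$, so $V=DV\oplus V[D]$ with $D$ restricting to an automorphism of $DV$), as in the proof of Proposition \ref{prop:bettsgpbehaviour} we have for any $D$-stable full-rank sublattice $\Gamma\subseteq\Lambda$
$$
D^{-1}\Gamma=(D|_{DV})^{-1}(\Gamma\cap DV)\oplus V[D].
$$
Applying this with $\Gamma=e\Lambda'$ and using that $DV$ is a subspace (so $e\Lambda'\cap DV=e(\Lambda'\cap DV)$) together with the $\Q$-linearity of $(D|_{DV})^{-1}$, we get
$$
D^{-1}(e\Lambda')=e\,(D|_{DV})^{-1}(\Lambda'\cap DV)\oplus V[D]
=e\Bigl((D|_{DV})^{-1}(\Lambda'\cap DV)\oplus V[D]\Bigr)+V[D]
=eD^{-1}\Lambda'+V[D],
$$
where in the middle step we used that $V[D]$ is a $\Q$-vector space, hence $eV[D]=V[D]$. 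Adding $\Lambda+V[D]$ to both sides and dividing,
$$
\B_{\Lambda,e\Lambda'}=\frac{\Lambda+D^{-1}(e\Lambda')}{\Lambda+V[D]}
=\frac{e\Lambda+eD^{-1}\Lambda'+V[D]}{\Lambda+V[D]}
=e\,\frac{\Lambda+D^{-1}\Lambda'}{\Lambda+V[D]}
=e\,\B_{\Lambda,\Lambda'}.
$$
Finally, since $\B_{\Lambda,\Lambda'}$ is a finite abelian group by Proposition \ref{prop:bettsgpbehaviour}, multiplication by $e$ on it has image $e\,\B_{\Lambda,\Lambda'}$ and kernel $\B_{\Lambda,\Lambda'}[e]$, giving $e\,\B_{\Lambda,\Lambda'}\cong\B_{\Lambda,\Lambda'}/\B_{\Lambda,\Lambda'}[e]$.
\end{proof}
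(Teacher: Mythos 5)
Your proof is correct and follows essentially the same route as the paper: one shows that, modulo $\Lambda+V[D]$, the set $\Lambda+D^{-1}(e\Lambda')$ coincides with $e(\Lambda+D^{-1}\Lambda')$, and then identifies $e\,\B_{\Lambda,\Lambda'}\cong\B_{\Lambda,\Lambda'}/\B_{\Lambda,\Lambda'}[e]$ via the first isomorphism theorem. The only difference is that you derive the key identity through the decomposition $V=DV\oplus V[D]$, whereas the paper simply notes $D^{-1}(e\Lambda')\equiv eD^{-1}\Lambda'$ — in fact this is an exact equality of preimages, since multiplication by $e$ is an automorphism of $V$ commuting with $D$ — so your detour is harmless but unnecessary.
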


\begin{proof}
Working modulo $\Lambda + V[D]$, we have the equality of sets
$$
 \Lambda+D^{-1}e\Lambda' \equiv 
D^{-1}e\Lambda' \equiv 
eD^{-1}\Lambda' \equiv 
 e(\Lambda+D^{-1}\Lambda'),
$$
so $\B_{\Lambda,e\Lambda'} = e\>\B_{\Lambda,\Lambda'}$.
%
%
%
The isomorphism follows from the first isomorphism theorem applied to the multiplication-by-$e$ map on $\B_{\Lambda,\Lambda'}$.
\end{proof}

%

\begin{remark}\label{rmk:Bcoprimef}
In \S\ref{s:tam} we will be working in the setting that $F$ is an automorphism of finite order of $\Lambda$ that preserves $\Lambda'$, and $D=F\!-\!1$.
In this case $\B_{\Lambda,\Lambda'}$ and $\SG_D(\Lambda)$ depend only on the group generated by $F$ and not on the choice of the generator. In other words, if $f$ is coprime to the order of $F$ and $\tilde D=F^f\!-\!1$, then the corresponding group $\tilde\B_{\Lambda,\Lambda'}$ is equal to $\B_{\Lambda,\Lambda'}$, and similarly for $\SG_D(\Lambda)$. Indeed, $\tilde D=F^f\!-\!1$ is clearly divisible by $D=F\!-\!1$ in $\Z[F]$ and vice versa (as $f$ is coprime to the order of $F$), and hence $\tilde D^{-1}\Lambda'=D^{-1}\Lambda'$, $V[\tilde D]= V[D]$ and $\tilde DV= DV$.
\end{remark}

\begin{theorem}\label{thm:exactformulatorsionnoe}
Let $\Lambda$ be a lattice, $D\in\End(\Lambda)$
with $\Lambda[D]=\Lambda[D^2]$, and $\Lambda'\subseteq\Lambda$ a
$D$-stable sublattice of maximal rank.
Write the characteristic polynomial of $D$ over $\Lambda$
as $\pm t^rp(t)$, where $p(0)>0$. Then
$$
\left|\left(\frac\Lambda{\Lambda'}\right)[D]\right|=
|\B_{\Lambda,\Lambda'}|^{-1}\cdot
\left|\frac{\Lambda[ D]}{\Lambda'[ D]}\right|\cdot
\frac{p(0)}{|\SG_D(\Lambda')|}.
$$
\end{theorem}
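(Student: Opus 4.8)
The plan is to compute the order of $(\Lambda/\Lambda')[D]$ by relating it, via exact sequences of abelian groups, to $D^{-1}\Lambda'$ and $D^{-1}\Lambda$, whose relative sizes are already controlled by Proposition~\ref{prop:orderofDinverse} and the group $\B_{\Lambda,\Lambda'}$. The starting observation is that an element $x+\Lambda' \in \Lambda/\Lambda'$ is killed by $D$ precisely when $Dx \in \Lambda'$, i.e. when $x \in D^{-1}\Lambda'$. Hence, as subgroups of $V = \Lambda\otimes\Q$,
$$
\left(\frac{\Lambda}{\Lambda'}\right)[D] = \frac{\Lambda \cap D^{-1}\Lambda'}{\Lambda'} = \frac{D^{-1}\Lambda' \cap \Lambda}{\Lambda'}.
$$
So I first want to understand $D^{-1}\Lambda'\cap\Lambda$, or equivalently to compare $(D^{-1}\Lambda'\cap\Lambda)/\Lambda'$ with $(D^{-1}\Lambda')/\Lambda'$ — the latter being essentially computed (up to the $V[D]$ factor) by Proposition~\ref{prop:orderofDinverse} applied to $\Lambda'$.

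The key bookkeeping step is to organise the four lattices $\Lambda'\subseteq \Lambda$ and $\Lambda' \subseteq D^{-1}\Lambda'$ (with $\Lambda \cap D^{-1}\Lambda'$ their ``intersection'' and $\Lambda + D^{-1}\Lambda'$ their ``sum'') into the standard isomorphism
$$
\frac{\Lambda + D^{-1}\Lambda'}{\Lambda} \cong \frac{D^{-1}\Lambda'}{\Lambda \cap D^{-1}\Lambda'}.
$$
This lets me write, for a suitable intermediate group such as $\Lambda + V[D]$ (note $V[D]\subseteq D^{-1}\Lambda'$ always, and $D$ kills $\Lambda[D]/\Lambda'[D]$ trivially), a chain of subgroups
$$
\Lambda' \;\subseteq\; \Lambda'[D] + \text{(something)} \;\subseteq\; \Lambda\cap D^{-1}\Lambda' \;\subseteq\; D^{-1}\Lambda' \;\subseteq\; D^{-1}\Lambda
$$
and multiply indices. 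Concretely: $|D^{-1}\Lambda'/(\Lambda\cap D^{-1}\Lambda')| = |(\Lambda + D^{-1}\Lambda')/\Lambda|$, and I will need to relate $(\Lambda + D^{-1}\Lambda')/\Lambda$ to $(\Lambda + D^{-1}\Lambda')/(\Lambda + V[D])= \B_{\Lambda,\Lambda'}$, losing a factor of $|(\Lambda + V[D])/\Lambda| = |V[D]/(\Lambda\cap V[D])| = |V[D]/\Lambda[D]|$; but $V[D]/\Lambda[D]$ is a $\Q$-vector space quotient, hence either trivial or infinite, and here (since $\Lambda[D]$ has full rank $r$ in $V[D]$, which is false in general) — so instead I should work with $\B$ directly and track the finite discrepancy $(D^{-1}\Lambda' \cap \Lambda + V[D])/(D^{-1}\Lambda'\cap\Lambda)$, which is exactly $\SG_D$-type data for $\Lambda$ sitting inside the picture. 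Finally I combine with Proposition~\ref{prop:orderofDinverse} applied to the full-rank $D$-stable lattice $\Lambda'$, which gives $|D^{-1}\Lambda'/(\Lambda' + V[D])| = p(0)/|\SG_D(\Lambda')|$ (the characteristic polynomial of $D$ is the same on $\Lambda'$ and $\Lambda$ since $[\Lambda:\Lambda']$ is finite), and with the elementary identity $|\Lambda[D]/\Lambda'[D]|$ accounting for the kernel part.

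The main obstacle I expect is the careful juggling of the ``eigenvalue-$1$'' (i.e. $V[D]$) directions, where $D$ is not invertible: one must separate $V = DV \oplus V[D]$, restrict all lattices to these summands, and check that the contributions from the $V[D]$ summand are exactly $|\Lambda[D]/\Lambda'[D]|$ while everything genuinely $D$-torsion-interesting lives in the $DV$ summand. In particular I will need the identity $D^{-1}\Lambda' = (D|_{DV})^{-1}(\Lambda'\cap DV) \oplus V[D]$ from the proof of Proposition~\ref{prop:bettsgpbehaviour}, and its analogue for $\Lambda$, together with the fact that $(\Lambda\cap DV) \supseteq D(\text{stuff})$ relates $\Lambda/\Lambda'$ restricted to $DV$ back to lattice indices there. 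Once the decomposition is set up, the proof should reduce to multiplying four or five finite indices and recognising the product of three of them as $|\B_{\Lambda,\Lambda'}|^{-1}\cdot p(0)/|\SG_D(\Lambda')|$ after invoking Proposition~\ref{prop:orderofDinverse}; the sign conventions ($p(0)>0$) are exactly what make the orders come out positive. I would present it as: (1) identify $(\Lambda/\Lambda')[D]$ with $(\Lambda\cap D^{-1}\Lambda')/\Lambda'$; (2) split off $V[D]$ to isolate the $|\Lambda[D]/\Lambda'[D]|$ factor; (3) on $DV$, use the isomorphism-theorem diamond for $\Lambda\cap D^{-1}\Lambda'$ inside $D^{-1}\Lambda'$ to bring in $\B$; (4) apply Proposition~\ref{prop:orderofDinverse} to $\Lambda'$ to produce $p(0)/|\SG_D(\Lambda')|$; (5) assemble.
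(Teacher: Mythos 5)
Your ingredient list --- identify $(\Lambda/\Lambda')[D]$ with $(\Lambda\cap D^{-1}\Lambda')/\Lambda'$, bring in $\B_{\Lambda,\Lambda'}$ via an isomorphism-theorem diamond, apply Proposition \ref{prop:orderofDinverse} to $\Lambda'$, and account separately for $\Lambda[D]/\Lambda'[D]$ --- is the same as the paper's, which packages exactly these comparisons into the exact sequence $0\to\Lambda[D]/\Lambda'[D]\to(\Lambda/\Lambda')[D]\to D^{-1}\Lambda'/(\Lambda'+V[D])\to\B_{\Lambda,\Lambda'}\to0$. But as written your argument has genuine gaps, and two of the claims you lean on are false. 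The group you call a ``finite discrepancy'', $\bigl((D^{-1}\Lambda'\cap\Lambda)+V[D]\bigr)/(D^{-1}\Lambda'\cap\Lambda)$, is isomorphic to $V[D]/\Lambda[D]\cong(\Q/\Z)^r$, which is infinite whenever $r>0$ and is not $\SG_D$-type data; likewise your parenthetical that ``$\Lambda[D]$ has full rank in $V[D]$ ... is false in general'' is backwards --- since $V=\Lambda\otimes\Q$, the sublattice $\Lambda[D]$ is always of full rank in $V[D]$, which is exactly why such quotients are infinite. For the same reason the index $[D^{-1}\Lambda':\Lambda\cap D^{-1}\Lambda']$ occurring in your proposed chain is infinite for $r>0$, so ``multiplying indices'' along that chain does not make sense as stated (and $D^{-1}\Lambda$ plays no role in the end).

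More importantly, the step you describe as ``restrict all lattices to the summands of $V=DV\oplus V[D]$ and check that the contributions split'' is precisely what fails in general: neither $\Lambda$, $\Lambda'$ nor $\Lambda\cap D^{-1}\Lambda'$ respects this decomposition, the failure being measured by the separation groups --- and the delicate point of the statement is that only $\SG_D(\Lambda')$, not $\SG_D(\Lambda)$, survives into the formula. The correct move is not to intersect with the summands but to work modulo $V[D]$, i.e.\ add $V[D]$ to everything and use the modular law: one needs $D^{-1}\Lambda'\cap(\Lambda+V[D])=(\Lambda\cap D^{-1}\Lambda')+V[D]$ and $(\Lambda\cap D^{-1}\Lambda')\cap(\Lambda'+V[D])=\Lambda'+\Lambda[D]$. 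These give the finite comparisons $\bigl[D^{-1}\Lambda':(\Lambda\cap D^{-1}\Lambda')+V[D]\bigr]=|\B_{\Lambda,\Lambda'}|$ and $|(\Lambda/\Lambda')[D]|=|\Lambda[D]/\Lambda'[D]|\cdot\bigl[(\Lambda\cap D^{-1}\Lambda')+V[D]:\Lambda'+V[D]\bigr]$, which combined with Proposition \ref{prop:orderofDinverse} applied to $\Lambda'$ yield the theorem; this is equivalent to the paper's exact-sequence argument. Your proposal gestures at these identities but never establishes them, and in their place asserts the incorrect splitting above, so as it stands it is an outline with the right ingredients rather than a proof.
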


\begin{proof}
Let $V=\Lambda\otimes_{\Z}\Q$, and consider the following diagram
of $\Z$-submodules of $V$, where all the arrows are inclusion maps:

\begin{center}
\includegraphics[width=11cm]{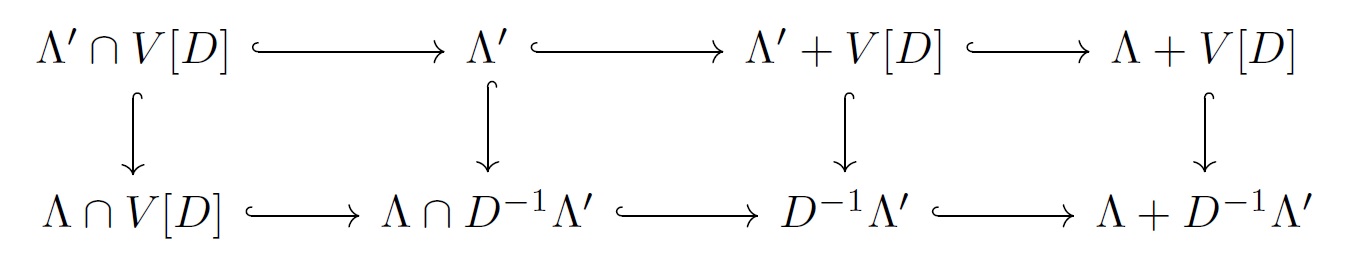}
\end{center}

Now since $\Lambda\cap V[D]=\Lambda[D]$ and similarly for $\Lambda'$,
and $(\Lambda\cap D^{-1}\Lambda')/\Lambda'=\left(\frac{\Lambda}{\Lambda'}\right)[ D]$, this induces a sequence of maps on the quotients
$$
\begin{CD}
0 @>>> \frac{\Lambda[D]}{\Lambda'[D]} @>\alpha>> \left(\frac{\Lambda}{\Lambda'}\right)[D] @>\beta>> \frac{D^{-1}\Lambda'}{\Lambda'+V[D]} @>\gamma>> \B_{\Lambda,\Lambda'} @>>> 0.
\end{CD}
$$

We will show that this sequence is exact by computing the kernel and
image of each map, pulled back to $V$. We make extensive use of the
modular identity for $\Z$-submodules of $V$.

The kernel of $\alpha$ corresponds to
$\Lambda\cap V[D]\cap\Lambda'=\Lambda'\cap V[D]$, so $\alpha$ is injective.
Moreover its image corresponds to $\Lambda'+(V[D]\cap\Lambda)$.

The kernel of $\beta$ corresponds to
$$\Lambda\cap D^{-1}\Lambda'\cap(\Lambda'+V[D])\>=\>\Lambda\cap(V[D]+\Lambda')\>=\>(\Lambda\cap V[D])+\Lambda'.$$
Its image corresponds to
$(\Lambda\cap D^{-1}\Lambda')+\Lambda'+V[D]=
 (D^{-1}\Lambda'\cap\Lambda)+V[ D]$.

The kernel of $\gamma$ corresponds to
$D^{-1}\Lambda'\cap(\Lambda+V[D])=(D^{-1}\Lambda'\cap\Lambda)+V[D]$.
Its image corresponds to $D^{-1}\Lambda'+\Lambda+V[D]=\Lambda+D^{-1}\Lambda'$,
so $\gamma$ is surjective.

Thus the sequence is exact. As all the terms are finite, this ensures
that
$$
\left|\frac{\Lambda[D]}{\Lambda'[D]}\right|\cdot
\left|\frac{D^{-1}\Lambda'}{\Lambda'+V[D]}\right| =
\left|\left(\frac{\Lambda}{\Lambda'}\right)[D]\right|\cdot
|\B_{\Lambda,\Lambda'}|.
$$
Proposition \ref{prop:orderofDinverse} completes the proof.
\end{proof}

\begin{corollary}[to the proof]
\label{cor:BTdivP}
Let $\Lambda$ be a lattice, $D\in\End(\Lambda)$ an endomorphism
satisfying $\Lambda[D]=\Lambda[D^2]$, and $\Lambda'\subseteq\Lambda$ a
$D$-stable sublattice of maximal rank.
Write the characteristic polynomial of $D$ over $\Lambda$
as $\pm t^rp(t)$, where $p(0)>0$. Then
$$
|\B_{\Lambda,\Lambda'}|\cdot|\SG_D(\Lambda')|
\qquad {\text{divides}} \qquad p(0).
$$
\end{corollary}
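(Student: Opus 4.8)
The plan is to extract the statement directly from the proof of Theorem~\ref{thm:exactformulatorsionnoe}, which is precisely why it is phrased as a corollary \emph{to that proof}. The key observation is that the four-term exact sequence
\[
0 \to \frac{\Lambda[D]}{\Lambda'[D]} \xrightarrow{\alpha} \left(\frac{\Lambda}{\Lambda'}\right)[D] \xrightarrow{\beta} \frac{D^{-1}\Lambda'}{\Lambda'+V[D]} \xrightarrow{\gamma} \B_{\Lambda,\Lambda'} \to 0
\]
constructed there exhibits $\B_{\Lambda,\Lambda'}$ as a quotient, via the surjection $\gamma$, of the finite group $D^{-1}\Lambda'/(\Lambda'+V[D])$. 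So it suffices to identify the order of that middle group and conclude a divisibility.

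First I would note that the characteristic polynomial of $D$ depends only on its action on $V=\Lambda\otimes\Q=\Lambda'\otimes\Q$, so writing it as $\pm t^r p(t)$ with $p(0)>0$ is unambiguous whether we regard $D$ as acting on $\Lambda$ or on the full-rank $D$-stable sublattice $\Lambda'$. Then Proposition~\ref{prop:orderofDinverse}, applied with $\Lambda'$ in place of $\Lambda$, gives
\[
\left|\frac{D^{-1}\Lambda'}{\Lambda'+V[D]}\right| = \frac{p(0)}{|\SG_D(\Lambda')|}.
\]
Since $\gamma$ is surjective and all groups in the sequence are finite (by Lemma~\ref{defnlem:sepgp} and Proposition~\ref{prop:bettsgpbehaviour}), $|\B_{\Lambda,\Lambda'}|$ divides $\left|\frac{D^{-1}\Lambda'}{\Lambda'+V[D]}\right| = p(0)/|\SG_D(\Lambda')|$; rearranging yields that $|\B_{\Lambda,\Lambda'}|\cdot|\SG_D(\Lambda')|$ divides $p(0)$, as required.

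There is no genuine obstacle here: the argument is essentially a one-liner once the exact sequence is in hand. The only points to be careful about are invoking Proposition~\ref{prop:orderofDinverse} for $\Lambda'$ rather than $\Lambda$ (noting the characteristic polynomial is the same), and observing that the expression $p(0)/|\SG_D(\Lambda')|$ is literally the order of a group, hence a positive integer, which is what legitimises passing from the surjectivity of $\gamma$ to an honest divisibility of integers.
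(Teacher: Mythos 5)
Your proposal is correct and is essentially the paper's own proof: the paper likewise cites the surjection $\frac{D^{-1}\Lambda'}{\Lambda'+V[D]}\rightarrow\B_{\Lambda,\Lambda'}$ constructed in the proof of Theorem \ref{thm:exactformulatorsionnoe} and the order computation $\frac{p(0)}{|\SG_D(\Lambda')|}$ from Proposition \ref{prop:orderofDinverse} applied to $\Lambda'$. Your extra remarks (the characteristic polynomial being the same on $\Lambda$ and $\Lambda'$, and the finiteness making the divisibility legitimate) are sound and just make explicit what the paper leaves implicit.
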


\begin{proof}
In the proof of Theorem \ref{thm:exactformulatorsionnoe} we constructed
a surjection
$\frac{D^{-1}\Lambda'}{\Lambda'+V[D]}\rightarrow\B_{\Lambda,\Lambda'}$.
By Proposition \ref{prop:orderofDinverse}, the first group has order
$\frac{p(0)}{|\SG_D(\Lambda')|}$, which gives the result.
\end{proof}

\begin{corollary}\label{cor:exactformulatorsion}
Let $\Lambda$ be a lattice, $D\in\End(\Lambda)$
with $\Lambda[D]=\Lambda[D^2]$, and $\Lambda'\subseteq\Lambda$ a
$D$-stable sublattice of maximal rank.
Write the characteristic polynomial of $D$ over $\Lambda$
as $\pm t^rp(t)$, where $p(0)>0$. Then for all $e\in\N$
$$
\left|\left(\frac\Lambda{e\Lambda'}\right)[D]\right|=
\frac{|\B_{\Lambda,\Lambda'}[e]|}{|\B_{\Lambda,\Lambda'}|}\cdot
\left|\frac{\Lambda[D]}{\Lambda'[D]}\right|\cdot
\frac{p(0)}{|\SG_D(\Lambda')|}\cdot e^r.
$$
\end{corollary}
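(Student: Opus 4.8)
The plan is to reduce everything to Theorem~\ref{thm:exactformulatorsionnoe}, applied not to $\Lambda'$ but to its scaled sublattice $e\Lambda'$, and then to rewrite each of the three resulting factors in terms of the data of $\Lambda'$ itself. Observe first that $e\Lambda'$ is again a $D$-stable sublattice of $\Lambda$ of maximal rank, and that the characteristic polynomial $\pm t^r p(t)$ of $D$ on $\Lambda$ is unaffected. Hence Theorem~\ref{thm:exactformulatorsionnoe} gives at once
$$
\left|\left(\frac\Lambda{e\Lambda'}\right)[D]\right|
 = |\B_{\Lambda,e\Lambda'}|^{-1}\cdot
   \left|\frac{\Lambda[D]}{(e\Lambda')[D]}\right|\cdot
   \frac{p(0)}{|\SG_D(e\Lambda')|},
$$
and it remains only to analyse the three quantities $\B_{\Lambda,e\Lambda'}$, $(e\Lambda')[D]$ and $\SG_D(e\Lambda')$.

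For the Betts group, Proposition~\ref{prop:bettsgpvaryLambda'} gives $\B_{\Lambda,e\Lambda'}\cong \B_{\Lambda,\Lambda'}/\B_{\Lambda,\Lambda'}[e]$, so that $|\B_{\Lambda,e\Lambda'}|^{-1} = |\B_{\Lambda,\Lambda'}[e]|\big/|\B_{\Lambda,\Lambda'}|$. For the separation group, I would note that multiplication by $e$ is a $\Q$-linear automorphism of $V=\Lambda\otimes\Q$ commuting with $D$; it carries $\Lambda'$ isomorphically onto $e\Lambda'$ and preserves the decomposition $V = DV\oplus V[D]$, hence induces an isomorphism $\SG_D(\Lambda')\xrightarrow{\sim}\SG_D(e\Lambda')$, and in particular $|\SG_D(e\Lambda')| = |\SG_D(\Lambda')|$. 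For the torsion term, since $V$ is torsion-free one has $(e\Lambda')[D] = e\Lambda'\cap V[D] = e(\Lambda'\cap V[D]) = e\,\Lambda'[D]$; and $\Lambda'[D]$ is free abelian of rank equal to the nullity of $D$, which, under the hypothesis $\Lambda[D]=\Lambda[D^2]$, is exactly the $t$-adic order $r$ of the characteristic polynomial of $D$. Therefore $[\Lambda'[D]:e\Lambda'[D]]=e^r$ and
$$
\left|\frac{\Lambda[D]}{(e\Lambda')[D]}\right|
 = \left|\frac{\Lambda[D]}{\Lambda'[D]}\right|\cdot e^r.
$$

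Substituting these three identities into the displayed formula yields precisely the claimed expression. There is no genuine obstacle: the whole argument is bookkeeping on top of Theorem~\ref{thm:exactformulatorsionnoe} and Proposition~\ref{prop:bettsgpvaryLambda'}. The one point that deserves a moment's care is the triple role of $r$ --- as the $t$-adic order of the characteristic polynomial of $D$, as the nullity of $D$, and as the common rank of $\Lambda[D]$ and $\Lambda'[D]$ --- but these coincide precisely because $V[D]=V[D^2]$ forces every Jordan block of $D$ at eigenvalue $0$ to have size one.
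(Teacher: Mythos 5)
Your proposal is correct and follows exactly the paper's own route: apply Theorem~\ref{thm:exactformulatorsionnoe} with $\Lambda'$ replaced by $e\Lambda'$, identify $\B_{\Lambda,e\Lambda'}$ via Proposition~\ref{prop:bettsgpvaryLambda'}, and note that $\SG_D(e\Lambda')\cong\SG_D(\Lambda')$ and $|\Lambda'[D]/e\Lambda'[D]|=e^{\rk\Lambda'[D]}=e^r$. You merely spell out in more detail (correctly) the justifications for the last two identifications, which the paper leaves as remarks.
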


\begin{proof}
Combine Theorem \ref{thm:exactformulatorsionnoe}
with Proposition \ref{prop:bettsgpvaryLambda'}. Note
that $\SG_D(e\Lambda')\cong\SG_D(\Lambda')$
and $|\Lambda'[D]/e\Lambda'[D]|=e^{\rk\Lambda'[ D]}=e^r$.
\end{proof}

\begin{corollary}\label{cor:exactformulatorsionviaF}
Let $\Lambda$ be a lattice, $F\in\Aut(\Lambda)$ an automorphism of finite order, and
$\Lambda'\subseteq\Lambda$ an $F$-stable sublattice of maximal rank.
Write the characteristic polynomial of $F$ over $\Lambda$
as $\pm (t-1)^rp(t)$, where $p(1)>0$, and let $D=F\!-\!1$. Then for all $e\in\N$
$$
\left|\left(\frac\Lambda{e\Lambda'}\right)^F\right|=
\frac{|\B_{\Lambda,\Lambda'}[e]|}{|\B_{\Lambda,\Lambda'}|}\cdot
\left|\frac{\Lambda^F}{\Lambda'^F}\right|\cdot
\frac{p(1)}{|\SG_D(\Lambda')|}\cdot e^r.
$$
\end{corollary}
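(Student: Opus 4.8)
The plan is to deduce the statement from Corollary~\ref{cor:exactformulatorsion} by the obvious change of variable. Since $F\in\Aut(\Lambda)$ has finite order, both $F$ and hence $D=F\!-\!1$ act semisimply on $V=\Lambda\otimes\Q$, so in particular $V[D]=V[D^2]$ (equivalently $\Lambda[D]=\Lambda[D^2]$), which is exactly the hypothesis needed to invoke Corollary~\ref{cor:exactformulatorsion}. Thus $D$ satisfies all the running assumptions of \S\ref{ss:bettsgp}, and $\Lambda'\subseteq\Lambda$ is a $D$-stable sublattice of maximal rank because it is $F$-stable.

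Next I would match up the three pieces of notation. First, $M^F=\{m:Fm=m\}=\{m:Dm=0\}=M[D]$ for any $\Z[F]$-module $M$; applying this to $M=\Lambda/e\Lambda'$, $M=\Lambda$ and $M=\Lambda'$ identifies $(\Lambda/e\Lambda')^F$ with $(\Lambda/e\Lambda')[D]$, and $\Lambda^F/\Lambda'^F$ with $\Lambda[D]/\Lambda'[D]$. Second, writing the characteristic polynomial of $F$ over $\Lambda$ as $\pm(t-1)^rp(t)$ with $p(1)>0$ is the same as writing the characteristic polynomial of $D=F\!-\!1$ over $\Lambda$ as $\pm t^r\tilde p(t)$ with $\tilde p(t)=\pm p(t+1)$ and $\tilde p(0)=p(1)>0$: indeed $\det(tI-D)=\det\big((t+1)I-F\big)$, so the factor $(t{+}1{-}1)^r=t^r$ of $\det((t{+}1)I-F)$ corresponds to the factor $(t-1)^r$ of $\det(tI-F)$, and $\tilde p(0)=p(1)$. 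In particular $r$ is the nullity of $D$, consistent with Corollary~\ref{cor:exactformulatorsion}.

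With these identifications, Corollary~\ref{cor:exactformulatorsion} applied to $(\Lambda,\Lambda',D)$ with $p(0)$ there equal to our $\tilde p(0)=p(1)$ reads
$$
\left|\left(\frac\Lambda{e\Lambda'}\right)^F\right|
=\left|\left(\frac\Lambda{e\Lambda'}\right)[D]\right|
=\frac{|\B_{\Lambda,\Lambda'}[e]|}{|\B_{\Lambda,\Lambda'}|}\cdot
\left|\frac{\Lambda[D]}{\Lambda'[D]}\right|\cdot
\frac{\tilde p(0)}{|\SG_D(\Lambda')|}\cdot e^r
=\frac{|\B_{\Lambda,\Lambda'}[e]|}{|\B_{\Lambda,\Lambda'}|}\cdot
\left|\frac{\Lambda^F}{\Lambda'^F}\right|\cdot
\frac{p(1)}{|\SG_D(\Lambda')|}\cdot e^r,
$$
which is the claimed formula. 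There is essentially no obstacle here: the only thing to be slightly careful about is the bookkeeping of signs and the shift $t\mapsto t+1$ in passing between the characteristic polynomials of $F$ and of $D$, and the (entirely formal) verification that $F$-stability gives $D$-stability and that finite order gives the semisimplicity hypothesis $\Lambda[D]=\Lambda[D^2]$. Note that the group $\B_{\Lambda,\Lambda'}$ and the separation group $\SG_D(\Lambda')$ appearing on the right are genuinely invariants of $F$ (not just of the chosen generator) by Remark~\ref{rmk:Bcoprimef}, so the statement is well-posed as written.
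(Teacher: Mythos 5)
Your proposal is correct and follows the same route as the paper: the paper likewise notes that finite order of $F$ forces semisimplicity, hence $\Lambda[D]=\Lambda[D^2]$, and then invokes Corollary~\ref{cor:exactformulatorsion}. Your extra bookkeeping (identifying $M^F$ with $M[D]$ and matching $\tilde p(0)=p(1)$ via the shift $t\mapsto t+1$) is exactly the implicit content of that deduction, spelled out.
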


\begin{proof}
As $F$ has finite order, both its action and the action of $D$ on $\Lambda\otimes_\Z\C$ are diagonalisable, so that in particular $\Lambda[D]=\Lambda[D^2]$. The result now follows from Corollary \ref{cor:exactformulatorsion}.
\end{proof}


\subsection{Lattices with a pairing}\label{ss:pairing}

In this section we generalise the notion of the lattice-dual to a wider
class of $\Z$-submodules of a finite-dimensional $\Q$-vector space~$V$.
The theory is developed relative to a non-degenerate $\Q$-valued pairing
on $V$, but it would be just as straightforward to formulate the theory in
terms of the vector space dual $V^*$ instead.
We will use it to derive further properties of
$\B_{\Lambda,\Lambda'}$ and $\SG(\Lambda')$ in the case when
$\Lambda'=\Lambda^\vee$ is the dual lattice with respect to a suitable
pairing.

Recall that we use the $^\vee$ operator to indicate the dual with respect to a $\Q/\Z$-pairing, so
when $V$ is carries non-degenerate symmetric bilinear form \hbox{$(\cdot,\cdot)\!:\!V\!\!\times\!\! V\!\rightarrow\!\Q$} we write
$
 S^\vee=\{v\in V|\forall s\in S:(v,s)\in \Z \}
$
for a subset $S\subseteq V$ (see Definition \ref{def:dual}).

\begin{notation}
For a finite-dimensional $\Q$-vector space $V$, we will write $\L(V)$
for the set of all subsets of $V$ of the form $M=U+\Lambda$, where $U$
is a subspace and $\Lambda$ a finitely generated $\Z$-submodule.

We write $\dim_-(M)=\dim U$ for the dimension of the largest vector
subspace of $M$, and $\dim_+(M)$ for the dimension of the smallest
subspace containing $M$, i.e.\ the dimension of the $\Q$-span of $M$.
We also define $\codim_\pm(M)$ to be $\dim V-\dim_\pm(M)$.
Note that $\Lambda$ is a full-rank lattice in $V$ if and only if
$\dim_-(\Lambda)=\codim_+(\Lambda)=0$.
\end{notation}

\begin{proposition}\label{prop:L(V)}
If $V$ is a finite-dimensional $\Q$-vector space then
\begin{itemize}
 \item[(a)] $\L(V)$ is a modular poset-lattice with respect to $(+,\cap)$,
 \item[(b)] if $\alpha\in\End(V)$ and $M\in\L(V)$, then $\alpha M,\alpha^{-1}M\in\L(V)$.
\end{itemize}
Suppose $V$ carries a
non-degenerate symmetric pairing $(\cdot,\cdot):V\times V\rightarrow\Q$.
Then
\begin{itemize}
 \item[(c)] $\cdot^{\myperp}:\L(V)\rightarrow\L(V)$ is a self-inverse
       order-reversing isomorphism of poset-lattices;
 \item[(d)] for all $M\in\L(V)$ and $\alpha\in\End(V)$ we have
       $(\alpha M)^{\myperp}=(\alpha^*)^{-1}M^{\myperp}$ and
       $(\alpha^{-1}M)^{\myperp}=\alpha^*M^{\myperp}$, where
       $\cdot^*$ denotes the adjoint with respect to $(\cdot,\cdot)$;
 \item[(e)] for all $M\in\L(V)$, $\dim_+(M^{\myperp}) = \codim_-(M)$ and
       $\dim_-(M^{\myperp})=\codim_+(M)$. In particular, if $\Lambda$
       is a full-rank lattice in $V$, then so is $\Lambda^{\myperp}$;
 \item[(f)] if $N\subseteq M$ are both in $\L(V)$ such that $M/N$ is finite, then
       $M/N\iso N^{\myperp}/M^{\myperp}$.
\end{itemize}
\end{proposition}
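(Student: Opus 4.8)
The plan is to establish the six assertions in the natural logical order, since (a)--(b) are purely lattice-theoretic while (c)--(f) use the pairing, and (f) will rely on (c)--(e). For (a), I would verify that $\L(V)$ is closed under $+$ and $\cap$: closure under $+$ is immediate since $(U_1+\Lambda_1)+(U_2+\Lambda_2)=(U_1+U_2)+(\Lambda_1+\Lambda_2)$, and closure under $\cap$ is the only mildly delicate point --- given $M_i=U_i+\Lambda_i$, one reduces to the case $U_1=U_2=0$ after quotienting by $U_1\cap U_2$ and using that $(U+\Lambda)\cap W$ for a subspace $W\supseteq$ something is again of the required form by a direct argument in the finitely many coordinates transverse to $U_1\cap U_2$. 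Modularity of the poset-lattice I would obtain by checking the modular identity $M\cap(N+P)=N+(M\cap P)$ whenever $N\subseteq M$, which for sums of a subspace and a f.g. $\Z$-module can be verified componentwise. For (b), $\alpha(U+\Lambda)=\alpha U+\alpha\Lambda$ is clearly in $\L(V)$; for $\alpha^{-1}(U+\Lambda)$ one notes $\alpha^{-1}M\supseteq\ker\alpha$, works in $V/\ker\alpha$ where $\alpha$ is injective onto $\alpha V$, and uses that $\alpha^{-1}$ of a f.g. module intersected with $\alpha V$ is f.g.

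For the pairing statements: (c) is standard --- $(\cdot)^\myperp$ is order-reversing and I would show it is self-inverse on $\L(V)$ by reducing to the two extreme cases (subspaces, where $U^{\myperp\myperp}=U$ is classical, and full-rank lattices, where $\Lambda^{\myperp\myperp}=\Lambda$ is the standard biduality), then combining via $M^\myperp=(U+\Lambda)^\myperp=U^\myperp\cap\Lambda^\myperp$ and a dimension count; that $(\cap)$ and $(+)$ are interchanged follows formally from self-inverseness and order-reversal. Part (d) is a formal manipulation: $(v,\alpha m)=(\alpha^* v,m)$, so $v\in(\alpha M)^\myperp\iff \alpha^*v\in M^\myperp$, giving $(\alpha M)^\myperp=(\alpha^*)^{-1}M^\myperp$, and the other identity follows by substituting $M\mapsto\alpha^{-1}M$ together with (b) and (c). Part (e) I would deduce by writing $M=U+\Lambda$ with $\Lambda$ of full rank in some complement, so that $M^\myperp=U^\myperp\cap\Lambda^\myperp$ where $U^\myperp$ is the subspace orthogonal to $U$ (of dimension $\codim U=\codim_- M$) and $\Lambda^\myperp$ meets it in a lattice of the appropriate rank; the key numerical facts are $\dim U^\myperp=\dim V-\dim U$ and that intersecting with a full-rank lattice doesn't change the $\Q$-span.

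The main work --- and the step I expect to be the genuine obstacle --- is (f). Given $N\subseteq M$ in $\L(V)$ with $M/N$ finite, the finiteness forces $\dim_- N=\dim_- M$ and $\dim_+ N=\dim_+ M$, so after replacing $V$ by the $\Q$-span of $M$ and quotienting by the largest subspace inside $N$ (legitimate since $(\cdot)^\myperp$ behaves well under such reductions by (d) applied to inclusions/projections, or by a direct check), I may assume $M$ and $N$ are both full-rank lattices in $V$. Then I want $M/N\iso N^\myperp/M^\myperp$ for lattices $N\subseteq M$. The clean way is to use the pairing to identify $N^\myperp/M^\myperp$ with $\Hom(M/N,\Q/\Z)$: the pairing $(\cdot,\cdot)\colon V\times V\to\Q$ induces, for $x\in N^\myperp$, a homomorphism $M\to\Q/\Z$, $m\mapsto(x,m)\bmod\Z$, which kills $N$ (as $x\in N^\myperp$) hence factors through $M/N$, and kills $M$ precisely when $x\in M^\myperp$; non-degeneracy of the pairing and the finiteness of $M/N$ make this an isomorphism $N^\myperp/M^\myperp\xrightarrow{\sim}\Hom(M/N,\Q/\Z)$, and since $M/N$ is finite abelian it is (non-canonically) isomorphic to its Pontryagin dual. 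Alternatively, and perhaps more in the spirit of the paper, I would pick Smith-normal-form-adapted bases exhibiting $N=\bigoplus d_i\Z e_i$ inside $M=\bigoplus\Z e_i$ and compute both sides to be $\bigoplus C_{d_i}$ directly, using that $M^\myperp=\bigoplus\Z e_i^*$ and $N^\myperp=\bigoplus\frac1{d_i}\Z e_i^*$ for the dual basis. I would present the Pontryagin-duality argument as the main one since it makes the isomorphism transparent and avoids basis bookkeeping; the only care needed is the reduction to the full-rank case, which is where the hypotheses $\dim_\pm N=\dim_\pm M$ (forced by finiteness of $M/N$) get used.
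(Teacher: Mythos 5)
Your proposal follows the same overall route as the paper -- adjointness for (d), a span/coordinate analysis for (e), and for (f) the duality map $x\mapsto\bigl((x,\cdot)\bmod\Z\bigr)$ identifying $N^{\myperp}/M^{\myperp}$ with $\Hom(M/N,\Q/\Z)$ -- but the step you single out as ``the only care needed'' in (f) is exactly the one that fails as written. You cannot reduce to full-rank lattices by replacing $V$ with the $\Q$-span $W$ of $M$: the restriction of a non-degenerate symmetric pairing to a subspace can be degenerate, even identically zero. For instance, take $V=\Q^2$ with the hyperbolic form $\bigl((x_1,x_2),(y_1,y_2)\bigr)=x_1y_2+x_2y_1$ and $M=\Z e_1\supseteq N=2\Z e_1$; then $W=\Q e_1$ is isotropic, so after your replacement there is no pairing left to run the argument with, and (d), which concerns endomorphisms of $V$, does not legitimise the passage. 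Fortunately the reduction is unnecessary: injectivity of your map is just the definition of $M^{\myperp}$, and surjectivity holds for arbitrary $N\subseteq M$ in $\L(V)$, either by lifting a homomorphism $M/N\to\Q/\Z$ to a $\Q$-linear functional on $V$ (it automatically kills the divisible part of $M$) and invoking non-degeneracy on all of $V$, or, as the paper does, by noting that the $\bmod\,\Z$ pairing on $M\times N^{\myperp}$ has left-kernel $M\cap N^{\myperp\myperp}=N$ (this is where (c) enters) and right-kernel $M^{\myperp}$, and a non-degenerate $\Q/\Z$-valued pairing with one side finite is automatically perfect.

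A second, softer, gap is the ``dimension count'' gluing in (c): since the restriction of the pairing to $U$ may be degenerate, $V$ need not split as $U\oplus U^{\myperp}$, so biduality for subspaces and for full-rank lattices does not formally combine to give $M^{\myperp\myperp}=M$ for $M=U+\Lambda$; you would have to separate points not in $M$ by elements of $M^{\myperp}$, which is in effect the paper's computation. The paper does this once and for all by choosing a basis $u_i,v_j,w_k$ of $V$ adapted to $M=\bigoplus_i\Q u_i\oplus\bigoplus_j\Z v_j$ and computing $M^{\myperp}$ in the dual basis with respect to the pairing; that single computation yields $M^{\myperp}\in\L(V)$, $M^{\myperp\myperp}=M$ and (e) simultaneously, and then closure of $\L(V)$ under intersection in (a) comes for free via $M\cap N=(M^{\myperp}+N^{\myperp})^{\myperp}$ (and $\alpha^{-1}M=(\alpha^*M^{\myperp})^{\myperp}$ for (b)). Note also that in your direct attack on (a), quotienting by $U_1\cap U_2$ only achieves $U_1\cap U_2=0$, not $U_1=U_2=0$, so a further (easy) step is still needed there, e.g.\ projecting modulo $U_1$ and then $U_2$ and using that subgroups of finitely generated groups are finitely generated. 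With the (f) reduction deleted and (c) argued via the dual-basis (or a separation) argument, your proof is correct and essentially coincides with the paper's.
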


\begin{proof}
For the proofs of (a) and (b) we may also, without loss of generality, assume
that $V$ carries some non-degenerate symmetric $\Q$-valued pairing.

\noindent \textbf{Points (a) and (c):}
Suppose $M=U+\Lambda\in\mathcal L(V)$. Note that $M/U$ is finitely generated,
so we may pick some $v_j\in M$ such that $v_j+U$ form a $\Z$-basis of $M/U$.
Now let $u_i$ be a $\Q$-basis of $U$, and extend the independent vectors
$u_i,v_j$ to a basis $u_i,v_j,w_k$ of $V$. Then we have
$$
  M=\bigoplus_i\Q u_i\oplus\bigoplus_j\Z v_j\oplus\bigoplus_k 0w_k.
$$

Now let $u_i^\myperp,v_j^\myperp,w_k^\myperp$ be the basis of $V$ dual
to $u_i,v_j,w_k$. The generic element
$\sum_ia'_iu_i^\myperp+\sum_jb'_jv_j^\myperp+\sum_kc'_kw_k^\myperp$
of $V$ lies in $M^\myperp$ precisely
when $\sum_ia_ia'_i+\sum_jb_jb'_j\in\Z$ for all $a_i\in\Q$ and $b_j\in\Z$,
i.e.\ exactly when $a'_i=0$ and $b'_j\in\Z$ for all $i,j$. Thus
$$
  M^\myperp=\bigoplus_i 0u_i^\myperp\oplus\bigoplus_j\Z v_j^\myperp
            \oplus\bigoplus_k\Q w_k^\myperp.
$$
In particular, $M^{\myperp}\in\L(V)$. Also, as $u_i,v_j,w_k$ is dual
to $u_i^\myperp,v_j^\myperp,w_k^\myperp$ it follows that $M^{\myperp\myperp}=M$.

Now it is clear that $\L(V)$ is closed under $+$ and that for
$M,N\in\L(V)$ we have
$$
  (M+N)^\myperp=(M\cup N)^\myperp=M^\myperp\cap N^\myperp.
$$
In particular, $M\cap N=(M^\myperp+N^\myperp)^\myperp\in\L(V)$,
completing the proof of (a).

Moreover
 $(M\cap N)^\myperp=(M^\myperp+N^\myperp)^{\myperp\myperp}=M^\myperp+N^\myperp$,
completing the proof of (c).

\noindent \textbf{Points (b) and (d):}
Since $\alpha(U+\Lambda)=\alpha U+\alpha\Lambda$, we have $\alpha M\in\L(V)$. Moreover, $(x,\alpha y)\in\Z$ precisely
when $(\alpha^*x,y)\in\Z$, so $(\alpha M)^\myperp=(\alpha^*)^{-1}M^\myperp$.
In particular $\alpha^{-1}M=(\alpha^*M^\myperp)^\myperp\in\L(V)$, completing
the proof of (b). Also, taking $\cdot^\myperp$ of each side
gives $(\alpha^{-1}M)^\myperp=\alpha^*M^\myperp$, which completes (d).

\noindent \textbf{Point (e):}
Using the coordinate description of $\cdot^\myperp$ in our proof of (a) and (c),
note that $\dim_+(M)=\#\{u_i\}+\#\{v_j\}$ and
that $\dim_-(M^\myperp)=\#\{w_k^\myperp\}=\#\{w_k\}$.
Thus $\dim_+(M)+\dim_-(M^{\myperp})=\#\{u_i,v_j,w_k\}=\dim V$,
so $\dim_+(M)=\codim_-(M^\myperp)$. The other statement follows by
replacing $M$ with $M^\myperp$.

\noindent \textbf{Point (f):}
Consider the pairing $(\cdot,\cdot)$ reduced mod $\Z$ and restricted
to $M\times N^{\myperp}$. Its left-kernel is $M\cap N^{\myperp\myperp}=N$
and its right-kernel is $N^{\myperp}\cap M^{\myperp}=M^{\myperp}$ so
it passes to a non-degenerate pairing on
$\frac MN\times\frac{N^{\myperp}}{M^{\myperp}}$, taking values
in $\Q/\Z$.
Since $M/N$ is finite, the pairing must be perfect, and hence
$N^{\myperp}/M^{\myperp}\cong\Hom(M/N,\Q/\Z)\iso M/N$.
\end{proof}

\begin{lemma}\label{lem:dlvee}
Let $V$ be a finite-dimensional $\Q$-vector space, $F\!=\!1\!+\!D$ an automorphism of $V$, 
and $(\cdot,\cdot):V\times V\rightarrow\Q$ an $F$-invariant non-degenerate symmetric pairing on $V$.
Then
$$
 (DV)^\vee = V[D].
$$
If $\Lambda$ is an $F$-invariant full-rank lattice in $V$, then
$$
 (D\Lambda)^\vee = D^{-1}\Lambda^\vee.
$$
\end{lemma}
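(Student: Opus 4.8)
The plan is to route both identities through the adjoint $D^*$ of $D$ with respect to $(\cdot,\cdot)$. Since the pairing is $F$-invariant and non-degenerate, $F$ is orthogonal, so $F^*=F^{-1}$, and as $D=F-1$ this yields the key formula
$$
D^*=F^{-1}-1=-F^{-1}D=-DF^{-1},
$$
where we use that $D$, $F$, $F^{-1}$ are polynomials in $F$ and hence commute. In particular $\ker D^*=\ker D=V[D]$. Also, applying Proposition~\ref{prop:L(V)}(d) to the identity $F\Lambda=\Lambda$ gives $\Lambda^\vee=(F\Lambda)^\vee=(F^*)^{-1}\Lambda^\vee=F\Lambda^\vee$, so any $F$-invariant full-rank lattice has $F$-invariant dual.

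For the first identity, I would apply Proposition~\ref{prop:L(V)}(d) with $\alpha=D$ and $M=V$: since $(\cdot,\cdot)$ is non-degenerate and $V$ is a subspace, $V^\vee=0$, whence $(DV)^\vee=(D^*)^{-1}\{0\}=\ker D^*=V[D]$. (Equivalently: $DV$ is a subspace, so $(DV)^\vee$ is its ordinary orthogonal complement $\ker D^*$, cf.\ Definition~\ref{def:dual}.)

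For the second identity, I would apply Proposition~\ref{prop:L(V)}(d) with $\alpha=D$ and $M=\Lambda$ to obtain $(D\Lambda)^\vee=(D^*)^{-1}\Lambda^\vee$, and it then remains to see $(D^*)^{-1}\Lambda^\vee=D^{-1}\Lambda^\vee$. For $v\in V$, using $D^*=-F^{-1}D$ and the fact that $\Lambda^\vee$ is a lattice (hence closed under negation), one has $D^*v\in\Lambda^\vee$ iff $F^{-1}Dv\in\Lambda^\vee$ iff $Dv\in F\Lambda^\vee$; and since $F\Lambda^\vee=\Lambda^\vee$ this holds iff $Dv\in\Lambda^\vee$, i.e.\ iff $v\in D^{-1}\Lambda^\vee$. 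Hence $(D\Lambda)^\vee=D^{-1}\Lambda^\vee$.

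There is no genuine obstacle here: once the adjoint formula $D^*=-F^{-1}D$ is established, both statements are immediate from Proposition~\ref{prop:L(V)}(d). The only points needing minor care are that $V^\vee=0$ (equivalently, that the $\vee$-dual of the subspace $DV$ is its honest orthogonal complement) and that $\Lambda^\vee$ is an $F$-stable lattice, both of which are elementary.
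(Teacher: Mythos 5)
Your proof is correct and takes essentially the same route as the paper: compute the adjoint $D^*=F^{-1}-1=-DF^{-1}$ from orthogonality of $F$, apply Proposition \ref{prop:L(V)}(d) to get $(DV)^\vee=(D^*)^{-1}\{0\}=V[D]$ and $(D\Lambda)^\vee=(D^*)^{-1}\Lambda^\vee$, and use $F\Lambda^\vee=\Lambda^\vee$ to identify the latter with $D^{-1}\Lambda^\vee$. The only point the paper makes explicit that you take as given is that $F\Lambda=\Lambda$ (not merely $F\Lambda\subseteq\Lambda$), which follows because $F$ is orthogonal and hence has determinant $\pm1$.
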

\begin{proof}
Observe first that $D^*=(F-1)^*=F^{-1}-1=(1-F)F^{-1}=-DF^{-1}$.
Hence, by Proposition \ref{prop:L(V)}(d),
$$
  (DV)^\vee = (D^*)^{-1}V^\vee = D^{-1}\{0\}= V[D],
$$
and similarly for $(D\Lambda)^\vee$. (Note that $F\Lambda=\Lambda$, since $F\Lambda\subseteq\Lambda$ by assumption and $F$ has determinant $\pm1$, being orthogonal with respect to $(\cdot,\cdot)$).
\end{proof}

\begin{theorem}\label{lem:perppreservessepgps}
Let $V$ be a finite-dimensional $\Q$-vector space endowed with a
non-degenerate symmetric pairing $(\cdot,\cdot):V\times V\rightarrow\Q$,
decomposing into orthogonal subspaces as $U\oplus W$.
Suppose $\Lambda\subseteq V$ is a full-rank sublattice.
Then
$$
  \SG_{U,W}(\Lambda)\simeq\SG_{U,W}\left(\Lambda^\myperp\right).
$$
If $F\!=\!1\!+\!D$ is an automorphism of $\Lambda$ that preserves $(\cdot,\cdot)$, then
$\SG_D(\Lambda)=\SG_D(\Lambda^\vee)$.
\end{theorem}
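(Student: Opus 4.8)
The plan is to prove the two statements in Theorem~\ref{lem:perppreservessepgps} separately, starting with the general one about $\SG_{U,W}$ and then deducing the $F$-equivariant version by specialising $U=DV$, $W=V[D]$.

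For the first statement, I would use the chain of isomorphisms recorded in Lemma~\ref{defnlem:sepgp}. There we have $\SG_{U,W}(\Lambda)\cong\frac{(\Lambda+U)\cap(\Lambda+W)}{\Lambda}$, and the key point is that $\Lambda+U$ and $\Lambda+W$ both lie in $\L(V)$ (by Proposition~\ref{prop:L(V)}(a), since $U,W$ are subspaces hence in $\L(V)$ and $\L(V)$ is closed under $+$). Since $U\oplus W=V$ is an \emph{orthogonal} decomposition, we have $U^\myperp=W$ and $W^\myperp=U$. Applying Proposition~\ref{prop:L(V)}(c), which says $\cdot^\myperp$ reverses order and turns $+$ into $\cap$, I compute
$$
\bigl((\Lambda+U)\cap(\Lambda+W)\bigr)^\myperp = (\Lambda+U)^\myperp + (\Lambda+W)^\myperp = (\Lambda^\myperp\cap W) + (\Lambda^\myperp\cap U),
$$
using $(\Lambda+U)^\myperp=\Lambda^\myperp\cap U^\myperp=\Lambda^\myperp\cap W$ and similarly for the other term. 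So the dual of $(\Lambda+U)\cap(\Lambda+W)$ is precisely $(\Lambda^\myperp\cap U)+(\Lambda^\myperp\cap W)$, which is the denominator in the definition of $\SG_{U,W}(\Lambda^\myperp)$. Now I invoke Proposition~\ref{prop:L(V)}(f): since $\SG_{U,W}(\Lambda)$ is finite (Lemma~\ref{defnlem:sepgp}), the quotient $\frac{(\Lambda+U)\cap(\Lambda+W)}{\Lambda}$ is a finite quotient of modules in $\L(V)$, so it is isomorphic to the quotient of the duals in the opposite order, namely $\frac{\Lambda^\myperp}{(\Lambda^\myperp\cap U)+(\Lambda^\myperp\cap W)}=\SG_{U,W}(\Lambda^\myperp)$. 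This gives the first isomorphism.

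For the second statement, with $F=1+D$ preserving $(\cdot,\cdot)$, I need to check that the orthogonal decomposition $V=DV\oplus V[D]$ used to \emph{define} $\SG_D$ is genuinely orthogonal with respect to $(\cdot,\cdot)$, so that the first part applies with $U=DV$, $W=V[D]$. This is exactly Lemma~\ref{lem:dlvee}, which gives $(DV)^\vee=V[D]$; since both are subspaces, $\cdot^\vee$ restricted to subspaces is the ordinary orthogonal complement (Definition~\ref{def:dual}), so $DV$ and $V[D]$ are orthogonal, and as they are complementary of the right dimensions the decomposition $V=DV\oplus V[D]$ is orthogonal. Then the first part yields $\SG_D(\Lambda)=\SG_{DV,V[D]}(\Lambda)\simeq\SG_{DV,V[D]}(\Lambda^\vee)=\SG_D(\Lambda^\vee)$, where the last equality holds because $\Lambda^\vee$ is also $F$-stable (again by Lemma~\ref{lem:dlvee}, or directly since $F$ is orthogonal) so its intrinsic separation group $\SG_D(\Lambda^\vee)$ is computed using the \emph{same} decomposition $DV\oplus V[D]$ of $V=\Lambda^\vee\otimes\Q$.

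I expect the only genuinely delicate point to be bookkeeping about which ambient decomposition is being used: one must make sure that $\SG_D(\Lambda^\vee)$, which by Remark~\ref{rmk:defnlem:sepgp} is intrinsic and a priori defined via the decomposition of $\Lambda^\vee\otimes\Q$ induced by $D$, coincides with $\SG_{DV,V[D]}(\Lambda^\vee)$ for the $U,W$ coming from $\Lambda\otimes\Q=V$ — but since $\Lambda^\vee$ spans the same $V$ and $D$ is the same operator, these are literally the same subspaces, so there is nothing to reconcile. Everything else is a formal consequence of Proposition~\ref{prop:L(V)} and Lemma~\ref{defnlem:sepgp}.
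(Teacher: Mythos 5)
Your proposal is correct and is essentially the paper's own argument: both rest on Lemma~\ref{defnlem:sepgp}, the duality identities of Proposition~\ref{prop:L(V)}(c),(f), and Lemma~\ref{lem:dlvee} for the $F$-equivariant case, with your chain of isomorphisms merely run from $\SG_{U,W}(\Lambda)$ towards $\SG_{U,W}(\Lambda^\myperp)$ rather than the reverse. The extra bookkeeping you flag (that $\SG_D(\Lambda^\vee)$ uses the same subspaces $DV$, $V[D]$, and that $V=DV\oplus V[D]$ is orthogonal and direct — the directness being part of the standing hypothesis $V[D]=V[D^2]$ under which $\SG_D$ is defined) is handled correctly.
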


\begin{proof}
Note that $U^\myperp=W$ and vice-versa. Using
Proposition \ref{prop:L(V)} we deduce that
\begin{align*}
\SG_{U,W}(\Lambda^\myperp) &= \frac{\Lambda^\myperp}{(\Lambda^\myperp\cap U)+(\Lambda^\myperp\cap W)} \\
&\simeq\frac{\left((\Lambda^\myperp\cap U)+(\Lambda^\myperp\cap W)\right)^\myperp}{\Lambda^{\myperp\myperp}} \\
&\cong\frac{(\Lambda^{\myperp\myperp}+U^\myperp)\cap(\Lambda^{\myperp\myperp}+W^\myperp)}\Lambda \\
&\cong\frac{(\Lambda+W)\cap(\Lambda+U)}\Lambda\cong\SG_{U,W}(\Lambda),
\end{align*}
where the last isomorphism comes from Lemma \ref{defnlem:sepgp}.

The final claim now follows from Lemma \ref{lem:dlvee}.
\end{proof}

\begin{lemma}\label{cor:LambdaperpisFstable}
Let $V$ be a finite-dimensional $\Q$-vector space,
$F$ an automorphism of $V$,
and $(\cdot,\cdot):V\times V\rightarrow\Q$ an $F$-invariant
non-degenerate symmetric pairing. If $\Lambda\subset V$ is an $F$-stable
lattice, then so is $\Lambda^\vee$.
\end{lemma}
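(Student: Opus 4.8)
The statement to prove is Lemma~\ref{cor:LambdaperpisFstable}: if $F$ is an automorphism of $V$ preserving a non-degenerate symmetric pairing $(\cdot,\cdot)$, and $\Lambda\subset V$ is an $F$-stable lattice, then $\Lambda^\vee$ is also $F$-stable.

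The plan is to argue directly from the definition of $\Lambda^\vee$ together with Proposition~\ref{prop:L(V)}(d). First I would note that $\Lambda$ being $F$-stable means $F\Lambda\subseteq\Lambda$; since $F$ preserves the non-degenerate symmetric pairing it is orthogonal, hence $\det F=\pm1$, so $F\Lambda=\Lambda$ (a finite-index inclusion of full-rank lattices with $|\det F|=1$ must be an equality). Thus $F$ is actually an automorphism of the lattice $\Lambda$.

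Next I would invoke Proposition~\ref{prop:L(V)}(d), which gives $(\alpha M)^\vee = (\alpha^*)^{-1}M^\vee$ for any $\alpha\in\End(V)$ and $M\in\L(V)$; a full-rank lattice lies in $\L(V)$, so this applies with $\alpha=F$ and $M=\Lambda$. Because $F$ is orthogonal with respect to $(\cdot,\cdot)$, its adjoint is $F^*=F^{-1}$. Therefore
$$
  (F\Lambda)^\vee = (F^*)^{-1}\Lambda^\vee = F\Lambda^\vee.
$$
Combining with $F\Lambda=\Lambda$ from the first step yields $\Lambda^\vee = (F\Lambda)^\vee = F\Lambda^\vee$, so $\Lambda^\vee$ is $F$-stable (indeed $F\Lambda^\vee=\Lambda^\vee$), as required.

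There is essentially no obstacle here: the whole content is bookkeeping about adjoints of orthogonal maps and the functoriality of $\cdot^\vee$ already recorded in Proposition~\ref{prop:L(V)}. The one point worth stating carefully is the equality $F^*=F^{-1}$ for an orthogonal automorphism — this is immediate from $(Fx,Fy)=(x,y)$ for all $x,y$ rewritten as $(F^*Fx,y)=(x,y)$ and non-degeneracy — and the remark that $F\Lambda=\Lambda$ rather than merely $F\Lambda\subseteq\Lambda$, which is what makes the conclusion symmetric. (This lemma is in fact also implicitly used in the proof of Lemma~\ref{lem:dlvee}, so it is natural to isolate it.)
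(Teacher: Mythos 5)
Your proof is correct and is essentially the paper's argument: the paper likewise deduces $F\Lambda^\vee=(F\Lambda)^\vee=\Lambda^\vee$ from Proposition~\ref{prop:L(V)}(d) together with $F^*=F^{-1}$, with the equality $F\Lambda=\Lambda$ (determinant $\pm1$) left implicit. Your version just spells out these two small points explicitly.
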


\begin{proof}
Simply note that $F\Lambda^\myperp=(F\Lambda)^\myperp=\Lambda^\myperp$
by Proposition \ref{prop:L(V)}(d). 
\end{proof}

\begin{lemma}\label{lem:alternativebettsgp}
Let $V$ be a finite-dimensional $\Q$-vector space, $F=1\!+\! D$ an
automorphism of $V$ that satisfies $V[D^2]=V[D]$,
and $(\cdot,\cdot):V\times V\rightarrow\Q$ an
$F$-invariant non-degenerate symmetric pairing.
If $\Lambda\subset V$ is an $F$-stable full-rank lattice with $\Lambda^\myperp\subseteq\Lambda$, then
$$
 \B_{\Lambda,\Lambda^\myperp}=
 \frac{\Lambda+D^{-1}\Lambda^\myperp}{\Lambda+V[D]}\iso
 \frac{\Lambda^\myperp\cap DV}{\Lambda^\myperp\cap D\Lambda}.
$$
Both groups have trivial induced $F$ action.
\end{lemma}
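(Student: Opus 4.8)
The plan is to deduce the isomorphism from the duality of Proposition \ref{prop:L(V)}(f), applied to the two modules appearing in Definition \ref{defn:bettsgp}, and then to identify the two resulting orthogonal complements using Lemma \ref{lem:dlvee}. The statement about $F$-actions will then come from Proposition \ref{prop:bettsgpbehaviour} on one side and a one-line direct check on the other.

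First I would record the facts needed to make Proposition \ref{prop:L(V)} applicable. Since $F$ is orthogonal for $(\cdot,\cdot)$ it has determinant $\pm1$, so the inclusion $F\Lambda\subseteq\Lambda$ is an equality and $F^{-1}\Lambda=\Lambda$; moreover $\Lambda^\myperp$ is an $F$-stable full-rank lattice by Lemma \ref{cor:LambdaperpisFstable} and Proposition \ref{prop:L(V)}(e), and $V=DV\oplus V[D]$ because $V[D^2]=V[D]$. Consequently $\Lambda$, $V[D]$, $DV$, $\Lambda^\myperp$, $D^{-1}\Lambda^\myperp$ and all their sums lie in $\L(V)$ by Proposition \ref{prop:L(V)}(a),(b). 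Set $N=\Lambda+V[D]$ and $M=\Lambda+D^{-1}\Lambda^\myperp$; since $D$ annihilates $V[D]$ we have $V[D]\subseteq D^{-1}\Lambda^\myperp$, so $N\subseteq M$, and $M/N=\B_{\Lambda,\Lambda^\myperp}$ is finite by Proposition \ref{prop:bettsgpbehaviour}. Hence Proposition \ref{prop:L(V)}(f) gives $\B_{\Lambda,\Lambda^\myperp}\iso N^\myperp/M^\myperp$.

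Next I would compute the two complements. Using $(A+B)^\myperp=A^\myperp\cap B^\myperp$ (Proposition \ref{prop:L(V)}(c)) together with $(DV)^\myperp=V[D]$ from Lemma \ref{lem:dlvee} --- whence $V[D]^\myperp=DV$ since $\cdot^\myperp$ is self-inverse on $\L(V)$ --- one gets $N^\myperp=\Lambda^\myperp\cap DV$. Likewise, the second formula of Lemma \ref{lem:dlvee} (applicable because $\Lambda$ is an $F$-invariant full-rank lattice) gives $(D\Lambda)^\myperp=D^{-1}\Lambda^\myperp$, and taking $\cdot^\myperp$ again yields $(D^{-1}\Lambda^\myperp)^\myperp=D\Lambda$, so $M^\myperp=\Lambda^\myperp\cap(D^{-1}\Lambda^\myperp)^\myperp=\Lambda^\myperp\cap D\Lambda$. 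Substituting gives the claimed isomorphism $\B_{\Lambda,\Lambda^\myperp}\iso(\Lambda^\myperp\cap DV)/(\Lambda^\myperp\cap D\Lambda)$.

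For the $F$-actions: on $\B_{\Lambda,\Lambda^\myperp}$ the endomorphism $D$ acts as $0$ by Proposition \ref{prop:bettsgpbehaviour}, so $F=1+D$ acts as the identity. For the right-hand quotient I would argue directly that $D$ maps $\Lambda^\myperp\cap DV$ into $D\Lambda^\myperp$, which lies in $\Lambda^\myperp\cap D\Lambda$ because $\Lambda^\myperp$ is $D$-stable and $\Lambda^\myperp\subseteq\Lambda$; hence $D$, and therefore $F$, acts trivially there too. I do not expect a genuine obstacle: the argument is a bookkeeping exercise inside the poset-lattice $\L(V)$. The only delicate points are to use $F^{-1}\Lambda=\Lambda$ (which is what lets Lemma \ref{lem:dlvee} be applied with $\Lambda$ in place of $\Lambda^\myperp$, making $D\Lambda$ the correct complement rather than $D^*\Lambda$), and to note that the hypothesis $\Lambda^\myperp\subseteq\Lambda$ is exactly what makes $\B_{\Lambda,\Lambda^\myperp}$ defined in the first place and what forces $D\Lambda^\myperp$ into $\Lambda^\myperp\cap D\Lambda$.
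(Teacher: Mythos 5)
Your proposal is correct and follows essentially the same route as the paper: the isomorphism is obtained by identifying $(V[D])^\myperp=DV$ and $(D^{-1}\Lambda^\myperp)^\myperp=D\Lambda$ via Lemma \ref{lem:dlvee} and then invoking Proposition \ref{prop:L(V)}(c),(f), and the triviality of the $F$-action is argued exactly as in the paper (Proposition \ref{prop:bettsgpbehaviour} on $\B_{\Lambda,\Lambda^\myperp}$, and $D\Lambda^\myperp\subseteq\Lambda^\myperp\cap D\Lambda$ for the other quotient). Your version merely spells out the finiteness needed for part (f) and the $F$-stability of $\Lambda^\myperp$, which the paper leaves implicit.
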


\begin{proof}
By Lemma \ref{lem:dlvee}, $D\Lambda=(D^{-1}\Lambda^\myperp)^\myperp$ and
$DV=(V[D])^\myperp$, so the isomorphism follows from
Proposition \ref{prop:L(V)}(c,f).
By Lemma \ref{cor:LambdaperpisFstable}, $\Lambda^\vee$ is $F$-stable, so
$F$ acts trivially on $\B_{\Lambda,\Lambda^\myperp}$
by Proposition \ref{prop:bettsgpbehaviour}.
In addition, since $D\Lambda^\myperp\subseteq \Lambda^\myperp\cap D\Lambda$, $D$ is
in the annihilator of $(\Lambda^\myperp\cap DV)/(\Lambda^\myperp\cap DV)$,
so $F$ also acts trivially on this quotient as well.
\end{proof}

\begin{remark}
The condition that $\Lambda^\vee\subseteq\Lambda$ in the preceding lemma is equivalent to requiring that the pairing is $\Z$-valued on $\Lambda^\vee$. For our applications, we will start with a $\Z[F]$-lattice $X$ with a non-degenerate $\Z$-valued $F$-equivariant pairing, and will let $\Lambda$ be dual lattice, viewed as an overlattice of $X$ via the pairing. In this setup, we have $\Lambda^\vee=X$, so that the condition $\Lambda^\vee\subseteq\Lambda$ is automatically satisfied.

Notice also that the characteristic polynomials of $F$ on $\Lambda$ and $\Lambda^\vee$ agree, as do the minimal polynomials, so the conditions in Theorem \ref{thm:exactformulafixpoint} can be checked either on $\Lambda$ or on $\Lambda^\vee=X$.
\end{remark}

\begin{corollary}\label{cor:restricting_B}
Let $V$ be a finite-dimensional $\Q$-vector space, $F=1\!+\! D$ an automorphism of $V$ that satisfies $V[D^2]=V[D]$, and $(\cdot,\cdot):V\times V\rightarrow\Q$ an $F$-invariant non-degenerate symmetric pairing. Let $V_0=DV$ be the largest $F$-invariant subspace on which $D$ is non-singular, so that the orthogonal complement of $V_0$ is $V[D]$. Suppose that $\Lambda\subset V$ is an $F$-stable full-rank lattice with $\Lambda^\vee\subseteq\Lambda$, and write $\Lambda_0^\vee\subset V_0$ (resp.\ $\Lambda_0\subset V_0$) for the intersection of $\Lambda^\vee$ with $V_0$ (resp.\ the projection of $\Lambda$ to $V_0$).

Then the restriction of $(\cdot,\cdot)$ to $V_0$ is non-degenerate, $\Lambda_0^\vee\subseteq\Lambda_0$ are dual under this restricted pairing, and\[\B_{\Lambda_0,\Lambda_0^\vee}\simeq\B_{\Lambda,\Lambda^\vee}.\]
\begin{proof}
Since $V_0$ is an orthogonal direct summand of $V$, it inherits non-degeneracy of the pairing. To see that $\Lambda_0^\vee$ and $\Lambda_0$ are dual, note that the dual of $\Lambda_0^\vee$ in all of $V$ is $(\Lambda^\vee\cap V_0)^\vee=\Lambda+V[D]$ by Proposition \ref{prop:L(V)}, and hence its dual inside $V_0$ is $(\Lambda+V[D])\cap V_0=\Lambda_0$. For the final part, note that $DV=DV_0=V_0$ and so $\Lambda_0^\vee\cap DV_0=\Lambda^\vee\cap DV$ and that $\Lambda_0^\vee\cap D\Lambda_0=\Lambda^\vee\cap D\Lambda$ since the projection to $V_0$ is $F$-equivariant. Lemma \ref{lem:alternativebettsgp} then provides the desired isomorphism.
\end{proof}
\end{corollary}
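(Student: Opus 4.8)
The plan is to reduce everything to statements about the poset-lattice $\L(V)$ and its perp operation, together with the alternative description of $\B$ from Lemma \ref{lem:alternativebettsgp}. First I would check non-degeneracy of the restricted pairing: since $V = V_0 \oplus V[D]$ is an orthogonal decomposition (the orthogonal complement of $V_0 = DV$ is $V[D]$ by Lemma \ref{lem:dlvee}), the form is automatically non-degenerate on each summand. Next I would verify the duality claim $\Lambda_0^\vee = (\Lambda_0)^\vee$ inside $V_0$. By Proposition \ref{prop:L(V)}(c,f) the dual of $\Lambda_0^\vee = \Lambda^\vee \cap V_0$ taken in all of $V$ is $(\Lambda^\vee \cap V_0)^\vee = \Lambda^{\vee\vee} + V_0^\vee = \Lambda + V[D]$. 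Intersecting with $V_0$ and using that perp-within-$V_0$ equals (perp-within-$V$)$\,\cap\,V_0$ for subsets of $V_0$ — a routine consequence of the orthogonal splitting — gives $(\Lambda_0)^\vee = (\Lambda + V[D]) \cap V_0 = \Lambda_0$, where the last equality is just the description of the projection of $\Lambda$ onto $V_0$ along $V[D]$.

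For the isomorphism $\B_{\Lambda_0,\Lambda_0^\vee} \simeq \B_{\Lambda,\Lambda^\vee}$, I would invoke Lemma \ref{lem:alternativebettsgp} on both sides. On the $V_0$ side this requires $\Lambda_0^\vee \subseteq \Lambda_0$, which follows from $\Lambda^\vee \subseteq \Lambda$ by intersecting/projecting to $V_0$; it also requires that $F = 1+D$ restricts to an automorphism of $V_0$ satisfying $V_0[D^2] = V_0[D]$, which is immediate since $D$ is invertible on $V_0$ (so both sides are zero). Lemma \ref{lem:alternativebettsgp} then identifies
$$
\B_{\Lambda,\Lambda^\vee} \iso \frac{\Lambda^\vee \cap DV}{\Lambda^\vee \cap D\Lambda}, \qquad
\B_{\Lambda_0,\Lambda_0^\vee} \iso \frac{\Lambda_0^\vee \cap DV_0}{\Lambda_0^\vee \cap D\Lambda_0}.
$$
So it suffices to match numerators and denominators. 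Since $DV = DV_0 = V_0$, the numerator is $\Lambda^\vee \cap V_0 = \Lambda_0^\vee = \Lambda_0^\vee \cap V_0 = \Lambda_0^\vee \cap DV_0$. For the denominator I would use that the projection $\pi\colon V \to V_0$ along $V[D]$ is $D$-equivariant (indeed $D$ kills $V[D]$), so $D\Lambda = D(\pi\Lambda) = D\Lambda_0$ as subsets of $V_0$, whence $\Lambda^\vee \cap D\Lambda = \Lambda_0^\vee \cap D\Lambda_0$ (both already lie in $V_0$).

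The only genuinely delicate point is the compatibility of the two perp operations — that for a subset $S \subseteq V_0$, its dual computed inside $V_0$ equals $S^\vee \cap V_0$ where $S^\vee$ is computed in $V$. This is where the orthogonality of the decomposition $V = V_0 \oplus V[D]$ is essential: it is false for a non-orthogonal splitting. Everything else is bookkeeping in $\L(V_0) \subseteq \L(V)$ using Proposition \ref{prop:L(V)}. I expect the $D$-equivariance of the projection $\pi$ — needed to make $D\Lambda$ and $D\Lambda_0$ agree — to be the subtlest of the "bookkeeping" steps, but it is genuinely immediate once one observes $\ker\pi = V[D] = V[D]$ is $D$-stable and $D$-equivariance follows.
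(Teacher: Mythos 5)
Your proposal is correct and follows essentially the same route as the paper: non-degeneracy from the orthogonal splitting $V=V_0\oplus V[D]$, the duality $\Lambda_0^\vee=(\Lambda_0)^\vee$ via Proposition \ref{prop:L(V)} (computing $(\Lambda^\vee\cap V_0)^\vee=\Lambda+V[D]$ and intersecting with $V_0$), and the identification of $\B$-groups by matching numerators and denominators in Lemma \ref{lem:alternativebettsgp} using $DV=DV_0=V_0$ and the equivariance of the projection. The extra details you supply (perp-in-$V_0$ versus perp-in-$V$, and $D\pi=D$) are exactly the points the paper leaves implicit.
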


\begin{theorem}\label{thm:bettspairing}
Let $V$ be a finite-dimensional $\Q$-vector space, $F=1\!+\! D$ an
automorphism of $V$ that satisfies $V[D^2]=V[D]$,
and $(\cdot,\cdot):V\times V\rightarrow\Q$ an
$F$-invariant non-degenerate symmetric pairing.
Let $\Lambda\subset V$ be an $F$-stable full-rank lattice satisfying $\Lambda^\myperp\subseteq\Lambda$.
Then $\B_{\Lambda,\Lambda^\vee}$ admits a perfect antisymmetric pairing taking values in $\Q/\Z$.

Explicitly,
$$
  \langle x,y\rangle\equiv(x,(D|_{DV})^{-1}y)\text{ mod }\Z
$$
gives a well-defined pairing
$\langle\cdot,\cdot\rangle:DV\times DV\rightarrow\Q/\Z$ which satisfies:
\begin{enumerate}
  \item[($i$)] $\langle x,y\rangle+\langle y,Fx\rangle=0$ for all $x,y\in DV$,
  \item[($ii$)] $\langle\cdot,\cdot\rangle$ passes to a perfect antisymmetric pairing
  on $\frac{\Lambda^\myperp\cap DV}{\Lambda^\myperp\cap D\Lambda}\iso\B_{\Lambda,\Lambda^\myperp}$.
\end{enumerate}
\end{theorem}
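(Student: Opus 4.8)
The plan is to do everything inside the orthogonal decomposition $V = DV\oplus V[D]$, which is genuine because Lemma \ref{lem:dlvee} gives $(DV)^\vee = V[D]$; and $V[D^2]=V[D]$ makes $D_0:=D|_{DV}$ invertible, so $E:=(D|_{DV})^{-1}$ is a well-defined endomorphism of $DV$. Write $F_0 := F|_{DV} = 1+D_0$, an automorphism of $DV$ preserving the restricted pairing. The first step is a short computation on $DV$: $E$ commutes with $D_0$, hence with $F_0 = 1+D_0$, and in fact $F_0E = E+1$; and the adjoint of $E$ for the restricted pairing is $E^\ast = -(E+1)$, which follows from $D^\ast = (F-1)^\ast = F^{-1}-1 = -DF^{-1}$ by restriction to $DV$ and inversion. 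Granting these, $\langle x,y\rangle := (x,Ey)\bmod\Z$ is manifestly a well-defined pairing on $DV\times DV$ with values in $\Q/\Z$, and property $(i)$ is immediate: expanding $\langle x,y\rangle = (y,E^\ast x) = -(y,Ex)-(x,y)$ and $\langle y,Fx\rangle = (y,EF_0x) = (y,Ex)+(x,y)$, the sum vanishes already in $\Q$.

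Next I would show the pairing descends to $M/N$, where $M := \Lambda^\vee\cap DV$ and $N := \Lambda^\vee\cap D\Lambda$ (so $M/N\iso\B_{\Lambda,\Lambda^\vee}$ by Lemma \ref{lem:alternativebettsgp}, which is the claim of $(ii)$). For $n\in N$ write $n = D\lambda$ with $\lambda\in\Lambda$, decompose $\lambda = p+q$ with $p\in DV$, $q\in V[D]$; then $En = p$, and for $x\in M$ one gets $\langle x,n\rangle\equiv (x,p) = (x,\lambda)\in\Z$, using $x\perp V[D]\ni q$ and $x\in\Lambda^\vee$, $\lambda\in\Lambda$. For the left slot, $\Lambda^\vee$ is $F$-stable (Lemma \ref{cor:LambdaperpisFstable}), so $Fn = n+Dn\in N$, whence $\langle n,y\rangle = -\langle y,Fn\rangle\equiv 0$ by $(i)$ and the previous case. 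This proves well-definedness on $M/N$; moreover for $x,y\in M\subseteq\Lambda^\vee\subseteq\Lambda$ we have $(x,y)\in\Z$ --- this is the only place the hypothesis $\Lambda^\vee\subseteq\Lambda$ is used --- and since $Fx\equiv x\bmod N$, property $(i)$ upgrades on the quotient to $\langle x,y\rangle = -\langle y,x\rangle$, i.e.\ antisymmetry.

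For perfectness, since $M/N$ is finite it is enough to show the induced map $M/N\to\Hom(M/N,\Q/\Z)$ is injective. Suppose $x\in M$ satisfies $(x,Ey)\in\Z$ for all $y\in M$. Rewriting $(x,Ey)\equiv -(Ex,y)\bmod\Z$ (using $E^\ast = -(E+1)$ and $(x,y)\in\Z$), this says $(Ex,y)\in\Z$ for all $y\in\Lambda^\vee\cap DV$, i.e.\ $Ex\in(\Lambda^\vee\cap DV)^\vee$. By Proposition \ref{prop:L(V)}(c) and Lemma \ref{lem:dlvee} this dual equals $\Lambda+V[D]$, so $Ex\in(\Lambda+V[D])\cap DV = \pi_{DV}(\Lambda)$, the projection of $\Lambda$ onto $DV$ along $V[D]$. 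Thus $Ex$ is the $DV$-component of some $\lambda\in\Lambda$; applying $D$ kills the $V[D]$-component and gives $x = D_0(Ex) = D\lambda\in D\Lambda$, hence $x\in\Lambda^\vee\cap D\Lambda = N$. So the map is injective, and therefore the pairing on $\B_{\Lambda,\Lambda^\vee}\iso M/N$ is perfect.

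I do not expect a real obstacle here: once one commits to working on $DV$, the whole argument reduces to the two identities $F_0E=E+1$ and $E^\ast=-(E+1)$ together with the modular-lattice/duality formalism of Proposition \ref{prop:L(V)} and Lemma \ref{lem:dlvee}. The one point that needs a little care is keeping the lattices $M$, $N$ and their duals straight --- in particular recognising $(\Lambda^\vee\cap DV)^\vee\cap DV$ as $\pi_{DV}(\Lambda)$, which is exactly what closes the injectivity step.
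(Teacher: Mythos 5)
Your proposal is correct and is essentially the paper's argument: the same explicit pairing, the kernel pinned down as $\Lambda^\vee\cap D\Lambda$ via the duality $(\Lambda^\vee)^\vee=\Lambda$, and antisymmetry extracted from identity $(i)$ together with the (near-)trivial action of $F$ modulo the kernel. The only differences are organisational: you verify $(i)$ through the adjoint identity $E^*=-(E+1)$ rather than the substitution $x=Dx_0$, and instead of invoking Corollary \ref{cor:restricting_B} to reduce to the case where $D$ is invertible you effectively inline its content, working directly on $DV$ with Lemma \ref{lem:alternativebettsgp} and Proposition \ref{prop:L(V)}(c) to identify $(\Lambda^\vee\cap DV)^\vee\cap DV=\pi_{DV}(\Lambda)$ — which is precisely the computation in the proof of that corollary.
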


\begin{proof}
By Corollary \ref{cor:restricting_B}, we may assume for simplicity of notation that $D$ is non-singular on $\Lambda$, so $D$ is invertible on $V$ and $\B_{\Lambda,\Lambda^\vee}\simeq\frac{\Lambda^\vee}{\Lambda^\vee\cap D\Lambda}$. Any $x,y\in V$ may be written as $x=Dx_0$, $y=Dy_0$, so that
\begin{align*}
 \langle y,Fx\rangle = (Dy_0,Fx_0) &= (Fy_0,Fx_0) - (y_0,Fx_0) = \\
 &= (x_0,y_0)-(Fx_0,y_0) = - (Dx_0,y_0) = - \langle x,y\rangle,
\end{align*}
which proves (i).

Consider the restriction of $\langle\cdot,\cdot\rangle$ to $\Lambda^\vee$. Since $F$ is an automorphism of $\Lambda^\vee$ by Lemma \ref{cor:LambdaperpisFstable}, identity $(i)$ shows that the left- and right-kernels of $\langle\cdot,\cdot\rangle$ on $\Lambda^\vee$ coincide. Moreover, we see that $y\in\Lambda^\vee$ is in the right-kernel precisely when $D^{-1}y\in(\Lambda^\vee)^\vee=\Lambda$, so that the left- and right-kernels are both equal to $\Lambda^\vee\cap D\Lambda$.

It follows that $\langle\cdot,\cdot\rangle$ passes to a perfect pairing on $\frac{\Lambda^\vee}{\Lambda^\vee\cap D\Lambda}\simeq\B_{\Lambda,\Lambda^\vee}$. Antisymmetry of this pairing follows directly from (i) and the fact that $F$ acts trivially on $\B_{\Lambda,\Lambda^\myperp}$ (Proposition \ref{prop:bettsgpbehaviour}).
\end{proof}


\subsection{Life up to squares}

We now turn to the behaviour of the order of the \hbox{$F^f$-invariants} of $\Lambda/e\Lambda^{\!\vee}$ up to rational squares as $e$ and $f$ vary. We first establish some general results about sizes of $\A[e]$ and $\A^{F^f}$  up to squares:

\begin{theorem}\label{thm:uptosquares}
Let $\mathcal A$ be a finite abelian group that admits a perfect
pairing $\langle\cdot,\cdot\rangle:\mathcal A\times\mathcal A\rightarrow\Q/\Z$.
Let $\sim$ denote equality up to rational squares.

\begin{enumerate}
\item If $\langle,\rangle$ is antisymmetric, then $|\A|$ is either a square or twice a square; it is a square if and only if $\mathcal A$ admits a perfect alternating pairing.
Moreover,
$$
 |\A[e]|\sim
   \begin{cases}
     |\A| & \text{if}\quad 2\div e, \\
        1 & \text{if}\quad 2\nmid e.
   \end{cases}
$$
\item If $F\in\Aut(\A)$ and $\langle,\rangle$ is symmetric and $F$-invariant,
then
$$
  |\A^{F^f}|\sim
    \begin{cases}
      |\A^F| & \text{if}\quad 2\nmid f, \\
      |\A|   & \text{if}\quad 2\div f.
    \end{cases}
$$
\end{enumerate}
\end{theorem}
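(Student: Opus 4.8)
\textbf{Proof proposal for Theorem \ref{thm:uptosquares}.}

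The plan is to reduce everything to the primary decomposition of $\A$ and observe that only the $2$-primary part is relevant to the statements, since a perfect pairing on a finite abelian group splits orthogonally along the primary decomposition, and the $p$-part for odd $p$ always has square order (its contribution to $|\A|$, $|\A[e]|$ and $|\A^{F^f}|$ is a square, so it disappears modulo rational squares). So from the start I would replace $\A$ by its $2$-part, and similarly all of $\A[e]$, $\A^F$, $\A^{F^f}$ by their $2$-parts; this only changes the relevant quantities by odd squares.

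For part (1), the classification of finite abelian groups with a perfect antisymmetric $\Q/\Z$-valued pairing is the key input: such a group is either hyperbolic (hence of square order, and in fact admits a perfect \emph{alternating} pairing) or differs from a hyperbolic group by a single orthogonal summand $C_{2^k}$ carrying the pairing $(x,y)\mapsto 2^{k-1}xy \bmod \Z$ (for which antisymmetry holds but alternation fails, as $\langle x,x\rangle=2^{k-1}x^2$ need not vanish for $2\nmid x$ — indeed $2^{k-1}\cdot 1\ne 0$). This gives the dichotomy ``square or twice a square'' and the alternating-pairing criterion. For the $\A[e]$ statement: when $e$ is odd, $\A[e]$ has odd order so $|\A[e]|\sim 1$; when $e$ is even, $\A[e]$ contains the $2$-part of $\A$, so writing $\A_2$ for the $2$-part, $|\A[e]|=|\A_2|\cdot(\text{odd})$, and $|\A_2|\sim|\A|$ because $|\A|/|\A_2|$ is an odd square. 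To be careful I should check $|\A|\sim|\A_2|$: indeed $|\A| = |\A_2|\prod_{p \text{ odd}}|\A_p|$ and each $|\A_p|$ is a square (odd-order group with perfect pairing, or just: any finite abelian group of odd order equipped with a perfect antisymmetric pairing has square order, since over $\Z/p$ with $p$ odd antisymmetric equals alternating).

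For part (2), the point is that $|\A^{F^f}|$ modulo squares depends only on the $2$-part. On the $2$-part $\A_2$ the symmetric $F$-invariant perfect pairing restricts to a perfect pairing, and I want $|\A_2^{F^f}| \sim |\A_2^F|$ when $f$ is odd and $\sim|\A_2|$ when $f$ is even. The odd-$f$ case should follow because $F$ and $F^f$ generate the same cyclic subgroup of $\Aut(\A_2)$ after replacing $F$ by a power coprime to $|\A_2^*$-order — more precisely, if $f$ is odd then $F^f$ restricted to $\A_2$ generates the same group as $F$ up to an automorphism of odd order, but actually the cleanest route is: the $2$-part of the order of $F$ acting on $\A_2$ is $2^a$ say, and for $f$ odd $F^f$ has the same $2^a$, so $\langle F^f\rangle \supseteq \langle F^{2^b}\rangle$ appropriately; I'd rather argue via the ``up to squares'' slack directly. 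The clean statement to prove is: for any automorphism $F$ of a finite abelian $2$-group with a perfect symmetric $F$-invariant pairing, $|\A^{F^2}|\sim|\A|$ and $|\A^{F^{\text{odd}}}|\sim|\A^F|$. The second is easy since $F$ and $F^{\text{odd}}$ generate the same $2$-group in $\Aut(\A)$... no — odd powers generate the same \emph{cyclic} group as $F$ only if the odd number is coprime to $\mathrm{ord}(F)$, which it is on the $2$-part since $\mathrm{ord}(F|_{\A_2})$ could have odd part. Hmm. I think the right move is to pass to $F$ of $2$-power order on $\A_2$: replacing $F$ by $F^{m}$ with $m$ the odd part of $\mathrm{ord}(F|_{\A_2})$ changes neither $\A_2^{F^f}$ for $f$ in the relevant residue classes nor the conclusion. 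Then $|\A^{F^{f}}|$ for $f$ odd equals $|\A^F|$ on the nose (same cyclic group), and for $f$ even I must show $|\A^{F^2}|\sim|\A|$. This last is the \textbf{main obstacle}: I'd prove it by decomposing $\A_2$ into $F$-stable orthogonal pieces, reducing to indecomposable $F$-modules-with-pairing, and on each such piece computing $|\A^{F^2}|$ and $|\A|$ and checking their ratio is a square — the symmetry and $F$-invariance of the pairing force the ``non-square discrepancy'' of $\A^{F^2}$ versus $\A$ to be measured by a quadratic form whose discriminant is a square. Concretely, I expect to use that $\A^{F^2}/\A^F$ and $\A/(1+F)\A$-type cokernels are related by the pairing (Pontryagin duality: $(\A^{F})^\perp = (F-1)\A$), so that $|\A/\A^F| = |(F-1)\A|$ and hence $|\A| = |\A^F|\cdot|(F-1)\A|$, and similarly $|\A| = |\A^{F^2}|\cdot|(F^2-1)\A|$; then $|\A^{F^2}|/|\A^F| = |(F-1)\A|/|(F^2-1)\A| = |(F-1)\A / (F+1)(F-1)\A|$, which is $|\,(F-1)\A / (F+1)(F-1)\A\,|$, a group on which $F$ acts as $-1$, hence — by an argument like Example (iii) / the antisymmetric case of part (1) applied to the induced pairing — has square or twice-square order, and I'd need to pin down that it is actually a square mod the slack we allow. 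I would handle this by an explicit generator-by-generator analysis on the indecomposable pieces in the classification of $\Z[F]/(F^{2^a}-1)$-lattices-with-pairing, which is finite casework but routine.
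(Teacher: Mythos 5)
Your part (2) has genuine gaps. First, the opening reduction to the $2$-primary part is not valid there: it is false that the odd $p$-parts contribute squares to $|\A^{F^f}|$ (take $\A=C_p$, $F=1$: the contribution is $p$), and what you actually need from the odd part — $|\A_{\mathrm{odd}}^{F^f}|\sim|\A_{\mathrm{odd}}^F|$ for odd $f$ and $\sim|\A_{\mathrm{odd}}|$ for even $f$ — is itself a nontrivial instance of the theorem in which the pairing hypothesis does real work (an automorphism of order $3$ acting on $C_p$ without fixed points shows the statement fails without an invariant pairing), so the odd part cannot simply be discarded. Second, for odd $f$ your fix of replacing $F$ by $F^m$, $m$ the odd part of $\ord(F|_{\A_2})$, changes the groups in question: $\A^{F^m}$ can be strictly larger than $\A^F$, so "same cyclic group on the nose" is not available, and this is exactly the delicate case (e.g.\ $F$ of odd order acting on a $2$-group with few fixed points) that needs the pairing. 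Third, you yourself flag the even-$f$ step ($|\A/\A^{F^2}|$ a square) as the main obstacle: your duality bookkeeping only yields "square or twice a square" for $(F-1)\A/(F^2-1)\A$, and the proposed resolution by classifying indecomposable $\Z[F]$-modules-with-pairing over $2$-groups is neither carried out nor routine (such classification problems are genuinely hard), so the key point is left unproved. The idea your proposal is missing is the paper's Lemma \ref{lem:reversible}: for suitable self-adjoint $H\in\Z[F,F^{-1}]$ the explicit pairing $\langle x,y\rangle=((F^k-F^{-k})Hx,y)$ is perfect and \emph{alternating} on $\A/\A[(F^{2k}-1)H]$, which immediately gives square index; applied with $H=1$, $k=f/2$ it settles even $f$, and combined with $I+J=R$ torsion bookkeeping (Lemma \ref{prop:idealsannihilate}) and the isomorphism $\A^{F^f}/\A^F\cong\A[(F^2-1)(1+F+\dots+F^{f-1})]/\A^{F^2}$ it settles odd $f$ — uniformly in $\A$, with no primary decomposition or classification.

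Part (1) is closer to workable but also has errors as written. The classification you invoke is misstated: the pairing $(x,y)\mapsto 2^{k-1}xy/2^k$ on $C_{2^k}$ is not perfect for $k\ge2$ (its kernel contains $2C_{2^k}$); the only possible non-alternating orthogonal cyclic summand is $C_2$. And the claim that for even $e$ the subgroup $\A[e]$ contains the whole $2$-part of $\A$ is false (e.g.\ $\A_2=C_4\times C_4$, $e=2$); with the corrected classification the conclusion $|\A[e]|\sim|\A|$ for $2\div e$ can still be salvaged, but your stated justification does not prove it. The paper avoids classification altogether: reducing the given pairing modulo $\tfrac1e\Z$ yields a perfect antisymmetric pairing on $\A/\A[e]$, which is automatically alternating when $e$ is even (since $2\langle x,x\rangle=0$), so $\A/\A[e]$ has square order; when $e$ is odd, $|\A[e]|$ is an odd number that is a square or twice a square, hence a square.
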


\begin{lemma}\label{lem:reversible}
Let $\A$ be an abelian group with an automorphism $F$ and
with a $\Q/\Z$-valued $F$-equivariant symmetric perfect
pairing $(\cdot,\cdot)$ on $\A$. Suppose that \hbox{$H\in\Z[F,F^{-1}]$} has $\theta(H)=F^{2s}H$
for some $s\in\Z$, where $\theta$ is the unique automorphism of
$\Z[F,F^{-1}]$ interchanging $F$ and $F^{-1}$. Then for all
$k\in\Z$ the quotient $\A/\A[(F^{2k}-1)H]$ admits a $\Q/\Z$-valued perfect
alternating pairing.
\end{lemma}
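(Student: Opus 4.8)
The plan is to write the desired pairing explicitly on all of $\A$, as a twist of the ``obvious'' pairing $(x,y)\mapsto(x,(F^{2k}-1)Hy)$ by a power of $F$, to compute its two radicals, and finally to verify the extra diagonal vanishing that distinguishes an \emph{alternating} pairing from a merely antisymmetric one. Write $G=(F^{2k}-1)H$. Since $(\cdot,\cdot)$ is $F$-equivariant, the adjoint of any $L\in\Z[F,F^{-1}]$ with respect to it is $\theta(L)$, i.e.\ $(Lx,y)=(x,\theta(L)y)$ for all $x,y\in\A$; in particular
$$
 \theta(G)=\theta(F^{2k}-1)\,\theta(H)=(F^{-2k}-1)F^{2s}H=-F^{2s-2k}G.
$$
Put $m=s-k$ and define $\tilde B(x,y):=(x,F^mGy)$; the exponent $m$ is chosen precisely so that $\theta(F^mG)=F^{-m}\theta(G)=-F^mG$, which makes $\tilde B$ antisymmetric:
$$
 \tilde B(y,x)=(F^mGx,y)=(x,\theta(F^mG)y)=-(x,F^mGy)=-\tilde B(x,y).
$$

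Next I would compute the radicals of $\tilde B$ on $\A$. Because $F^m$ is an automorphism and $(\cdot,\cdot)$ is perfect, the right radical is $\ker(F^mG)=\A[G]$, and likewise the left radical is $\ker(\theta(F^mG))=\ker(F^mG)=\A[G]$. Hence $\tilde B$ descends to a non-degenerate antisymmetric pairing on $\A/\A[G]=\A/\A[(F^{2k}-1)H]$, which is perfect (here $\A$, and hence this quotient, is finite).

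The one substantive step — and the only place I expect real difficulty — is to upgrade ``antisymmetric'' to ``alternating'', i.e.\ to show $\tilde B(x,x)=(x,F^mGx)=0$ for every $x\in\A$; this is strictly stronger than antisymmetry when $\A$ has $2$-torsion, and it is precisely here that the hypotheses ``$2k$ even'' and $\theta(H)=F^{2s}H$ are used. The key point is the ``palindrome'' identity
$$
 (x,F^jHx)=(F^jHx,x)=(x,\theta(F^jH)x)=(x,F^{2s-j}Hx)\qquad(j\in\Z),
$$
which says $j\mapsto(x,F^jHx)$ is symmetric about $j=s$. Since $m=s-k$, we have $F^mG=(F^{s+k}-F^{s-k})H$, and the two exponents $s+k$, $s-k$ are mirror images about $s$; applying the identity with $j=s+k$ gives $(x,F^{s+k}Hx)=(x,F^{s-k}Hx)$, whence $\tilde B(x,x)=0$. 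Therefore $\tilde B$ induces a perfect alternating $\Q/\Z$-valued pairing on $\A/\A[(F^{2k}-1)H]$, as claimed. The ``miracle'' making the lemma work is that the single twist $F^{s-k}$ forced by antisymmetry simultaneously delivers the diagonal vanishing: without the hypothesis $\theta(H)=F^{2s}H$ one would obtain only an antisymmetric, not an alternating, pairing.
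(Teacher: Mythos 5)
Your proof is correct and is essentially the paper's argument: both twist the operator $(F^{2k}-1)H$ by a power of $F$ so that it becomes anti-self-adjoint, identify the left and right kernels with $\A[(F^{2k}-1)H]$, and obtain the diagonal vanishing from the adjoint identity $(F^{s+k}Hx,x)=(x,F^{s+k}Hx)=(F^{s-k}Hx,x)$. The only cosmetic difference is that the paper first reduces to $s=0$ (replacing $H$ by the self-adjoint $F^sH$, which does not change $\A[(F^{2k}-1)H]$) and then uses the pairing $\langle x,y\rangle=((F^k-F^{-k})Hx,y)$, whereas you carry the exponent $s$ through via the explicit twist $F^{s-k}$.
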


\begin{proof}
Since $\theta(F^sH)=F^{-s}F^{2s}H=F^sH$ and
$\A[(F^{2k}-1)H]=\A[(F^{2k}-1)F^sH]$, we may, without loss of generality,
assume that $s=0$.

Note that now $H$ is self-adjoint and that $F^k\!-\! F^{-k}$ is anti-self-adjoint
with respect to $(\cdot,\cdot)$. Thus the pairing on $\A$ given by
$$
 \langle x,y\rangle=((F^k-F^{-k})Hx,y)=-(x,(F^k-F^{-k})Hy)
$$
has left- and right-kernels $\A[(F^k-F^{-k})H]=\A[(F^{2k}-1)H]$,
and hence passes to a perfect pairing on the desired quotient.
To see that it is alternating, note that
\begin{equation*}
  \langle x,x\rangle=(F^kHx,x)-(F^{-k}Hx,x)=(F^kHx,x)-(x,F^kHx)=0.\qedhere
\end{equation*}
\end{proof}

\begin{lemma}\label{prop:idealsannihilate}
Let $R$ be a commutative ring, and $I,J,K$ ideals of $R$ with $I+J=R$. Then for any $R$-module $M$,
$$
  M[IJK]=M[IK]+M[JK].
$$
\end{lemma}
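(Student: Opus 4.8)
The plan is to prove the two inclusions separately, with the hypothesis $I+J=R$ entering only for the harder one.

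First, the inclusion $M[IK]+M[JK]\subseteq M[IJK]$ is immediate and uses nothing about $I+J$: since $R$ is commutative and $I,J,K$ are ideals, $IJK\subseteq IK$ (because $J\subseteq R$) and $IJK\subseteq JK$ (because $I\subseteq R$), after harmlessly reordering the factors. Hence any element annihilated by $IK$, or by $JK$, is annihilated by $IJK$, and the sum of the two submodules lies in $M[IJK]$.

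For the reverse inclusion, fix $m\in M[IJK]$ and use $I+J=R$ to write $1=i+j$ with $i\in I$ and $j\in J$, so that $m=im+jm$. The claim is that $jm\in M[IK]$ and $im\in M[JK]$. Indeed, $IK\cdot(jm)=(jIK)m$, and $jIK\subseteq JIK=IJK$ using $j\in J$ and commutativity of ideal multiplication, so $IK\cdot(jm)\subseteq (IJK)m=0$; the argument for $im$ is symmetric, with the roles of $I$ and $J$ swapped. Therefore $m=jm+im\in M[IK]+M[JK]$, which completes the proof.

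There is essentially no obstacle here beyond bookkeeping: the one thing to get right is that in the decomposition $m=im+jm$ it is the summand multiplied by a generator of $I$ that becomes annihilated by $JK$ (and vice versa), and that this works because ideal products in a commutative ring can be freely reordered. (One could also phrase the reverse inclusion as an application of the Chinese-remainder-type fact that $IK+JK=(I+J)K=K$, but the explicit $1=i+j$ argument is the most transparent.)
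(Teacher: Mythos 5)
Your proof is correct, and it is essentially the paper's argument in element-wise form: the paper notes that $I\cdot M[IJK]\subseteq M[JK]$ and $J\cdot M[IJK]\subseteq M[IK]$, so both $I$ and $J$ annihilate the quotient $M[IJK]/(M[IK]+M[JK])$, whence $R=I+J$ does and the quotient vanishes. Your decomposition $1=i+j$, $m=im+jm$ with $im\in M[JK]$ and $jm\in M[IK]$ is exactly the unwound version of that annihilator argument, so there is nothing to add.
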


\begin{proof}
Clearly $M[IK]+M[JK]\subseteq M[IJK]$, so it suffices to check that the quotient
$\tilde M=\frac{M[IJK]}{M[IK]+M[JK]}$ is trivial. Since $I.M[IJK]\subseteq M[JK]$
we see that $I\subseteq\Ann(\tilde M)$. Similarly,
$J\subseteq\Ann(\tilde M)$, so $R=I+J\subseteq\Ann(\tilde M)$, and $\tilde M$ is trivial as desired.
\end{proof}

\begin{proof}[Proof of Theorem \ref{thm:uptosquares}]

(1) The fact about the order of $\A$ is standard.

Let $\langle\cdot,\cdot\rangle_{e}$ denote the reduction of
$\langle\cdot,\cdot\rangle$ mod $\frac1e\Z$, for $e\in\N$.
Thus $x\in\A$ is in the left-kernel (equivalently, right-kernel)
of $\langle\cdot,\cdot\rangle_e$ just when
$e\langle x,y\rangle=\langle ex,y\rangle=0$ for all $y\in\A$,
i.e.\ just when $x\in\A[e]$.
Thus $\langle\cdot,\cdot\rangle_e$ passes to a perfect antisymmetric
pairing on $\A/\A[e]$.

When $e$ is odd
$\left|\frac{\A}{\A[1]}\right|/\left|\frac{\A}{\A[e]}\right|=|\A[e]|$
is then either a square or twice a square, but is also the order of a group
with odd exponent; thus $|\A[e]|\sim 1$.
If $e$ is even, then as $2\langle x,x\rangle=0$ for all $x\in\A$, it follows that $\langle x,x\rangle_e=0$ and\ $\langle\cdot,\cdot\rangle_e$ is alternating; hence $|\A[e]|\sim|\A|$.

(2)
Without loss of generality, $f>0$.

If $2\div f$ then, Lemma \ref{lem:reversible} with $H=1,k=\frac f2$ shows
that $\A/\A^{F^f}$ admits a $\Q/\Z$-valued
alternating perfect pairing, and so has square order,
i.e.\ $\smash{\left|\A^{F^f}\right| \sim |\A|}$.

For $2\nmid f$ consider $\A^{F^f}$ and $\A^{F^2}$.
Their intersection is precisely $\A^F$ and by
Lemma \ref{prop:idealsannihilate}
(with $I=(F+1),J=(F^{f-1}+F^{f-2}+\dots+1),K=(F-1)$) their sum
is $\A[(F^2-1)(F^{f-1}+\dots+1)]$.
Thus by the second
isomorphism theorem for groups
$$
  \frac{\A^{F^f}}{\A^F}= \frac{\A^{F^f}}{\A^{F^f}\cap \A^{F^2}}
  \cong \frac{\A[(F^2-1)(F^{f-1}+\dots+1)]}{\A^{F^2}}.
$$
Lemma \ref{lem:reversible} with $k=1,H=F^{f-1}+\dots+1$ shows that
$\A[(F^2-1)(F^{f-1}+\dots+1)]$ has square index in $\A$, as does $\A^{F^2}$
(by the ``$2|f$'' case). Hence $\smash{\left|\A^{F^f}\right|\sim\left|\A^F\right|}$.
\end{proof}

\begin{theorem}\label{thm:uptosquares2}
Let $\Lambda$ be a lattice with an automorphism $F=1+D$,
satisfying $\Lambda[D]=\Lambda[D^2]$.
Suppose $(\cdot,\cdot)$
is a non-degenerate symmetric $F$-invariant pairing with
$\Lambda^\vee\subseteq\Lambda$.
Write the characteristic polynomial of $F$ over $\Lambda$
as $\pm(t-1)^rp(t)$ with $p(1)>0$.
Then for all $e,f\in\N$ 
$$
  \left|\left(\frac\Lambda{e\Lambda^\vee}\right)^{F^f}\right|\sim
    \begin{cases}
       \left|\frac{\Lambda^F}{\Lambda^{\vee F}}\right|\cdot\frac{p(1)}{|\B_{\Lambda,\Lambda^\vee}|\cdot|\SG_D(\Lambda)|}\cdot e^r
        & \text{if }\>2\nmid e,2\nmid f, \\
      \left|\frac{\Lambda^F}{\Lambda^{\vee F}}\right|\cdot\frac{p(1)}{|\SG_D(\Lambda)|}\cdot e^r
        & \text{if }\>2\div e,2\nmid f, \\
      \left|\frac\Lambda{\Lambda^\vee}\right|\cdot e^{\rk\Lambda}
        & \text{if }\>2\div f,
    \end{cases}
$$
where $\sim$ denotes equality up to rational squares.
\end{theorem}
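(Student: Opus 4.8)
The plan is to combine the exact formula of Corollary~\ref{cor:exactformulatorsion} with the ``up to squares'' results of Theorem~\ref{thm:uptosquares}, after equipping the finite abelian group $\A=\Lambda/e\Lambda^\vee$ with a suitable pairing. The key observation is that $e\Lambda^\vee$ is precisely the dual lattice of $\Lambda$ with respect to the rescaled pairing $\frac1e(\cdot,\cdot)$, which is again non-degenerate, symmetric and $F$-invariant; since $\Lambda^\vee\subseteq\Lambda$ forces $e\Lambda^\vee\subseteq\Lambda$, the reduction of $\frac1e(\cdot,\cdot)$ modulo $\Z$ descends to a perfect symmetric $F$-invariant $\Q/\Z$-valued pairing on $\A$ (its left kernel inside $\Lambda$ being $\{x:(x,\Lambda)\subseteq e\Z\}=e\Lambda^\vee$, so the pairing is non-degenerate, hence perfect as $\A$ is finite). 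Here $F$ acts on $\A$ because $F\Lambda^\vee=\Lambda^\vee$ by Lemma~\ref{cor:LambdaperpisFstable}.

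If $2\div f$, Theorem~\ref{thm:uptosquares}(2) applied to $\A$ immediately gives $|\A^{F^f}|\sim|\A|=|\Lambda/\Lambda^\vee|\cdot e^{\rk\Lambda}$, which is the third line. If $2\nmid f$, the same theorem instead gives $|\A^{F^f}|\sim|\A^F|=\bigl|(\Lambda/e\Lambda^\vee)^F\bigr|$, so it remains to evaluate the latter up to squares. Writing $D=F-1$, the hypothesis $\Lambda[D]=\Lambda[D^2]$ lets me apply Corollary~\ref{cor:exactformulatorsion} to the pair $\Lambda^\vee\subseteq\Lambda$ (which is $D$-stable of full rank by Lemma~\ref{cor:LambdaperpisFstable} and Proposition~\ref{prop:L(V)}(e); the characteristic polynomial of $D$ is $\pm t^rp(t+1)$ with $p(1)>0$, and $(\Lambda/e\Lambda^\vee)^F=(\Lambda/e\Lambda^\vee)[D]$), which yields
\[
\bigl|(\Lambda/e\Lambda^\vee)^F\bigr|=\frac{|\B_{\Lambda,\Lambda^\vee}[e]|}{|\B_{\Lambda,\Lambda^\vee}|}\cdot\left|\frac{\Lambda^F}{\Lambda^{\vee F}}\right|\cdot\frac{p(1)}{|\SG_D(\Lambda^\vee)|}\cdot e^r.
\]
I then replace $\SG_D(\Lambda^\vee)$ by $\SG_D(\Lambda)$ using Theorem~\ref{lem:perppreservessepgps}, and control the leading factor via Theorem~\ref{thm:uptosquares}(1), which applies because $\B_{\Lambda,\Lambda^\vee}$ carries a perfect antisymmetric $\Q/\Z$-valued pairing by Theorem~\ref{thm:bettspairing}: when $2\div e$ we get $|\B_{\Lambda,\Lambda^\vee}[e]|\sim|\B_{\Lambda,\Lambda^\vee}|$, so the factor is $\sim1$ and we land on the second line, while when $2\nmid e$ we get $|\B_{\Lambda,\Lambda^\vee}[e]|\sim1$, so the factor is $\sim1/|\B_{\Lambda,\Lambda^\vee}|$ and we land on the first line. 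Multiplicativity of $\sim$ under products then finishes each case.

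The substantive inputs — the antisymmetric perfect pairing on $\B_{\Lambda,\Lambda^\vee}$ and the identity $\SG_D(\Lambda)=\SG_D(\Lambda^\vee)$ — are already in hand from Theorems~\ref{thm:bettspairing} and~\ref{lem:perppreservessepgps}, so I do not expect a genuine obstacle. The only points requiring care are the construction of the pairing on $\A$ (checking well-definedness and non-degeneracy after the $\frac1e$-rescaling, and that $F$ preserves it) and the modulo-squares bookkeeping that matches the factor $|\B_{\Lambda,\Lambda^\vee}[e]|/|\B_{\Lambda,\Lambda^\vee}|$ against the terms $p(1)$ and $|\B_{\Lambda,\Lambda^\vee}|$ appearing in the statement.
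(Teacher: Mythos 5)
Your proposal is correct and follows essentially the same route as the paper: reduce the given pairing to a perfect symmetric $F$-invariant pairing on $\Lambda/e\Lambda^\vee$ (the paper phrases this as a $\Q/e\Z$-valued pairing rather than rescaling by $\frac1e$, which is the same thing), apply Theorem~\ref{thm:uptosquares}(2) to split the cases by the parity of $f$, and in the odd-$f$ case combine Corollary~\ref{cor:exactformulatorsionviaF} with Theorem~\ref{lem:perppreservessepgps}, Theorem~\ref{thm:bettspairing} and Theorem~\ref{thm:uptosquares}(1) to handle the factor $|\B_{\Lambda,\Lambda^\vee}[e]|/|\B_{\Lambda,\Lambda^\vee}|$ according to the parity of $e$. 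No gaps.
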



\begin{proof}
The reduction mod $e\Z$ of $(\cdot,\cdot)$ provides a
$\Q/e\Z$-valued $F$-invariant perfect symmetric pairing
on $\Lambda/e\Lambda^\vee$.
By Theorem \ref{thm:uptosquares}(2),
$$
  \left|\left(\frac\Lambda{e\Lambda^\vee}\right)^{F^f}\right|\sim
    \begin{cases}
       \left|\left(\frac\Lambda{e\Lambda^\vee}\right)^{F}\right|
        & \text{if }2\nmid f, \\
       \left|\frac\Lambda{e\Lambda^\vee}\right|
        & \text{if }2\div f,
    \end{cases}
$$
As $|\Lambda/e\Lambda^\vee|=|\Lambda/\Lambda^\vee|.e^{\rk\Lambda}$,
this gives the result when $2\div f$.

Since $\Lambda^\vee$ is $F$-stable (Corollary \ref{cor:LambdaperpisFstable})
we may apply Corollary \ref{cor:exactformulatorsionviaF}:
$$
\left|\left(\frac\Lambda{e\Lambda^\vee}\right)^F\right|
  =\frac{|\B_{\Lambda,\Lambda^\vee}[e]|}{|\B_{\Lambda,\Lambda^\vee}|}.
  \left|\frac{\Lambda^F}{\Lambda^{\vee F}}\right|.\frac{p(1)}{|\SG_D(\Lambda^\vee)|}.e^r.
$$
Note that by Theorem \ref{lem:perppreservessepgps},
$\SG_D(\Lambda)\iso\SG_D(\Lambda^\vee)$.
Finally, by Theorem \ref{thm:bettspairing}, $\B_{\Lambda,\Lambda^\vee}$
possesses a $\Q/\Z$-valued perfect antisymmetric pairing, so
by Theorem \ref{thm:uptosquares}(1),
\begin{equation*}
 \frac{|\B_{\Lambda,\Lambda^\vee}[e]|}{|\B_{\Lambda,\Lambda^\vee}|}\sim
   \begin{cases}
     \frac{1}{|\B_{\Lambda,\Lambda^\vee}|} & \text{if}\quad 2\nmid e, \\
                           1 & \text{if}\quad 2\div e.
   \end{cases}
\qedhere
\end{equation*}
\end{proof}


\medskip

\subsection{$\Z[C_n]$-lattices}
\label{ss:c_n}

We now examine $C_n$-lattices with pairings in some specific cases, computing their invariants $\B$ and $\SG$, and often classifying the possibilities that can occur. The goal of this section is the complete classification of lattice-pairs in dimension $\leq2$.

We begin with an example to illustrate how the above results can be applied in practice. Note throughout this section that for any $C_n$-lattice $\Lambda$ the group action on $\Lambda\otimes_\Z\Q$ is semisimple, so the hypothesis $\Lambda[D]=\Lambda[D^2]$ is automatically satisfied for any generator $F$ of $C_n$ and $D\!=\!F\!-\!1$.

\smallskip

\begin{lemma}
Let $\Lambda$ be an integral representation of $C_n$ with a full-rank subrepresentation $\Lambda'\subseteq\Lambda$, and let $F$ be a generator of $C_n$. Suppose that $\Lambda'$ is a permutation representation corresponding to an action of $C_n$ with orbits of size $m_1,\dots,m_k$. Then $\SG(\Lambda')\simeq\prod_iC_{m_i}$ and $\B_{\Lambda,\Lambda'}$ is trivial.
\begin{proof}
The characteristic polynomial of $F$ is $\prod_i\left(X^{m_i}-1\right)=(X-1)^k\prod_i\left(\frac{X^{m_i}-1}{X-1}\right)$ so that in the notation of Theorem \ref{thm:exactformulafixpoint} we have $p(1)=\prod_im_i$. Since $|\SG(\Lambda')|.|\B_{\Lambda,\Lambda'}|$ divides $p(1)$, triviality of $\B_{\Lambda,\Lambda'}$ will follow from the description of $\SG(\Lambda')$.

To prove this, since the construction of the separation group respects direct sums of $\Z[F]$-lattices, it suffices to consider the case when the $C_n$-set giving rise to $\Lambda'$ is transitive, i.e.\ $\Lambda'\simeq\Z[C_m]$ for some $m$ dividing $n$. Then $\pi=\frac1m(1+F+\dots+F^{m-1})$ is the projection operation from $V=\Lambda'\otimes\Q$ to $V^F$ with kernel $(F-1)V$. It follows that $\pi\Lambda'$ is the $\Z$-span of $\frac1m(1+F+\dots+F^{m-1})$ in $\Q[C_m]\simeq V$, while $\Lambda'\cap V^F$ is the $\Z$-span of $1+F+\dots+F^{m-1}$. It follows by Lemma \ref{defnlem:sepgp} that $\SG(\Lambda')\simeq C_m$, which was what we wanted to prove.
\end{proof}
\end{lemma}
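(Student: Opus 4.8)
The plan is to deduce everything from the dimension count provided by Theorem~\ref{thm:exactformulafixpoint}, together with a direct computation of the separation group of a single orbit. First I would record that the characteristic polynomial of $F$ on $\Lambda'$, being the character of the permutation representation, is $\prod_i(X^{m_i}-1)$, and factor out the power of $(X-1)$ coming from the trivial summand of each orbit: writing $p(X)=\prod_i\frac{X^{m_i}-1}{X-1}$ we get $r=k$ and $p(1)=\prod_i m_i$. Since Theorem~\ref{thm:exactformulafixpoint} (via Corollary~\ref{cor:BTdivP}) tells us that $|\SG(\Lambda')|\cdot|\B_{\Lambda,\Lambda'}|$ divides $p(1)=\prod_i m_i$, it will be enough to show that $|\SG(\Lambda')|$ is \emph{already} equal to $\prod_i m_i$; triviality of $\B_{\Lambda,\Lambda'}$ is then forced.

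The separation group construction is additive in direct sums of $\Z[F]$-lattices (the subspaces $V[D]$, $DV$ and the lattice-intersections all split compatibly), so it suffices to treat a single transitive orbit, i.e. $\Lambda'\simeq\Z[C_m]$. Here I would identify $V=\Lambda'\otimes\Q\simeq\Q[C_m]$, note that $V^F=\Q\cdot N$ where $N=1+F+\dots+F^{m-1}$, and that the averaging idempotent $\pi=\tfrac1m N$ is exactly the projection onto $V^F$ along $(F-1)V=V[D]^{\perp\text{-complement}}$. Then $\pi\Lambda'=\Z\cdot\tfrac1m N$ while $\Lambda'\cap V^F=\Z\cdot N$, since an integer combination $\sum a_j F^j$ lies in $\Q N$ precisely when all $a_j$ are equal. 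Lemma~\ref{defnlem:sepgp} gives $\SG(\Lambda')\simeq \pi\Lambda'/(\Lambda'\cap V^F)\simeq C_m$, as desired. Summing over orbits yields $\SG(\Lambda')\simeq\prod_i C_{m_i}$, of order $\prod_i m_i = p(1)$, and hence $\B_{\Lambda,\Lambda'}$ is trivial.

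The only mild subtlety — and the step I would be most careful about — is checking that the separation group genuinely depends only on $\Lambda'$ and not on the ambient $\Lambda$, so that computing $\SG(\Lambda')$ inside $\Lambda'\otimes\Q$ is legitimate; but this is exactly Remark~\ref{rmk:defnlem:sepgp}. One also wants to be slightly careful that in the statement of Theorem~\ref{thm:exactformulafixpoint} it is $\SG(\Lambda')$, not $\SG(\Lambda)$, that pairs with $\B$ in the divisibility, which is what makes the argument close up cleanly. No serious obstacle is expected: the content is the one-line orbit computation, and the rest is bookkeeping with the already-established formulae.
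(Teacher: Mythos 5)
Your proposal is correct and follows essentially the same route as the paper: compute the characteristic polynomial of the permutation module to get $p(1)=\prod_i m_i$, invoke the divisibility $|\SG(\Lambda')|\cdot|\B_{\Lambda,\Lambda'}| \div p(1)$ so that triviality of $\B$ follows once $\SG(\Lambda')$ is identified, reduce to a single orbit $\Lambda'\simeq\Z[C_m]$ by additivity under direct sums, and compare $\pi\Lambda'=\Z\cdot\tfrac1m(1+F+\dots+F^{m-1})$ with $\Lambda'\cap V^F=\Z\cdot(1+F+\dots+F^{m-1})$ via Lemma \ref{defnlem:sepgp}. Your extra care about $\SG(\Lambda')$ being intrinsic to $\Lambda'$ (Remark \ref{rmk:defnlem:sepgp}) and about it being $\SG(\Lambda')$, not $\SG(\Lambda)$, in the divisibility is exactly the right bookkeeping and matches the paper's implicit use of these facts.
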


We now turn our attention to classifying the low-dimensional $C_n$-lattice pairs, beginning by examining the cases when $\Lambda\otimes\Q$ is an irreducible $\Q$-representation of $C_n$. For this, we recall first the following basic fact about cyclotomic polynomials:
\begin{lemma}\label{lem:cyc}
$\Phi_k(1)\!=\!p$ if $k\!=\!p^s$ is a prime power; $\Phi_k(1)\!=\!1$ for all other $k\!\neq\! 1$.
\end{lemma}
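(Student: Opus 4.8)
\textbf{Proof plan for Lemma \ref{lem:cyc}.}

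The plan is to evaluate the cyclotomic polynomial $\Phi_k$ at $t=1$ by exploiting the product factorisation $t^n-1=\prod_{d\mid n}\Phi_d(t)$, which upon dividing by $t-1$ and setting $t=1$ gives $n=\prod_{1\ne d\mid n}\Phi_d(1)$. First I would treat the prime-power case: if $k=p^s$, then induction on $s$ (together with the base case $\Phi_p(1)=1+1+\dots+1=p$, since $\Phi_p(t)=\frac{t^p-1}{t-1}$) and the identity $\Phi_{p^s}(t)=\Phi_p(t^{p^{s-1}})$ immediately yield $\Phi_{p^s}(1)=p$. Alternatively, and perhaps more cleanly, I would observe that $t^{p^s}-1=\prod_{j=0}^{s}\Phi_{p^j}(t)$, so dividing by $t-1=\Phi_1(t)$ and setting $t=1$ gives $p^s=\prod_{j=1}^{s}\Phi_{p^j}(1)$; an easy induction then forces each factor to equal $p$.

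For the general case, I would argue by strong induction on $k\ge 2$: suppose $k$ is not a prime power and $\Phi_d(1)$ is already known for all $1<d<k$. Applying $n=\prod_{1\ne d\mid n}\Phi_d(1)$ with $n=k$, and noting that every proper divisor $d>1$ of $k$ contributes $\Phi_d(1)\in\{1,q\}$ where $q$ is the unique prime with $d=q^t$ a prime power (and $1$ otherwise), the right-hand side equals $\prod_{q^t\mid k}q=\mathrm{rad}(k)$ over proper prime-power divisors, multiplied by the unknown $\Phi_k(1)$. Since $k$ has at least two distinct prime factors, $\prod_{q\mid k}q^{\,?}$ already accounts for all of $k$ except we need $\Phi_k(1)=1$; a short check comparing the $p$-adic valuation of both sides for each prime $p\mid k$ does the job, since the prime-power divisors $p, p^2,\dots,p^{v_p(k)}$ already contribute $p^{v_p(k)}$.

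The main obstacle — really the only subtlety — is organising the bookkeeping in the general case so that the valuations match exactly: one must be careful that the proper prime-power divisors of $k$ contribute precisely $p^{v_p(k)}$ for each prime $p\mid k$ and nothing more, which uses crucially that $k$ itself is \emph{not} a prime power (so $k$ is never one of the divisors contributing a prime). A cleaner route avoiding valuations is the classical one: for $k$ with at least two distinct prime factors, write $k=p^s m$ with $p\nmid m$, $m>1$, and use $\Phi_{p^s m}(t)=\Phi_{pm}(t^{p^{s-1}})$ to reduce to $s=1$; then $\Phi_{pm}(t)=\Phi_m(t^p)/\Phi_m(t)$, so $\Phi_{pm}(1)=\Phi_m(1^p)/\Phi_m(1)=1$ provided $\Phi_m(1)\ne 0$, which holds since $1$ is not a primitive $m$-th root of unity for $m>1$. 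This is the argument I would actually write, as it is shortest and sidesteps all the divisor-counting.
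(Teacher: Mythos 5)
Your proposal is correct. Your first route is exactly the paper's proof: the paper simply substitutes $X=1$ into $\frac{X^k-1}{X-1}=\prod_{1\neq j\mid k}\Phi_j(X)$ and inducts on $k$, which is your divisor-product argument written in one line; your careful bookkeeping (that for $k$ not a prime power the proper prime-power divisors $p,p^2,\dots,p^{v_p(k)}$ contribute exactly $p^{v_p(k)}$ for each prime $p\mid k$, so the known factors already multiply to $k$ and force $\Phi_k(1)=1$) is just the induction step spelled out. One small slip in passing: the product of $q$ over \emph{all} proper prime-power divisors $q^t\mid k$ is $\prod_{p\mid k}p^{v_p(k)}=k$, not $\mathrm{rad}(k)$; your subsequent valuation check states the correct count, so nothing breaks. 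The alternative you say you would actually write, via $\Phi_{p^sm}(t)=\Phi_{pm}(t^{p^{s-1}})$ and $\Phi_{pm}(t)=\Phi_m(t^p)/\Phi_m(t)$ with $\Phi_m(1)\neq0$ for $m>1$, is a genuinely different and equally valid route: it trades the divisor-sum induction for two standard functional identities of cyclotomic polynomials (which would themselves need a word of justification), and in exchange it isolates the prime-power and composite cases cleanly without any counting of divisors. Either version establishes the lemma.
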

\begin{proof}
This follows by substituting $X=1$ into the formula $\frac{X^k-1}{X-1}=\prod_{1\neq j|k}\Phi_j(X)$ and using induction on $k$.
\end{proof}

\newpage

\begin{theorem}
\label{thm:c_n}
Let $\Lambda$ be an integral representation of $C_n$ ($n>1$), such that
$\Lambda\otimes\Q$ is the unique faithful irreducible rational representation
of $C_n$. Fix a generator $F$ of $C_n$ and write $D=F-1$ and $p(t)$ for the
minimal polynomial of $F$.

(i) $\Lambda\simeq\a$ for some fractional ideal $\a$ of
$\Q(\zeta_n)$, where $F$ acts on $\Q(\zeta_n)$ by multiplication
by a primitive $n$-th root of unity $\zeta_n$.
Two such lattices are isomorphic as $\Z[C_n]$-representations
if and only if the corresponding fractional ideals are equivalent
in the ideal class group $\Cl_{\mathbb Q(\zeta_n)}$.

(ii) The non-degenerate $C_n$-invariant $\Q$-valued symmetric pairings
on $\Lambda\simeq\a$ are given by
$$
  (x,y)=\tr_{\Q(\zeta_n)/\Q}(\omega^{-1}x\bar y)
$$
for non-zero $\omega\in\Q(\zeta_n)^+$.
With respect to this pairing
$\Lambda^\myperp=\omega\different_n^{-1}\bar{\a}^{-1}$, where
$\different_n^{-1}=\different_{\Q(\zeta_n)/\Q}^{-1}$ is the inverse different
ideal \footnote{i.e. $\different_n^{-1}=\{z\in\Z[\zeta_n]\>|\>\forall w\in\Z[\zeta_n]: \tr_{\Q(\zeta_n)/\Q}(zw)\in\Z\}$}.
In particular $\Lambda^\vee\subseteq\Lambda$ if and only if $\omega\in\mathbf{a\bar a}\different_n$.

(iii) Suppose $\Lambda$ admits such a pairing and that
$\Lambda^\vee\subseteq \Lambda$. Then $\SG_D(\Lambda)$ is trivial; $p(1)=q$ if $n=q^s$ is a prime power, and
$p(1)=1$ otherwise; $\B_{\Lambda,\Lambda^\vee}$ is trivial unless $n=2^s$
and $[\Lambda:\Lambda^\vee]$ is odd, in which case $\B_{\Lambda,\Lambda^\vee}\iso C_2$. In particular
$$
 \left|\left(\frac\Lambda{e\Lambda^\vee}\right)^F\right| =
   \begin{cases}
      q & \text{if}\quad n=q^s, q\text{ an odd prime}, \\
      2 & \text{if}\quad n=2^s \text{ and } [\Lambda\!:\!e\Lambda^\vee] \text{ is even}, \\ 
      1 & \text{otherwise}.
   \end{cases}
$$
\end{theorem}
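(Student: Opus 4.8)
The plan is to prove the three parts in order, relying on the structure theory of $\Z[\zeta_n]$-modules and the results on $\SG$ and $\B$ established earlier.

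For part (i), I would use the fact that $\Q(\zeta_n)$ is the unique faithful irreducible rational representation of $C_n$, so $\Q[C_n]$ acts on $\Lambda\otimes\Q$ through the quotient $\Q[C_n]\twoheadrightarrow\Q(\zeta_n)$. Thus $\Lambda$ is a finitely generated torsion-free module over the Dedekind domain $\Z[\zeta_n]$, and the classification of such modules shows $\Lambda\simeq\a$ for a fractional ideal $\a$, with the isomorphism class determined by the class of $\a$ in $\Cl_{\Q(\zeta_n)}$; two fractional ideals give isomorphic $\Z[C_n]$-modules precisely when they are equivalent. For part (ii), a non-degenerate $C_n$-invariant symmetric $\Q$-bilinear form on $\Q(\zeta_n)$ must be of the form $(x,y)=\tr_{\Q(\zeta_n)/\Q}(\lambda x\bar y)$ for some $\lambda\in\Q(\zeta_n)$; $F$-invariance (i.e.\ invariance under multiplication by $\zeta_n$, using $\bar\zeta_n=\zeta_n^{-1}$) forces $\lambda\bar\lambda^{-1}$-type conditions giving $\lambda\in\Q(\zeta_n)^+$, and writing $\lambda=\omega^{-1}$ gives the stated form; symmetry holds because complex conjugation fixes $\Q$. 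The computation $\Lambda^\vee=\omega\different_n^{-1}\bar\a^{-1}$ is the standard description of the dual of a fractional ideal under a trace pairing twisted by $\omega^{-1}$, and the condition $\Lambda^\vee\subseteq\Lambda$ rearranges to $\omega\in\a\bar\a\different_n$.

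For part (iii), the separation group $\SG_D(\Lambda)$ vanishes because $D=F-1$ is invertible on $V=\Q(\zeta_n)$ (as $1$ is not an eigenvalue of a primitive $n$-th root of unity for $n>1$), so $V[D]=0$ and the defining quotient of $\SG_D$ is trivial. The value $p(1)=\Phi_n(1)$ is given by Lemma \ref{lem:cyc}: it is $q$ when $n=q^s$ is a prime power and $1$ otherwise. For $\B_{\Lambda,\Lambda^\vee}$, by Corollary \ref{cor:BTdivP} its order divides $p(1)$, so it is automatically trivial unless $n$ is a prime power $q^s$, and for odd $q$ it is either trivial or $C_q$. The key computation is via Lemma \ref{lem:alternativebettsgp}: $\B_{\Lambda,\Lambda^\vee}\simeq\frac{\Lambda^\vee\cap DV}{\Lambda^\vee\cap D\Lambda}=\frac{\Lambda^\vee}{\Lambda^\vee\cap D\Lambda}$ since $DV=V$. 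Here $D\Lambda=(\zeta_n-1)\a$ and $\Lambda^\vee\cap D\Lambda$ equals $\Lambda^\vee$ intersected with the prime (above $q$, when $n=q^s$) corresponding to $(\zeta_n-1)$; since $(\zeta_n-1)\Z[\zeta_n]=\mathfrak q$ is the unique prime above $q$, the quotient is a module over $\Z[\zeta_n]/\mathfrak q\simeq\F_q$, so it is either $0$ or $\F_q$. A $\Z[\zeta_n]$-module argument (or a direct valuation computation at $\mathfrak q$, comparing $v_{\mathfrak q}$ of $\Lambda^\vee$ and of $D\Lambda$) shows it is nontrivial exactly when $[\Lambda:\Lambda^\vee]$ is coprime to $q$.

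The main obstacle, and the place requiring the most care, is pinning down exactly when $\B_{\Lambda,\Lambda^\vee}\simeq C_q$ versus trivial, and in particular showing that for odd $q$ it is \emph{always} trivial while for $q=2$ it is controlled by the parity of the index. For this I would invoke Theorem \ref{thm:bettspairing}: $\B_{\Lambda,\Lambda^\vee}$ admits a perfect antisymmetric $\Q/\Z$-valued pairing, so its order is a square or twice a square. Since the order divides $p(1)=q$, for odd $q$ the only possibility consistent with ``square or twice a square'' is order $1$, forcing triviality. For $q=2$ the order is $1$ or $2$ (both allowed), and a direct local computation at the prime $\mathfrak q=(\zeta_{2^s}-1)$ — comparing the $\mathfrak q$-adic valuations of $\Lambda^\vee=\omega\different_n^{-1}\bar\a^{-1}$ and $D\Lambda=\mathfrak q\a$ — shows the quotient is $\F_2$ precisely when $[\Lambda:\Lambda^\vee]=|N_{\Q(\zeta_n)/\Q}(\a\bar\a\different_n/\omega)|$ (up to sign) is odd, i.e.\ when $\omega/(\a\bar\a\different_n)$ is a unit at $\mathfrak q$. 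Finally, the displayed formula for $\left|(\Lambda/e\Lambda^\vee)^F\right|$ follows by feeding $\SG_D(\Lambda)=0$, the value of $p(1)$, and $|\B_{\Lambda,\Lambda^\vee}[e]|$ (which is all of $\B$ when $2\mid e$ in the $q=2$ case, since then $[\Lambda:e\Lambda^\vee]$ is even and $\B\simeq C_2$) into Corollary \ref{cor:exactformulatorsionviaF}, noting $r=0$ and $\Lambda^F=\Lambda'^F=0$.
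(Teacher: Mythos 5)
Your proposal is correct, and for parts (i), (ii), the triviality of $\SG_D(\Lambda)$, the value of $p(1)$, and the exclusion of odd-order $\B$ via Theorems \ref{thm:bettspairing} and \ref{thm:uptosquares}(1) combined with Corollary \ref{cor:BTdivP}, it runs parallel to the paper's argument. Where you genuinely diverge is the key dichotomy for $n=2^s$ (and in fact for any prime power $n=q^s$): the paper never computes $\B_{\Lambda,\Lambda^\vee}$ directly, but instead applies Corollary \ref{cor:exactformulatorsionviaF} with $e=1$ to get $|(\Lambda/\Lambda^\vee)^F|=2/|\B_{\Lambda,\Lambda^\vee}|$, and then argues group-theoretically: if $[\Lambda:\Lambda^\vee]$ is odd the fixed-point count is odd, forcing $\B\iso C_2$, while if the index is even the $2$-group $\langle F\rangle$ must have a nontrivial fixed point on $(\Lambda/\Lambda^\vee)[2]$, forcing $\B$ trivial. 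You instead identify $\B_{\Lambda,\Lambda^\vee}\iso\Lambda^\vee/(\Lambda^\vee\cap(\zeta_n-1)\a)$ via Lemma \ref{lem:alternativebettsgp} (using $DV=V$) and do an ideal-theoretic valuation computation at the unique prime $\mathfrak q=(1-\zeta_{q^s})$ above $q$: since $\Lambda^\vee\subseteq\Lambda$ kills the contribution at all other primes and makes the quotient an $\F_q$-vector space of dimension at most one, the quotient is $C_q$ exactly when $v_{\mathfrak q}(\Lambda^\vee)=v_{\mathfrak q}(\a)$, i.e.\ when $q\nmid[\Lambda:\Lambda^\vee]$. This is a sound and more explicit route; it buys you a uniform criterion for all prime powers (from which, combined with the pairing parity argument, one even deduces that for odd $q$ the index is always divisible by $q$), at the cost of invoking the dual-lattice machinery of \S\ref{ss:pairing} where the paper's argument is a short counting trick. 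Your final deduction of the displayed formula from Corollary \ref{cor:exactformulatorsionviaF} with $r=0$, $\Lambda^F=0$ matches the paper.
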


\begin{proof}
(i) 
$\Lambda\otimes_\Z \Q\cong \Q(\zeta_n)$, this being the unique faithful
irreducible rational representation of $C_n$. The image of $\Lambda$ in
$\Q(\zeta_n)$ is then a $\Z[\zeta_n]$-submodule, and hence a fractional
ideal $\a$ of $\Q(\zeta_n)$. Finally, two such lattices are isomorphic
as $\Z[C_n]$-representations if and only if the corresponding fractional
ideals are isomorphic as $\Z[\zeta_n]$-modules, which is equivalent to them
having the same class in $\Cl_{\Q(\zeta_n)}$ (only possible isomorphisms
are scaling by $x\in\Q(\zeta_n)$).

(ii) Let $\Sigma$ denote the space of all $C_n$-invariant
$\Q$-valued symmetric bilinear forms on $\Q(\zeta_n)$.
$C_n$-invariance implies that bilinear forms are determined by
the $\Q$-linear map $z\mapsto(1,z)$. Moreover, $C_n$-invariance and
symmetry give that $(1,z)=(1,\bar z)$ for all $z$, so
$(1,z)=(1,\Re(z))$ and so the bilinear forms are determined by the
restriction of $z\mapsto(1,z)$ to $\Q(\zeta_n)^+$. Thus $\Sigma$
injects $\Q$-linearly into $\Hom_{\Q}(\Q(\zeta_n)^+,\Q)$. In particular,
$\dim\Sigma\leq [\Q(\zeta_n)^+:\Q]$.

Conversely, the assignment of the form
$(x,y)=\tr_{\Q(\zeta_n)/\Q}(\eta x\bar y)$ to $\eta\in\Q(\zeta_n)^+$
provides a $\Q$-linear injection $\Q(\zeta_n)^+\hookrightarrow\Sigma$.
By dimension considerations, it must be a linear isomorphism. Thus
all $C_n$-equivariant $\Q$-valued symmetric bilinear forms
must be of this form. 
The only degenerate one is when $\eta=0$, so for a non-degenerate one
we may take $\omega^{-1}=\eta$.

For this pairing, $b\in\Lambda^\myperp$ exactly when $\tr_{\Q(\zeta_n)/\Q}(\omega^{-1}b\bar a)\in\Z$ for all $a\in\a$,
which occurs precisely when $\omega^{-1}b\bar\a\subseteq\different_n^{-1}$. 
Thus $\Lambda^\myperp=\omega\different_n^{-1}\bar\a^{-1}$ as claimed.
Finally, $\Lambda^\myperp\subseteq\Lambda$ just when $\omega\different_n^{-1}\bar{\a}^{-1}\subseteq\a$, which rearranges to the desired criterion.

(iii)
As $F$ has no invariants on $\Lambda$, $\SG_{D}(\Lambda)$ is trivial by
definition. The minimal polynimial $p(t)$ is just the $n$-th cyclotomic polynomial, so the result on $p(1)$ follows from 
Lemma \ref{lem:cyc}.
By Corollary \ref{cor:BTdivP}, the order of
$\B_{\Lambda,\Lambda^\vee}$ divides $p(1)$, and by Theorems \ref{thm:bettspairing} and \ref{thm:uptosquares}(i)
its order is either a square or twice a square.
Hence $\B_{\Lambda,\Lambda^\vee}$ is trivial, unless $n=2^s$, in which
case it is either trivial or $C_2$.

In the latter case, Corollary \ref{cor:exactformulatorsionviaF} with $e=1$
tells us that $|(\Lambda/\Lambda^\vee)^F|=2/|\B_{\Lambda,\Lambda^\vee}|$.
If $[\Lambda\!:\!\Lambda^\vee]$ is odd then, then so is
$|(\Lambda/\Lambda^\vee)^F|$, and hence $\B_{\Lambda,\Lambda^\vee}\iso C_2$.
Conversely, if $[\Lambda\!:\!\Lambda^\vee]$ is even, then since
$F$ has 2-power order, it must have a non-trivial fixed point on
$(\Lambda/\Lambda^\vee)[2]$. Thus we must have
$|\B_{\Lambda,\Lambda^\vee}|=1$.

The final formula follows from Corollary \ref{cor:exactformulatorsionviaF}.
\end{proof}

\smallskip

\begin{remark}
\label{rmk:different}
The different ideal $\different_n$ is explicitly given by
$$
 \different_n=\different_{\Q(\zeta_n)/\Q}=n\cdot\!\!\!\prod_{q|n \text{ prime}}(\zeta_q-1)^{-1}.
$$
To see this, first recall that as the ring of integers of $\Q(\zeta_n)$ is $\Z[\zeta_n]$, its different is generated by $\Phi_n'(\zeta_n)$ (see e.g. \cite{samuel} remark on p 96). The cyclotomic polynomial can be expressed as
$$
\Phi_n(X)=\prod_{t|n} (X^{n/t}-1)^{\mu(t)}
$$
where $\mu$ is the M\"obius $\mu$-function, and hence, by l'H\^opital's rule
$$
 \frac{\Phi'(\zeta_n)}{n\zeta_n^{n-1}} = \prod_{1\neq t|n} (\zeta_n^{n/t}-1)^{\mu(t)} = \text{unit}\times \prod_{q|n \text{ prime}} (\zeta_q-1)^{-1},
$$
where for the second equality we have used that fact that $\mu(t)=0$ for non-squarefree $t$ and that $(\zeta_k-1)$ is a unit when $k$ is not a prime power. (The latter result follows from the identity
$N_{\Q(\zeta_k)/\Q}(1-\zeta_k)=\prod_{0<s<k, (s,k)=1}(1-\zeta_k^s)=\Phi_k(1)$, which is 1 by Lemma \ref{lem:cyc}, so that
$1-\zeta_k$ is a unit.) The claim follows.
\end{remark}

\smallskip

\begin{lemma}
\label{lem:1d}
Let $\Lambda$ be a 1-dimensional integral representation of
$C_k\!=\!\langle F\rangle$ that admits a non-degenerate $F$-invariant
symmetric $\Q$-valued pairing with $\Lambda\supseteq\Lambda^\myperp$.
Then, up to $\Z[F]$-isomorphism, $\Lambda,\Lambda^\myperp$ and $F$ are one
of the following:

\begingroup\smaller[1]
$$
\begin{array}{|l|c|c|c|c|ccl|  }
 \hline
\UDspace \text{Type} & \Lambda   & \Lambda^\myperp            & F & \SG_{D}(\Lambda) & \B_{\Lambda,\Lambda^\myperp}  & |(\Lambda/\Lambda^\myperp)^F|&    \\[2pt]
 \hline
\UDspace \voI n    & \Z          & n\Z       &1 &  1  &  1  &n&   \cr
 \hline
\UDspace \rh{\voII n}   & \rh\Z          & \rh{n\Z}       &\rh{-1}&  \rh{1}  &  C_2 &1& (n \text{ odd})     \cr
           &             &           &  &     &  1   &2& (n \text{ even})    \cr
 \hline 
\end{array}
$$
\endgroup

\medskip\noindent
The parameter $n$ is a positive integer.
All of the types arise from integral representations with a corresponding
symmetric pairing and no two are isomorphic.
The associated groups
$\SG_D(\Lambda)$ and $\B_{\Lambda,\Lambda^\myperp}$ are as described in the
table (with $D=F\!-\!1$).
\end{lemma}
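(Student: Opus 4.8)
The plan is to reduce the whole statement to the fact that $\Aut(\Z)=\{\pm1\}$. Since $\Lambda\iso\Z$, the fixed generator $F$ of $C_k$ acts on $\Lambda$ as $+1$ or $-1$. Fix a $\Z$-generator $x$ of $\Lambda$; an $F$-invariant symmetric $\Q$-valued bilinear form on $\Lambda$ is determined by the single scalar $c=(x,x)$, the $F$-invariance being automatic since $Fx=\pm x$, and non-degeneracy amounts to $c\neq0$. A one-line computation gives $\Lambda^\myperp=\tfrac1c\,\Z x$, so the hypothesis $\Lambda^\myperp\subseteq\Lambda$ forces $\tfrac1c\in\Z\setminus\{0\}$; writing $n=\bigl|\tfrac1c\bigr|$ we get $\Lambda^\myperp=n\Z x$. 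Hence, up to $\Z[F]$-isomorphism, the triple $(\Lambda,\Lambda^\myperp,F)$ is $(\Z,n\Z,1)$ or $(\Z,n\Z,-1)$, i.e.\ type $\voI n$ or $\voII n$ of the table. Every such type occurs (take $\Lambda=\Z$, $\Lambda^\myperp=n\Z$ with pairing $(a,b)=ab/n$, and $F=\pm1$), and no two are isomorphic because the sign by which $F$ acts on $\Z$ and the index $[\Lambda:\Lambda^\myperp]=n$ are both invariants of the triple.

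Next I would compute $\SG_D(\Lambda)$ with $D=F-1$. Since $\dim_\Q(\Lambda\otimes\Q)=1$, in the decomposition $V=DV\oplus V[D]$ one summand is $0$ and the other is all of $V$, so $(\Lambda\cap DV)+(\Lambda\cap V[D])=\Lambda$ and $\SG_D(\Lambda)$ is trivial (consistent with the bound in Corollary~\ref{cor:sepgpbehaviour}); by Theorem~\ref{lem:perppreservessepgps} the same holds for $\SG_D(\Lambda^\myperp)$.

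Then I would compute $\B_{\Lambda,\Lambda^\myperp}$ directly from Definition~\ref{defn:bettsgp}. For type $\voI n$, $D=0$, so $D^{-1}\Lambda^\myperp=V=V[D]$ and $\B=(\Lambda+V)/(\Lambda+V)$ is trivial. For type $\voII n$, $D$ is multiplication by $-2$, so $V[D]=0$ and $D^{-1}(n\Z x)=\tfrac n2\Z x$; thus $\B_{\Lambda,\Lambda^\myperp}=\dfrac{\Z x+\tfrac n2\Z x}{\Z x}$, which is trivial when $n$ is even and $\iso\dfrac{\tfrac12\Z x}{\Z x}\iso C_2$ when $n$ is odd. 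Finally the fixed-point count is immediate: $(\Lambda/\Lambda^\myperp)^F=\Z/n\Z$ has order $n$ when $F=1$, and when $F=-1$ it equals $(\Z/n\Z)[2]$, of order $1$ or $2$ according as $n$ is odd or even. (Equivalently, one can feed $\Lambda^F=\Lambda^{\myperp F}=0$, $p(1)=2$ and the values of $|\B|$ and $|\SG_D(\Lambda^\myperp)|$ just obtained into Theorem~\ref{thm:exactformulafixpoint} to recover the same numbers.)

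No step here poses a genuine difficulty; the only place that rewards a little care is the computation of $D^{-1}(n\Z x)$ in the $F=-1$ case, where the parity of $n$ is exactly what separates the two rows of type $\voII n$.
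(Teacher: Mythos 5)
Your proposal is correct and follows essentially the same route as the paper: observe that $F$ acts on $\Lambda\iso\Z$ as $\pm1$, that the pairing is an arbitrary non-zero rational multiple of the standard one so $\Lambda^\myperp$ can be any sublattice $n\Z$, and then compute $\SG_D(\Lambda)$, $\B_{\Lambda,\Lambda^\myperp}$ and $|(\Lambda/\Lambda^\myperp)^F|$ directly from the definitions. The paper's proof merely asserts these computations are elementary, whereas you carry them out (correctly, including the parity split via $D^{-1}(n\Z x)=\tfrac n2\Z x$ in the $F=-1$ case), so there is nothing to add.
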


\begin{proof}
As $\Lambda\iso\Z$ as an abelian group, the automorphism $F$ is either multiplication by 1 or by $-1$. The pairing can be any $\Q$-valued non-zero pairing, so, in particular, $\Lambda^\vee$ can be any sublattice of $\Lambda$. The invariants $\SG_D(\Lambda), \B_{\Lambda,\Lambda^\vee}$ and $|(\Lambda/\Lambda^\vee)^F|$ are elementary to compute direct from the definitions.
\end{proof}

\newpage

\begin{theorem}
\label{thm:2d}
Let $\Lambda$ be a 2-dimensional integral representation of $C_k\!=\!\langle F\rangle$
that admits a non-degenerate $F$-invariant symmetric $\Q$-valued pairing with
$\Lambda\supseteq\Lambda^\myperp$.
Then, up to $\Z[F]$-isomorphism, $\Lambda,\Lambda^\myperp$ and $F$ are one
of the following:

\begingroup\smaller[1]
$$
\hskip-13mm
\begin{array}{|l|c|c|c|c|ccl|  }
 \hline\UDspace 
  \text{Type} & \Lambda         & \Lambda^\myperp            & F & \SG_{D}(\Lambda) & \B_{\Lambda,\Lambda^\myperp}  & |(\Lambda/\Lambda^\myperp)^F|&   
\\[2pt] \hline  \UDspace
 \vI n,m   & \Z\oplus\Z      & n\Z\oplus m\Z        &\smallmatrix1001&  1  &  1  &nm&   
\\[2pt] \hline  \UDspace
 \rh{\vIIa n,m} & \rh{\Z\oplus\Z}      & \rh{n\Z \oplus m\Z}       &\rh{\smallmatrix100{-1}}&  \rh{1}  &  C_2 &n& (m \text{ odd})     \cr
           &                 &                      &    &     &  1 &2n& (m \text{ even})     
\\[2pt] \hline  \UDspace
 \vIIb n,m & \Z\oplus\Z+\langle \frac12,\frac12\rangle & n\Z\oplus m\Z+\langle \frac n2,\frac m2\rangle  &\smallmatrix100{-1}&  C_2  &  1  &n&       
\\[2pt] \hline  \UDspace
                &      &                 &                              &     &  C_2\times C_2 &1& (n, m \text{ odd})    \cr
 \vII n,m  & \Z\oplus\Z  & n\Z \oplus m\Z      &\smallmatrix{-1}00{-1}&  1  &  C_2 &2& (n,m \text{ odd/even})     \cr       
                &      &                 &                              &     &  1 &4& (n,m \text{ even})     
\\[2pt] \hline  \UDspace
 \vIII n   & \Z[\zeta_3] & (\zeta_3-1)n\Z[\zeta_3] &\cdot\zeta_3 & 1 & 1 &3&   
\\[2pt] \hline  \UDspace
 \rh{\vIV n}    & \rh{\Z[i]} & \rh{n\Z[i]} &\rh{\cdot i}& \rh{1} & C_2  &1& (n \text{ odd})   \cr
           &                    &       &        &   & 1    &2& (n \text{ even})  
\\[2pt] \hline  \UDspace
 \vVI n    & \Z[\zeta_3] & (\zeta_3-1)n\Z[\zeta_3] &\cdot\zeta_6& 1 & 1 &1&   
\\[2pt] \hline  
\end{array}\\[7pt]
$$
\endgroup
The parameters $n$ and $m$ are positive integers, with $n\equiv m$ mod 2 for type $\vIIbs$.
Types $\vI n,m$ and $\vI n',m'$ are isomorphic if and
only if $C_n\times C_m \iso C_{n'}\times C_{m'}$, and similarly for type $\vIIs$;
no other pairs are isomorphic. The associated groups
$\SG_D(\Lambda)$ and $\B_{\Lambda,\Lambda^\myperp}$ are as described in the
table (with $D=F\!-\!1$).
All of the above types arise from integral representations with a corresponding
symmetric pairing.
\end{theorem}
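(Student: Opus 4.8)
The plan is to organise the proof by the $\Q[F]$-module structure of $V=\Lambda\otimes\Q$, in the same spirit as the one-dimensional Lemma \ref{lem:1d}. Since $F$ has finite order, $V$ is a semisimple $\Q[F]$-module, and in rank $2$ there are exactly six possibilities: $V$ splits as a sum of two $1$-dimensional rational representations, on which $F$ acts as $\smallmatrix1001$, $\smallmatrix100{-1}$, or $\smallmatrix{-1}00{-1}$; or $V$ is an irreducible $2$-dimensional rational representation, necessarily $\Q(\zeta_m)$ with $\varphi(m)=2$, i.e.\ $m\in\{3,4,6\}$ and $F$ acting as multiplication by $\zeta_m$. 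These are the six rows of the table. In each case the task is threefold: classify the $F$-stable full-rank lattices $\Lambda$ in $V$ up to $\Z[F]$-isomorphism; for each such $\Lambda$ determine which sublattices occur as $\Lambda^\vee$ for an admissible pairing, i.e.\ one that is non-degenerate, symmetric, $F$-invariant and satisfies $\Lambda^\vee\subseteq\Lambda$; and compute the invariants $\SG_D(\Lambda)$, $\B_{\Lambda,\Lambda^\vee}$ and $|(\Lambda/\Lambda^\vee)^F|$.

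The three irreducible rows ($\vIIIs$, $\vIVs$, $\vVIs$) are immediate from Theorem \ref{thm:c_n}, applied to the cyclic groups $C_3$, $C_4$ and $C_6$. Since $\Cl_{\Q(\zeta_m)}$ is trivial for $m\in\{3,4,6\}$, part (i) gives $\Lambda\cong\Z[\zeta_m]$, and part (ii) gives the pairing $(x,y)=\tr(\omega^{-1}x\bar y)$ with $\Lambda^\vee=\omega\different_m^{-1}$, where $\omega$ runs over the maximal real subfield $\Q(\zeta_m)^+=\Q$. Substituting the explicit differents of Remark \ref{rmk:different} ($\different_3=\different_6=(\zeta_3-1)$ and $\different_4=(2)$) and imposing $\Lambda^\vee\subseteq\Lambda$ shows that $\Lambda^\vee$ runs precisely through $(\zeta_3-1)n\Z[\zeta_3]$ (for $\vIIIs$, where $F$ acts as $\cdot\zeta_3$, and $\vVIs$, where $F$ acts as $\cdot\zeta_6$) and through $n\Z[i]$ (for $\vIVs$), with $n\ge1$; the three remaining columns are then read off Theorem \ref{thm:c_n}(iii), using $\Phi_3(1)=3$, $\Phi_4(1)=2$, $\Phi_6(1)=1$ and the parity of $[\Lambda:\Lambda^\vee]=n^2$ to decide $\B$ in the $\vIVs$ case.

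For the reducible rows I proceed directly. When $F=\smallmatrix1001$ or $\smallmatrix{-1}00{-1}$, being $\Z[F]$-linear is the same as being $\Z$-linear, so Smith normal form brings $(\Lambda,\Lambda^\vee)$ to the form $(\Z^2,\,n\Z\oplus m\Z)$, realised by the Gram matrix $\smallmatrix{1/n}00{1/m}$. For $F=\smallmatrix1001$ (row $\vIs$) we have $D=0$, hence $\SG_D=\B=1$ and $|(\Lambda/\Lambda^\vee)^F|=nm$; for $F=\smallmatrix{-1}00{-1}$ (row $\vIIs$) we have $D=-2$, so $(\Lambda/\Lambda^\vee)[D]=(\Lambda/\Lambda^\vee)[2]$ and Theorem \ref{thm:exactformulatorsionnoe} with $\SG_D=1$ and $p(0)=4$ forces $|\B|=4/|(\Lambda/\Lambda^\vee)[2]|\in\{4,2,1\}$ according to the parities of $n,m$, while $|(\Lambda/\Lambda^\vee)^F|=|(\Lambda/\Lambda^\vee)[2]|$. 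When $F=\smallmatrix100{-1}$, $F$-invariance makes the pairing orthogonal for the eigenspace decomposition $V=V_+\oplus V_-$, and every $F$-stable lattice satisfies $2\Lambda\subseteq(\Lambda\cap V_+)\oplus(\Lambda\cap V_-)$; hence $\SG_D(\Lambda)$ is killed by $2$ and has at most one generator (Corollary \ref{cor:sepgpbehaviour}), so $\SG_D(\Lambda)\in\{1,C_2\}$. These two cases give, up to isomorphism, $\Lambda=\Z u\oplus\Z v$ (row $\vIIas$) and $\Lambda=\Z u\oplus\Z v+\Z\tfrac{u+v}2$ (row $\vIIbs$). Writing the pairing via $(u,u)=a$, $(v,v)=b$, $(u,v)=0$ with $a,b\in\Q^\times$, an explicit description of $\Lambda^\vee$ together with the condition $\Lambda^\vee\subseteq\Lambda$ pins down $a,b$ and produces $\Lambda^\vee=n\Z u\oplus m\Z v$ in the first case and $\Lambda^\vee=n\Z u\oplus m\Z v+\Z\tfrac{nu+mv}2$, subject to $n\equiv m\bmod2$, in the second. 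The remaining entries then follow from Corollary \ref{cor:BTdivP} (so that $|\B|\cdot|\SG_D(\Lambda^\vee)|$ divides $p(1)=2$), Theorem \ref{lem:perppreservessepgps} ($\SG_D(\Lambda^\vee)\cong\SG_D(\Lambda)$), Theorem \ref{thm:bettspairing} (order of $\B$ a square or twice a square), and a direct computation of $|\Lambda^F/\Lambda^{\vee F}|$ and $|(\Lambda/\Lambda^\vee)^F|$ from the explicit lattices.

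It remains to verify the distinctness and realisability assertions. The characteristic polynomial of $F$ distinguishes the six families except $\vIIas$ from $\vIIbs$, which are told apart by $\SG_D(\Lambda)$. Within a family the parameters are recovered: from the abelian group $\Lambda/\Lambda^\vee$ for rows $\vIs$ and $\vIIs$ (which yields the criterion $C_n\times C_m\cong C_{n'}\times C_{m'}$); from the two cyclic groups $(\Lambda\cap V_\pm)/(\Lambda^\vee\cap V_\pm)$ for rows $\vIIas$ and $\vIIbs$; and from the index $[\Lambda:\Lambda^\vee]$ for the irreducible rows. Each row is realised by the lattice and pairing exhibited along the way. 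The step requiring the most care is the reducible case $F=\smallmatrix100{-1}$: showing that exactly the two lattice shapes occur, and carrying out the $\Lambda^\vee\subseteq\Lambda$ bookkeeping so that the parity condition $n\equiv m\bmod2$ for $\vIIbs$ falls out for the right reason and no overlattice is overlooked, while simultaneously checking that the tabulated groups $\B$ are consistent both with Corollary \ref{cor:BTdivP} and with the square-or-twice-square constraint.
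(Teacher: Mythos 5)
Your proposal is correct and follows essentially the same route as the paper's proof: split into the six possibilities for the order of $F$ (equivalently the $\Q[F]$-structure of $V$), invoke Theorem \ref{thm:c_n} with trivial class groups and the explicit differents for the irreducible cases $\vIIIs,\vIVs,\vVIs$, treat scalar $F$ by noting the pairing is unconstrained, and use the dichotomy $\SG_D(\Lambda)\in\{1,C_2\}$ together with duality of separation groups and Corollary \ref{cor:BTdivP} to split the mixed-eigenvalue case into $\vIIas$ and $\vIIbs$ with the parity condition $n\equiv m\bmod 2$. The only (immaterial) deviation is that for rows $\vIIs$ and $\vIIas$ you compute $\B$ directly from Theorem \ref{thm:exactformulatorsionnoe}, where the paper instead reduces to the one-dimensional Lemma \ref{lem:1d} via multiplicativity of $\B$ and $\SG$ under direct sums.
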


\begin{remark}
\label{rmk:scalebye}
(i)
Our notion of ``type'' refers to an isomorphism class of triples $(\Lambda,\Lambda^\vee,F)$, where $\Lambda\supseteq\Lambda^\vee$ are $\Z[F]$-lattices of the same rank, and $F$ an automorphism of finite order.
The first digits in the notation for the types refer to the orders of the eigenvalues of $F$, e.g. for $\vVI n$ there are two eigenvalues, both of which are primitive 6th roots of unity. They recover $\Lambda\otimes_\Z\Q$ as a $\Q[F]$-representation.

(ii) In each of the first four types in the table, the descriptions of $\Lambda$, $\Lambda^\vee$ and $F$ are, of course, written with respect to the same basis. In particular, for type $\vIIbs$, $F$ acts trivially on the first copy of $\Z$ and by $-1$ on the second copy.

(iii)
Note that to obtain $|(\Lambda/e\Lambda^\myperp)^F|$ one simply needs to
scale the parameters $n,m$ by $e$, as this will just change $\Lambda^\myperp$
to $e\Lambda^\myperp$.
\end{remark}

\begin{proof}
We may assume that $C_k$ acts faithfully. Write $V=\Lambda\otimes_\Z \Q$.

Since $F$ has a quadratic characteristic polynomial
with integer coefficients, its order is either 1,2,3,4 or 6. If the order
is 3,4, or 6 then $V$ is the faithful irreducible rational representation
of $C_k$, and the classification follows from Theorem \ref{thm:c_n};
this gives types $\vIIIs,\vIVs, \vVIs$.
Note that the corresponding class groups are trivial, so we may take
$\Lambda=\Z[\zeta_k]$ with $F$ acting by multiplication by $\zeta_k$.

If $k=1$, $F$ acts trivially and the result is clear. This gives type $\vIs$. 

When $k=2$ and both
the eigenvalues of $F$ are $-1$, $F$ acts by multiplication by~$-1$. In this
case $F$-invariance imposes no constraints on the pairing or
on $\Lambda^\myperp$, so the latter may be any sublattice of $\Lambda$.
Thus with respect to some basis we have $\Lambda=\Z\oplus\Z$ and
$\Lambda^\myperp=n\Z\oplus m\Z$ for some $n,m\ge 1$. As $\B$, $\SG$ and
$|(\Lambda/\Lambda')^F|$ behave multiplicatively under direct sums of
$\Z[F]$-lattices, their computation reduces to the case of 1-dimensional
lattices, which is done in Lemma \ref{lem:1d}. This gives type~$\vII n,m$.

When $k=2$ and the $F$-eigenvalues are +1 and -1, there are two possible cases, corresponding to whether $\SG_D(\Lambda)$ is trivial or $C_2$
(Corollary \ref{cor:sepgpbehaviour}). If $\SG_D(\Lambda)$ is trivial, then we can write $\Lambda=\Lambda_1\oplus\Lambda_{-1}$ as the direct sum of its eigenvalue $\pm 1$ sublattices; moreover by Theorem \ref{lem:perppreservessepgps} $\SG_D(\Lambda^\vee)$ is also trivial and hence $\Lambda^\vee=n\Lambda_1\oplus m\Lambda_{-1}$ for some $n,m,\in\Z$. The result then follows from the multiplicativity of $\B$ and $\SG$ under direct sums of lattices, as before. This gives type $\vIIa n,m$.

Finally, suppose that $k=2$, the $F$-eigenvalues are 1 and -1, and that $\SG_D(\Lambda)\iso C_2$. Write $V_1$ and $V_{-1}$ for the eigenvalue $\pm1$ subspaces of $V$. Then $\Lambda\cap V_1 = \Z u_1$, $\Lambda\cap V_{-1} = \Z u_{-1}$ for some $u_{\pm1}\in\Lambda$, and $\Lambda^\vee\cap V_1 = n\Z u_1$, $\Lambda^\vee\cap V_{-1}
=m\Z u_{-1}$ for some $n,m\in \Z$. As $\SG_D(\Lambda)\iso C_2\iso\SG_D(\Lambda^\vee)\iso C_2$ by Theorem \ref{lem:perppreservessepgps}, we must have $\Lambda = u_1\Z + u_{-1}\Z + \frac{u_1+u_{-1}}2 \Z$ and $\Lambda^\vee= nu_1\Z + mu_{-1}\Z + \frac{nu_1+mu_{-1}}2 \Z$. By Corollary \ref{cor:BTdivP}, $\B_{\Lambda,\Lambda^\vee}$ must be trivial, and by 
Corollary \ref{cor:exactformulatorsionviaF}
or by direct computation $|(\Lambda/\Lambda^\vee)^F|=n$. This gives type $\vIIb n,m$.
Note that as $\Lambda^\vee\subseteq\Lambda$ we must have $n\equiv m\mod 2$. Conversely, if $n\equiv m\mod 2$ then taking the pairing with matrix $\smallmatrix{\pm \frac 2n}00{\pm\frac2m}$ with respect to the $\Q$--basis $u_{\pm1}$ and $\Lambda = u_1\Z + u_{-1}\Z + \frac{u_1+u_{-1}}2 \Z$ gives the desired $\Lambda^\vee$, and so the only constraint on $n$ and $m$ is that $n\equiv m\mod 2$.

\end{proof}


\section{Tamagawa numbers of semistable abelian varieties}
\label{s:tam}

We now turn to the study of the behaviour of Tamagawa numbers of semistable abelian varieties in finite extensions of $p$-adic fields. Raynaud's parametrisation \cite{Ray}, \cite{Gro}\S9--10 allows us to translate this into a question about lattice quotients of the type investigated in \S\ref{s:betts}. For the convenience of the reader, we will phrase the results in a way that does not require familiarity with the preceding section, except for the willingness to use the groups $\B$ and $\SG$ as black boxes.

In \S\ref{ss:torus} we briefly review the theory of Tamagawa numbers of semistable abelian varieties and introduce the main notation. 
Our main results on Tamagawa numbers are
contained in \S\ref{ss:tam}, followed by a classification of their behaviour for abelian varieties of toric dimension 2 in \S\ref{ss:dim2}.
Finally, we turn to abelian varieties over number fields and the $p$-parity conjecture in \S\ref{sspparity}.

\subsection{Dual character group $\Lak$}
\label{ss:torus}

Here we briefly review the theory of Tamagawa numbers of semistable abelian varieties.
We only give a minimalistic description, and refer to \cite{tamroot} \S 3.5.1 for a more detailed overview, and the precise references therein for the proofs. 

\begin{notation}\label{not:lak}
Let $K/\Q_p$ be a finite extension, $A/K$ a semistable principally polarised abelian variety, and $K'/K$ an unramified extension over which $A$ acquires split semistable reduction. Let $T$ be the torus part of the Raynaud parametrisation of $A/K'$ and $X(T)$ its character group. This is a finite free $\Z$-module with an action of $\Gal(K'/K)$.
The monodromy pairing together with the principal polarisation give a non-degenerate symmetric $\Gal(K'/K)$-invariant pairing 
$(\,\cdot\,,\,\cdot\,):X(T)\times X(T)\to \Z$. 

We will write $\Lak$ for the dual lattice to $X(T)$ inside $X(T)\otimes_\Z\Q$. 
Then $(\,\cdot\,,\,\cdot\,)$ extends to a non-degenerate symmetric $\Gal(K'/K)$-invariant pairing
$$
(\,\cdot\,,\,\cdot\,):\Lak\times\Lak\to \Q,
$$ 
with $\Lak\supseteq X(T)=\Lakv$.

We will write $F\in\Gal(K^{nr}/K)$ for the Frobenius element, i.e. the element that acts as the Frobenius automorphism on the residue field. Its action on $\Lak$ factors through the quotient $\Gal(K'/K)$, so we will occasionally identify $F$ with its image in the latter group.

\end{notation}

\begin{remark}
\label{rmk:lakcv}
The group scheme of connected components of the special fibre of the N\'eron model of $A/\cO_K$ is isomorphic to $\Lak/\Lakv$, as groups with $\Gal(K^{nr}/K)$-action, and the local Tamagawa number is 
$ c_{A/K}=\Bigl|\Bigl(\frac{\Lak}{\Lakv}\Bigr)^F\Bigr|$.
Moreover, if $L/K$ is a finite extension of residue degree $f$ and ramification degree~$e$, then 
$\Lal=\Lak$ and $\Lalv=e\Lakv$, and
$$
 c_{A/L}=\biggl|\left(\frac{\Lak}{e\Lakv}\right)^{\smash{F^f}}\biggr|.
$$
\end{remark}

\begin{remark} 
Since the action of $\Gal(K^{nr}/K)$ on $\Lak$ factors through the finite quotient $\Gal(K'/K)$, the Frobenius element $F$ acts on $\Lak$ as an automorphism of finite order. Hence its action is semisimple, and in particular $\Lambda[D]=\Lambda[D^2]$ for $D=F-1$.
This is necessary to apply most of the results of \S\ref{s:betts}.
\end{remark}

\begin{definition}
We will refer to $d=\smash{\dim \Lak}$ as the {\em toric dimension} of $A/K$, 
and to $r=\dim \Lakf$ as its {\em split toric dimension}. Note that $d=r$ if and only if the reduction is split.
\end{definition}

\begin{example}\label{ex:ec1}
Suppose $E$ is an elliptic curve over $K$. If $E$ has good reduction then $\Lambda=0$, and the whole setting becomes trivial. If $A$ has multiplicative reduction of Kodaira type $\kI_n$ then $\Lak=\Z$ and $\Lakv=n\Z$, and the pairing is $(a,b)=\frac{ab}{n}$.
The Frobenius element acts either trivially on $\Lak$ or as multiplication by $-1$, corresponding to the reduction being split or non-split. The Tamagawa number can indeed be computed as 
$$
c_{A/K}=\left|\left(\frac{\Lak}{\Lakv}\right)^F\right| = |(\Z/n\Z)^F| = 
   \begin{cases}
      n & \text{split multiplicative reduction}, \\
      1 & \text{non-split multiplicative reduction, }2\!\nmid\! n  \\
      2 & \text{non-split multiplicative reduction, } 2\!\div\! n.
   \end{cases}
$$
Over a finite extension $L/K$ with ramification degree $e$ and residue degree $f$, $A/L$ 
has reduction type $\kI_{en}$ and it has split multiplicative reduction if either $A/K$ does or if $f$ is even, and non-split multiplicative reduction otherwise. This is readily seen to be compatible with the above Tamagawa number formula, 
$c_{A/L}\!=\!|(\Z/en\Z)^{F^f}|$.
\end{example}


\subsection{Behaviour of Tamagawa numbers in field extensions}
\label{ss:tam}

\begin{notation}\label{not:bak}
Let $K$ be a finite extension of $\Q_p$ and $A/K$ a semistable principally polarised abelian variety. Let $F$ be the Frobenius element of $K^{nr}/K$ and
$D=F-1\in \End(\Lak)$. Write the characteristic polynomial of $F$ acting on $\Lak$ as $\pm (t-1)^r p(t)$, where $p(1)>0$. 
We then define
$$
  \B_{A/K} = \B_{\Lambda,{\Lambda^\vee}}, \qquad\qquad \SG_{A/K}=\SG_D(\Lambda), \qquad\qquad P_{A/K}=p(1),
$$
where $\Lambda=\Lak$ with the pairing $(,)$ induced by the monodromy pairing as above.
\end{notation}

\begin{theorem}\label{thm:clptb}
Let $K/\Q_p$ be a finite extension and $A/K$ a semistable principally polarised abelian variety.
Then $\SG_{A/K}$ and $ \B_{A/K}$ are finite abelian groups and
$$
  c_{A/K} = \left|\frac{\Lakf}{\Lakvf}\right|\cdot \frac{P_{A/K}}{|\SG_{A/K}||\B_{A/K}|}.
$$
\end{theorem}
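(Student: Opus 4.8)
The plan is to deduce this directly from the algebraic results of \S\ref{s:betts}, the only real content being a translation of notation. First I would use Remark \ref{rmk:lakcv} to rewrite the Tamagawa number as $c_{A/K}=|(\Lak/\Lakv)^F|$, and recall from the discussion around Notation \ref{not:lak} that $F$ acts on $\Lambda:=\Lak$ as an automorphism of finite order preserving the monodromy pairing, that $\Lakv=X(T)=\Lambda^\vee$ is the dual lattice, and that $\Lambda^\vee\subseteq\Lambda$. In particular $F$ acts semisimply, so $D:=F-1$ satisfies $\Lambda[D]=\Lambda[D^2]$, and $\Lambda^\vee$ is a full-rank $F$-stable (hence $D$-stable) sublattice by Lemma \ref{cor:LambdaperpisFstable}. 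Thus the hypotheses of Corollary \ref{cor:exactformulatorsionviaF} are met.

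Applying that corollary with $e=1$ and $\Lambda'=\Lambda^\vee$ yields
$$
  |(\Lak/\Lakv)^F|=\frac{1}{|\B_{\Lambda,\Lambda^\vee}|}\cdot\left|\frac{\Lambda^F}{\Lambda^{\vee F}}\right|\cdot\frac{p(1)}{|\SG_D(\Lambda^\vee)|},
$$
where $\pm(t-1)^r p(t)$ is the characteristic polynomial of $F$ on $\Lambda$ and $p(1)>0$. Here $\Lambda^F=\Lakf$, $\Lambda^{\vee F}=\Lakvf$, $\B_{\Lambda,\Lambda^\vee}=\B_{A/K}$ and $p(1)=P_{A/K}$ directly from Notation \ref{not:bak}. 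The one step that needs a comment is the separation group: the corollary produces $\SG_D(\Lambda^\vee)$, whereas $\SG_{A/K}$ is by definition $\SG_D(\Lambda)$; but since $\Lambda^\vee$ is the dual lattice of $\Lambda$ with respect to an $F$-invariant symmetric pairing, Theorem \ref{lem:perppreservessepgps} gives $\SG_D(\Lambda^\vee)\iso\SG_D(\Lambda)=\SG_{A/K}$. Substituting gives the asserted identity.

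Finiteness of $\SG_{A/K}$ and $\B_{A/K}$ is then automatic: $\SG_D(\Lambda)$ is a finite abelian group by Lemma \ref{defnlem:sepgp} (or Corollary \ref{cor:sepgpbehaviour}) and $\B_{\Lambda,\Lambda^\vee}$ by Proposition \ref{prop:bettsgpbehaviour}; alternatively both follow at once from Corollary \ref{cor:BTdivP}, which in fact shows $|\SG_{A/K}|\cdot|\B_{A/K}|$ divides $P_{A/K}$. I do not expect any genuine obstacle here. The only points requiring care are matching the characteristic polynomial of $F$ (used to define $P_{A/K}$) with that of $D=F-1$ (appearing in \S\ref{s:betts}) via the substitution $t\mapsto t+1$, which sends $\pm(t-1)^r p(t)$ to $\pm t^r p(t+1)$ so that the relevant constant term is $p(1)$, and remembering to pass from $\SG_D(\Lambda^\vee)$ to $\SG_D(\Lambda)$ through the duality of Theorem \ref{lem:perppreservessepgps} rather than assuming them equal on the nose.
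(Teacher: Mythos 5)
Your proposal is correct and follows essentially the same route as the paper: the paper's proof likewise cites Remark \ref{rmk:lakcv}, deduces the formula as a direct consequence of Corollary \ref{cor:exactformulatorsionviaF}, and gets finiteness from Lemma \ref{defnlem:sepgp} and Proposition \ref{prop:bettsgpbehaviour}. Your explicit use of Theorem \ref{lem:perppreservessepgps} to pass from $\SG_D(\Lakv)$ to $\SG_D(\Lak)=\SG_{A/K}$ is exactly the (implicit) step the paper's terse proof relies on, so there is no gap.
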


\begin{proof}
Finiteness of $\SG$ and $\B$ is established in Lemma \ref{defnlem:sepgp} and Proposition \ref{prop:bettsgpbehaviour}.
The formula is a direct consequence of Corollary \ref{cor:exactformulatorsionviaF}.
\end{proof}

\begin{theorem}\label{thm:varyfield}
Let $K/\Q_p$ be a finite extension 
and $A/K$ a semistable principally polarised abelian variety of toric dimension $d$ and split toric dimension $r$.
Let $n$ be the degree of the minimal extension over which $A$ acquires split semistable reduction, equivalently the order of the Frobenius element $F$ in its action on $\Lak$. For an integer $m$ let $a_m$ denote the multiplicity of
a fixed primitive $m$-th root of unity among the eigenvalues of $F$ acting on $\Lak$, 
and let $b_m= 0$ or $1$ according to whether $a_m=0$ or $a_m>0$.

(a) $P_{A/K} = \prod_{m=q^k} q^{a_m}$ where the product is taken over all prime powers.
In particular, $P_{A/K}$ remains unchanged in extensions with residue degree coprime to~$n$.

(b) $\SG_{A/K}$ has order dividing $P_{A/K}$, requires at most $\min (r,d\!-\!r)$ generators, and
has exponent dividing $\prod_{m=q^k} q^{b_m}$, where the product is taken over all prime powers.
$\SG_{A/K}$ remains unchanged in extensions with residue degree coprime to $n$.

(c) $\B_{A/K}$  has order dividing $P_{A/K}$, requires at most $(d\!-\!r)$ generators,
and has exponent dividing $\prod_{m=q^k} q^{b_m}$,  where the product is taken over all prime powers.
If $L/K$ is an extension with residue degree coprime to $n$ and with ramification degree~$e$ then 
$\B_{A/L}\cong \frac{\B_{A/K}}{\B_{A/K}[e]}$.

$\B_{A/K}$ admits a perfect antisymmetric pairing; in particular its order is either a square or twice a square. 
For a finite extension $L/K$, $\B_{A/L}$ has square order if and
only if at least one of the following holds: (i) $\B_{A/K}$
has square order, or (ii)~$[L:K]$ is even.

(d) $|\B_{A/K}|\cdot |\SG_{A/K}|$ divides $P_{A/K}$.
\end{theorem}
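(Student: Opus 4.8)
The plan is to obtain part (d) as a direct consequence of Corollary~\ref{cor:BTdivP} together with the self-duality of the separation group, Theorem~\ref{lem:perppreservessepgps}. Throughout, write $\Lambda=\Lak$ with its monodromy pairing, $\Lambda^\vee=\Lakv$, let $F$ be the Frobenius element and $D=F-1$, so that $\B_{A/K}=\B_{\Lambda,\Lambda^\vee}$, $\SG_{A/K}=\SG_D(\Lambda)$ and $P_{A/K}=p(1)$ in the notation of Notation~\ref{not:bak}.

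First I would verify that the hypotheses of Corollary~\ref{cor:BTdivP} hold with the sublattice taken to be $\Lambda^\vee$. Since $F$ acts on $\Lambda$ through a finite quotient of $\Gal(K^{nr}/K)$ it has finite order, so $F$ and $D$ act semisimply on $V=\Lambda\otimes\Q$ and $\Lambda[D]=\Lambda[D^2]$; the lattice $\Lambda^\vee$ is of full rank (Proposition~\ref{prop:L(V)}) and is $F$-stable (Lemma~\ref{cor:LambdaperpisFstable}), hence $D$-stable. The characteristic polynomial of $D$ on $\Lambda$ is obtained from that of $F$ by the substitution $t\mapsto t+1$, namely $\pm t^r p(t+1)$, so the quantity playing the role of ``$p(0)$'' in Corollary~\ref{cor:BTdivP} is exactly $p(1)=P_{A/K}$. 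Applying that corollary thus gives that $|\B_{\Lambda,\Lambda^\vee}|\cdot|\SG_D(\Lambda^\vee)|$ divides $P_{A/K}$.

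Second, since $\Lambda^\vee$ is the dual lattice of $\Lambda$ under the $F$-invariant non-degenerate symmetric pairing, Theorem~\ref{lem:perppreservessepgps} yields $\SG_D(\Lambda)\iso\SG_D(\Lambda^\vee)$; in particular $|\SG_{A/K}|=|\SG_D(\Lambda^\vee)|$. Substituting this into the divisibility of the previous step gives that $|\B_{A/K}|\cdot|\SG_{A/K}|=|\B_{\Lambda,\Lambda^\vee}|\cdot|\SG_D(\Lambda^\vee)|$ divides $P_{A/K}$, which is precisely (d).

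I do not expect any genuine obstacle here: (d) is essentially a one-line corollary of the algebraic results of \S\ref{s:betts}, and the only care needed is the bookkeeping of the substitution $t\mapsto t+1$ relating the characteristic polynomials of $F$ and $D$, and the verification that the cited results apply in this setting (both immediate from semistability and the finite order of Frobenius). One could instead try to extract (d) from Theorem~\ref{thm:clptb}, but that formula only shows $|\SG_{A/K}|\cdot|\B_{A/K}|$ divides $|\Lakf/\Lakvf|\cdot P_{A/K}$, which is strictly weaker; so the argument via Corollary~\ref{cor:BTdivP} and Theorem~\ref{lem:perppreservessepgps} is the one to use.
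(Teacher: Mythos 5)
Your argument for part (d) is correct and is essentially the paper's: the paper proves (d) by exactly the citation ``Corollary~\ref{cor:BTdivP} with $D=F-1$'', and your extra care about the substitution $t\mapsto t+1$ and the identification $\SG_D(\Lambda)\iso\SG_D(\Lambda^\vee)$ via Theorem~\ref{lem:perppreservessepgps} (needed because Corollary~\ref{cor:BTdivP} bounds $|\SG_D(\Lambda^\vee)|$ while $\SG_{A/K}$ is defined from $\Lambda$) is a legitimate and welcome filling-in of that one line; the paper itself invokes the same isomorphism in the proof of Theorem~\ref{thm:uptosquares2}.

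The genuine gap is that the statement you were asked to prove is the whole of Theorem~\ref{thm:varyfield}, and your proposal addresses only (d). Parts (a) and (b) are routine but still need an argument: (a) is the factorisation of the characteristic polynomial of $F$ into cyclotomic polynomials together with Lemma~\ref{lem:cyc}, and (b) follows from Corollary~\ref{cor:sepgpbehaviour} plus (d) and Remark~\ref{rmk:Bcoprimef}. More seriously, part (c) contains the only genuinely non-formal claim of the theorem, namely that for an \emph{arbitrary} finite extension $L/K$ the group $\B_{A/L}$ has square order if and only if $\B_{A/K}$ does or $[L:K]$ is even. This cannot be extracted from the purely lattice-theoretic results you cite: the paper's proof first reduces to $L/K$ unramified (using $\B_{A/L}\cong\B_{A/M}/\B_{A/M}[e]$ and Theorem~\ref{thm:uptosquares}(1)), then passes to a further unramified extension $N/L$ over which the reduction is split, chooses a square integer $\epsilon$ annihilating $\B_{A/K}$, $\B_{A/L}$, $\B_{A/N}$, and compares Tamagawa numbers over totally ramified extensions of degree $\epsilon$ via Corollary~\ref{cor:Bram} and Theorem~\ref{thm:uptosquares}(2), exploiting that $\B_{A/N}$ is trivial because $F$ acts trivially on $\Lambda_{A/N}$. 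None of this mechanism (or any substitute for it) appears in your proposal, so as a proof of Theorem~\ref{thm:varyfield} it is incomplete even in outline.
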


\begin{proof}
(a) Let $\Phi_m(t)$ denote the $m$-th cyclotomic polynomial. It is an elementary fact that $\Phi_m(1)=q$ or $1$ depending on whether $m=q^k$ is a prime power, or not (Lemma \ref{lem:cyc}). Since the roots of the characteristic polynomial $p(t)$ of $F$ in its action on $\Lak$ are all roots of unity, the claim follows by writing $p(t)$ as a product of cyclotomic polynomials. 

(d) Corollary \ref{cor:BTdivP} with $D=F-1$.

(b) Corollary \ref{cor:sepgpbehaviour} with $D=F-1$, observing that if $p_0(t)$ is the minimal polynomial of $F$ then $p_0(1)=\prod_{m=q^k} q^{b_m}$ by the
same argument as in (a). The result on the order follows from (d). The fact that $\SG_{A/K}$ is unchanged in extensions with residue degree coprime to $n$ follows from Remark \ref{rmk:Bcoprimef}.

(c) Proposition \ref{prop:bettsgpbehaviour} with $D=F-1$ shows that $\B_{A/K}$ is finite with at most $(d\!-\!r)$ generators and exponent dividing $p_0(1)=\prod_{m=q^k} q^{b_m}$.
The bound on the order follows from (d).
 
By Remark \ref{rmk:Bcoprimef}, $\B$ does not change in unramified extensions of degree coprime to $n$. 
The formula $\B_{A/L}\cong \frac{\B_{A/K}}{\B_{A/K}[e]}$.
for totally ramified extensions $L/K$ follows from Proposition \ref{prop:bettsgpvaryLambda'}.
The general case now follows, since an extension with residue degree coprime to $n$ is built up from an unramified one of degree coprime to $n$ followed by a totally ramified one of degree $e$.

$\B_{A/K}$ admits a perfect antisymmetric pairing by Theorem \ref{thm:bettspairing}.

For the final claim of (c), suppose that $L/K$ is a finite extension. We first reduce to the case that $L/K$ is unramified, as follows.
Let $M$ be the maximal intermediate field that is unramified over $K$, and let $e$ be the ramification degree of $L/K$.
Now $\B_{A/L}\cong\frac{\B_{A/M}}{\B_{A/M}[e]}$, so  by Theorem \ref{thm:uptosquares}(1) the order of $\B_{A/L}$ is a square if $e$ is even, and is the same up to squares as $\B_{A/M}$ if $e$ is odd.
Thus if $e$ is even then the claim is true, and otherwise it will suffice to prove the claim for $M/K$.
Hence we may (and will) assume that the extension $L/K$ is unramified.

Let $N/L$ be a further unramified extension such that $A/N$ has split semistable reduction. 
Pick a square integer $\epsilon$ that annihilates the three groups $\B_{A/K}, \B_{A/L}$ and $\B_{A/N}$.
Let $K_\epsilon$ be a totally ramified extension of $K$ of degree $\epsilon$, and let $L_\epsilon$ and $N_\epsilon$ be its composita 
with $L$ and $N$, respectively. Write $\sim$ for equality up to rational squares.
Corollary \ref{cor:Bram} below (whose proof does not use this part of the theorem) relates the Tamagawa numbers to $\B$ by the formula
$$
  c_{A/K_k} = |\B_{A/K}[\epsilon]| \cdot c_{A/K} \cdot \epsilon^{r} \sim c_{A/K} |\B_{A/K}|,
$$
and similarly for $L_\epsilon/L$ and $N_\epsilon/N$.

Theorem \ref{thm:uptosquares}(2) (together with Remark \ref{rmk:lakcv}) shows that
if $[L\!:\!K]$ is odd then $c_{A/L}\sim c_{A/K}$ and $c_{A/L_k}\sim c_{A/K_k}$; combining it with the above formula gives 
$|\B_{A/K}|\sim |\B_{A/L}|$, as required.
Finally, if $[L:K]$ is even, then Theorem \ref{thm:uptosquares}(2) shows that
$c_{A/L}\sim c_{A/N}$ and $c_{A/L_k}\sim c_{A/N_k}$, which together with the above formula gives
$|\B_{A/L}|\!\sim\!|\B_{A/N}|$; but $\B_{A/N}$ is trivial ($F$ acts trivially on $\Lambda_{A/N}$, so $P_{A/N}\!=\!1$ by (a) and 
$|\B_{A/N}|=1$ by (d)), so the order of $\B_{A/L}$ must be a square, as required.
\end{proof}

\begin{corollary}
\label{cor:Bram}
Let $K/\Q_p$ be a finite extension and $A/K$ a semistable principally polarised abelian variety of split toric dimension $r$.
Suppose that $A$ acquires split semistable reduction over an extension of degree $n$.
If $L/K$ has residue degree coprime to $n$ and ramification degree~$e$ then 
$$
  c_{A/L} = |\B_{A/K}[e]| \cdot c_{A/K} \cdot e^{r}.
$$
\end{corollary}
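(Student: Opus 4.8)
The plan is to reduce the statement to the two algebraic results already available: the formula for the size of $F$-invariants of $\Lambda/e\Lambda'$ from Corollary~\ref{cor:exactformulatorsion} (or its $F$-version, Corollary~\ref{cor:exactformulatorsionviaF}), and the behaviour of $\B$ under scaling of $\Lambda'$ from Proposition~\ref{prop:bettsgpvaryLambda'}. First I would factor the extension $L/K$ as $K \subseteq K^{nr} \cap L \subseteq L$, i.e.\ as an unramified extension $M/K$ of degree $f$ coprime to $n$, followed by a totally ramified extension $L/M$ of degree $e$. Because $f$ is coprime to the order $n$ of the Frobenius element $F$ acting on $\Lak$, Remark~\ref{rmk:Bcoprimef} gives that passing from $K$ to $M$ replaces $F$ by $F^f$, which generates the same cyclic group, so that $\B_{A/M} = \B_{A/K}$, $\SG_{A/M} = \SG_{A/K}$, $P_{A/M} = P_{A/K}$, and moreover $c_{A/M} = c_{A/K}$ (the reduction type is genuinely unchanged; this is part (a) of Theorem~\ref{thm:varyfield} together with the fact that $|(\Lak/\Lakv)^{F^f}| = |(\Lak/\Lakv)^F|$ as $\langle F^f\rangle = \langle F\rangle$). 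Then it suffices to treat the case where $L/K = L/M$ is totally ramified of degree $e$.

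In the totally ramified case, Remark~\ref{rmk:lakcv} identifies $c_{A/L} = |(\Lambda/e\Lambda^\vee)^{F}|$ with $\Lambda = \Lak = \Lambda_{A/M}$ and $\Lambda^\vee = \Lakv$, the Frobenius still $F$ (now of residue degree~$1$). Applying Corollary~\ref{cor:exactformulatorsionviaF} with this $e$, and separately with $e=1$, and dividing, gives
$$
\frac{c_{A/L}}{c_{A/M}} = \frac{|\B_{A/M}[e]|}{|\B_{A/M}|}\cdot e^r,
$$
since all the other factors ($|\Lambda^F/\Lambda^{\vee F}|$, $p(1)$, $|\SG_D(\Lambda^\vee)|$) are common to both formulae. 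But Proposition~\ref{prop:bettsgpvaryLambda'} shows $\B_{\Lambda,e\Lambda^\vee} \cong \B_{A/M}/\B_{A/M}[e]$, which has order $|\B_{A/M}|/|\B_{A/M}[e]|$; this is exactly the reciprocal of the ratio appearing above, so the formula rearranges to $c_{A/L} = |\B_{A/M}[e]|\cdot c_{A/M}\cdot e^r$. Combining with $\B_{A/M} = \B_{A/K}$ and $c_{A/M} = c_{A/K}$ from the first step yields the claim.

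The main obstacle is purely bookkeeping: making sure that the transition $K \rightsquigarrow M$ really leaves all of $\B$, $\SG$, $P$ and the Tamagawa number itself untouched. This rests entirely on Remark~\ref{rmk:Bcoprimef} (for $\B$ and $\SG$) and on part~(a) of Theorem~\ref{thm:varyfield} together with the observation that the group of $F$-invariants of any $\Z[F]$-module depends only on the subgroup $\langle F\rangle$, so replacing $F$ by a coprime power $F^f$ changes nothing; once this is granted, the rest is a one-line division of the two instances of Corollary~\ref{cor:exactformulatorsionviaF}. Note also that, as remarked in the statement, this argument does not invoke the final claim of Theorem~\ref{thm:varyfield}(c), so there is no circularity when that part of the theorem later quotes this corollary.
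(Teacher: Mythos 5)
Your overall route is sound and is essentially the paper's argument unpacked: the paper proves the corollary in one line by combining Remark \ref{rmk:lakcv} with Theorem \ref{thm:clptb} over $K$ and over $L$ and with Theorem \ref{thm:varyfield}(a),(b) and the first part of (c), and those parts of Theorem \ref{thm:varyfield} are themselves proved by exactly your factorisation into an unramified step of degree coprime to $n$ (Remark \ref{rmk:Bcoprimef}, plus $\langle F^f\rangle=\langle F\rangle$) followed by a totally ramified step (Proposition \ref{prop:bettsgpvaryLambda'}). So the ingredients you cite are the right ones, and your closing remark about non-circularity with Theorem \ref{thm:varyfield}(c) is accurate.

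There is, however, a bookkeeping error in your middle display. When you divide the two instances of Corollary \ref{cor:exactformulatorsionviaF}, the factor $1/|\B_{A/M}|$ occurs in \emph{both} (the $e=1$ case reads $|(\Lambda/\Lambda^\vee)^F|=\frac{1}{|\B_{A/M}|}\cdot|\Lambda^F/\Lambda^{\vee F}|\cdot\frac{p(1)}{|\SG_D(\Lambda^\vee)|}$), so it cancels together with the factors you list; the correct ratio is $c_{A/L}/c_{A/M}=|\B_{A/M}[e]|\cdot e^r$, not $\frac{|\B_{A/M}[e]|}{|\B_{A/M}|}\cdot e^r$. As written, your displayed identity would yield the corollary only when $|\B_{A/M}|=1$, and the subsequent appeal to Proposition \ref{prop:bettsgpvaryLambda'} (``the reciprocal of the ratio'') does not logically rearrange it into the claimed formula. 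The fix is immediate: either divide correctly, in which case Proposition \ref{prop:bettsgpvaryLambda'} is not needed at all on your route, or use it as the paper does, writing $c_{A/L}$ via Theorem \ref{thm:clptb} over $L$ with $\B_{A/L}=\B_{\Lambda,e\Lambda^\vee}\cong\B_{A/M}/\B_{A/M}[e]$ and $|\Lambda^F/(e\Lambda^\vee)^F|=e^r\,|\Lambda^F/\Lambda^{\vee F}|$. With that repair your proof is complete and matches the paper's.
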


\begin{proof}
Using Remark \ref{rmk:lakcv}, Theorem \ref{thm:clptb} and Theorem \ref{thm:varyfield} (a), (b) and the first part of~(c),
$$
 c_{A/L} = \left|\frac{\Lalf}{\Lalvf}\right|\cdot \frac{P_{A/L}}{|\SG_{A/L}||\B_{A/L}|} 
             = \left|\frac{\Lakf}{e\Lakvf}\right|\cdot \frac{P_{A/K}|\B_{A/K}[e]|}{|\SG_{A/K}||\B_{A/K}|}
            = e^{r}\cdot c_{A/K} \cdot|\B_{A/K}[e]|  .
$$
\end{proof}

\begin{remark}\label{rmk:gcd}
To determine the Tamagawa number of $A$ over every extension $M/K$ 
it suffices to know $c_{A/L}$, $\B_{A/L}$ and the split toric dimension of $A/L$ just for the
 intermediate fields~\hbox{$K\!\subseteq\! L\! \subseteq \!K'$},
where $K'/K$ is the minimal unramified extension over which $A$ acquires split semistable reduction. 
Indeed, one can then simply apply Corollary \ref{cor:Bram} to the extension $M/M\cap K'$ to find $c_{A/M}$.
\end{remark}

\begin{remark}\label{rmk:cncv}
Theorem \ref{thm:varyfield} often lets one determine $\B,\SG$ and $P$. For instance, all three invariants are trivial if the reduction is split semistable, or if none of the eigenvalues of $F$ in its action on $\Lak$ have prime power order --- this condition forces $P_{A/K}=1$ by (a), and hence trivialises $\B$ and $\SG$ by (d). Moreover, $P$ can always be read off from the eigenvalues of $F$, and $\SG$ is also trivial if $F$ has no eigenvalue 1 (equivalently $A/K$ has split toric dimension 0), by (a) and (b), respectively.

Theorems \ref{thm:c_n}(iii) and \ref{thm:2d} describe these invariants in special cases. The second of these translates to a classification for the case of toric dimension 2, see Theorem \ref{thm:cvdim2} below. The first of these gives the following result: suppose $n>1$ and the eigenvalues of $F$ are precisely the set of $n$-th roots of unity (each occuring once), equivalently $\Lak\otimes_\Z\Q$ is the unique faithful rational representation of $C_n=\langle F\rangle$. Then $\SG_{A/K}$ is trivial; $P_{A/K}=q$ if $n=q^s$ is a prime power, and $P_{A/K}=1$ otherwise; $\B_{A/K}$ is trivial unless $n=2^s$ and $[\Lambda\!:\!\Lambda^\vee]$ is odd, in which case $\B_{A/K}\iso C_2$. In particular, by Theorem \ref{thm:clptb},
$$
 c_{A/K} =
   \begin{cases}
      q & \text{if}\quad n=q^s, q\text{ an odd prime}, \\
      2 & \text{if}\quad n=2^s \text{ and } [\Lambda\!:\!\Lambda^\vee] \text{ is even}, \\ 
      1 & \text{otherwise}.
   \end{cases}
$$
\end{remark}

\begin{example}\label{ex:ec2}
As in Example \ref{ex:ec1} consider an elliptic curve $A/K$ with multiplicative reduction of type $\kI_n$.
In this case $\SG_{A/K}$ is trivial, and $P_{A/K}$ is 1 or $2$ depending on whether the reduction is split or non-split, as one readily sees either from the definitions or from Theorem \ref{thm:varyfield}(a,b).
By Theorem \ref{thm:varyfield}(c), the group $\B_{A/K}$ is trivial if the reduction is split and is either trivial or $C_2$ in the non-split case.
In fact it is $C_2$ if and only if the reduction is non-split and $n$ is odd, as can be checked direct from the definition or using the final part of Remark \ref{rmk:cncv}.

Note that  $\left|\frac{\Lakf}{\Lakvf}\right|$ is either $n$ or $1$ depending on whether the reduction is split or not, and thus the expression
$$
  \left|\frac{\Lakf}{\Lakvf}\right|\cdot \frac{P_{A/K}}{|\SG_{A/K}||\B_{A/K}|} =
   \begin{cases}
      n & \text{for split multiplicative reduction}, \\
      1 & \text{for non-split multiplicative reduction, }2\!\nmid\! n  \\
      2 & \text{for non-split multiplicative reduction, }2|n,
   \end{cases}
$$
from Theorem \ref{thm:clptb} does indeed compute the local Tamagawa number in each case, cf. Example \ref{ex:ec1}.
This also recovers the formula in Remark \ref{rmk:cncv} for the case of non-split multiplicative reduction (but note that $n$ has a different meaning there).

Now consider a finite extension $M/K$ with residue degree $f$ and ramification degree~$e$. 
If $A/K$ has split multiplicative reduction, then 
$$
 c_{A/M} = en = c_{A/K}\cdot e,
$$ 
as predicted by Corollary \ref{cor:Bram} with $r=1$ and trivial $\B_{A/K}$. If $A/K$ has non-split multiplicative reduction, then it becomes split over the quadratic unramified extension $K'/K$. One readily checks that
$$
 c_{A/M} =
   \begin{cases}
       |\B_{A/K}[e]| \cdot c_{A/K} & \text{if }\> 2\!\nmid\! f, \\
       c_{A/K'}\cdot e &  \text{if }\> 2|f,
   \end{cases}
$$
again as predicted by Corollary \ref{cor:Bram}, with $r=0,1$ over $K$ and $K'$, respectively, and trivial $\B_{A/K'}$. This illustrates Remark \ref{rmk:gcd}, that to obtain $c_{A/M}$ over general extensions one just needs to know $c_{A/L}, \B_{A/L}$ and $r$ for the subfields \hbox{$K\!\subseteq\! L\! \subseteq \!K'$}, 
\linebreak 
where $K'$ is the minimal extension over which the reduction becomes split semistable. Thus in general, the cases ``$2|f$'' and ``$2\!\nmid\! f$'' will be replaced by the possible values of gcd$(f,n)$, where $n=[K':K]$.
\end{example}

\begin{corollary}\label{cor:iwastab}
Let $K/\Q_p$ be a finite extension and $A/K$ a semistable abelian variety.
If $K\subset L_1 \subset L_2 \subset \ldots$ is a tower of finite field extensions with $L_k/K$ of ramification degree $e_k$, then for all sufficiently large $k$
$$
  c_{A/L_k} = C \cdot e_k^{r_\infty}, 
$$
for some suitable constant $C\in\Q$, and where $r_{\infty}$ is the split toric dimension of $A/L_k$ for all sufficiently large $k$.
\end{corollary}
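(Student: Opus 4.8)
The plan is to strip the problem down to the closed formula of Corollary~\ref{cor:exactformulatorsionviaF}, after observing that every quantity in sight becomes eventually constant along the tower.

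First I would set up the lattice data. By Grothendieck's theory of semistable reduction (\cite{Gro}\S9--10; see also \cite{tamroot} \S3.5.1), and without any need for a principal polarisation --- one simply takes $\Lambda=\Hom(X(T_{A^\vee}),\Z)$ and $\Lambda'$ the image of $X(T_A)$ under the monodromy map --- there are full-rank lattices $\Lambda'\subseteq\Lambda$ carrying a Frobenius automorphism $F$ of some finite order $n$, such that for every finite extension $M/K$ of residue degree $f$ and ramification degree $e$ one has $c_{A/M}=\bigl|(\Lambda/e\Lambda')^{F^f}\bigr|$. Write $f_k,e_k$ for the residue and ramification degrees of $L_k/K$; since $L_k\subseteq L_{k+1}$ and both are multiplicative in towers, $f_k\div f_{k+1}$ and $e_k\div e_{k+1}$.

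Next I would stabilise the Galois action. For any $\langle F\rangle$-module $M$ one has $M^{F^{f}}=M^{\langle F^{f}\rangle}=M^{\langle F^{\gcd(f,n)}\rangle}$, and from $f_k\div f_{k+1}$ it follows that $\gcd(f_k,n)\div\gcd(f_{k+1},n)$; hence $\gcd(f_k,n)$ runs through a divisibility-increasing chain of divisors of $n$ and is eventually constant, say equal to $g$. Put $F'=F^{g}$ and $D'=F'-1$, and write the characteristic polynomial of $F'$ on $\Lambda$ as $\pm(t-1)^{r'}p'(t)$ with $p'(1)>0$. Then for all large $k$ we have $c_{A/L_k}=\bigl|(\Lambda/e_k\Lambda')^{F'}\bigr|$, and $r'=\rk\Lambda^{F'}$ is exactly the split toric dimension of $A/L_k$ for all large $k$; denote this common value by $r_\infty$.

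Finally, I would invoke Corollary~\ref{cor:exactformulatorsionviaF} for the automorphism $F'$ and $e=e_k$, which gives
$$
c_{A/L_k}=\frac{|\B_{\Lambda,\Lambda'}[e_k]|}{|\B_{\Lambda,\Lambda'}|}\cdot\left|\frac{\Lambda^{F'}}{\Lambda'^{F'}}\right|\cdot\frac{p'(1)}{|\SG_{D'}(\Lambda')|}\cdot e_k^{r_\infty},
$$
with $\B_{\Lambda,\Lambda'}$ and $\SG_{D'}(\Lambda')$ formed using $D'=F'-1$. Every factor is independent of $k$ except $|\B_{\Lambda,\Lambda'}[e_k]|$ and the power $e_k^{r_\infty}$; and since $\B_{\Lambda,\Lambda'}$ is a fixed finite group (Proposition~\ref{prop:bettsgpbehaviour}) and $e_k\div e_{k+1}$, the subgroups $\B_{\Lambda,\Lambda'}[e_k]$ form an increasing chain, hence stabilise, with eventual order $b$ say. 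Thus for all sufficiently large $k$ one gets $c_{A/L_k}=C\cdot e_k^{r_\infty}$ with $C=\frac{b}{|\B_{\Lambda,\Lambda'}|}\cdot\bigl|\Lambda^{F'}/\Lambda'^{F'}\bigr|\cdot\frac{p'(1)}{|\SG_{D'}(\Lambda')|}\in\Q$, which is the claim. There is no real obstacle here: the entire content is the bookkeeping observation that the three divisibility chains $f_k\div f_{k+1}$, $\gcd(f_k,n)\div\gcd(f_{k+1},n)$ and $e_k\div e_{k+1}$ force the Frobenius power, the exponent $r'$ and the finite torsion group $\B_{\Lambda,\Lambda'}[e_k]$ all to become eventually constant, after which \S\ref{s:betts} supplies the exact formula.
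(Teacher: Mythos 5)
Your argument is correct, but it handles the absence of a principal polarisation differently from the paper, and the comparison is worth spelling out. The paper never sets up $\Lak$ for unpolarised $A$: its Notation \ref{not:lak} requires a principal polarisation, so its proof first reduces to that case by observing that $A$ and its dual $A^*$ have equal Tamagawa numbers (via Grothendieck's perfect duality of component groups, \cite{Gro} \S 11.4) and equal split toric dimensions, and then applying Zarhin's trick to $A^4\times A^{*4}$; after that it replaces $K$ by the maximal unramified extension of $K$ inside $\bigcup L_k$ of degree dividing $n$, so that all residue degrees become coprime to $n$, and quotes the ready-made Corollary \ref{cor:Bram} together with the stabilisation of $\B_{A/K}[e_k]$. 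You instead invoke the general monodromy-pairing description of the component group, valid for any semistable $A$, to get $F$-stable full-rank lattices $\Lambda'\subseteq\Lambda$ with $c_{A/M}=|(\Lambda/e\Lambda')^{F^f}|$, and then apply the pairing-free algebraic formula of Corollary \ref{cor:exactformulatorsionviaF} directly, stabilising the Frobenius power via the divisibility chain $\gcd(f_k,n)\div\gcd(f_{k+1},n)$ rather than by enlarging the base field. What your route buys is a more self-contained and arguably cleaner argument that avoids Zarhin's trick and the coprimality reduction; what it costs is an appeal to the unpolarised form of Grothendieck's theorem (the exact sequence $0\to X(T_{A^\vee})\to\Hom(X(T_A),\Z)\to\Phi_A\to 0$ and its behaviour under base change), which lies outside the framework the paper sets up, whereas the paper's reduction stays entirely within its own principally polarised machinery.

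One small point to fix: with your labels ($\Lambda=\Hom(X(T_{A^\vee}),\Z)$ and $\Lambda'$ the image of $X(T_A)$) the quotient $\Lambda/e\Lambda'$ is the geometric component group of the dual abelian variety $A^\vee$ over $M$, not of $A$ (or vice versa, depending on which transpose of the monodromy map one takes). Either swap the two character groups, or note, as the paper does, that the Frobenius-fixed points of $\Phi_A$ and $\Phi_{A^\vee}$ have the same order by the component group duality, and that the split toric dimensions agree since $X(T_A)\otimes\Q$ and $X(T_{A^\vee})\otimes\Q$ are isomorphic as $F$-modules. This does not affect the conclusion, but as written the displayed identity $c_{A/M}=|(\Lambda/e\Lambda')^{F^f}|$ needs that one extra remark.
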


\begin{proof}

We begin by reducing to the case of principally polarised abelian varieties. Note first that $A/L$ and the dual abelian variety $A^*/L$ have the same Tamagawa number and the same split toric dimension over any extension $L/K$. 
Indeed, let $\Phi$ and $\Phi'$ denote the N\'eron component groups of $A/L$ and of $A^*/L$, respectively. They admit
a perfect $\Gal(L^{nr}/L)$-invariant pairing $\Phi\times\Phi'\to \Q/\Z$ (\cite{Gro} \S 11.4) and
$$
  c_{A/L}=\Phi^{\Gal(L^{nr}/L)} \qquad {\text{and}} \qquad   c_{A^*/L}={\Phi'}^{\Gal(L^{nr}/L)},
$$
so, in particular, $c_{A/L}=c_{A^*/L}$.
They have the same split toric dimension since they are isogenous (polarisation) and, in particular, have 
isogenous $l$-adic Tate modules.
It thus suffices to prove the theorem for the abelian variety $A^{4}\times A^{*4}$, which,
by Zarhin's trick, admits a principal polarisation. In other words, we may assume that $A/K$
is principally polarised.

We may replace $K$ by the maximal unramified extension of $K$ in $\bigcup L_k$ whose degree divides $n$,
the order of the Frobenius element $F$ in its action on $\Lak$.
Indeed, this extension is also contained in $L_k$ for all sufficiently large $k$, and this operation alters neither the $e_k$ nor $r_\infty$. This now ensures that the residue degrees of $L_k/K$ are all coprime to $n$, 
and also gives $r=r_\infty$. The result now follows from Corollary \ref{cor:Bram}, since the sequence $\B_{A/K}[e_k]$ is eventually constant.
\end{proof}

\begin{remark}
The analogue of Corollary \ref{cor:iwastab} can fail for non-semistable reduction. For example, the Tamagawa number of the elliptic curve 243a1 fluctuates between 1 and 3 in the layers of the $\Z_3$-cyclotomic tower of $\Q_3$, see \cite{megasha} Remark~5.4.
\end{remark}

\begin{theorem}\label{thm:cvuptosquares}
Let $K$ be a finite extension of $\Q_p$, 
and let $A/K$ be a semistable principally polarised abelian variety of toric dimension $d$ and split toric dimension~$r$.
If $L/K$ is a finite extension with residue degree $f$ and ramification degree $e$, then
$$
 c_{A/L}  \sim
    \begin{cases}
      c_{A/K} \cdot e^r
        & \text{if }\>2\nmid e,2\nmid f, \\
      c_{A/K}\cdot|\B_{A/K}|\cdot e^r
        & \text{if }\>2\div e,2\nmid f, \\
      c_{A/K^{nr}}\cdot e^d
        & \text{if }\>2\div f,
    \end{cases}
$$
where $\sim$ denotes equality up to rational squares.
\end{theorem}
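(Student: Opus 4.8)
The plan is to reduce the statement to the purely algebraic result \texttt{Theorem \ref{thm:uptosquares2}} applied to the lattice $\Lambda=\Lak$ with its monodromy pairing, which by \texttt{Remark \ref{rmk:lakcv}} computes $c_{A/L}=|(\Lak/e\Lakv)^{F^f}|$ with $F$ the Frobenius element and $D=F-1$. First I would translate the dictionary: $\Lak^F=\Lakf$, $\Lakv{}^F=\Lakvf$, $\B_{A/K}=\B_{\Lambda,\Lambda^\vee}$, $\SG_{A/K}=\SG_D(\Lambda)$, and $P_{A/K}=p(1)$, where $\pm(t-1)^rp(t)$ is the characteristic polynomial of $F$ on $\Lak$; all the hypotheses of \texttt{Theorem \ref{thm:uptosquares2}} hold because $F$ is an automorphism of finite order (so $\Lambda[D]=\Lambda[D^2]$), the pairing is non-degenerate symmetric $F$-invariant, and $\Lambda^\vee=X(T)\subseteq\Lambda$ by \texttt{Notation \ref{not:lak}}.

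Next I would handle the three cases. For $2\nmid e$, $2\nmid f$: \texttt{Theorem \ref{thm:uptosquares2}} gives $c_{A/L}\sim\bigl|\Lakf/\Lakvf\bigr|\cdot\tfrac{P_{A/K}}{|\B_{A/K}|\cdot|\SG_{A/K}|}\cdot e^r$, and the right-hand side is exactly $c_{A/K}\cdot e^r$ by \texttt{Theorem \ref{thm:clptb}} (with $e$ odd, $e^r\sim e^r$ so this is fine up to squares — actually it is an exact equality of the non-$e^r$ part, so multiplying by $e^r$ is harmless). For $2\mid e$, $2\nmid f$: the formula gives $c_{A/L}\sim\bigl|\Lakf/\Lakvf\bigr|\cdot\tfrac{P_{A/K}}{|\SG_{A/K}|}\cdot e^r = c_{A/K}\cdot|\B_{A/K}|\cdot e^r$, again using \texttt{Theorem \ref{thm:clptb}}. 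For $2\mid f$: the formula gives $c_{A/L}\sim\bigl|\Lak/\Lakv\bigr|\cdot e^{\rk\Lak}=\bigl|\Lak/\Lakv\bigr|\cdot e^d$, and I must identify $\bigl|\Lak/\Lakv\bigr|$ with $c_{A/K^{nr}}$. This follows because over $K^{nr}$ the Frobenius acts trivially, so $F^{f'}=\mathrm{id}$ for the relevant $f'$, whence $c_{A/K^{nr}}=|(\Lak/\Lakv)^{\mathrm{id}}|=|\Lak/\Lakv|$; more carefully, $\Lambda_{A/K^{nr}}=\Lak$ and $\Lambda^\vee_{A/K^{nr}}=\Lakv$ as lattices (the unramified base change does not scale $\Lakv$), only the Frobenius action is killed, so $c_{A/K^{nr}}=|\Lak/\Lakv|$ and $\rk\Lak=d$ by definition of toric dimension.

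The main obstacle — though a mild one — is bookkeeping the "$\sim$ up to rational squares" relation consistently: in the first two cases \texttt{Theorem \ref{thm:uptosquares2}} already yields an expression that equals $c_{A/K}\cdot e^r$ (resp.\ $c_{A/K}|\B_{A/K}|e^r$) on the nose via \texttt{Theorem \ref{thm:clptb}}, so I need only note that $\sim$ is transitive and compatible with multiplication by the common factor; and in the third case I must be slightly careful that the quantity $c_{A/K^{nr}}$ is well-defined (independent of which large enough unramified extension one uses to trivialise $F$) and genuinely equals $|\Lak/\Lakv|$. I would close by writing out the three-line case split, citing \texttt{Theorem \ref{thm:uptosquares2}}, \texttt{Theorem \ref{thm:clptb}}, \texttt{Notation \ref{not:lak}} and \texttt{Remark \ref{rmk:lakcv}}, which completes the proof.
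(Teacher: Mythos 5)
Your proposal is correct and matches the paper's own proof: the paper likewise deduces the result directly from Theorem \ref{thm:uptosquares2} applied to $\Lambda=\Lak$, $\Lambda^\vee=\Lakv$ and the Frobenius, using Theorem \ref{thm:clptb} to rewrite the first two cases and the identification $c_{A/K^{nr}}=\bigl|\Lak/\Lakv\bigr|$ for the third. Your extra verifications (finite order of $F$ giving $\Lambda[D]=\Lambda[D^2]$, $\Lakv\subseteq\Lak$, and the justification that unramified base change does not scale $\Lakv$) are exactly the routine dictionary the paper leaves implicit.
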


\begin{proof}
This follows from Theorem \ref{thm:uptosquares2}, using Theorem \ref{thm:clptb} for the first two cases and noting that $c_{A/K^{nr}}=\left|\frac{\Lak}{\Lakv}\right|$ for the third case.
\end{proof}

\begin{remark}
Recall that the group $\B_{A/K}$ admits a perfect antisymmetric pairing, and so it has square order if and only if it admits a perfect alternating pairing.
Thus Theorems \ref{thm:varyfield}(c) and \ref{thm:cvuptosquares} may be reformulated in this language. (Note, however, that this is not equivalent to {\em every} perfect antisymmetric pairing on $\B_{A/K}$ being alternating.)
\end{remark}


\subsection{Classification for dimension 2}
\label{ss:dim2}

\begin{definition}
Let $K$ be a finite extension of $\Q_p$ and $A/K$ a semistable principally polarised abelian variety of toric dimension 2.
Recall that $\Lak\supseteq\Lakv$ are $F$-invariant 2-dimensional lattices, dual to each other with respect to a symmetric $\Q$-valued pairing. Such lattices are classified in Theorem \ref{thm:2d}. We will say that $A/K$ has a certain {\em reduction type} if $\Lak, \Lakv$ have this type in that classification. 

Thus the possible reduction types are $\vI n,m$, $\vIIa n,m$, $\vIIb n,m$, $\vII n,m$, $\vIII n$, $\vIV n$ and $\vVI n$, where
the parameters $n$ and $m$ are positive integers with $n\equiv m$ mod 2 for type $\vIIb n,m$.
Types $\vI n_1,m_1$ and $\vI n_2,m_2$ are the same if and only if
$C_{n_1}\times C_{m_1}\iso C_{n_2}\times C_{m_2}$, and similarly for types $\vIIs$. All other types are distinct.
Recall that the first two digits of the type name specify the orders of the eigenvalues of the Frobenius element acting on $\Lak$.
In particular, $A/K$ acquires split semistable reduction over an unramified extension of degree 1, 2, 2, 2, 3, 4 and 6 for the seven types, respectively.
\end{definition}

\begin{theorem}\label{thm:cvdim2}
Let $K/\Q_p$ be a finite extension and let $A/K$ be a semistable principally polarised abelian variety of toric dimension 2. Then its Tamagawa number depends on its type as listed in the following table:
$$
\begin{array}{|l|cl|lc|c|}
 \hline \vphantom{\int^{\int^a}}
 {\rm Type} & c_{A/K} && f=2 && f=3 
\\[1pt] \hline \UDspace
 \vI n,m & nm & &\text{unchanged}&&\text{unchanged} 
\\[1pt] \hline \UDspace
 \rh{\vIIa n,m}  & n & (m \text{ odd}) & \rh{\vI n,m} &&\rh{\text{unchanged}} \cr
       & 2n  & (m \text{ even}) &&&
\\[1pt] \hline \UDspace
 & & & \vI 2n,m/2 &(\ord_2\frac nm> 0)&\cr
 \vIIb n,m             & n    & & \vI n,m &(\ord_2\frac nm= 0)&\text{unchanged}\cr
            &     & & \vI n/2,2m &(\ord_2\frac nm< 0)&
\\[1pt] \hline \UDspace
 & 1 & (n,m \text{ odd})    &&&\cr
 \vII n,m   & 2 & (n, m \text{ odd/even})  &\vI n,m&&\text{unchanged}\cr
        & 4 & (n,m \text{ even})   &&&
\\[1pt] \hline \UDspace
 \vIII n & 3 & &\text{unchanged}&& \vI n,3n 
\\[1pt] \hline \UDspace
 \rh{\vIV n}                & 1 & (n \text{ odd}) & \rh{\vII n,n}&&\rh{\text{unchanged}}\cr
                       & 2 & (n \text{ even})  &&&
\\[1pt] \hline \UDspace
 \vVI n & 1 &  &\vIII n && \vII n,3n
\\[1pt] \hline
\end{array}
$$
Taking the base change of $A/K$ to a totally ramified extension of degree $e$ does not change its reduction type, but
scales the parameters $n$ and $m$ by $e$. An unramified extension of
degree coprime to 2 and 3 does not change the type or the parameters.
An unramified extension of degree 2 or 3 changes the type as shown in the table, corresponding to the columns ``$f=2$'' and ``$f=3$''.
\end{theorem}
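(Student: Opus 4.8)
The plan is to reduce everything to the classification of $2$-dimensional lattice pairs in Theorem~\ref{thm:2d} via Remark~\ref{rmk:lakcv}, and then either read the answer off that table or perform a short explicit computation. The Tamagawa number column is immediate: Remark~\ref{rmk:lakcv} gives $c_{A/K}=|(\Lak/\Lakv)^F|$, and since the triple $(\Lak,\Lakv,F)$ with its monodromy pairing is one of the types of Theorem~\ref{thm:2d}, this is exactly the entry in the last column of that theorem's table. For a totally ramified $L/K$ of degree $e$, Remark~\ref{rmk:lakcv} gives $\Lal=\Lak$ and $\Lalv=e\Lakv$ with the same pairing, so the type of $A/L$ is the type of the pair $(\Lambda,e\Lambda^\vee)$; inspecting the rows of Theorem~\ref{thm:2d} one sees at a glance that scaling $\Lambda^\vee$ by $e$ leaves $\Lambda$, $F$ and the shape of $\Lambda^\vee$ unchanged and merely multiplies $n,m$ by $e$ (for type $\vIIbs$ the congruence $en\equiv em\bmod 2$ is preserved). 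This settles the first and third sentences of the statement.

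The substantive part is the unramified case. If $L/K$ is unramified of residue degree $f$, then $\Lal=\Lak$, $\Lalv=\Lakv$ and the pairing are unchanged but the Frobenius becomes $F^f$, so the type of $A/L$ is the Theorem~\ref{thm:2d}-type of $(\Lak,\Lakv,F^f)$. Since an unramified extension of degree $f$ is a tower of prime-degree unramified extensions, and toric dimension is unchanged at each stage, it suffices to treat $f$ coprime to $6$, $f=2$ and $f=3$, the general case following by iteration. When $\gcd(f,6)=1$, $F^f$ generates $\langle F\rangle$: for the four types with $F$ of order $\le2$ this forces $F^f=F$, and for the CM types $\vIIIs,\vIVs,\vVIs$ we have $F^f\in\{F,F^{-1}\}$, with complex conjugation on $\Z[i]$ or $\Z[\zeta_3]$ — preserving the ideals $n\Z[i]$ and $(\zeta_3-1)n\Z[\zeta_3]$, the latter because $\zeta_3+1$ is a unit — giving an isomorphism of triples carrying $F$ to $F^{-1}$; so nothing changes.

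For $f=2$ the list is: $F^2=I$ in types $\vIIas,\vIIbs,\vIIs$, which collapses $\vIIas$ and $\vIIs$ to the pair $(\Z^2,n\Z\oplus m\Z)$ with trivial action, i.e.\ type $\vI n,m$; for $\vIVs$, $F^2$ is multiplication by $-1$ on $\Z[i]$, giving type $\vII n,n$; for $\vVIs$, $F^2$ acts as $\zeta_3$ on $\Z[\zeta_6]=\Z[\zeta_3]$, giving type $\vIII n$; and $\vIs$, $\vIIIs$ are unchanged (the latter by the conjugation argument). For $f=3$: $F^3=F$ for the order-$\le2$ types and, by conjugation, for $\vIVs$; $F^3=I$ on $\Z[\zeta_3]$ for $\vIIIs$ yields $(\Z[\zeta_3],(\zeta_3-1)n\Z[\zeta_3],I)$; and $F^3$ is multiplication by $-1$ on $\Z[\zeta_6]=\Z[\zeta_3]$ for $\vVIs$, yielding $(\Z[\zeta_3],(\zeta_3-1)n\Z[\zeta_3],-I)$. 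Once the underlying lattice pair is identified, the resulting type — and hence $c_{A/L}$ — is read off from Theorem~\ref{thm:2d}, so what remains is to identify the abelian group $\Lambda/\Lambda^\vee$ in two families.

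The first is $(\Z[\zeta_3],(\zeta_3-1)n\Z[\zeta_3])$: multiplication by $(\zeta_3-1)n$ in the basis $1,\zeta_3$ has matrix $\smallmatrix{-n}{-n}{n}{-2n}$, whose Smith normal form has entries $n,3n$, so $\Z[\zeta_3]/(\zeta_3-1)n\Z[\zeta_3]\iso C_n\times C_{3n}$; this makes the $f=3$ images of $\vIIIs$ and $\vVIs$ equal to $\vI n,{3n}$ and $\vII n,{3n}$ respectively. The second family, and the main obstacle, is the type $\vIIbs$ pair $\bigl(\Z^2+\langle\tfrac12,\tfrac12\rangle,\ n\Z\oplus m\Z+\langle\tfrac n2,\tfrac m2\rangle\bigr)$ under $f=2$. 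Here I would pass to the basis $f_1=e_1$, $f_2=\tfrac12(e_1+e_2)$ of the overlattice; using $n\equiv m\bmod 2$, the sublattice becomes the $\Z$-span of $(n,0)$ and $\bigl(\tfrac{n-m}{2},m\bigr)$, and a prime-by-prime Smith normal form computation — the delicate point being the $2$-adic valuations, which split into the three cases $\ord_2(n/m)$ positive, zero and negative (and in the nonzero cases both $n,m$ are forced even) — gives $\Lambda/\Lambda^\vee\iso C_{\gcd(n,m)}\times C_{nm/\gcd(n,m)}\iso C_n\times C_m$ when $\ord_2(n/m)=0$, and $\iso C_{2n}\times C_{m/2}$ (resp.\ $C_{n/2}\times C_{2m}$) when $\ord_2(n/m)>0$ (resp.\ $<0$). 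Since the action on this pair is trivial, these are the types $\vI n,m$, $\vI {2n},{m/2}$ and $\vI {n/2},{2m}$, matching the three sub-rows for $\vIIbs$ under $f=2$ and completing the proof.
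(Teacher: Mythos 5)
Your proposal is correct and follows essentially the same route as the paper: reduce via Remark \ref{rmk:lakcv} to the lattice classification of Theorem \ref{thm:2d}, note that ramified base change scales $n,m$ by $e$ while unramified base change replaces $F$ by $F^f$, and then do a case-by-case analysis in which the only substantive work is identifying the quotient groups for $\vIIIs,\vVIs$ under $f=3$ and for $\vIIbs$ under $f=2$. The only (cosmetic) difference is how that last identification is carried out --- you use a change of basis and a prime-by-prime Smith normal form computation, whereas the paper pins down the group via the exact sequence $C_2\to\frac{\Z\oplus\Z}{n\Z\oplus m\Z}\to\Lambda/\Lambda^\vee\to C_2$ and the orders of explicit elements; both give the same three cases according to $\ord_2(n/m)$.
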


\begin{proof}
By Remark \ref{rmk:lakcv}, the Tamagawa number is given by $ c_{A/K}=\Bigl|\left(\!\frac{\Lak}{\Lakv}\!\right)^{\!\! F}\Bigr|$. The classification of the Tamagawa numbers then follows from
Theorem \ref{thm:2d} with $\Lambda=\Lak$. By the same Remark, a totally ramified extension changes $\Lakv$ 
to $e\Lakv$, which in turn scales the parameters of the type by $e$ (see Remark \ref{rmk:scalebye}).

An unramified extension does not change $\Lak$ or $\Lakv$, but changes $F$ to~$F^f$. Thus the claim for unramified
extensions follows from Theorem \ref{thm:2d}, by a case-by-case analysis:
all cases are straightforward, except perhaps that of cubic extensions for types $\vIIIs$ and $\vVIs$, and quadratic extensions for $\vIIbs$.
To see the effect of a cubic unramified extension for the type $ \vIII n$ (respectively $\vVI n$),
note that $\Z[\zeta_3] / (\zeta_3-1)n\Z[\zeta_3]\iso C_n\times C_{3n}$ as abelian groups,
so it becomes $\vI n,3n$ (respectively, $\vII n,3n$), as claimed.
Similarly,
for a quadratic unramified extension for type $\vIIb n,m$, note that
$\frac{\Z\oplus\Z+\langle \frac12,\frac12\rangle}{n\Z\oplus m\Z+\langle \frac n2,\frac m2\rangle}
\iso 
C_{2n}\times C_{m/2}$ or 
$C_n\times C_m$ or 
$C_{n/2}\times C_{2m}$, depending on whether 
$\ord_{2}n>\ord_{2}m$,
$\ord_{2}n=\ord_{2}m$ or 
$\ord_{2}n<\ord_{2}m$, respectively --- this gives the claimed description.
(To see this isomorphism of abelian groups, write $n=n'2^a, m=m'2^b$ with odd $n', m'$. The quotient group is closely related to the elementary group
$\frac{\Z\oplus\Z}{n\Z\oplus m\Z}$ by the exact sequence 
$C_2\to \frac{\Z\oplus\Z}{n\Z\oplus m\Z} \to 
 \frac{\Z\oplus\Z+\langle \frac12,\frac12\rangle}{n\Z\oplus m\Z+ \langle \frac n2,\frac m2\rangle} \to C_2$,
and so it is isomorphic to one of $C_n\!\times\! C_m, C_{2n}\!\times\! C_{m/2}$ or $ C_{n/2}\!\times\! C_{2m}$. If $a=b$, then $(1,0), (0,1)$ and $(\frac12,\frac12)$ are all killed by $2^a n'm'=nm'=n'm$ in the quotient group. Thus there are no elements of order $2^{a+1}$, and hence the group is $C_n\times C_m$. If instead, without loss of generality, $a>b$, then $2^a n'm'(\frac12,\frac12)=(\frac{nm'}2,\frac{2^{a-b}n'm}2) \equiv (\frac n2,0)$ in the quotient group. So $(\frac{n'm'}2,\frac{n'm'}2)$ has order $2^{a+1}$ and the group must be $C_{2n}\times C_{m/2}$.)
\end{proof}



\subsection{Applications to the $p$-parity conjecture}
\label{sspparity}

We finally turn to abelian varieties over number fields.
Recall from \S\ref{ss:intro-p} that for an abelian variety $A$ over a number field $K$ we write $\X_p(A/K)$ for its dual $p^\infty$-Selmer group. This is a $\Q_p$-vector space whose dimension is, conjecturally, the rank of $A/K$. The $p$-parity conjecture (Conjecture \ref{conj:pparity}) states that the parity of this dimension can be read off from the global root number $w(A/K)$. More generally, if $F/K$ is a finite Galois extension and $\tau$ a self-dual complex representation of $\Gal(F/K)$, then it gives a formula for the parity of the multiplicity of $\rho$ in $\X_p(A/F)$:
$$
 (-1)^{\langle\X_p(A/F),\tau\rangle}=w(A/K,\tau),
$$
where $w(A/K,\tau)$ is the root number of the twist of $A/K$ by $\tau$, and $\langle\cdot,\cdot\rangle$ the usual inner product of characters of a finite group. (Strictly speaking, we first need to fix an embedding $\bar\Q_p\subset\C$, so as to be able to compare complex and $p$-adic representations. We do this once and for all.)

The aim of this section is to prove the $p$-parity conjecture for a specific supply $\TFKp$ of twisting representations $\tau$ (Theorem \ref{thm-parity}). The approach relies on the theory of Brauer relations and regulator constants of \cite{tamroot}.

\begin{notation}
Let $G$ be a finite group. We say that a formal $\Z$-linear combination of (conjugacy classes of) subgroups $\Theta=\sum_i n_i H_i$ is a  Brauer relation (or a $G$-relation), if
$$
 \bigoplus_i \C[G/H_i]^{\oplus n_i} \iso 0
$$
as a virtual representation, i.e. if the character $\sum_i n_i \chi_{\C[G/H_i]}=0$.

Fix a prime $p$. For a self-dual $\Q_p$-representation $\rho$ of $G$ we then define its regulator constant as
$$
  \RC_\Theta(\rho)= \prod_i \det\left( \frac1{|H_i|} ( , ) | \rho^{H_i} \right) \in \Q_p^\times/\Q_p^{\times2},
$$
where $(,)$ is any non-degenerate $G$-invariant pairing of $\rho$, and the determinant is computed on any $\Q_p$-basis of the invariant subspace $\rho^{H_i}$. This definition is known to be independent of the choices of the pairing and of the bases.

We now define $\T_{\Theta,p}$ to be the set of self-dual $\bar\Q_pG$-representations $\tau$ that satisfy
$$
 \langle \tau, \rho \rangle \equiv \ord_p \RC_\Theta(\rho) \mod 2
$$
for every self-dual $\Q_pG$ representation $\rho$.

For a Galois extension $F/K$ of number fields with Galois group $G$ we then define
$$
 \TFKp = \bigcup\nolimits_\Theta \T_{\Theta,p},
$$
where the sum is taken over all the Brauer relations of $G$.
\end{notation}

\begin{remark}
The set $\TFKp$ still remains rather mysterious. In the context of the parity conjecture and Brauer relations it should be thought of as the set of ``computable representations''. If, as above, $F/K$ is a Galois extension with Galois group $G$ and $A/K$ an abelian variety, then $\X_p(A/F)$ can be decomposed into irreducible $\Q_p$-representations $\X_p(A/F)\iso \bigoplus_i \rho_i^{\oplus n_i}$.
Ideally, we would like to be able to determine the multiplicities $n_i$, but this appears to be beyond reach at present.
However, the machine of Brauer relations and regulator constants gives a formula for \hbox{$\sum_{i\in I} n_i \!\mod 2$} for suitable sets $I$ of representations of $G$. These sets $I$ are determined by regulator constants: if $\Theta$ is a $G$-relation then the 
\hbox{$\Q_p$-representations}~$\rho$ for which $\ord_p\RC_\Theta(\rho)$ is odd form such a set. 
Now pick one $\bar\Q_p$-irreducible consitutent~$\tau_i$ of each of the $\rho_i$ for $i\in I$ and set $\tau=\bigoplus\tau_i$. The sum of the multiplicities $\sum n_i\!\mod 2$ that we can compute is the same as $\langle \tau,\X_p(A/F)\rangle\!\mod 2$. All the possible $\tau$ that can be obtained in this fashion are exactly the set $\TFKp$. (Technically, to get all of $\TFKp$, we also need to allow to take an odd number of constituents of $\rho_i$, and an even number of constutuents for those self-dual $\rho$ for which $\ord_p\RC(\rho)$ is even.)
\end{remark}

\begin{example}
The group $G=S_3$ has three complex irreducible representations ($\triv$, sign $\epsilon$ and 2-dimensional $\rho$) and four subgroups up to conjugacy ($S_3, C_3, C_2, \{{\rm id}\}$). The corresponding permutation representations decompose as 
$$
\C[G/S_3]\iso\triv \qquad \C[G/C_3]\iso\triv\oplus\epsilon \qquad \C[G/C_2]\iso\triv\oplus\rho \qquad 
\C[G/\{{\rm id}\}]\iso\triv\oplus\epsilon\oplus\rho^{\oplus 2}.
$$
Thus one easily sees that, up to multiples, $S_3$ has exactly one Brauer relation: 
$$
  \Theta=2S_3-C_3-2C_2+\{{\rm id}\}.
$$

Now fix any prime $p$. The irreducible $\Q_p$-representations of $S_3$ are still $\triv, \epsilon$ and~$\rho$, since they can all be realised over $\Q$. To compute their regulator constants we need to pick an $S_3$-invariant pairing on the representations. For example, for $\triv$ we can take the pairing $(1,1)=1$, or indeed any other one. We then compute
$$
 \RC_\Theta(\triv) = \left(\frac{1}{6}\cdot 1 \right)^2  \left(\frac{1}{3}\cdot 1 \right)^{-1} 
 \left(\frac{1}{2}\cdot 1 \right)^{-2}  \left(\frac{1}{1}\cdot 1 \right) = 3 \cdot \square.
$$
Note that choosing a different pairing would not have affected the result (the powers neatly cancel!) and that picking different bases for the invariant spaces $\triv^{H}$ for the various subgroups $H$ would only have changed the result by a square in $\Q_p$.
One similarly computes the regulator constant for the other two irreducibles, which in this example give the same result: $\RC_\Theta(\epsilon)=\RC_\Theta(\rho)=3$.

Now let $p=3$. The parity of the power of $3$ in the regulator constant of a representation $X$ can simply be computed by counting the number of $\triv,\epsilon$ and~$\rho$ in its decomposition into irreducibles (regulator constants are multiplicative). 
Equivalently, $\ord_3 \RC_\Theta(X)\equiv\langle\triv\oplus\epsilon\oplus\rho,X\rangle \!\mod \! 2$.
This precisely means that $\tau=\triv\oplus\epsilon\oplus\rho\in\T_{\Theta,3}$.

The purpose of this machine is that we can prove the $3$-parity conjecture for twists by representations like $\tau$: 
Theorem \ref{thm-parity} shows that if $F/K$ is an extension of number fields with Galois group $S_3$, then the $3$-parity conjecture holds for a large class of abelian varieties over $K$ twisted by the representation $\tau$ of $\Gal(F/K)$.

\end{example}

\begin{notation}
For an abelian variety over a local field $A/K$ together with a non-zero regular exterior form $\omega$, we write
$$
 C(A/K,\omega)= 
    \begin{cases}
      c_{A/K} \cdot \left|\frac{\omega}{\neron}\right|_K  & \text{ for } K \text{ non-archimedean,}\\
     \int_{A(K)}|\omega| & \text{ for } K=\R, \\
     2^{\dim A}\int_{A(K)}|\omega\wedge\bar\omega| & \text{ for } K=\C,
    \end{cases}
$$
where $|\cdot|_K$ is the normalised absolute value of $K$ and $\neron$  is the N\'eron exterior form.
If $F/K$ is a Galois extension and $\Theta=\sum_i n_iH_i$ a Brauer relation in its Galois group, we use the shorthand notation
$$
  C_v(\Theta)= \prod_i C(A/F^{H_i},\omega)^{n_i}.
$$
This quantity is independent of the choice of $\omega$ (see \cite{tamroot} Notation 3.1).
The subscript ``$v$'' is there only to indicate that the setting is local.

If $K$ is instead a number field, we write $C_{A/K}=\prod_v C(A/K_v,\omega)$, the product taken over all the places of $K$. This definition is independent of the choice of $\omega$ by the product formula. We similarly write
$$
  C(\Theta)= \prod_i C(A/F^{H_i})^{n_i},
$$
whenever $F/K$ is a Galois extension and $\Theta=\sum_i n_iH_i$ a Brauer relation in its Galois group.
\end{notation}

\begin{definition}
Let $K$ be a local field, $A/K$ an abelian variety and $p$ a fixed prime
number. For a Galois extension $F/K$, let us say that
{\em local compatibility holds for $A$ in $F/K$} if
for every Brauer relation $\Theta$ of $\Gal(F/K)$ and every
$\tau\in\T_{\Theta,p}$,
$$
  (-1)^{\ord_p C_v(\Theta)} = w(A/K,\tau).
$$
We will also say that {\em local compatibility holds} for $A/K$ if it
holds for $A$ in all Galois extensions $F/K$.
\end{definition}

\begin{theorem}
\label{comp-parity}
Let $F/K$ be a Galois extension of number fields, $p$ a prime number,
and $A/K$ and abelian variety.
Suppose that local compatibility holds for $A$ in $F_w/K_v$ at
every place $v$ of $K$ and every place $w|v$ of $F$. Then
$$
  (-1)^{\ord_p C(\Theta)} = w(A/K,\tau)
$$
for every $\Gal(F/K)$-relation $\Theta$ and every $\tau\in\T_{\Theta,p}$.

Suppose further that $A$ is principally polarised, and that, if $p=2$, the
polarisation comes from a $K$-rational divisor. Then
the $p$-parity conjecture holds for all twists of $A/K$ by $\tau\in\TFKp$.
\end{theorem}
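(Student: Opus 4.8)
Both conclusions will come from feeding the local compatibility hypothesis into the standard machinery of Brauer relations, regulator constants and self-duality of Selmer groups developed in \cite{tamroot} and its predecessors; write $G=\Gal(F/K)$. For the first conclusion I would localise. For a place $v$ of $K$ fix a place $w$ of $F$ above it and let $D_v\le G$ be the associated decomposition group. Artin formalism for $\varepsilon$-factors gives $w(A/K,\tau)=\prod_v w(A/K_v,\Res_{D_v}\tau)$, all but finitely many factors being $1$; and Mackey's formula identifies the places of $F^{H_i}$ above $v$ with the double cosets $H_i\backslash G/D_v$ and their completions with the fixed fields $F_w^{\,g^{-1}H_ig\cap D_v}$, whence $C(\Theta)=\prod_i C(A/F^{H_i})^{n_i}$ factors as $\prod_v C_v(\Res_{D_v}\Theta)$ for the induced $D_v$-relations $\Res_{D_v}\Theta$. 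Finally, since induction of a self-dual representation is self-dual, Frobenius reciprocity together with the behaviour of regulator constants under induction (\cite{tamroot}) shows that $\Res_{D_v}\tau\in\T_{\Res_{D_v}\Theta,p}$ whenever $\tau\in\T_{\Theta,p}$. Applying the local compatibility hypothesis at each place $v$ and multiplying yields $(-1)^{\ord_p C(\Theta)}=w(A/K,\tau)$.

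For the second conclusion, fix $\tau\in\TFKp$ and a $G$-relation $\Theta$ with $\tau\in\T_{\Theta,p}$. The principal polarisation equips $\X_p(A/F)$ with a perfect $G$-equivariant antisymmetric pairing coming from Cassels--Tate, making it a self-dual $\Q_p[G]$-representation; for $p=2$ it is precisely the hypothesis that the polarisation comes from a $K$-rational divisor that guarantees this self-duality, via the Poonen--Stoll analysis of when the pairing is alternating (as in \cite{tamroot} and the literature on self-duality of Selmer groups). Applying the defining property of $\T_{\Theta,p}$ to the self-dual representation $\rho=\X_p(A/F)$ gives $\langle\tau,\X_p(A/F)\rangle\equiv\ord_p\RC_\Theta(\X_p(A/F))\pmod 2$. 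The regulator-constant formula of \cite{tamroot} --- obtained by rewriting $\prod_i\BSD(A/F^{H_i})^{n_i}$, in which the $L$-value, torsion and regulator contributions cancel or become squares along the Brauer relation --- identifies $\ord_p\RC_\Theta(\X_p(A/F))$ with $\ord_p C(\Theta)$ modulo $2$. Combining with the first conclusion, $(-1)^{\langle\tau,\X_p(A/F)\rangle}=(-1)^{\ord_p C(\Theta)}=w(A/K,\tau)$, which is the $p$-parity conjecture for the twist of $A/K$ by $\tau$. Letting $\Theta$ run over all $G$-relations covers all of $\TFKp$.

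Most of the arithmetic content --- the $\BSD$-to-regulator-constant identity and, above all, the self-duality of $\X_2(A/F)$ under the rational-divisor hypothesis --- is quoted from \cite{tamroot} and earlier work, so within the present paper the proof is a matter of careful assembly. Accordingly the step I expect to require the most care is the local--global bookkeeping in the first conclusion: checking that restriction to decomposition groups genuinely carries $\T_{\Theta,p}$ into $\T_{\Res_{D_v}\Theta,p}$ --- which rests on the standard but delicate compatibility of regulator constants with induction --- and that the factorisation $C(\Theta)=\prod_v C_v(\Res_{D_v}\Theta)$ matches, on the nose, the local terms in the statement of local compatibility.
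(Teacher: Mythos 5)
Your proposal is correct and is essentially the paper's own argument, which is outsourced to \cite{tamroot}: the first part is the proof of Cor.\ 3.4 there with the local-compatibility hypothesis substituted for Thm.\ 3.2/Cor.\ 3.3 (precisely the localisation of $C(\Theta)$, of the root number, and of $\T_{\Theta,p}$ to decomposition groups via Mackey, induction of relations and Frobenius reciprocity that you carry out), and the second part is the proof of Thm.\ 1.6 via Thm.\ 1.14, i.e.\ the chain $(-1)^{\langle\tau,\X_p(A/F)\rangle}=(-1)^{\ord_p\RC_\Theta(\X_p(A/F))}=(-1)^{\ord_p C(\Theta)}=w(A/K,\tau)$ that you assemble. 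One small misattribution: the $K$-rational-divisor hypothesis at $p=2$ is not what makes $\X_2(A/F)$ self-dual (the $K$-rational principal polarisation together with the Cassels--Tate duality already gives self-duality as a $\Q_p[\Gal(F/K)]$-representation for every $p$); rather it enters, via Poonen--Stoll, in guaranteeing that the Tate--Shafarevich contributions are squares in the Selmer/BSD-quotient computation underlying the congruence $\ord_p\RC_\Theta(\X_p(A/F))\equiv\ord_p C(\Theta)\bmod 2$, which is why the Remark following the theorem can relax that hypothesis to squareness of $\sha^\circ(A/L)[p^\infty]$ for all intermediate fields $L$.
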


\begin{proof}
This is proved in the beginning of \S3 of \cite{tamroot}, although unfortunately
not stated there in this form. We will not repeat the proof here, as it
requires quite a lot of notation and several general results from \cite{tamroot}.
We will just explain which parts need to be taken and how they need to be
modified.

The first part of the theorem follows from the proof of \cite{tamroot} Cor.\ 3.4.:
the single use in that proof of \cite{tamroot} Thm.\ 3.2.\ (in the form of Cor.\ 3.3.)
is replaced by our hypothesis that local compatibility holds for $A$
in $F_w/K_v$ at all $v$. The rest of the proof of \cite{tamroot} Cor.\ 3.4.\ holds
verbatim.

The second part of the theorem follows from the proof of \cite{tamroot} Thm.\ 1.6.,
(after the proof of Cor.\ 3.4). It is a simple application of \cite{tamroot} Thm.\ 1.14.
\end{proof}

\begin{remark}
The assumption on the $K$-rational divisor can be relaxed to 
$\sha^\circ(A/L)[p^\infty]$ having square order for every $K\subseteq L\subseteq F$, or to the even weaker
requirement that $\prod_i |\sha^\circ(A/F^{H_i})[p^\infty]|^{n_i}$ is a square for every $G$-relation \hbox{$\Theta=\sum n_i H_i$}.
\end{remark}

\begin{lemma}
\label{comp-cyc}
Local compatibility holds for all abelian varities in cyclic extensions
of local fields.
\end{lemma}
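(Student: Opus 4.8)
The plan is to reduce everything to the \emph{trivial} Brauer relation, using that a cyclic group has no others. If $G=\Gal(F/K)\cong C_n$ with generator $\sigma$, then for each $d\mid n$ one has $\chi_{\C[G/C_d]}(\sigma^j)=(n/d)\cdot[\,(n/d)\mid j\,]$; evaluating a hypothetical relation $\sum_{d\mid n}n_d\,\C[G/C_d]=0$ on the powers of $\sigma$ and applying M\"obius inversion over the divisor lattice of $n$ forces $n_d=0$ for every $d$. Hence the only $G$-relation is $\Theta=0$, for which $C_v(0)$ is the empty product, so $\ord_p C_v(0)=0$. Local compatibility for $A$ in $F/K$ therefore comes down to the single assertion that $w(A/K,\tau)=1$ for every $\tau\in\T_{0,p}$.

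Next I would identify $\T_{0,p}$ explicitly. Since $\RC_0(\rho)=1$ for all $\rho$, a self-dual $\bar\Q_pG$-representation $\tau$ lies in $\T_{0,p}$ precisely when $\langle\tau,\rho\rangle$ is even for every self-dual irreducible $\Q_pG$-representation $\rho$. Writing $\tau=\bigoplus_\chi a_\chi\chi$ over $\bar\Q_p$-characters of $C_n$, self-duality gives $a_\chi=a_{\bar\chi}$. A self-dual irreducible $\Q_pG$-representation is $\bigoplus_{\chi\in O}\chi$ for a $\Gal(\bar\Q_p/\Q_p)$-orbit $O$ of characters with $O=\bar O$, and such an $O$ is either $\{\triv\}$, or $\{\epsilon\}$ when $2\mid n$ (with $\epsilon$ the quadratic character), or else contains no self-inverse character and so breaks up into conjugate pairs, making $\sum_{\chi\in O}a_\chi$ automatically even. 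Thus the only constraints on $\tau$ are that $a_\triv$ and, if $2\mid n$, $a_\epsilon$ be even; equivalently $\tau\cong a_\triv\,\triv\oplus a_\epsilon\,\epsilon\oplus(\rho_0\oplus\rho_0^\vee)$ for some $\bar\Q_p$-representation $\rho_0$ and even integers $a_\triv,a_\epsilon$.

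Finally I would conclude from multiplicativity of the local root number in the twisting variable, together with the standard behaviour of local constants under duality (see \cite{RohG}): $|w(A/K,\rho_0)|=1$ and $w(A/K,\rho_0^\vee)=\overline{w(A/K,\rho_0)}$. For $\tau$ as above,
\[
 w(A/K,\tau)=w(A/K,\triv)^{a_\triv}\cdot w(A/K,\epsilon)^{a_\epsilon}\cdot w(A/K,\rho_0\oplus\rho_0^\vee).
\]
The twists of $A$ by $\triv$ and by the quadratic $\epsilon$ are themselves self-dual, so their root numbers lie in $\{\pm1\}$ and the first two factors are $1$ because $a_\triv,a_\epsilon$ are even; the third factor equals $w(A/K,\rho_0)\,\overline{w(A/K,\rho_0)}=|w(A/K,\rho_0)|^2=1$. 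Hence $w(A/K,\tau)=1=(-1)^{\ord_p C_v(0)}$, which is exactly local compatibility, and the same computation covers the archimedean case $\Gal(\C/\R)\cong C_2$. The only genuinely delicate point is the combinatorial identification of $\T_{0,p}$ in the second step — verifying that, apart from $\triv$ and the quadratic character, no self-dual rational irreducible of $C_n$ imposes a nontrivial parity constraint — after which the root-number computation is routine.
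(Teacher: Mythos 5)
Your proposal is correct, but it takes a noticeably longer route than the paper, whose entire proof is the observation that cyclic groups have no Brauer relations, so the statement is vacuous. You in effect prove that classical fact yourself: your evaluation of the permutation characters $\chi_{\C[G/C_d]}$ on powers of a generator followed by M\"obius inversion over the divisor lattice is a correct and self-contained argument that the only $G$-relation of $C_n$ is the zero one. Where you diverge is in then taking the zero relation seriously: under the intended convention (relations form the kernel of the Burnside-to-representation-ring map, and ``no relations'' means this kernel is trivial), the lemma really is contentless and your second and third steps are not needed. That said, your treatment of $\Theta=0$ is sound and closes a literal-reading loophole in Definition-level pedantry: your identification of $\T_{0,p}$ is right (only $\triv$ and, for $2\mid n$, the quadratic character impose parity constraints, since every other self-dual rational irreducible splits into $\chi,\bar\chi$ pairs whose multiplicities in a self-dual $\tau$ agree), and the root-number verification $w(A/K,\tau)=1$ for such $\tau$ follows from $w(A/K,\triv),w(A/K,\epsilon)\in\{\pm1\}$ together with $w(A/K,\rho_0)w(A/K,\rho_0^{\vee})=1$; the cleanest justification of the latter is the determinant formula $w(\sigma)w(\sigma^*)=\det\sigma(-1)$ applied to the (symplectically self-dual, trivial-determinant) twisted representation, which is exactly the fact the paper itself invokes in the proof of Theorem \ref{comp-ss}, so you may as well cite it in that form rather than appealing loosely to complex conjugation of root numbers. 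In short: the paper buys brevity by convention; you buy robustness at the cost of reproving a standard combinatorial fact and carrying out a (correct but strictly optional) root-number computation.
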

\begin{proof}
Cyclic groups have no Brauer relations, so there is nothing to prove.
\end{proof}

\begin{theorem}
\label{comp-ss}
Local compatibility holds for all semistable principally polarised
abelian varieties over local fields of characteristic zero.
\end{theorem}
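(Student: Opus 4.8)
The plan is to first remove the differential-form factor from $C_v(\Theta)$, reducing the assertion to a congruence between $\ord_p$ of a product of Tamagawa numbers and a local root number, and then to feed in the results of \S\ref{s:tam}. Since $K$ has characteristic zero, either $K$ is archimedean --- and then $\Gal(\bar K/K)$ is cyclic, so local compatibility holds by Lemma \ref{comp-cyc} --- or $K$ is a finite extension of $\Q_\ell$ for some prime $\ell$; assume the latter, and fix a Galois extension $F/K$ with group $G$ and a Brauer relation $\Theta=\sum_i n_iH_i$. The key simplification is that for semistable $A$ the factor $|\omega/\neron|$ drops out of $C_v(\Theta)$: the identity component of the N\'eron model of $A$ commutes with base change, so a N\'eron differential for $A/K$ stays N\'eron over every $F^{H_i}$; for this choice of $\omega$ we get $C(A/F^{H_i},\omega)=c_{A/F^{H_i}}$, and since $C_v(\Theta)$ is independent of $\omega$, $C_v(\Theta)=\prod_i c_{A/F^{H_i}}^{n_i}$. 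So it suffices to prove $(-1)^{\ord_p\prod_i c_{A/F^{H_i}}^{n_i}}=w(A/K,\tau)$ for every $\tau\in\T_{\Theta,p}$.

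For odd $p$ this is already known, being contained in \cite{tamroot} Thm.\ 3.2 (whose hypothesis ``4ex'' is superfluous in the semistable odd-$p$ case), so we may assume $p=2$. From here I would follow the proof of \cite{tamroot} Thm.\ 3.2: via inductivity of local $\epsilon$-factors, Grothendieck's description of the monodromy filtration on $H^1(A_{\bar K},\Q_\ell)$ for semistable $A$ --- whose only ramified Weil--Deligne constituent is built from $X(T)\otimes\Q_\ell$ (which is unramified, factoring through $\Gal(K'/K)$) twisted by the special representation $\mathrm{sp}(2)$ --- and the reduction of an arbitrary Brauer relation to standard ones in small subquotients of $G$, that proof splits the required identity into (a) a purely root-number statement about the toric constituent, independent of the Tamagawa material, and (b) a statement about the parity of $\ord_2\prod_i c_{A/F^{H_i}}^{n_i}$. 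Hypothesis ``4ex'' is used only in part (b).

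Part (b) is now unconditional. By Theorem \ref{thm:cvuptosquares}, modulo rational squares $c_{A/F^{H_i}}$ equals $c_{A/K}\,e_i^{r}$, or $c_{A/K}\,|\B_{A/K}|\,e_i^{r}$, or $c_{A/K^{nr}}\,e_i^{d}$, according to the parities of the ramification degree $e_i$ and the residue degree $f_i$ of $F^{H_i}/K$, where $d$ and $r$ are the toric and split toric dimensions of $A/K$. Substituting this and reducing modulo $\Q^{\times2}$ turns $\prod_i c_{A/F^{H_i}}^{n_i}$ into an explicit monomial in $c_{A/K}$, $c_{A/K^{nr}}$, $|\B_{A/K}|$ and the $e_i$; using the Brauer-relation identities $\sum_i n_i=0$, $\sum_i n_i e_if_i=0$, and more generally $\sum_i n_i\dim\rho^{H_i}=0$ for $\Q[G]$-representations $\rho$, one extracts $\ord_2$ of this class and compares it with the value of $w(A/K,\tau)$ produced by (a).

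The delicate point --- and the genuinely new ingredient --- is the factor $|\B_{A/K}|$: for elliptic curves and for toric dimension $1$, $\B_{A/K}$ is trivial or $C_2$ and its effect on root numbers is classical, whereas in general $|\B_{A/K}|$ is only a square or twice a square (Theorem \ref{thm:bettspairing} together with Theorem \ref{thm:uptosquares}), and it is precisely the ``twice a square'' case --- equivalently, the failure of $\B_{A/K}$ to carry a perfect alternating pairing --- that can flip a local root number. So the heart of the argument will be to show that the parity of $\ord_2|\B_{A/K}|$ matches the corresponding $\epsilon$-factor contribution of the toric constituent of $H^1(A)$; since $\B_{A/K}$ is built from the same monodromy pairing on $X(T)$ that governs that constituent, this should reduce to a compatibility between the antisymmetric pairing of Theorem \ref{thm:bettspairing} and the orthogonal/symplectic structure controlling the local root number, which I expect to be checkable case by case after the reduction of $\Theta$ to small groups, using Theorems \ref{thm:2d} and \ref{thm:c_n} to pin down $\B_{A/K}$ in those cases.
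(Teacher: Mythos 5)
Your first steps track the paper: the archimedean case via Lemma \ref{comp-cyc}, and the observation that for semistable $A$ a N\'eron/minimal form stays minimal in all extensions, so $C_v(\Theta)$ reduces to $\prod_i c_{A/F^{H_i}}^{n_i}$, to be compared with $w(A/K,\tau)$ via Theorem \ref{thm:cvuptosquares}. But the proof then stops exactly where the actual content lies. You yourself identify the factor $|\B_{A/K}|$ (the case $2\mid e_i$, $2\nmid f_i$) as ``the heart of the argument'' and leave it as an expectation that the parity of $\ord_2|\B_{A/K}|$ should \emph{match} an $\epsilon$-factor contribution of the toric constituent, checkable ``case by case'' via Theorems \ref{thm:2d} and \ref{thm:c_n}. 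This is a genuine gap, and moreover the proposed mechanism is the wrong one. First, Theorems \ref{thm:2d} and \ref{thm:c_n} only classify toric dimension $\le 2$ and the irreducible-$\Q[F]$ case, whereas the theorem concerns arbitrary semistable principally polarised abelian varieties, where $\B_{A/K}$ can be any finite abelian group with a perfect antisymmetric pairing. Second, no matching with the root number is needed or true: the root number side $w(A/K,\tau)=(-1)^{\langle\tau,\Lambda_{A/K}\otimes\Q_p\rangle}$ does not see $\B_{A/K}$ at all. What one must show is that the multiplicity with which $\ord_2|\B_{A/K}|$ enters $\ord_2 C_v(\Theta)$, namely $\sum_{i:\,2\mid e_i,\,2\nmid f_i} n_i$, is even; this is not a formal consequence of the identities $\sum_i n_i=0$, $\sum_i n_i[G:H_i]=0$ that you invoke, and your sketch neither states nor proves it.

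The paper's proof settles precisely this point by a different device: it writes the up-to-squares formula of Theorem \ref{thm:cvuptosquares} as a monomial, over each subfield $L=F^{H_i}$, in the Tamagawa numbers of three reference elliptic curves (split $\kI_1$, non-split $\kI_1$, non-split $\kI_2$) times a factor depending only on the residue degree. The residue-degree factor is trivial on Brauer relations (\cite{tamroot} Thm.\ 2.36(f)), and the elliptic-curve case of local compatibility (\cite{tamroot} Prop.\ 3.9) gives $\ord_p c_2(\Theta)\equiv\ord_p c_3(\Theta)\equiv\langle\tau,\chi\rangle \bmod 2$, so the $\B$-contribution, which appears as $(2c_2/c_3)^{\eta}$ with $|\B_{A/K}|\sim 2^{\eta}$, drops out modulo squares without any comparison of pairings; the remaining terms give $\ord_p C_v(\Theta)\equiv r\langle\tau,\triv\rangle+(d-r)\langle\tau,\chi\rangle\equiv\langle\tau,\Lambda_{A/K}\otimes\Q_p\rangle$. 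Note also that the paper does not redo the $\epsilon$-factor analysis you outline (monodromy filtration, reduction to small subquotients): it simply quotes the standard formula for $w(A/K,\tau)$ for semistable $A$ and $\tau$ self-dual of trivial determinant, and it treats all $p$ uniformly rather than splitting off odd $p$. So your proposal is missing the key cancellation argument, and the route you suggest for it would not deliver the general case.
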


\begin{proof}
For archimedean local fields this follows from the previous lemma.

Suppose $F/K$ is a finite Galois extension of $l$-adic fields.
Let $p$ be a prime number, $A/K$ a semistable principally polarised
abelian variety, $\Theta\!=\!\sum n_i H_i$ a \hbox{$\Gal(F/K)$-relation} and~$\tau\in \T_{\Theta,p}$.
Write $d$ and $r$ for the toric dimension and the split toric dimension of $A/K$, respectively, and $\Lambda=\Lak\otimes_\Z\Q_p$.

The root number of the twist is given by
$$ 
  w(A/K,\tau)=
  w(\tau)^{2\dim A} (-1)^{\langle \tau, \Lambda \rangle}=
  (-1)^{\langle \tau, \Lambda \rangle} ,
$$
where the first equality is a standard formula for the root number of a twist of a semistable abelian variety by a self-dual representation, and the second follows from the determinant formula $w(\sigma)w(\sigma^*)=\det\sigma(-1)$ and the fact
that all elements of $\T_{\Theta,p}$ have trivial determinant, see \cite{tamroot} Prop. 3.23., Lemma A.1 and Thm.\ 2.56.

Let $\omega$ be a minimal exterior form on $A/K$. As $A/K$ is semistable, $\omega$ remains minimal over every extension $L/K$, so that 
$$
C(A/L,\omega)=c_{A/L} =
   \begin{cases}
      c_{A/K} \cdot e^r \cdot \square
        & \text{if }\>2\nmid e,2\nmid f, \\
      c_{A/K}\cdot|\B_{A/K}|\cdot e^r\cdot \square
        & \text{if }\>2\div e,2\nmid f, \\
      c_{A/K^{nr}}\cdot e^d\cdot \square
        & \text{if }\>2\div f,
  \end{cases},
$$
by Theorem \ref{thm:cvuptosquares}, where $e$ and $f$ denote the ramification and the residue degree of $L/K$, respectively.

We now proceed to show that $(-1)^{\ord_p C_v(\Theta)}=w(A/K,\tau)$ by using these explicit formulas for both terms.
Rather than doing the computation for the behaviour of these functions in Brauer relations from scratch,
we will take a shortcut by making use of the fact that the theorem has already been established for semistable elliptic curves in \cite{tamroot} Prop. 3.9. If $E_1, E_2$ and $E_3$ are elliptic curves with, respectively, split multiplicative reduction of type $\kI_1$
and non-split multiplicative reduction of types $\kI_1$ and $\kI_2$, then their Tamagawa numbers over $L$ are
$$
 c_1(L)=c_{E_1/L}=e,\>\>\>\quad
 c_2(L)=c_{E_2/L}= \leftchoicethree
           {1}{\text{if}\> 2\nmid e, \>2\nmid f}
           {2}{\text{if}\> 2|e, \>2\nmid f}
           {e}{\text{if}\> 2|f},\>\>\>\quad
 c_3(L)=c_{E_3/L}= \leftchoice
           {2}{ \text{if}\> 2\nmid f}
           {2e}{\text{if}\> 2|f}.
$$
Thus we already know that
$$
 \ord_p c_1(\Theta)\equiv \langle \tau, \triv \rangle,\>\quad
 \ord_p c_2(\Theta)\equiv \langle \tau, \chi \rangle,\>\quad
 \ord_p c_3(\Theta)\equiv \langle \tau, \chi \rangle,
$$
where $c_j(\Theta)=\prod_i (c_j(F^{H_i})^{n_i}$, 
$\chi$ denotes the unramified character of order 2 of $K$, and $\equiv$ is equality mod~2.

Since the Galois group acts on $\Lambda$ through a finite cyclic quotient, and this representation is unramified and self-dual,
$$
 \Lambda\iso \triv^{\oplus r} \oplus \chi^{\oplus s} \oplus (\rho\oplus\rho^*)
$$
for some representation $\rho$ and some integer $s\equiv d-r\mod 2$.

Note that we can factor
$$
 C(A/L,\omega) = \gamma \left(\frac{2c_2}{c_3}\right)^\eta c_1^r \left(\frac{c_3}{2}\right)^{d-r} \cdot \square,
$$
where 
$\gamma= \gamma(L) =  \leftchoice
       {c_{A/K}}{ \text{if}\> 2\nmid f}
       {c_{A/{K^{nr}}}}{\text{if}\> 2|f}$
and $\eta$ is 0 or 1 depending on whether $|\B_{A/K}|$ is a square or twice a square (Theorem \ref{thm:varyfield}(c)).
Set $\gamma'(L)=2^{\eta-d+r}\gamma(L) $ and write $\gamma'(\Theta)=\prod_i\gamma'(F^{H_i})^{n_i}$. Then
$$
  C_v(\Theta) = \gamma'(\Theta) \left(\frac{c_2(\Theta)}{c_3(\Theta)}\right)^\eta c_1(\Theta)^r c_3(\Theta)^{d-r} \cdot \square.
$$
As a function of the field $L$, $\gamma'$ depends only on the residue degree $f$, and hence $\gamma'(\Theta)=1$; this is a
general fact about evaluating such functions on Brauer relations, see \cite{tamroot} Thm.\ 2.36(f). Hence
$$
 \ord_p C_v(\Theta) \equiv r\langle \tau, \triv \rangle + (d-r) \langle \tau, \chi \rangle
   \equiv \langle\tau, \triv^{\oplus r}\oplus \chi^{\oplus s}\rangle
   \equiv \langle\tau, \Lambda\rangle \mod 2,
$$
because $s\equiv d-r\mod 2$ and $\tau$ is self-dual. By the formula for the root number above,
$$
 (-1)^{\ord_p C_v(\Theta)}= w(A/K.\tau),
$$
as required.
\end{proof}

\begin{theorem}
\label{thm-parity}
Let $F/K$ be a Galois extension of number fields and let $p$ be a prime
number. Let $A/K$ be a principally polarised abelian variety
all of whose primes of unstable reduction have cyclic
decomposition groups in $F/K$; if $p=2$ assume also that the
polarisation is induced by a $K$-rational divisor.
Then the $p$-parity conjecture holds for all twists of $A/K$
by $\tau\in \T^{\scriptscriptstyle{F/K}}_p$. 
\end{theorem}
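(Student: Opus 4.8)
The plan is to deduce the theorem from Theorem \ref{comp-parity}, which reduces the global $p$-parity statement for twists by $\tau\in\TFKp$ to a purely local input: it suffices to show that \emph{local compatibility} holds for $A$ in $F_w/K_v$ at every place $v$ of $K$ and every place $w\mid v$ of $F$. The hypotheses of Theorem \ref{comp-parity} beyond local compatibility --- that $A$ is principally polarised, and that the polarising divisor is $K$-rational when $p=2$ --- are precisely those assumed in the present theorem, so nothing further is needed on that front. Thus the entire proof consists of verifying local compatibility place by place.

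First I would fix a place $v$ of $K$ and a place $w\mid v$ of $F$ and split into three exhaustive cases. If $v$ is archimedean, then $\Gal(F_w/K_v)$ has order $1$ or $2$, hence is cyclic, and local compatibility holds by Lemma \ref{comp-cyc}. If $v$ is a finite place at which $A$ has semistable reduction, then $A\times_K K_v$ is a semistable principally polarised abelian variety over the characteristic-zero local field $K_v$, so local compatibility holds by Theorem \ref{comp-ss}. Finally, if $v$ is a prime of unstable reduction of $A$, then by hypothesis the decomposition group $\Gal(F_w/K_v)$ is cyclic, so local compatibility again holds by Lemma \ref{comp-cyc} (and in particular the hypothesis is automatic whenever the finitely many such primes are unramified in $F/K$). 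Since every place of $K$ falls into one of these cases, local compatibility holds for $A$ in $F_w/K_v$ at all $v$ and $w$, and Theorem \ref{comp-parity} immediately yields the $p$-parity conjecture for all twists of $A/K$ by $\tau\in\TFKp$.

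I do not expect a genuine obstacle at this final stage: the difficulty has been front-loaded into Theorem \ref{comp-ss} (which in turn rests on the growth formula of Theorem \ref{thm:cvuptosquares} and the structural properties of $\B_{A/K}$ and $\SG_{A/K}$ established in \S\ref{s:betts}) and into the Brauer-relations and regulator-constants machinery packaged in Theorem \ref{comp-parity}. The only points requiring a modicum of care in the write-up are that the case split above is genuinely exhaustive --- every finite place of $K$ is either a place of semistable reduction of $A$ or a prime of unstable reduction --- and that the properties needed to invoke Theorem \ref{comp-ss}, namely semistability and the existence of a principal polarisation, survive base change of $A$ to the completion $K_v$, which they do.
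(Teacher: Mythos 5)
Your proposal is correct and is essentially the paper's own proof: the paper deduces Theorem \ref{thm-parity} directly from Lemma \ref{comp-cyc} (covering cyclic decomposition groups, hence the archimedean places and the primes of unstable reduction), Theorem \ref{comp-ss} (covering the semistable places), and Theorem \ref{comp-parity}, exactly as you do. Your case analysis just spells out what the paper leaves implicit; no difference in substance.
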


\begin{proof}
This follows from Lemma \ref{comp-cyc}, Theorem \ref{comp-ss} and
Theorem \ref{comp-parity}.
\end{proof}


\def\pap{{A^+}}
\def\pam{{A^-}}
\def\pbp{{B^+}}
\def\pbm{{B^-}}
\def\pgp{{C^+}}
\def\pgm{{C^-}}

\def\vna{{a}}
\def\vnb{{b}}
\def\vng{{c}}
\def\vnd{{d}}
\def\vnn{{n}}
\def\cC{{\mathcal C}}
\newmathop{Jac}
\def\Ljk{{\Lambda_{\scriptstyle J/K}}}


\bigskip

\section{Appendix: Integral module structure of $\Lak$ for Jacobians of semistable hyperelliptic curves of genus 2}
\label{s:appendix}
\setcounter{subsection}{1}
\setcounter{thmcount}{0}

\medskip

\begin{center}
by Vladimir Dokchitser and Adam Morgan
\end{center}

\bigskip

In this appendix we explain how the ``lattice type'', in the sense of Theorems~\ref{thm:introclassification} and \ref{thm:2d}, of the dual character group  $\Ljk$ of the toric part of the reduction can be determined when $J$ is the Jacobian of a semistable hyperelliptic curve of genus~2. 
Theorem~\ref{thm:introclassification} (or Theorem~\ref{thm:cvdim2}) then lets one read off the Tamagawa number of~$J$ over all finite extensions of the base field.

Let $p$ be an odd prime and $K/\Q_p$ a finite extension. Let $\cC/K$ be a hyperelliptic curve given by an equation
$$
\cC:\qquad y^2=f(x),
$$
with $f(x)\in\cO_K[x]$ with no repeated roots in $\bar K$. Write $J=\Jac(\cC)$ for the Jacobian of $\cC$.
The article \cite{DDMM} provides an explicit criterion that determines whether $\cC$ is semistable, and, under this assumption, a description of its minimal regular model and its special fibre (together with the Frobenius action) and of the lattice $\Lak$. Here we apply this machinery  in the case when $\cC$ has genus 2 to obtain the lattice type of $\Ljk$ in terms of elementary data attached to $f(x)$. We omit the proofs and computations, which will be included in \cite{DDMM}.

The description is similar to the familiar one for elliptic curves with multiplicative reduction, corresponding to the case when $f(x)$ is a cubic whose reduction has a
double root in $\overline\F_p$. There one checks the tangents at the singular point to decide whether the reduction is split or non-split multiplicative, and the valuation of the discriminant (equivalently, twice the valuation of the difference of the two roots that have the same reduction) to determine the parameter $n$ of the Kodaira type~ $\kI_n$. 

For the purposes of this appendix we assume that $f(x)$ has degree 5 or 6, so that~$\cC$ has genus 2, and impose the following simplifying assumption:
\begin{itemize}
 \item $f(x)$ has a unit leading coefficient and its reduction $\bar f(x)$ has no triple \\ roots in $\bar\F_p$. 
(In particular, $\cC$ and $J$ have semistable reduction.)
\end{itemize}

For a double root $\alpha\in\bar\F_p$ of $\bar f(x)$ define the two ``tangents'' $t_\alpha^{\pm}$ by
$$
  t_\alpha^{\pm} = \pm \sqrt{g(\alpha)}, \qquad {\rm where} \qquad \bar f(x) = (x-\alpha)^2g(x),
$$
and define the pairs
$$
  A^{\pm}= (\alpha,t_\alpha^{\pm}).
$$
Also set 
$$
 \vna=2\cdot v_K(\alpha_1\!-\!\alpha_2)\in \Z,
$$ where $\alpha_1,\alpha_2\in\bar K$ are the two roots of $f(x)$ that reduce to $\alpha$, and the valuation $v_K:\bar K^\times\to \Q$ is normalised so as to send the uniformiser of $\cO_K$ to 1. 
If there are further double roots of $\bar f(x)$ present, say $\beta$ and $\gamma$, we similarly define the quantities $t_\beta^{\pm}, t_{\gamma}^\pm$, $B^\pm, C^\pm$ and $\vnb, \vng$, corresponding to these roots.

The type of the lattice $\Ljk$ can be read off from the valuations $a, b, c$ and the action of the Frobenius automorphism on $A^\pm, B^\pm, C^\pm$ as shown in the following table. For convenience of the reader, the table also lists the order of Frobenius in this action, the Tamagawa number $c_{J/K}$ and the toric dimension of $J$, equivalently the rank of $\Ljk$.
As in Lemma \ref{lem:1d},
types $\voI \vna$ and $\voII \vna$ denote the lattice~$\Lambda\!=\!\Z$ with dual lattice $\Lambda^\vee=a\Z$ and $F$ acting as multiplication by $+1$ and $-1$, respectively; the types of 2-dimensional lattices are described in Theorems \ref{thm:introclassification} and \ref{thm:2d}.
We use the shorthand notation 
$$
  \overline{x}=1 \>\text{ if } x \text{ is odd,} \qquad   \overline{x}=2 \>\text{ if } x \text{ is even},
$$
and
$$
 \vnd=gcd(\vna,\vnb,\vng), \qquad\qquad \vnn=\vna\vnb+\vnb\vng+\vna\vng,
$$ 
in the case when $\bar f$ has three double roots.

\begingroup\smaller[1]
$$
\hskip -1.5cm
\begin{array}{|c|c|l|c|c|c|}
 \hline \vphantom{\int^{\int^a}}
 {\rm Double} & {\text{Order of}}& {\text{Frobenius orbits on pairs}} & {\rm Toric} & {\text{Lattice type}} & {\text{Tamagawa}}\cr
 {\text{roots of }} \bar{f} & {\rm Frobenius}&  & {\rm dimension} & {\text{of }} \Ljk& {\text{number}}\cr 
\hline\UDspace{\vphantom {\int^{\int^\int}}}
{\text{none}}& - & - &  0 & - & 1\cr 
\hline\UDspace{\vphantom {\int^{\int^\int}}}
 
\alpha & 1 & \{\pap\}\quad \{\pam\}&1& \voI \vna & a \cr
\alpha & 2 & \{\pap,\pam\}       &1& \voII \vna& \overline{a} \cr
\hline\UDspace{\vphantom {\int^{\int^\int}}}

\alpha,\beta & 1 &\{\pap\}\quad \{\pam\}\quad \{\pbp\}\quad \{\pbm\}&2&\vI \vna,\vnb & a\cdot b\cr
\alpha,\beta & 2 &\{\pap,\pam\}\quad \{\pbp\}\quad \{\pbm\}&2& \vIIa \vna,\vnb & a\cdot \overline{b} \cr
\alpha,\beta & 2 &\{\pap,\pam\}\quad \{\pbp,\pbm\}&2& \vII \vna,\vnb & \overline{a}\cdot \overline{b}\cr
\alpha,\beta & 2 &\{\pap,\pbp\}\quad \{\pam,\pbm\}&2& \vIIb \vna, \vna & a\cr
\alpha,\beta & 4 &\{\pap,\pbp,\pam,\pbm\}&2& \vIV \vna & \overline{a} \cr
\hline\UDspace{\vphantom {\int^{\int^\int}}}

\alpha, \beta, \gamma & 1 & \{\pap\}\quad\{\pam\}\quad \{\pbp\}\quad \{\pbm\}\quad \{\pgp\}\quad \{\pgm\}&2 & \vI \vnd, {\vnn}/{\vnd}& n\cr
\alpha, \beta, \gamma & 2 & \{\pap,\pam\}\quad \{\pbp,\pbm\}\quad \{\pgp,\pgm\}&2 & \vII \vnd, {\vnn}/{\vnd} & \overline{d}\cdot \overline{n/d}\cr
\alpha, \beta, \gamma & 2 & \{\pap,\pbp\}\quad \{\pam,\pbm\}\quad \{\pgp\}\quad \{\pgm\}&2 & \vIIb \vna\!+\!2\vng, \vna & a\!+\!2c\cr
\alpha, \beta, \gamma & 2 & \{\pap,\pbp\}\quad \{\pam,\pbm\}\quad \{\pgp, \pgm\}&2 &\vIIb \vna, \vna\!+\!2\vng & a\cr
\alpha, \beta, \gamma & 3 & \{\pap,\pbp,\pgp\}\quad \{\pam,\pbm,\pgm\}&2 & \vIII \vna & 3\cr
\alpha, \beta, \gamma & 6 & \{\pap,\pbp,\pgp,\pam,\pbm,\pgm\}&2 & \vVI \vna & 1\cr

 \hline
\end{array}
$$
\endgroup

\bigskip

Let us remark that the table is complete, in the sense that these are all the possible Frobenius actions on the pairs under our hypotheses on $f(x)$.
It is also not too difficult to check that by varying $f(x)$ it is all possible to obtain all the lattice types of Theorem \ref{thm:introclassification} with all possible parameter values.

\begin{example}
Let $\cC/\Q_3$ be given by the equation
$$ 
  y^2= f(x) 
$$
with
$$
 f(x)= x^5+x^4+20x^3+20x^2+64x+64.
$$
Over the residue field, this polynomial factorises as $\bar{f}(x)=(x+i)^2(x-i)^2(x+1)$, where $i\in\F_9$ denotes a square root of $-1$.
It visibly has the double roots $\alpha =i$ and $\beta =-i$, along with the single root $-1$. Moreover
$$
 t^{\pm}_\alpha = \pm\sqrt{(\alpha+i)^2(\alpha+1)}= \pm iw, \quad\qquad
 t^{\pm}_\beta  = \pm\sqrt{(\beta-i)^2(\beta+1)}= \pm iw^3,
$$
where $w$ denotes a square root of $1\!+\!i$ in $\F_{81}$ (a 16th root of unity). Thus the pairs we need to determine the reduction type are $A^{\pm}=(i,\pm iw)$ and $B^{\pm}=(-i,\pm iw^3)$. Frobenius acts on these as the 4-cycle $(A^+ B^- A^- B^+)$, so the the table gives the associated lattice type as $\vIV a$ for some $a\in\Z$.

Over $\overline\Q_3$, the polynomial factors as $f(x)= ((x-i)^2 +9)( (x+i)^2+9)(x+1)$, where $i$ now denotes a square root of $-1$ in $\overline\Q_3$. Writing $\alpha_1,\alpha_2$ for the roots that reduce to $\alpha$, we find that
$$
 a = 2 \cdot v_{\Q_3}(\alpha_1\!-\!\alpha_2)=v_{\Q_3}(\Disc((x\!-\!i)^2\!+\!9)) = v_{\Q_3}(-36)=2,
$$
so the lattice type is $\vIV 2$.

In particular, if $J/\Q_3$ is the Jacobian of $\cC$ and $K/\Q_3$ a finite extension of residue degree $f$ and ramification degree $e$, it follows from Theorem \ref{thm:introclassification} that the Tamagawa number of $J/K$ is given by
$$
c_{J/K}=\begin{cases}
2 & {\text{if }}\> 2\!\nmid\! f,\\
4 & {\text{if }}\> 2\!\mid \mid\! f,\\
4e^2 & {\text{if }}\> 4 \!\mid\! f.
\end{cases}
$$

\end{example}

\begin{example}
Let $\cC/\Q_3$ be the curve given by
$$
 y^2=f(x)=x^6+17x^4+76x^2+36 .   
$$
The polynomial factors as $f(x)=( (x-i)^2+3)( (x+i)^2+3)(x^2+9)$ over $\overline\Q_3$, so its reduction $\bar{f}$ has 3 double roots, $\alpha =i, \beta =-i, \gamma= 0$. The corresponding tangents are $t_{\alpha}^{\pm}= \pm 2, t_{\beta}^{\pm} = \pm 2$ and $t_{\gamma}^{\pm} = \pm 1$, giving the pairs 
$$
A^{\pm} = (i,\pm 2), \qquad B^{\pm} = (-i,\pm 2), \qquad C^{\pm}= (0, \pm 1).
$$ 
The Frobenius orbits on these pairs are 
$\{A^+,B^+\}, \{A^-,B^-\}, \{C^+\}, \{C^-\}$ and, as in the previous Example, one easily checks that the (scaled) distances in $\overline\Q_3$ between the pairs of roots that reduce to $\alpha, \beta$ and $\gamma$ are $a\!=\!b\!=\!1$ and $c\!=\!2$. The table now shows the lattice type to be $\vIIb 5,1$. 

If $J/\Q_3$ is the Jacobian of $\cC$ and $K/\Q_3$ a finite extension of residue degree $f$ and ramification degree $e$, it follows from Theorem \ref{thm:introclassification} that the Tamagawa number of~$J/K$ is given by
$$
c_{J/K}=\begin{cases}
  5e      &{\text{if }}\> 2\!\nmid\! f,\\
  5e^2 &{\text{if }}\> 2\!\mid\! f.
\end{cases}
$$
\end{example}


\begin{thebibliography}{9}


\bibitem{Manjul-Gross-Wang}
M. Bhargava, B. H. Gross, X. Wang, Pencils of quadrics and the arithmetic of hyperelliptic curves, to appear in J. Amer. Math. Soc.

\bibitem{CFKS}
J. Coates, T. Fukaya, K. Kato, R. Sujatha,
Root numbers, Selmer groups, and non-commutative Iwasawa theory,
J. Algebr. Geom. 19, no. 1, 19-97 (2010).


\bibitem{tamroot} T. Dokchitser, V. Dokchitser,
Regulator constants and the parity conjecture,
Invent. Math. 178, no. 1 (2009), 23-73.

\bibitem{kurast}
T. Dokchitser, V. Dokchitser,
Root numbers and parity of ranks of elliptic curves,
J. Reine Angew. Math. 658 (2011), 39-64.

\bibitem{megasha} T. Dokchitser, V. Dokchitser,
Growth of $\smallsha$ in towers for isogenous curves,
to appear in Compositio Math.

\bibitem{DDMM}
T. Dokchitser, V. Dokchitser, C. Maistret, A. Morgan, Arithmetic of semistable hyperelliptic curves over local fields, in preparation.

\bibitem{Gro}
A. Grothendieck, Mod\`eles de N\'eron et monodromie, LNM 288,
S\'eminaire de G\'eom\'etrie 7, Expos\'e IX, Springer-Verlag, 1973.

\bibitem{HN}
L. H. Halle, J. Nicaise, The N\'eron component series of an abelian variety, Math. Ann. 348, no. 3 (2010) 749--778.

\bibitem{Lor}
D. Lorenzini,
On the group of components of a N\'eron model, J. Reine Angew. Math. 445 (1993), 109--160.

\bibitem{Mor}
A. Morgan, Parity of 2-Selmer ranks of hyperelliptic curves over quadratic extensions,
arxiv: 1504.01960.

\bibitem{NU}
Y. Namikawa, K. Ueno, The complete classification of fibres in pencils of 
curves of genus two, Manuscripta Math. 9 (1973), 143-186.

\bibitem{NekIV}
J. Nekov\'a\v r, On the parity of ranks of Selmer groups IV, Compos. Math. 145 (2009), 1351--1359.

\bibitem{Ray}
M. Raynaud, Vari\'et\'es ab\'eliennes et g\'eom\'etrie rigide,
Actes du congr\`es international de Nice 1970, tome 1, 473--477.

\bibitem{RohV}
D. Rohrlich, The vanishing of certain Rankin-Selberg convolutions. In: Automorphic Forms and Analytic
Number Theory, 123--133. Les publications CRM, Montreal (1990).

\bibitem{RohG}
D. Rohrlich, Galois theory, elliptic curves, and root numbers. Compos. Math. 100 (1996) 311–349.

\bibitem{samuel}
P. Samuel, Algebraic Theory of Numbers (Silberger translation),
Kershaw 1971.



\end{thebibliography}
\end{document}